%
%
%

\documentclass[11pt,reqno]{amsart}
\pdfoutput=1

\usepackage{amsmath}
\usepackage{amsfonts}
\usepackage{amssymb}
\usepackage{enumerate}
\usepackage{amstext}
\usepackage{amsbsy}
\usepackage{amsopn}
\usepackage{bbm,amsthm}
\usepackage{amscd}
\usepackage[pdftex]{color}
\usepackage{amsxtra}
\usepackage{upref}
\usepackage{epstopdf}
\usepackage{graphicx,color}
\usepackage{hyperref}
\usepackage{longtable}
\usepackage{graphicx}
\usepackage[francais, english]{babel}
\usepackage{soul}
\usepackage{array}
\usepackage{todonotes}

\textwidth=15cm
\addtolength{\oddsidemargin}{-1cm}
\addtolength{\evensidemargin}{-1cm}

\DeclareFontFamily{OML}{rsfs}{\skewchar\font'177}
\DeclareFontShape{OML}{rsfs}{m}{n}{ <5> <6> rsfs5 <7> <8> <9> rsfs7
  <10> <10.95> <12> <14.4> <17.28> <20.74> <24.88> rsfs10 }{}
\DeclareMathAlphabet{\mathfs}{OML}{rsfs}{m}{n}

\newtheorem{theorem}{Theorem}
\newtheorem*{maintheorem}{Main Theorem}

\newtheorem{lemma}[theorem]{Lemma}
\newtheorem{proposition}[theorem]{Proposition}
\newtheorem{corollary}[theorem]{Corollary}

\theoremstyle{definition}

\theoremstyle{remark}
\newtheorem{remark}[theorem]{\bf Remark}

\numberwithin{equation}{section}
\numberwithin{theorem}{section}

\newcommand{\intav}[1]{\mathchoice {\mathop{\vrule width 6pt height 3 pt depth  -2.5pt
\kern -8pt \intop}\nolimits_{\kern -6pt#1}} {\mathop{\vrule width
5pt height 3  pt depth -2.6pt \kern -6pt \intop}\nolimits_{#1}}
{\mathop{\vrule width 5pt height 3 pt depth -2.6pt \kern -6pt
\intop}\nolimits_{#1}} {\mathop{\vrule width 5pt height 3 pt depth
-2.6pt \kern -6pt \intop}\nolimits_{#1}}}

\newcommand{\intavl}[1]{\mathchoice {\mathop{\vrule width 6pt height 3 pt depth  -2.5pt
\kern -8pt \intop}\limits_{\kern -6pt#1}} {\mathop{\vrule width 5pt
height 3  pt depth -2.6pt \kern -6pt \intop}\nolimits_{#1}}
{\mathop{\vrule width 5pt height 3 pt depth -2.6pt \kern -6pt
\intop}\nolimits_{#1}} {\mathop{\vrule width 5pt height 3 pt depth
-2.6pt \kern -6pt \intop}\nolimits_{#1}}}



\newcommand{\un}{\underline}

\newcommand{\ve}{\varepsilon}
\newcommand{\wt}{\widetilde}
\newcommand{\vf}{\varphi}

\newcommand{\R}{\mathbb{R}}
\newcommand{\N}{\mathbb{N}}

\newcommand{\Z}{\mathbb{Z}}

\newcommand{\Hol}[1]{{\textrm{H\"ol}}_{#1}}

\renewcommand{\exp}[1]{{\rm exp}_{#1}}

\newcommand{\Lip}{{\rm Lip}}
\newcommand{\Sas}{d_{\rm Sas}}
\newcommand{\inj}{{\rm inj}}
\newcommand{\nuh}{{\rm NUH}}
\newcommand{\simN}{\stackrel{N}\sim}

\newcommand{\colvec}[2][.8]{%
  \scalebox{#1}{%
    \renewcommand{\arraystretch}{.8}%
    $\begin{bmatrix}#2\end{bmatrix}$%
  }
}

\title[Symbolic dynamics of 3--dimensional flows]{Symbolic dynamics for large non-uniformly hyperbolic sets of
three dimensional flows}
\author{J\'er\^ome Buzzi, Sylvain Crovisier, and Yuri Lima}
\date{\today}
\keywords{}
\thanks{SC was partially supported by the ERC project 692925 \emph{NUHGD}. YL was also supported by CNPq 
and Instituto Serrapilheira, grant ``Jangada Din\^{a}mica: Impulsionando Sistemas Din\^{a}micos na 
Regi\~{a}o Nordeste''.}


\setcounter{tocdepth}{1}

\begin{document}
\maketitle

\begin{abstract}
We construct symbolic dynamics for three dimensional flows with positive speed. 
More precisely, for each $\chi>0$, we code a set of full measure for every invariant probability measure which is $\chi$--hyperbolic. These include all ergodic measures with entropy bigger than $\chi$ as well as all hyperbolic periodic orbits of saddle-type with Lyapunov exponent outside of $[-\chi,\chi]$.
This contrasts with a previous work of Lima \& Sarig which built a coding associated
to a given invariant probability measure~\cite{Lima-Sarig}. As an application, we code homoclinic classes of measures
by suspensions of irreducible countable Markov shifts.
\end{abstract}

\tableofcontents

\section{Introduction}

Let $M$ be a smooth closed  three dimensional manifold
and $X$ be a $C^{1+\beta}$ vector field on $M$ with $\beta>0$
which is non-singular, i.e. $X_p\neq 0$ for all $p\in M$.
We want to code a ``large" subset of $M$ with some non-uniform hyperbolicity
for the flow $\vf=\{\vf^t\}_{t\in\R}$ generated by $X$. This subset carries
all $\vf$--invariant hyperbolic ergodic probability measures with the following nonuniform hyperbolic property.
Let $\chi>0$.

\medskip
\noindent
{\sc $\chi$--hyperbolic measure:} A $\vf$--invariant probability measure $\mu$ on $M$
is {\em $\chi$--hyperbolic} if $\mu$--a.e. point has one Lyapunov exponent $>\chi$
and one Lyapunov exponent $< -\chi$.
The Lyapunov exponent along the flow, which always vanishes, is called trivial.

\medskip
This defines a rather natural, large, and uncountable class of measures. For instance, by the Ruelle inequality,
every $\vf$--invariant ergodic probability measure with metric entropy
larger than $\chi$ is $\chi$--hyperbolic. Also, every $\vf$--invariant probability measure defined by
a closed orbit with nontrivial Lyapunov exponents larger than $\chi$ in absolute value is
$\chi$--hyperbolic. In this paper, for each $\chi>0$ we construct a symbolic
system which lifts all $\chi$--hyperbolic measures.

\begin{maintheorem}\label{maintheorem}
Let $X$ be a non-singular $C^{1+\beta}$ vector field ($\beta>0$) on a closed $3$--manifold $M$.
Given $\chi>0$, there exist a locally compact topological Markov flow
$(\Sigma_r,\sigma_r)$ and a map $\pi_r:\Sigma_r\to M$ such that
$\pi_r\circ \sigma_r^t=\vf^t\circ\pi_r$, for all $t\in\R$, and satisfying:
\begin{enumerate}[{\rm (1)}]
\item The roof function $r$ and the projection $\pi_r$ are H\"older continuous.
\item $\pi_r[\Sigma_r^\#]$ has full measure for every $\chi$--hyperbolic measure on $M$.
\item $\pi_r$ is finite-to-one on $\Sigma_r^\#$, i.e. $\operatorname{Card}(\{z\in \Sigma_r^\#:\pi_r(z)=x\})<\infty$, for all $x\in \pi_r[\Sigma_r^\#]$.
\end{enumerate}
\end{maintheorem}

\noindent
A more precise version of the Main Theorem is stated in Section~\ref{s.detailed}, see
Theorem~\ref{t.main}.
\smallskip

A topological Markov flow is the unit speed vertical flow on a suspension space whose
basis is a topological Markov shift and whose roof function is continuous, everywhere positive and uniformly bounded.
We can endow $(\Sigma_r,\sigma_r)$
with a natural metric, called the {\em Bowen-Walters metric}, that makes $\sigma_r$ a continuous flow.
It is with respect to this metric that $\pi_r$ is H\"older continuous.
The set $\Sigma_r^\#$ is the {\em regular}  set of $(\Sigma_r,\sigma_r)$, consisting of all elements of
$\Sigma_r$ for which the symbolic coordinate has a symbol repeating infinitely often in the future
and a symbol repeating infinitely often in the past. See Section \ref{Section-Preliminaries} for the definitions.

The Main Theorem provides a {\em single} symbolic extension that codes all
$\chi$--hyperbolic measures at the same time, and that is finite-to-one almost everywhere.
This improves on the seminal result by
Lima \& Sarig \cite{Lima-Sarig},
whose codings depend on the choice of a measure (or a countable class of measures).
We will mention later the importance of this novelty.

In applications, it is useful to work with {\em irreducible} Markov shifts
since, among other properties, they are topologically transitive and they 
carry at most one measure of maximal entropy
(see Section~\ref{subsection-symbolic}).
This is related to the notion of homoclinically related measures and of
\emph{homoclinic classes of measures}, defined in Section~\ref{sec.homoclinic}.
In this context, we prove the following theorem.

\begin{theorem}\label{thm.homoclinic}
In the setting of the Main Theorem,
let $\mu$ be a hyperbolic ergodic measure.
Then $\Sigma_{r}$ contains an irreducible component
${\Sigma'_{r}}$ which lifts any $\chi$--hyperbolic ergodic measure $\nu$
homoclinically related to $\mu$.
\end{theorem}

This implies the following local uniqueness result for measures of maximal entropy.

\begin{corollary}\label{cor.local-uniq}
In the setting of the Main Theorem,
let $\mu$ be a hyperbolic ergodic measure.
Then there is at most one measure $\nu$ which is homoclinically related to $\mu$ and 
maximizes the entropy,  i.e. satisfies
$h(\vf,\nu)=\sup\{h(\vf,\mu):\mu \text{ is homoclinically related to $\mu$}\}$.
\end{corollary}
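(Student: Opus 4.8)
The plan is to deduce the corollary from Theorem~\ref{thm.homoclinic}, the fact that the coding preserves metric entropy, and the uniqueness of the measure of maximal entropy of an irreducible topological Markov flow. Two preliminary remarks: first, since $\pi_r$ is finite-to-one on $\Sigma_r^\#$ and satisfies $\pi_r\circ\sigma_r^t=\vf^t\circ\pi_r$, one has $h(\vf,(\pi_r)_\ast\widehat m)=h(\sigma_r,\widehat m)$ for every $\sigma_r$-invariant probability measure $\widehat m$ which gives full measure to $\Sigma_r^\#$; second, every $\sigma_r$-invariant probability measure on an irreducible topological Markov flow does give full measure to its regular set, because for its base projection all but countably many $1$-cylinders are null and, by Poincar\'e recurrence, a.e.\ point returns to the $1$-cylinder containing it infinitely often in both time directions.

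Set $h^\ast:=\sup\{h(\vf,\nu'):\nu' \text{ is homoclinically related to }\mu\}$, and suppose $\nu_1,\nu_2$ are homoclinically related to $\mu$ with $h(\vf,\nu_1)=h(\vf,\nu_2)=h^\ast$; the goal is to show $\nu_1=\nu_2$. By Theorem~\ref{thm.homoclinic} there is a single irreducible component $\Sigma'_r$ of $\Sigma_r$ lifting both, so there are $\sigma_r$-invariant probability measures $\widehat\nu_1,\widehat\nu_2$ carried by $\Sigma'_r$ with $(\pi_r)_\ast\widehat\nu_i=\nu_i$; by the remarks above each $\widehat\nu_i$ gives full measure to $\Sigma_r^\#$ and $h(\sigma_r,\widehat\nu_i)=h^\ast$.

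Next I would identify $h^\ast$ with the topological entropy of the topological Markov flow $\Sigma'_r$. The inequality ``$\le$'' is immediate from the previous paragraph. For ``$\ge$'', let $\widehat m$ be any $\sigma_r$-invariant probability measure on $\Sigma'_r$ and $m:=(\pi_r)_\ast\widehat m$; then $h(\sigma_r,\widehat m)=h(\vf,m)$, and by the properties of the coding established in the proof of the Main Theorem $m$ is carried by $\pi_r[\Sigma_r^\#]$, hence $\chi$-hyperbolic, as are all of its ergodic components; by the description of $\Sigma'_r$ obtained in the proof of Theorem~\ref{thm.homoclinic} each such component is homoclinically related to $\mu$, so has entropy $\le h^\ast$, and by affinity of the metric entropy $h(\vf,m)\le h^\ast$. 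Hence $\widehat\nu_1$ and $\widehat\nu_2$ are both measures of maximal entropy of the irreducible topological Markov flow $\Sigma'_r$, so they coincide, and $\nu_1=(\pi_r)_\ast\widehat\nu_1=(\pi_r)_\ast\widehat\nu_2=\nu_2$.

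I expect the genuinely substantial part of the argument to sit upstream, in Theorem~\ref{thm.homoclinic} and the symbolic preliminaries, rather than in this deduction. Within the deduction, the one delicate point is the converse inclusion invoked above: that every $\chi$-hyperbolic ergodic measure lifted by the component $\Sigma'_r$ is homoclinically related to $\mu$, which I would read off from the homoclinic-class framework of Section~\ref{sec.homoclinic}. The remaining ingredient to be made precise is the uniqueness of the measure of maximal entropy of an irreducible topological Markov flow; this should follow, as in the discrete case recalled in Section~\ref{subsection-symbolic}, from the Abramov formula together with uniqueness of the equilibrium state of the H\"older potential $-s^\ast r$ over the base irreducible countable Markov shift, where $s^\ast$ is the topological entropy of the flow (when $s^\ast=0$ the flow is a single periodic orbit and the claim is trivial).
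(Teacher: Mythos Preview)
Your proposal is correct and follows essentially the same route as the paper: lift $\nu_1,\nu_2$ to a common irreducible component $\Sigma'_r$ via Theorem~\ref{thm.homoclinic}, use entropy preservation (finite-to-one factor) together with the fact that every ergodic measure on $\Sigma'_r$ projects to a measure homoclinically related to $\mu$ (this is exactly Lemma~\ref{C6} in the paper) to see that the lifts are measures of maximal entropy on $\Sigma'_r$, and conclude by uniqueness of the MME on an irreducible topological Markov flow with H\"older roof. The paper's proof is more terse but structurally identical; in particular it makes explicit one step you leave implicit, namely the choice of $\chi>0$ small enough that both $\nu_1$ and $\nu_2$ are $\chi$--hyperbolic before invoking the coding and Theorem~\ref{thm.homoclinic}. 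Your aside that ``when $s^\ast=0$ the flow is a single periodic orbit'' is not obviously true for countable-alphabet irreducible shifts and is not how the paper handles the uniqueness step; the paper simply cites \cite[Proof of Theorem 6.2]{Lima-Sarig} without a case distinction.
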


Results about uniqueness of the measure of maximal entropy for flows have been obtained previously under various settings, see for instance~\cite{Bowen-mme-flow,Knieper-Rank-One-Entropy,climenhaga-thompson,BCFT,Gelfert-Ruggiero,CKP-20,CKW, pacifico-yang-yang}.

The field of symbolic dynamics has been extremely successful in analyzing systems displaying hyperbolic
behavior. Its modern history includes (but is not restricted to) the construction of Markov partitions in various 
uniformly and non-uniformly hyperbolic settings:
\begin{enumerate}[$\circ$]
\item Adler \& Weiss for two dimensional hyperbolic toral automorphisms \cite{Adler-Weiss-PNAS}.
\item Sina{\u\i} for Anosov diffeomorphisms \cite{Sinai-Construction-of-MP}.
\item Ratner for Anosov flows \cite{Ratner-MP-three-dimensions,Ratner-MP-n-dimensions}.
\item Bowen for Axiom A diffeomorphisms \cite{Bowen-MP-Axiom-A,Bowen-LNM}
and Axiom A flows without fixed points \cite{Bowen-Symbolic-Flows}.
\item Katok for sets approximating hyperbolic measures of diffeomorphisms \cite{KatokIHES}.
\item Hofbauer \cite{Hofbauer-PMM} and Buzzi \cite{Buzzi-IJM,Buzzi-Invent} for piecewise maps on the interval and beyond.
\item Sarig for surface diffeomorphisms \cite{Sarig-JAMS}.
\item Lima \& Matheus for two dimensional non-uniformly hyperbolic billiards \cite{Lima-Matheus}.
\item Ben Ovadia for diffeomorphisms in any dimension \cite{Ben-Ovadia-high-dimension}.
\item Lima \& Sarig for three dimensional flows without fixed points \cite{Lima-Sarig}.
\item Lima for one-dimensional maps \cite{Lima-AIHP}.
\item Araujo, Lima, Poletti for non-invertible maps with singularities in any dimension \cite{ALP}.
\end{enumerate}
In the first four settings above, that dealt with uniformly hyperbolic systems, 
the coding is surjective and one-to-one in a large (Baire generic) set. 
Katok was the first to treat non-uniformly hyperbolic systems \cite{KatokIHES}.
When applied to surface diffeomorphisms, it implies the existence of horseshoes of large
(but not necessarily full) topological entropy.
Sarig was the first to construct non-uniformly hyperbolic horseshoes of full topological entropy \cite{Sarig-JAMS}.
His work improved Katok's to a great extent, proving that for each $\chi>0$ there is
a symbolic coding with good properties, among them the finiteness-to-one property
in the regular set $\Sigma^\#$ (see Section ~\ref{subsection-symbolic}
for the definition of $\Sigma^\#$).
It codes all $\chi$--hyperbolic measures {\em simultaneously},
and it implies many dynamical consequences such as estimates on the number of closed orbits \cite{Sarig-JAMS},
an at most countable set of ergodic measures of maximal entropy \cite{Sarig-JAMS},
ergodic properties of equilibrium measures \cite{Sarig-JMD},
and the almost Borel structure of surface diffeomorphisms \cite{Boyle-Buzzi}.
In recent years, more advances are being obtained, such as the
coding of homoclinic classes of measures by irreducible Markov shifts and
finiteness/uniqueness of measures of maximal entropy \cite{BCS-MME}, and 
continuity properties of Lyapunov exponents \cite{BCS-continuity}.

The work of Lima \& Sarig was the first to construct, 
for three dimensional flows, horseshoes of full topological entropy \cite{Lima-Sarig}.
It is not as strong as Sarig's, since it only codes one $\chi$--hyperbolic measure at a time
(actually, by an easy adaptation in the proof, it codes countably many such measures).
It implies some dynamical consequences, such as 
estimates on the number of closed orbits \cite{Lima-Sarig}, the countability on the number of measures of
maximal entropy \cite{Lima-Sarig}, and ergodic properties of equilibrium measures \cite{Ledrappier-Lima-Sarig}.
Unfortunately, their techniques do not seem to extend to, say, the coding of all $\chi$-hyperbolic measures
as in the case of diffeomorphisms.

Our Main Theorem identifies a subset
of points in $M$ with non-uniform hyperbolicity at least $\chi$ possessing local product
structure, and constructs a finite-to-one extension of this set by a locally compact topological Markov flow.
This set carries all $\chi$--hyperbolic measures. As an application, we code
homoclinic classes of measures by irreducible Markov flows.

\subsection{Method of proof}\label{ss-method-proof}

We build on the groundbreaking works of Sarig \cite{Sarig-JAMS} and Lima \& Sarig \cite{Lima-Sarig}. 
Lima \& Sarig study a flow by considering the Poincar\'e return map to a section. 
This yields a surface map to which they apply a version of Sarig's result.
Of course, the Poincar\'e map has singularities which they control at the price of choosing
the section in a way that 
almost all orbits slowly approach the boundary of the section. Here is where their construction 
becomes specific to a single measure.
It is still unknown whether there is a global Poincar\'e section such that this latter property holds
for every $\chi$--hyperbolic measure. We call the presence of boundary
the {\em boundary effect}. 

Additionally to the work of Sarig \cite{Sarig-JAMS} and Lima \& Sarig \cite{Lima-Sarig},
we are also inspired by the work of Bowen \cite{Bowen-Symbolic-Flows}. Bowen's idea
to construct Markov partitions for flows is to replace the Poincar\'e map by good returns
(suitable holonomy maps), which are smooth by construction: the artificial singularities of 
\cite{Lima-Sarig} have disappeared. In this way, we proceed as follows:
\begin{enumerate}[(1)]
\item Construct two global Poincar\'e sections $\Lambda,\widehat{\Lambda}$ such that
$\Lambda\subset\widehat{\Lambda}$. We use $\Lambda$ as the reference section
for our construction, and $\widehat{\Lambda}$ as a security section.
\item Let $f:\Lambda\to\Lambda$ be the Poincar\'e return map of $\Lambda$
(note: $f$ is not the Poincar\'e return map of $\widehat{\Lambda}$).
If $\mu$ is $\chi$--hyperbolic and $\nu$ is the measure induced on $\Lambda$,
then $\nu$--almost every $x\in\Lambda$ has a Pesin chart
$\Psi_x:[-Q(x),Q(x)]^2\to \widehat\Lambda$ whose size satisfies $\lim\tfrac{1}{n}\log Q(f^n(x))=0$.
Note that the center of the chart is in $\Lambda$, while the image is on the security section
$\widehat\Lambda$. Local changes of coordinates by linear maps of norm $Q^{-1}$
allow to conjugate $f$ to a uniformly hyperbolic map.
\item Introduce $\ve$--double charts $\Psi_x^{p^s,p^u}$, which are some version of Pesin charts
that controls separately the local stable and local unstable hyperbolicity at $x$ (the parameters
$p^s/p^u$ can be seen as choices of sizes of the stable/unstable manifolds). Define the
transition between $\ve$--double charts so that the parameters $p^s,p^u$ are {\em almost} maximal.
\item Construct a countable collection $\mathfs A$ of $\ve$--double charts that are dense
in the space of all $\ve$--double charts. The notion of denseness is defined in terms of
finitely many parameters of $x$. Using pseudo-orbits, shadowing
and the graph transform method, the collection $\mathfs A$ defines
a Markov cover $\mathfs Z$. Unfortunately, $\mathfs Z$ defines a symbolic coding that
is usually infinite-to-one. Fortunately, $\mathfs Z$ is locally finite.
\item $\mathfs Z$ satisfies a Markov property: for every $x\in\bigcup_{Z\in \mathfs Z}Z$
there is $k>0$ such that $f^k(x)$ satisfies a Markov property in the stable direction and
$\ell>0$ such that $f^{-\ell}(x)$ satisfies a Markov property in the unstable direction. The values
of $k,\ell$ are uniformly bounded.
\item The local finiteness of $\mathfs Z$ and the uniform bounds on $k,\ell$ allow 
to apply a refinement method to obtain a countable Markov partition, which 
defines a topological Markov flow $(\Sigma_r,\sigma_r)$ and a map
$\pi_r:\Sigma_r\to M$ satisfying the Main Theorem.
\end{enumerate}
In analogy with Bowen \cite{Bowen-Symbolic-Flows}, in our case a good return of the center of
a chart is a return to $\Lambda$. The ideas of \cite{Bowen-Symbolic-Flows} are also used in steps
(5) and (6).

We use the same method of \cite{Sarig-JAMS} to obtain step (2). Steps (3) and (4) use ideas
of \cite{Sarig-JAMS,Lima-Sarig}, but they require {\em novel}
ideas. The main difficulty is the following: there is no canonical way to parse a flow orbit into good returns,
hence a single orbit might be cut into different ways. We call this the {\em parsing problem}.
It relates to the {\em inverse problem}, whose goal is to prove that the parameters of the $\ve$--double
charts coding an orbit are defined ``up to bounded error''.
Firstly, since the flow transition times of good returns might belong to a continuum (hence uncountable),
our definition of a transition between $\ve$--double charts requires inequalities between the parameters $p^s,p^u$,
see relations (\ref{gpo2-a}) and (\ref{gpo2-b}).
This contrasts with all previous recent literature, whose definitions of transition require equalities. 
Secondly, we compare the parameters of an orbit directly with the parameters
of the $\ve$--double charts coding it. 
To do that, we introduce analogues of the parameters $p^s,p^u$ for points of $M$. Indeed, we introduce
continuous and discrete versions of such parameters, see Sections \ref{section-parameters-qs-qu}
and \ref{section-Z-indexed}. The continuous version is intrinsic and only depends on the flow,
while the discrete depends on the parsing. The 
discrete one can more easily be compared with the parameters of the $\ve$--double charts.
These new parameters, already used in \cite{Lima-Matheus} in a non-essential way, 
are essential to our us.

The definition of transition between $\ve$--double charts introduces new difficulties. Since
equalities between the parameters no longer hold, a single orbit can be shadowed by two different sequences of 
$\ve$--double charts whose transition times accumulate. To investigate this shear,
we first show that parameters at {\em hyperbolic times} are defined ``up to bounded error'',
and then prove that between two hyperbolic times the shear is uniformly bounded, regardless
the number of hits to $\widehat\Lambda$ and $\Lambda$, see Section \ref{section-control-ps-pu}.

Another difficulty we encounter related to Step (4) above is the {\em coarse graining}, which
consists on selecting a countable collection $\mathfs A$ of $\ve$--double charts that are dense
in the space of all $\ve$--double charts and such that the pseudo-orbits they generate shadow all
relevant orbits. This also relates to the definition of transition between $\ve$--double charts,
that has to be loose enough to code all relevant orbits and tight enough to impose that
charts parameters are defined ``up to bounded error''. To guarantee that the definition is loose
enough, the countable collection we consider is much larger than those constructed in the 
recent literature. Yet, proving that this family is sufficient also requires an analysis at hyperbolic times,
where parameters are essentially uniquely defined.
We can then define
parameters between successive hyperbolic times. 
See Section \ref{section-coarse-graining}.
 
\medbreak
{\bf Applications.} In recent years, the ergodic theory of non-uniformly hyperbolic 
diffeomorphisms has seen great advances, notably with the coding of homoclinic classes of measures
by irreducible Markov shifts and finiteness/uniqueness of measures of maximal entropy \cite{BCS-MME},
and continuity properties of Lyapunov exponent \cite{BCS-continuity}.
Theorem \ref{thm.homoclinic} obtains the corresponding result for flows of the 
coding of homoclinic classes. The remaining questions for flows are not known, and 
are expected to be much harder to obtain than those for diffeomorphisms.

Symbolic codings have been used to obtain exponential decay of correlations
for measures of maximal entropy of smooth surface diffeomorphisms. 
The groundbreaking work of Dolgopyat \cite{Dolgopyat-Annals} shows that, for uniformly hyperbolic
flows, the study of decay of correlations is much more difficult than for diffeomorphisms. 
We believe the corresponding question for non-uniformly hyperbolic flows
is also much harder than for diffeomorphisms.  

In contrast to Lima \& Sarig \cite{Lima-Sarig}, our coding considers the flow displacement between 
returns to the section. We expect to use this more intrinsic construction to code flows {\em with} fixed points. 

We also expect to extend the techniques of this paper to flows in higher dimensions. 
As part of this extension, the tools obtained by Ben Ovadia \cite{Ben-Ovadia-high-dimension} 
when dealing with diffemorphisms in any dimension
will play a key role.

\subsection{Preliminaries}\label{Section-Preliminaries}

\subsubsection{Symbolic dynamics}\label{subsection-symbolic}
Let $\mathfs G=(V,E)$ be an oriented graph, where $V,E$ are the vertex and edge sets.
We denote edges by $v\to w$, and assume that $V$ is countable.

\medskip
\noindent
{\sc Topological Markov shift (TMS):} It is a pair $(\Sigma,\sigma)$
where
$$
\Sigma:=\{\text{$\Z$--indexed paths on $\mathfs G$}\}=
\left\{\un{v}=\{v_n\}_{n\in\Z}\in V^{\Z}:v_n\to v_{n+1}, \forall n\in\Z\right\}
$$
is the symbolic space and $\sigma:\Sigma\to\Sigma$, $[\sigma(\un v)]_n=v_{n+1}$, is the {\em left shift}. 
We endow $\Sigma$ with the distance $d(\un v,\un w):={\rm exp}[-\inf\{|n|\in\Z:v_n\neq w_n\}]$.
The {\em regular set} of $\Sigma$ is
$$
\Sigma^\#:=\left\{\un v\in\Sigma:\exists v,w\in V\text{ s.t. }\begin{array}{l}v_n=v\text{ for infinitely many }n>0\\
v_n=w\text{ for infinitely many }n<0
\end{array}\right\}.
$$

\medskip
We only consider TMS that are \emph{locally compact}, i.e.
for all $v\in V$ the number of ingoing edges $u\to v$ and outgoing edges $v\to w$ is finite.

\medskip
Given $(\Sigma,\sigma)$ a TMS, let $r:\Sigma\to(0,+\infty)$ be a continuous function.
For $n\geq 0$, let
$r_n=r+r\circ\sigma+\cdots+r\circ \sigma^{n-1}$ be $n$--th {\em Birkhoff sum} of $r$,
and extend this definition for $n<0$
in the unique way such that the {\em cocycle identity} holds: $r_{m+n}=r_m+r_n\circ\sigma^m$, $\forall m,n\in\Z$.

\medskip
\noindent
{\sc Topological Markov flow (TMF):} The TMF defined
by $(\Sigma,\sigma)$ and the \emph{roof function} $r$ is the pair $(\Sigma_r,\sigma_r)$ where
$\Sigma_r:=\{(\un v,t):\un v\in\Sigma, 0\leq t<r(\un v)\}$
and $\sigma_r:\Sigma_r\to\Sigma_r$ is the flow on $\Sigma_r$ given by
$\sigma_r^t(\un v,t')=(\sigma^n(\un v),t'+t-r_n(\un v))$, where
$n$ is the unique integer such that $r_n(\un v)\leq t'+t<r_{n+1}(\un v)$.
We endow $\Sigma_r$ with a natural metric $d_r(\cdot,\cdot)$,
called the {\em Bowen-Walters metric}, such that $\sigma_r$ is a continuous flow \cite{Bowen-Walters-Metric}.
The {\em regular set} of $(\Sigma_r,\sigma_r)$ is $\Sigma_r^\#=\{(\un v,t)\in\Sigma_r:\un v\in \Sigma^\#\}$.

\medskip
In other words, $\sigma_r$ is the unit speed vertical flow on $\Sigma_r$ with the identification
$(\un v,r(\un v))\sim (\sigma(\un v),0)$. 
The roof functions we will consider will be H\"older continuous.
In this case, there exist $\kappa,C>0$ such that $d_r(\sigma_r^t(z),\sigma_r^{t}(z'))\leq C d_r(z,z')^\kappa$
for all $|t|\leq 1$ and $z,z'\in\Sigma_r$, see \cite[Lemma 5.8]{Lima-Sarig}.

\medskip
\noindent
{\sc Irreducible component:}
If $\Sigma$ is a countable Markov shift defined by an oriented graph
$\mathfs{G}=(V,E)$, its \emph{irreducible components} are the subshifts $\Sigma'\subset \Sigma$ over
maximal subsets $V'\subset V$ satisfying the following condition:
$$\forall v,w\in V',\;\exists \un v\in \Sigma \text{ and } n\geq 1\text{ such that } v_0=v \text{ and } v_n=w.$$
An irreducible component $\Sigma'_r$ of a suspended shift $\Sigma_r$ is a set of
elements $(\un v,t)\in \Sigma_r$ with $\un v$ in an irreducible component $\Sigma'$ of $\Sigma$.

\subsubsection{Notations.}
For $a,b,\varepsilon>0$, we write $a=e^{\pm\ve}b$ when $e^{-\ve}\leq \frac{a}{b}\leq e^\ve$.
We also write $a\wedge b:=\min (a,b)$.
We write $\bigsqcup A_n$ to represent the {\em disjoint union} of sets $A_n$.

The \emph{Frobenius norm}
of a $2\times 2$ matrix is
$\left\|{   \left[\begin{smallmatrix}
a & b\\ c & d\end{smallmatrix}\right]}
\right\|_{\rm Frob}:=\sqrt{a^2+b^2+c^2+d^2}$. It is equivalent to the sup norm $\|\cdot\|$,
since $\|\cdot\|\leq \|\cdot\|_{\rm Frob}\leq \sqrt{2}\|\cdot\|$. The {\em co-norm}
of an invertible matrix A is denoted by $m(A)=\|A^{-1}\|^{-1}$.

\subsubsection{Metrics}\label{s.metric}

If $M$ is a smooth Riemannian manifold, we denote by
$d_M$ the distance induced by the Riemannian metric.
The Riemannian metric induces a Riemannian metric $\Sas(\cdot,\cdot)$ on $TM$,
called the {\em Sasaki metric}, see e.g. \cite[\S2]{Burns-Masur-Wilkinson}.
For nearby small vectors, the Sasaki metric is
almost a product metric in the following sense. Given a geodesic $\gamma$ in $M$
joining $y$ to $x$, let $P_\gamma:T_yM\to T_xM$
be the parallel transport along $\gamma$. If $v\in T_xM$, $w\in T_yM$ then
$\Sas(v,w)\asymp d(x,y)+\|v-P_\gamma w\|$ as $\Sas(v,w)\to 0$, see e.g.
\cite[Appendix A]{Burns-Masur-Wilkinson}. The rate of convergence depends on the
curvature tensor of the metric on $M$.

Given an open set $U\subset \R^n$ and $h:U\to \R^m$,
let $\|h\|_{C^0}:=\sup_{x\in U}\|h(x)\|$ denote the $C^0$ norm of $h$. For $0<\beta\leq 1$,
let $\Hol{\beta}(h):=\sup\frac{\|h(x)-h(y)\|}{\|x-y\|^\beta}$ 
where the supremum ranges over distinct elements $x,y\in U$. Note that $\Hol{1}(h)$ is a
Lipschitz constant of $h$, that we will also denote by $\Lip(h)$.
If $h$ is differentiable, let
$\|h\|_{C^1}:=\|h\|_{C^0}+\|dh\|_{C^0}$ denote its $C^1$ norm, and
$\|h\|_{C^{1+\beta}}:=\|h\|_{C^1}+\Hol{\beta}(dh)$ its $C^{1+\beta}$ norm.

For any $x,y$ close to some point $z$ in a Riemanniann manifold $M$,
the parallel transport along the shortest geodesic between $x$ and $y$
induces a linear map $P_{x,y}:T_xM\to T_yM$.
To  any linear map $A\colon T_xM\to T_yM$, one associates a map
$\widetilde A :=P_{y,z} \circ A\circ P_{z,x}$.
By definition, $\widetilde{A}$ depends on $z$ but different basepoints $z$ define
a map that differs from $\widetilde{A}$ by pre and post composition with isometries.
In particular, $\|\widetilde{A}\|$ does not depend on the choice of $z$.
With this notation, a map
$f:U\subset M\to M$ is $C^{1+\beta}$ if it is $C^1$ and $\exists C>0$ such that
$\|\widetilde{df_x}-\widetilde{df_y}\|\leq C d(x,y)^\beta$ for all nearby $x,y\in U$.
In this case, define $\Hol{\beta}(df):=\sup\frac{\|\widetilde{df_x}-\widetilde{df_y}\|}{d(x,y)^\beta}$ 
where the supremum ranges over distinct nearby elements $x,y\in U$.

\subsection{Standing assumptions}\label{ss.standing}
Let $M$ be a three dimensional closed smooth Riemannian manifold, and let
$X:M\to TM$ be 
a $C^{1+\beta}$ vector field such that $X(x)\neq 0$, $\forall x\in M$, and let $\varphi=(\varphi^t)_{t\in \R}$ 
be the flow generated by $X$.
We will denote the value of the vector field $X$ at $x$ by either $X_x$ or $X(x)$.
Given a set $Y\subset M$ and an interval $I\subset\R$, write $\vf^I(Y):=\bigcup_{t\in I}\vf^t(Y)$.

We assume from now on
that $\|\nabla X\|_0\leq1$ (up to multiplying the Riemannian metric by a small constant). This assumption
avoids the introduction of some multiplicative constants.
For instance, since an application of the Gr\"onwall inequality implies that $\|d\vf^t\|\leq e^{\|\nabla X\|_0 |t|}$ for all $t\in\R$
(see e.g. \cite{Kunzinger-flow}), we will simply write that $\|d\vf^t\|\leq e^{|t|}$, $\forall t\in\R$.
Another consequence is that every Lyapunov exponent of $\vf$ has absolute value
at most $1$, hence we can take $\chi\in (0,1)$ in the definition of $\chi$-hyperbolicity.

\section{Poincar\'e sections}\label{Section-sections}

In this section, we:
\begin{enumerate}[(1)]
\item Construct two sections $\Lambda,\widehat{\Lambda}$ with good geometrical properties
such that $\Lambda\subset\widehat{\Lambda}$, $d(\Lambda,\partial\widehat{\Lambda})>0$,
and the orbit under $\vf$ of every point of $M$ hits $\Lambda$ after some time $\rho\ll 1$.
The section $\Lambda$ induces a Poincar\'e return map $f$ and a return time $r$.
We call $\Lambda$ the {\em reference section} and $\widehat{\Lambda}$ the {\em security section}.
\item Introduce the {\em induced linear Poincar\'e flow} $\Phi$, which is a flow that describes the local behavior
of $\vf$ in the complimentary direction to $X$.
\item Introduce the {\em holonomy} maps $g^+_x$, $g^-_x$ for each $x\in \Lambda$, which are local and 
continuous versions of Poincar\'e return
maps. It is for these maps that we will construct suitable systems of coordinates in Section \ref{s.nhu}.
\end{enumerate}

\subsection{Transverse discs and flow boxes}\label{section-discs}

Let $\rho>0$.

\medskip
\noindent
{\sc $\rho$--transverse disc:} An open disc $D\subset M$ is {\em $\rho$--transverse} if:
\begin{enumerate}[$\circ$]
\item $D$ is compactly contained in a $C^\infty$ disc of $M$.
\item ${\rm diam}(D)<4\rho$.
\item For every $x\in D$, $\angle(X(x),T_xD^\perp)<\rho$.
\end{enumerate}

\medskip
In other words, a $\rho$--transverse disc is a small disc that is almost orthogonal to $X$.
It is easy to build $\rho$--transverse discs. For instance, we know by the tubular neighborhood theorem
that $\vf$ can be conjugated in local charts to the flow $(x,t_0)\in\R^2\times\R\mapsto(x,t_0+t)$.
If $\rho'$ is small enough, then the image of $B(0,\rho')\times\{t_0\}$ under the local chart
is a $\rho$--transverse disc. 

\medskip
\noindent
{\sc Flow box:} Every $\rho$--transverse disc $D$ defines a {\em flow box} $\vf^{[-4\rho,4\rho]}D$.

\medskip
The assumption that $X$ does not vanish implies
that for all $\rho>0$ small enough,
the map $\Gamma_D\colon (y,t)\in D\times [-4\rho,4\rho]\mapsto \vf^t(y)$
is a diffeomorphism onto the flow box $\vf^{[-4\rho,4\rho]}D$. We denote its inverse by
$x\in \vf^{[-4\rho,4\rho]}D\mapsto (\mathfrak q_D(x),\mathfrak t_D(x))$,
where $\mathfrak q_D:\vf^{[-4\rho,4\rho]}D\to D$ and
$\mathfrak t_D:\vf^{[-4\rho,4\rho]}D\to [-4\rho,4\rho]$. 

\begin{lemma}\label{Lemma-regularity-q-t} 
There is a $\rho_0=\rho_0(M,X)>0$
such that for every $\rho_0$--transverse discs $D,D'$:
\begin{enumerate}[{\rm (1)}]
\item The maps $\mathfrak q_D,\mathfrak t_D$ are $C^{1+\beta}$.
\item The map $\mathfrak q_D$ has a Lipchitz constant smaller than $2$.
\item If $D'$ intersects the flow box $\vf^{[-4\rho_0,4\rho_0]}D$,
then the restriction to $D'$ of the map $\mathfrak t_D$ has a Lipschitz constant smaller than $1$.
\end{enumerate}
\end{lemma}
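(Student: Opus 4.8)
The plan is to reduce everything to the local flow-box normal form and to a uniform transversality estimate. First I would fix notation: since $M$ is compact and $X$ is $C^{1+\beta}$, non-vanishing, the tubular neighborhood theorem gives, at each $x\in M$, a $C^{1+\beta}$ chart flattening $\vf$ to the translation flow $(y,s)\mapsto(y,s+t)$ on an open box of $\R^2\times\R$; by compactness there is a uniform radius $\delta_0>0$ and uniform $C^{1+\beta}$-bounds on these charts and their inverses. Choosing $\rho_0$ small compared to $\delta_0$, every $\rho_0$-transverse disc $D$ lies inside one such chart, and the map $\Gamma_D(y,t)=\vf^t(y)$ is a composition of the chart, the flow, and the inclusion of $D$, all $C^{1+\beta}$; its differential at $(y,0)$ sends $\partial_t$ to $X(y)$ and is injective on $T_yD$ because $\angle(X(y),T_yD^\perp)<\rho_0<\tfrac{\pi}{2}$. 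Since this injectivity (equivalently, a lower bound on the determinant of $d\Gamma_D$) is uniform in $\rho_0$, the inverse function theorem applies with uniform constants, giving that $\mathfrak q_D,\mathfrak t_D$ are $C^{1+\beta}$ on the flow box. That proves (1), modulo tracking that the $C^{1+\beta}$ norms are controlled by $M,X$ alone and not by the particular disc.

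For (2), the key point is that $\mathfrak q_D$ is the composition of the flow-box retraction onto $D$ along orbits. Writing $x=\vf^t(y)$ with $|t|\le 4\rho_0$ and $y=\mathfrak q_D(x)$, the derivative $d\mathfrak q_D$ at $x$ is (up to the uniform chart distortion, which is $1+O(\rho_0)$) the projection of $T_xM$ onto $T_yD$ along the line $\R X(x)$, followed by $d\vf^{-t}$. Since $\|d\vf^{-t}\|\le e^{|t|}\le e^{4\rho_0}$ and the oblique projection onto an almost-orthogonal plane along an almost-orthogonal line has norm $1+O(\rho_0)$, we get $\|d\mathfrak q_D\|\le 1+O(\rho_0)<2$ once $\rho_0$ is small; integrating along geodesics in the (convex, small) flow box gives the Lipschitz bound $2$. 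Strictly speaking the flow box need not be geodesically convex, but for $\rho_0$ small the metric distortion is negligible and one may work in the chart, where the bound $1+O(\rho_0)$ on the operator norm of $d\mathfrak q_D$ converts directly into a Lipschitz constant $<2$.

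For (3) I would argue similarly but more carefully, since now we restrict $\mathfrak t_D$ to a \emph{different} disc $D'$ meeting the flow box of $D$. The gradient of $\mathfrak t_D$ at $x=\vf^t(y)$ is the linear functional that reads off the ``time coordinate'' in the flow box of $D$; in the flattening chart for $D$ it is essentially the covector dual to $\partial_t$, of norm $1+O(\rho_0)$, so in the ambient metric $\|\nabla\mathfrak t_D(x)\|\le 1+O(\rho_0)$. Restricting to $D'$ replaces this by its composition with the inclusion $T_zD'\hookrightarrow T_zM$, which only decreases the norm; the extra factor to watch is the angle between $T_zD'$ and $T_zD$, but this is $O(\rho_0)$ since both are almost orthogonal to the \emph{same} direction $X$ (they are both within $\rho_0$ of $X^\perp$ at nearby points, and $X$ varies slowly because $\|\nabla X\|_0\le 1$ and the box has diameter $O(\rho_0)$). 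Hence the restricted gradient has norm $1+O(\rho_0)<1+$something, and to actually get the clean bound $1$ one uses that the projection of $\nabla\mathfrak t_D$ onto $T_zD'$ loses a factor $\cos(\angle)$; a short computation shows the product of the ``$1+O(\rho_0)$'' amplification and the ``$\cos$'' contraction is $<1$ for $\rho_0$ small. Integrating over the disc $D'$ (again working in charts, where small discs behave like convex Euclidean sets up to negligible distortion) yields Lipschitz constant $<1$.

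The main obstacle is purely bookkeeping: making the $O(\rho_0)$ error terms in (2) and (3) genuinely \emph{uniform} over all $\rho_0$-transverse discs and all basepoints, i.e. checking that the inverse function theorem constants, the chart distortions, and the angle estimates can all be bounded in terms of $(M,X)$ before choosing $\rho_0$. Once that uniformity is in place, the three claims are immediate consequences of (i) $\|d\vf^t\|\le e^{|t|}$, (ii) $|t|\le 4\rho_0$, and (iii) the transversality angle being $<\rho_0$; there is no conceptual difficulty, only the need to fix the order of quantifiers correctly (first extract uniform bounds, then shrink $\rho_0$).
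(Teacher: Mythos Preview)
Your arguments for (1) and (2) are essentially correct and close in spirit to the paper's; the paper also passes to charts straightening the flow, and for (2) phrases the estimate as ``holonomy along vertical lines onto a nearly horizontal graph is $(1+e)^2/\cos\gamma$--Lipschitz'', which unpacks to the same computation you outline.

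Your argument for (3), however, has a genuine gap in the trigonometry. First, the ambient gradient satisfies $\langle\nabla\mathfrak t_D,X\rangle=1$ and is perpendicular to the level sets $\vf^c(D)$; since those level sets are $O(\rho_0)$--close to $X^\perp$, the gradient is $O(\rho_0)$--close to the line $\R X$ and hence has norm $\approx 1/\|X\|$, not $1+O(\rho_0)$ (nothing in the hypotheses forces $\|X\|$ to be near $1$). Second, and more seriously, the contraction you gain when restricting to $T_zD'$ is $\sin$ of the angle between $\nabla\mathfrak t_D$ and the unit normal to $D'$, not $\cos$ of the dihedral angle between $T_zD$ and $T_zD'$. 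Both angles are $O(\rho_0)$, but $\cos(O(\rho_0))=1-O(\rho_0^2)$ gives no useful contraction: your ``short computation'' would yield $(1+O(\rho_0))(1-O(\rho_0^2))$, which is \emph{not} $<1$ for small $\rho_0$. The correct observation is that $\nabla\mathfrak t_D$ is \emph{nearly perpendicular} to $D'$ (since $\vf^c(D)$ and $D'$ are both $\rho_0$--close to $X^\perp$), so its tangential component has norm $\|\nabla\mathfrak t_D\|\cdot\sin(O(\rho_0))=O(\rho_0/\|X\|)$, which is indeed $<1$ once $\rho_0$ is small.

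For comparison, the paper avoids the gradient analysis entirely: in a straightening chart it writes $D,D'$ as graphs $t=\varphi_2(x,y)$, $t=\varphi_1(x,y)$ and computes $\mathfrak t_D(z)=\int_{\varphi_1}^{\varphi_2}\|\widehat X(x,y,t)\|^{-1}\,dt$; differentiating gives boundary terms proportional to $D\varphi_i=O(\rho_0)$ plus a bulk term over an interval of length $O(\rho_0)$, so the derivative is $O(\rho_0)$.
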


\begin{proof}
By the implicit function theorem, for any $\rho$--transverse disc $D$
the chart $\Gamma_D\colon (x,t)\in D\times [-4\rho,4\rho]\mapsto \vf^t(x)$
is $C^{1+\beta}$, hence the inverse maps $\mathfrak q_D,\mathfrak t_D$ are also $C^{1+\beta}$.
This proves part (1).

Let us consider the foliation of $\R^3$ whose leaves are the verticals $\{(x,y)\}\times \R$,
and let $\Delta$ be a section whose tangent spaces $T_x\Delta$ define angles with the horizontal planes
smaller than $\gamma>0$.
Then the holonomy along the vertical lines define a projection to $\Delta$ which is $(1/\cos\gamma)$--Lipschitz.

Given $e>0$ arbitrarily small, there exists a covering of
$M$ by finitely many charts $\Theta \colon (-a,a)^3\to M$ which are $(1+e)$-biLipschitz
and such that the lifted vector field $\widehat X:=\Theta^*X$ is tangent to the vertical lines.
Choosing $e$ and then $\rho_0>0$ small enough,
for any $\rho_0$--transverse disc $D$, the set $\varphi_{[-4\rho,4\rho]}(D)$ is contained
in the image of a chart $\Theta$; moreover
$\Theta^{-1}(\Delta)$ is a disc whose tangent spaces define angles with the horizontal planes
smaller than $\gamma>0$.
The projection $\mathfrak q_D$ to $D$ is conjugated by $\Theta$ to the projection by holonomy along
vertical lines to the set $\Theta^{-1}(\Delta)$. Consequently, the map $\mathfrak q_D$ is 
$(1+e)^2/\cos\gamma$--Lipschitz,
which can be chosen arbitrarily close to $1$ if $e,\gamma$, and hence $\rho_0$, are small enough.
This proves part (2).

Now consider two $\rho_0$--transverse discs $D,D'$ such that
$D'$ intersects $\vf^{[-4\rho_0,4\rho_0]}D$.
Since $\rho_0$ is chosen small, both $D,D'$
are contained in the image of a same chart $\Theta$.
As before, let $\widehat X:=\Theta^*X$ be the vector field $X$ lifted in the chart.
The discs $\Theta^{-1}(D'),\Theta^{-1}(D)$ are graphs $\{(x,y,\varphi_i(x,y))\}$
over the horizontal hyperplane of $C^{1+\beta}$ maps $\varphi_1$ and $\varphi_2$ respectively.
For $z=\Theta(x,y,\varphi_1(x,y))\in D'$ which intersects the flow box $\vf^{[-4\rho_0,4\rho_0]}D$,
the projection time to $D$ can be computed in the chart as:
$$\mathfrak t_D(z)=\int_{\varphi_1(x,y)}^{\varphi_2(x,y)} \tfrac 1 {\|\widehat X(x,y,t)\|}dt .$$
The $C^1$--norm of  $\tfrac 1 {\|\widehat X\|}$ is bounded, independently from the charts $\Theta$.
Taking $\rho_0$ small, the derivatives $D\varphi_i$ and the differences
$\varphi_2(x,y)-\varphi_1(x,y)$ are close to $0$.
Hence the derivative of the map $z\mapsto \mathfrak t_D(z)$ for $z\in D'$ is close to $0$,
proving part (3).
\end{proof}

\subsection{Proper sections and Poincar\'e return maps}\label{ss.proper-section}
We begin with some definitions.

\medskip
\noindent
{\sc Proper section:} A {\em proper section of size} $\rho$
is a finite union $\Lambda=\bigcup_{i=1}^{\mathfrak{n}} D_i$ of $\rho$--transverse
discs $D_1,\ldots,D_\mathfrak{n}$ such that:
\begin{enumerate}[(1)]
\item {\sc Cover:} $M=\bigcup_{i=1}^{\mathfrak{n}} \vf^{[0,\rho)}D_i$.
\item {\sc Partial order:} For all $i\neq j$, at least one of the sets
$\overline{D_i}\cap \vf^{[0,4\rho]}\overline{D_j}$
or $\overline{D_j}\cap \vf^{[0,4\rho]}\overline{D_i}$ is empty;
in particular $\overline{D_i}\cap\overline{D_j}=\emptyset$.
\end{enumerate}
Define the {\em return time function} $r_\Lambda:\Lambda\to (0,\rho)$ by $r_\Lambda(x):=\inf\{t>0:\vf^t(x)\in\Lambda\}$.

\medskip
\noindent
{\sc Poincar\'e return map:} The {\em Poincar\'e return map} of a proper section $\Lambda$
is the map $f_\Lambda:\Lambda\to\Lambda$ defined by $f_\Lambda(x):=\vf^{r_\Lambda(x)}(x)$.

\medskip
In the following, we fix $\rho<\min\{0.25,\rho_0\}$ small and
consider two proper sections
$\Lambda,\widehat{\Lambda}$ of size $\rho/2$ such that
$\Lambda\subset\widehat{\Lambda}$ and $d_M(\Lambda,\partial\widehat{\Lambda})>0$.
We let $d=d_{\widehat{\Lambda}}$ be the metric on $\widehat{\Lambda}$ defined
by the induced Riemannian metric on $\widehat{\Lambda}$.
For $x\in\widehat{\Lambda}$ and $r>0$, we write:
\begin{enumerate}[$\circ$]
\item $B(x,r)\subset\widehat{\Lambda}$ for the ball in the distance $d$ with center $x$ and radius $r$;
\item $B_x[r]\subset T_x\widehat{\Lambda}$ for the ball with center $0$ and radius $r$;
\item $R[r]:=[-r,r]^2\subset\R^2$. 
\end{enumerate}
Since the associated flow boxes are $C^{1+\beta}$,
there exists $L>0$ such that for any transverse disc $D_i$ defining the section $\Lambda$,
the maps $\mathfrak q_{D_i},\mathfrak t_{D_i}$ satisfy:
$$\Hol{\beta}(d \mathfrak q_{D_i}) <L \, \text{ and }\, \Hol{\beta}(d \mathfrak t_{D_i}) <L.$$

\subsection{Exponential maps}
Given $x\in\widehat{\Lambda}$, let $\inj(x)$ denote the {injectivity radius} of $\widehat{\Lambda}$ at $x$,
and let $\exp{x}$ be the {\em exponential map} of $\widehat{\Lambda}$ at $x$, wherever it can be defined.
Below we list the properties of $\exp{x}$ that we will use.

\medskip
\noindent
{\sc Regularity of $\exp{x}$:} There is $\mathfrak r\in (0,\rho)$ such that for every $x\in\Lambda$
the following properties hold on the ball $B_x:=B(x,2\mathfrak r)\subset\widehat{\Lambda}$:
\begin{enumerate}[ii\, )]
\item[(Exp1)] If $y\in B_x$ then $\inj(y)\geq 2\mathfrak r$, the map $\exp{y}^{-1}:B_x\to T_y\widehat{\Lambda}$
is a diffeomorphism onto its image, and for all
$v\in T_x\widehat{\Lambda},w\in T_y\widehat{\Lambda}$ with $\|v\|,\|w\|\leq 2\mathfrak r$ it holds
$$\tfrac{1}{2}(d(x,y)+\|v-P_{y,x}w\|)\leq \Sas(v,w)\leq 2(d(x,y)+\|v-P_{y,x} w\|),$$ 
where $P_{y,x}$
is the parallel transport along the geodesic joining $y$ to $x$.
\item[(Exp2)] If $y_1,y_2\in B_x$ then $d(\exp{y_1}v_1,\exp{y_2}v_2)\leq 2\Sas(v_1,v_2)$ for 
$\|v_1\|$, $\|v_2\|\leq 2\mathfrak r$, and
$\Sas(\exp{y_1}^{-1}z_1,\exp{y_2}^{-1}z_2)\leq 2[d(y_1,y_2)+d(z_1,z_2)]$ for $z_1,z_2\in B_x$
whenever the expression makes sense.
In particular, $\|d(\exp{x})_v\|\leq 2$ for $\|v\|\leq 2\mathfrak r$ and 
$\|d(\exp{x}^{-1})_y\| \leq 2$ for $y\in B_x$.
\end{enumerate}

\medskip
Conditions (Exp1)--(Exp2) say that the exponential maps and their inverses
are well-defined and have Lipschitz constants bounded by 2 in balls
of radius $2\mathfrak r$. The existence of $\mathfrak r$ follows from compactness, since 
$d_M(\Lambda,\partial\widehat{\Lambda})>0$ and $d(\exp{x})_0$ is the identity map.

The next two assumptions describe the regularity of $d\exp{x}$. For $x,x'\in\widehat{\Lambda}$, let
$\mathfs L _{x,x'}:=\{A:T_x\widehat{\Lambda}\to T_{x'}\widehat{\Lambda}:A\text{ is linear}\}$
and $\mathfs L _x:=\mathfs L_{x,x}$. 
In particular, $P_{y,x}$ considered in (Exp1) is in $\mathfs L_{y,x}$.
Given $y\in B_x,z\in B_{x'}$ and $A\in \mathfs L_{y,z}$,
let $\widetilde{A}\in\mathfs L_{x,x'}$, $\widetilde{A}:=P_{z,x'} \circ A\circ P_{x,y}$.
The norm $\|\widetilde{A}\|$ does not depend on the choice of $x,x'$.
If $A_i\in\mathfs L_{y_i,z_i}$ then $\|\widetilde{A_1}-\widetilde{A_2}\|$ does
depend on the choice of $x,x'$, but if we change the basepoints $x,x'$ to $w,w'$ then
the respective differences differ by precompositions and postocompositions
with norm of the order of the areas of the geodesic triangles formed by $x,w,y_i$
and by $x',w',z_i$, which will be negligible to our estimates.
For $x\in\Lambda$,
define the map $\tau=\tau_x:B_x\times B_x\to \mathfs L_x$
by $\tau(y,z)=\widetilde{d(\exp{y}^{-1})_z}$, where we use the identification
$T_v(T_{y}\widehat{\Lambda})\cong T_{y}\widehat{\Lambda}$ for all $v\in T_y\widehat{\Lambda}$.

\medskip
\noindent
{\sc Regularity of $d\exp{x}$:} There is $\mathfrak K>1$ such that for all $x\in\Lambda$ the following holds:
\begin{enumerate}[ii\, )]
\item[(Exp3)] If $y_1,y_2\in B_x$ then
$\|\widetilde{d(\exp{y_1})_{v_1}}-\widetilde{d(\exp{y_2})_{v_2}}\|\leq \mathfrak K\Sas(v_1,v_2),$
for all $\|v_1\|,\|v_2\|\leq 2\mathfrak r$, and
$\|\tau(y_1,z_1)-\tau(y_2,z_2)\|\leq \mathfrak K[d(y_1,y_2)+d(z_1,z_2)]$
for all $z_1,z_2\in B_x$.
\item[(Exp4)] If $y_1,y_2\in B_x$ then the map $\tau(y_1,\cdot)-\tau(y_2,\cdot):B_x\to \mathfs L_x$
has Lipschitz constant $\leq \mathfrak Kd(y_1,y_2)$.
\end{enumerate}

\medskip
Condition (Exp3) controls the Lipschitz constants of the derivatives of $\exp{x}$,
and (Exp4) controls the Lipschitz constants of their second derivatives.
The existence of $\mathfrak K$ is guaranteed whenever the curvature tensor of
$\widehat{\Lambda}$ is uniformly bounded, and this happens because $\widehat{\Lambda}$
is the restriction to a compact subset of a finite union of $\rho$--transverse (open) discs.

\subsection{Induced linear Poincar\'e flows}\label{section-induced}

Classically, the linear Poincar\'e flow is the $\R$--cocycle induced by $d\vf$ in the bundle orthogonal to $X$.
In this paper we employ a different definition: we fix a 1--form $\theta$ and consider parallel projections
to $X$ onto the bundle ${\rm Ker}(\theta)$. We begin choosing a suitable 1--form.

\begin{lemma}\label{Lemma-1-form}
If $\widehat \Lambda$ is a proper section of size $\rho/2$, there exists a $1$--form $\theta$ on $M$ such that:
\begin{enumerate}[{\rm (1)}]
\item $\theta(X(x))=1$ and $\angle(X(x),{\rm Ker}(\theta_x)^\perp)<\rho$, $\forall x\in M$.
\item ${\rm Ker}(\theta_x)=T_x\widehat \Lambda$, $\forall x\in\widehat \Lambda$.
\end{enumerate}
\end{lemma}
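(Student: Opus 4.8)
The plan is to construct $\theta$ first locally on the flow boxes of the discs making up $\widehat\Lambda$, then patch with a partition of unity, and finally check that the patched $1$-form still satisfies (1) and (2). Recall that since $\widehat\Lambda=\bigsqcup_{i}\widehat D_i$ is a proper section of size $\rho/2$, the closures $\overline{\widehat D_i}$ are pairwise disjoint compact sets, each $\widehat D_i$ is $(\rho/2)$-transverse, and the associated flow boxes $\vf^{[-4\rho,4\rho]}\widehat D_i$ (which need not be disjoint) are foliated by flow lines. The key point is that on each such flow box the maps $(\mathfrak q_{\widehat D_i},\mathfrak t_{\widehat D_i})$ give $C^{1+\beta}$ coordinates straightening the flow to $\partial_t$, so it is trivial to write down a $1$-form there with the desired kernel and with $\theta(X)\equiv 1$.

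First I would treat the degenerate case $\widehat\Lambda=\emptyset$ separately (then $X$ itself, suitably normalized, works, or there is nothing to arrange for (2)); assume $\widehat\Lambda\neq\emptyset$. For each disc $\widehat D_i$ set $U_i:=\vf^{(-3\rho,3\rho)}\widehat D_i$, an open neighborhood of $\widehat D_i$, and on $U_i$ define the $C^\beta$ $1$-form $\theta_i:=d\mathfrak t_{\widehat D_i}$. By construction $\theta_i(X)=1$ on $U_i$ (the flow moves $\mathfrak t_{\widehat D_i}$ at unit speed), and $\mathrm{Ker}(\theta_i)_x=\ker d(\mathfrak t_{\widehat D_i})_x$ equals $T_x\widehat D_i$ for $x\in\widehat D_i$; shrinking the flow-box radius beforehand if needed, the angle $\angle(X(x),\mathrm{Ker}(\theta_i)_x^\perp)$ is as small as we like on $U_i$ because it is exactly $\rho/2$-small on $\widehat D_i$ and $d\mathfrak t_{\widehat D_i}$ varies continuously (use that the disc is $(\rho/2)$-transverse and that $\rho_0$ in Lemma~\ref{Lemma-regularity-q-t} was chosen small). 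Away from $\widehat\Lambda$, on the open set $U_0:=M\setminus\widehat\Lambda$, pick any $C^\infty$ $1$-form $\theta_0$ with $\theta_0(X)=1$ and $\angle(X,\mathrm{Ker}(\theta_0)^\perp)<\rho$ on $U_0$ — such a form exists locally by the tubular neighborhood argument used to build transverse discs, and one assembles it on $U_0$ by a partition of unity (this is possible since the condition $\theta_0(X)=1$ is affine and the angle condition is open and convex in the sense that it is preserved under convex combinations of forms all satisfying it). Then $\{U_i\}_{i=0}^{\mathfrak n}$ is an open cover of $M$, refine it to a partition of unity $\{\chi_i\}$ subordinate to it with the extra property that $\chi_i\equiv 1$ on $\widehat D_i$ for $i\geq 1$ (possible because the $\overline{\widehat D_i}$ are disjoint and each is contained in the single set $U_i$ among the cover), and set $\theta:=\sum_{i=0}^{\mathfrak n}\chi_i\,\theta_i$.

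It remains to verify (1) and (2) for this $\theta$. For (1): $\theta_x(X(x))=\sum_i\chi_i(x)\theta_{i,x}(X(x))=\sum_i\chi_i(x)\cdot 1=1$ since each $\theta_i(X)=1$ on its domain and $\sum\chi_i=1$. For the angle bound, note $\mathrm{Ker}(\theta_x)^\perp$ is spanned by the vector dual to $\theta_x$ under the Riemannian metric, i.e. by $\sum_i\chi_i(x)v_i(x)$ where $v_i(x)$ is the metric-dual of $\theta_{i,x}$; each $v_i(x)$ makes an angle $<\rho$ with $X(x)$ and has $\langle v_i(x),X(x)\rangle=1>0$, so the convex combination also makes an angle $<\rho$ with $X(x)$ (a convex combination of unit-ish vectors all within a cone of half-angle $\rho<\rho$ about $X(x)$, with positive inner product with $X(x)$, stays in that cone). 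For (2): if $x\in\widehat\Lambda$ then $x\in\widehat D_i$ for exactly one $i\geq 1$, and near $x$ we have $\chi_i\equiv 1$ and all other $\chi_j\equiv 0$, so $\theta=\theta_i$ in a neighborhood of $x$, whence $\mathrm{Ker}(\theta_x)=\mathrm{Ker}(\theta_{i,x})=T_x\widehat D_i=T_x\widehat\Lambda$. This completes the proof.

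The main obstacle, and the only place real care is needed, is the angle estimate in (1): one must make sure that neither the extension of the local forms $d\mathfrak t_{\widehat D_i}$ off the discs, nor the partition-of-unity averaging, degrades the transversality beyond the threshold $\rho$. This is handled by the convexity observation above (the set of covectors $\alpha$ at $x$ with $\alpha(X(x))=1$ and $\angle(\text{metric-dual of }\alpha, X(x))<\rho$ is convex) together with choosing the flow-box radii small enough at the outset — which is legitimate since $\rho<\min\{0.25,\rho_0\}$ and all the relevant transversality angles on $\widehat\Lambda$ itself are bounded by $\rho/2$, leaving a definite margin.
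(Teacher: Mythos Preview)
Your approach is essentially the same as the paper's: build local $1$--forms with the right kernel near $\widehat\Lambda$, a compatible form away from $\widehat\Lambda$, then interpolate, using convexity of the cone $\{v:\angle(v,X(x))<\rho\}$ to preserve the angle bound under convex combinations. The paper is more economical on two counts. Away from $\widehat\Lambda$ it uses the explicit global form $\eta(v)=\langle v,X(x)\rangle/\|X(x)\|^2$, whose kernel is $X(x)^\perp$, so the angle in (1) is zero --- no nested partition-of-unity construction of your $\theta_0$ is needed. And near $\widehat\Lambda$ it produces a single form $\zeta$ via the tubular neighborhood theorem and glues with a single bump function $h$, setting $\theta=h\eta+(1-h)\zeta$, rather than juggling a partition of unity indexed by the individual discs.

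There is also a small technical gap in your execution. You ask for $\chi_i\equiv 1$ on $\widehat D_i$ with $\operatorname{supp}(\chi_i)\subset U_i=\vf^{(-3\rho,3\rho)}\widehat D_i$; by continuity this forces $\chi_i=1$ on $\overline{\widehat D_i}$, hence $\overline{\widehat D_i}\subset U_i$. But $U_i$ is the open flow cylinder over the \emph{open} disc $\widehat D_i$, and the boundary circle $\partial\widehat D_i$ typically lies outside it (picture a vertical flow over a horizontal disc). The fix is immediate: by the definition of $\rho$--transverse disc, each $\widehat D_i$ is compactly contained in a larger $C^\infty$ disc $\widetilde D_i$; take $\widetilde D_i$ only slightly larger so it remains, say, $\rho$--transverse, set $U_i:=\vf^{(-3\rho,3\rho)}\widetilde D_i\supset\overline{\widehat D_i}$ and $\theta_i:=d\mathfrak t_{\widetilde D_i}$, and your argument goes through unchanged.
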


\begin{proof}
Take $\eta(v)=\tfrac{\langle v,X(x)\rangle}{\|X(x)\|^2}$ for $v\in T_xM$.
Clearly $\eta$ is a 1--form on $M$ satisfying (1) above. Let $U$
be a small neighborhood of $\widehat \Lambda$. By the tubular neighborhood theorem,
there exists a 1--form $\zeta$ on $U$ such that $\zeta(X(x))=1$ and
$\angle(X(x),{\rm Ker}(\zeta_x)^\perp)<\rho$ for all $x\in U$,
and ${\rm Ker}(\zeta_x)=T_x\widehat \Lambda$ for all $x\in\widehat \Lambda$. Let
$V$ be a neighborhood of $\widehat \Lambda$ with $\widehat \Lambda\subset V\subset U$, and take
a bump function $h:M\to[0,1]$ such that $h\restriction_{V}\equiv 0$ and $h\restriction_{ M\backslash U}\equiv 1$.
The 1--form $\theta:=h\eta+(1-h)\zeta$ satisfies the following:
\begin{enumerate}[$\circ$]
\item $\theta(X(x))=1$, $\forall x\in M$: clear, since $\eta(X(x))=\zeta(X(x))=1$.
\item $\angle(X(x),{\rm Ker}(\theta_x)^\perp)<\rho$, $\forall x\in M$:
to see this, write $\eta_x(\,\cdot\,)=\langle \,\cdot\,,v_x\rangle$ and $\zeta_x(\,\cdot\,)=\langle \,\cdot\,,w_x\rangle$,
where $v_x=\tfrac{X(x)}{\|X(x)\|}$ and $\angle(X(x),w_x)<\rho$. Since
${\rm Ker}(\theta_x)^\perp$ is generated by the linear combination $h(x)v_x+(1-h(x))w_x$, we have
$\angle(X(x),{\rm Ker}(\theta_x)^\perp)\leq \angle(X(x),w_x)<\rho$.
\item ${\rm Ker}(\theta_x)=T_x\widehat \Lambda$, $\forall x\in\widehat \Lambda$: since $h(x)=0$,
we have ${\rm Ker}(\theta_x)={\rm Ker}(\zeta_x)=T_x\widehat \Lambda$.
\end{enumerate}
The proof is complete.
\end{proof}

From now on, we fix a 1--form $\theta$ satisfying Lemma \ref{Lemma-1-form}.
Introduce the two dimensional bundle
$$N:=\displaystyle\bigsqcup_{x\in M}{\rm Ker}(\theta_x).$$
For each $x\in M$, let $\mathfrak p_x:T_xM\to N_x$ be the projection to $N_x$ parallel to $X(x)$.
By Lemma \ref{Lemma-1-form}(1), for all $x\in M$ we have:
$$
\|\mathfrak p_x\|=\tfrac{1}{\cos\angle(X(x),{\rm Ker}(\theta_x)^\perp)}<\tfrac{1}{\cos\rho}<1+\rho.
$$

\medskip
\noindent
{\sc Induced linear Poincar\'e flow:} The {\em linear Poincar\'e flow of $\vf$ induced by $\theta$}
is the flow $\Phi=\{\Phi^t\}_{t\in\R}:N\to N$ defined by
$\Phi^t(v)={\mathfrak p}_{\vf^t(x)}[d\vf^t_x(v)]$ for $v\in N_x$.

\medskip
When the context is clear, we will omit the subscripts $x$ and $\vf^t(x)$.
Clearly $\Phi$ is H\"older continuous, and
$\|\Phi^t_x\|\leq \|\mathfrak p_{\vf^t(x)}\|\|d\vf^t_x\|\leq (1+\rho)e^{|t|}<e^{\rho+|t|}$, $\forall t\in\R$.
In particular:
\begin{equation}\label{definition-d}
\|\Phi^t\|=e^{\pm 4\rho},\,\forall |t|\leq2\rho.
\end{equation}

\begin{lemma}\label{Lemma-flow-Phi}
The following hold.
\begin{enumerate}[{\rm (1)}]
\item $\Phi$ is a flow: $\Phi^{t+t'}=\Phi^t\circ\Phi^{t'}$, $\forall t,t'\in\R$.
\item If $D\subset{\widehat \Lambda}$ is a transverse disc, then for all $x\in D$ it holds
$d(\mathfrak q_D)_x=\mathfrak p_x$.
\end{enumerate}
\end{lemma}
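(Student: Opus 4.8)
\textbf{Proof plan for Lemma \ref{Lemma-flow-Phi}.}

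The plan is to establish the two assertions separately, as they rely on different mechanisms. For part (1), the idea is to verify directly from the definition $\Phi^t(v)=\mathfrak p_{\vf^t(x)}[d\vf^t_x(v)]$ that the flow property reduces to the cocycle property of $d\vf$ together with a compatibility of the projections $\mathfrak p$ with the flow. Concretely, for $v\in N_x$ I would compute $\Phi^{t}(\Phi^{t'}(v)) = \mathfrak p_{\vf^{t+t'}(x)}\bigl[d\vf^t_{\vf^{t'}(x)}\bigl(\mathfrak p_{\vf^{t'}(x)}[d\vf^{t'}_x(v)]\bigr)\bigr]$ and compare with $\Phi^{t+t'}(v)=\mathfrak p_{\vf^{t+t'}(x)}[d\vf^{t+t'}_x(v)] = \mathfrak p_{\vf^{t+t'}(x)}[d\vf^t_{\vf^{t'}(x)}(d\vf^{t'}_x(v))]$. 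So it suffices to show $\mathfrak p_{\vf^{t+t'}(x)}\circ d\vf^t_{\vf^{t'}(x)}\circ \mathfrak p_{\vf^{t'}(x)} = \mathfrak p_{\vf^{t+t'}(x)}\circ d\vf^t_{\vf^{t'}(x)}$ on the image of $d\vf^{t'}_x$. The key observation is that $\mathfrak p_y$ acts as the identity on $N_y = \mathrm{Ker}(\theta_y)$ and annihilates the line $\R X(y)$, so $w - \mathfrak p_y w \in \R X(y)$; since $d\vf^t$ maps $X(y)$ to a multiple of $X(\vf^t(y))$ (the vector field is flow-invariant, $d\vf^t_y(X(y)) = X(\vf^t(y))$), applying $d\vf^t_{\vf^{t'}(x)}$ to $w-\mathfrak p_{\vf^{t'}(x)}w$ lands in $\R X(\vf^{t+t'}(x))$, which is then killed by the outer $\mathfrak p_{\vf^{t+t'}(x)}$. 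This gives the desired equality and hence the flow property.

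For part (2), let $D\subset\widehat\Lambda$ be a transverse disc and $x\in D$. Recall $\mathfrak q_D$ is defined on the flow box $\vf^{[-4\rho,4\rho]}D$ as the first coordinate of the inverse of $\Gamma_D\colon (y,s)\mapsto \vf^s(y)$. Since $\mathfrak q_D$ restricted to $D$ is the identity and $\mathfrak t_D$ restricted to $D$ is zero, for any tangent vector the chain rule applied to $\Gamma_D\circ(\mathfrak q_D,\mathfrak t_D)=\mathrm{id}$ at a point $x\in D$ gives, for $v\in T_xM$, that $v = d(\Gamma_D)_{(x,0)}\bigl(d(\mathfrak q_D)_x v,\ d(\mathfrak t_D)_x v\bigr) = d(\mathfrak q_D)_x v + (d(\mathfrak t_D)_x v)\, X(x)$, using $\partial_y\Gamma_D|_{(x,0)} = \mathrm{id}_{T_xD}$ and $\partial_s\Gamma_D|_{(x,0)} = X(x)$. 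Thus $v$ decomposes as an element $d(\mathfrak q_D)_x v \in T_xD = \mathrm{Ker}(\theta_x) = N_x$ plus a multiple of $X(x)$; by uniqueness of the decomposition $T_xM = N_x\oplus \R X(x)$, this shows $d(\mathfrak q_D)_x v = \mathfrak p_x v$, which is exactly the claim. Here I use that $D\subset\widehat\Lambda$ implies $T_xD = \mathrm{Ker}(\theta_x)$ by Lemma \ref{Lemma-1-form}(2).

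I expect part (1) to be the only place requiring genuine care: one must be careful that $\mathfrak p_y$ is defined as the projection \emph{parallel to $X$}, so that its kernel is precisely $\R X(y)$ and it fixes $N_y$ pointwise, and one must invoke $d\vf^t_y(X(y))=X(\vf^t(y))$ (which follows from differentiating $\vf^t\circ\vf^s = \vf^{s+t}$ in $s$ at $s=0$). Part (2) is essentially a chain-rule computation once the flow-box coordinates are set up. No delicate estimates are needed; the main obstacle is purely bookkeeping of the direct-sum decompositions.
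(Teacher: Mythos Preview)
Your proposal is correct and follows essentially the same approach as the paper. For part~(1) both arguments reduce to the observation that $w-\mathfrak p_y w\in\R X(y)$ is carried by $d\vf^t$ into $\R X(\vf^t(y))$ and then annihilated by the outer projection; for part~(2) the paper verifies $d(\mathfrak q_D)_x$ directly on the two summands $\R X(x)$ and $N_x$ (using $\mathfrak q_D(\vf^t(x))=x$ and $\mathfrak q_D|_D=\mathrm{id}$), while you obtain the same conclusion by differentiating $\Gamma_D\circ(\mathfrak q_D,\mathfrak t_D)=\mathrm{id}$ and invoking uniqueness of the splitting --- a cosmetic repackaging of the same computation.
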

\begin{proof}
(1) If $v\in N_x$ and $t,t'\in\R$, then there is $\gamma\in\R$ such that
\begin{align*}
&\ \Phi^{t'}(\Phi^t(v))=\Phi^{t'}({\mathfrak p}_{\vf^t(x)}[d\vf^t_x(v)])=\Phi^{t'}(d\vf^t_x(v)+\gamma X({\vf^t(x)}))\\
&={\mathfrak p}_{\vf^{t'+t}(x)}[d\vf^{t'}_{\vf^t(x)}(d\vf^t_x(v)+\gamma X({\vf^t(x)}))]
={\mathfrak p}_{\vf^{t'+t}(x)}[d\vf^{t'+t}_{x}(v)+\gamma X({\vf^{t'+t}(x)})]\\
&={\mathfrak p}_{\vf^{t'+t}(x)}[d\vf^{t'+t}_{x}(v)]=\Phi^{t'+t}(v).
\end{align*}

\noindent
(2) Fix $x\in D\subset {\widehat \Lambda}$. It is enough to show that $d(\mathfrak q_D)_x[X(x)]=0$
and $d(\mathfrak q_D)_x[v]=v$ for all $v\in N_x$.
We have $d(\mathfrak q_D)_x[X(x)]=\tfrac{d}{dt}\big|_{t=0}[\mathfrak q_D(\vf^t(x))]=0$
because $\mathfrak q_D(\vf^t(x))=x$ for small $t$. Now, since $N_x=T_x\widehat \Lambda$ and 
$\mathfrak q_D\restriction_{{\widehat \Lambda}}$ is the identity,
$d(\mathfrak q_D)_x[v]=v$ for all $v\in N_x$.
\end{proof}

\subsection{Holonomy maps}
We have fixed $\Lambda,\widehat{\Lambda}$,
two proper sections of size $\rho/2$.
From now on, write $f:=f_\Lambda$.  
The maps $f,r_\Lambda$ admit discontinuities, hence we introduce a related family of local diffeomorphisms.
Recall that $\mathfrak r>0$ is a fixed small parameter, and that $B_x:=B(x,2\mathfrak r)$. 
Write $\widehat{\Lambda}=\bigcup_{i=1}^{\mathfrak{n}} D_i$ as the disjoint union of $\rho$--transverse
discs $D_i$, and let $\mathfrak q_{D_i}$ as before. By Lemma \ref{Lemma-regularity-q-t},
$\Lip(\mathfrak q_{D_i})<2$.

Assume that $x,\vf^t(x)\in\Lambda$ for some $|t|\leq\rho$, with $x\in D_i$ and $\vf^t(x)\in D_j$.
In this case, the restrictions
$\mathfrak q_{D_j}\restriction_{B_x}$ and $\mathfrak q_{D_i}\restriction_{B_{\vf^t(x)}}$ are diffeomorphisms
onto their images, and one is the inverse of the other when the compositions makes sense.
When this happens, we call these restrictions {\em holonomy maps}.

\begin{lemma}\label{Lemma-map-g}
Under the above conditions, the holonomy map $\mathfrak q_{D_j}\restriction_{B_x}$ is a $2$--bi-Lipschitz
$C^{1+\beta}$ diffeomorphism onto its image, and its derivative at $x$ equals $\Phi^t\restriction_{N_x}$.
\end{lemma}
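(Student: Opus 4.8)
The plan is to verify the three asserted properties of the holonomy map $h:=\mathfrak q_{D_j}\restriction_{B_x}$ by unwinding the definitions. First I would establish that $h$ is a well-defined $C^{1+\beta}$ diffeomorphism onto its image. Since $x\in D_i\subset\widehat\Lambda$ and $\vf^t(x)\in D_j\subset\widehat\Lambda$ with $|t|\le\rho<4\rho_0$, the point $x$ lies in the flow box $\vf^{[-4\rho_0,4\rho_0]}D_j$; by shrinking $\mathfrak r$ if necessary (it is already assumed $\mathfrak r<\rho$) the whole ball $B_x=B(x,2\mathfrak r)$ stays inside this flow box, so $\mathfrak q_{D_j}$ is defined on $B_x$ and is $C^{1+\beta}$ by Lemma~\ref{Lemma-regularity-q-t}(1). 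That it is injective on $B_x$, with inverse a restriction of $\mathfrak q_{D_i}$, follows because $\Gamma_{D_j}$ is a diffeomorphism of $D_j\times[-4\rho_0,4\rho_0]$ onto the flow box: each point $y\in B_x$ has a unique representation $\vf^{s}(\mathfrak q_{D_j}(y))$, and running this backwards identifies $\mathfrak q_{D_i}\restriction_{B_{\vf^t(x)}}$ as the inverse where defined.

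Next I would get the $2$--bi-Lipschitz bound. By Lemma~\ref{Lemma-regularity-q-t}(2), $\Lip(\mathfrak q_{D_j})<2$, hence $\Lip(h)<2$. For the inverse, the same lemma applied to $D_i$ gives $\Lip(\mathfrak q_{D_i})<2$, and since $h^{-1}$ is a restriction of $\mathfrak q_{D_i}$, $\Lip(h^{-1})<2$. (Alternatively one can argue from $\|dh_y\|\le 2$ and $\|d(h^{-1})\|\le2$ pointwise via Lemma~\ref{Lemma-flow-Phi}(2) together with the $C^1$ closeness of flow boxes to vertical projections, but the Lipschitz route is cleanest.)

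The substantive point is the identification of the derivative at $x$. By the chain rule, $dh_x = d(\mathfrak q_{D_j})_{\vf^t(x)}\circ d\vf^t_x$ — more precisely, writing $x=\vf^{-t}(\vf^t(x))$ and using that $\mathfrak q_{D_j}\circ\vf^t$ restricted to a neighborhood of $x$ in $\widehat\Lambda$ agrees with $h$, we get $dh_x = d(\mathfrak q_{D_j})_{\vf^t(x)}\circ d\vf^t_x\restriction_{T_x\widehat\Lambda}$. Now $T_x\widehat\Lambda = N_x = \mathrm{Ker}(\theta_x)$ by Lemma~\ref{Lemma-1-form}(2), so we are evaluating $d\vf^t_x$ on $N_x$. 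By Lemma~\ref{Lemma-flow-Phi}(2), $d(\mathfrak q_{D_j})_{\vf^t(x)} = \mathfrak p_{\vf^t(x)}$, the projection onto $N_{\vf^t(x)}$ parallel to $X(\vf^t(x))$. Therefore, for $v\in N_x$,
\[
dh_x(v) = \mathfrak p_{\vf^t(x)}\bigl[d\vf^t_x(v)\bigr] = \Phi^t(v),
\]
which is exactly the definition of the induced linear Poincar\'e flow $\Phi^t\restriction_{N_x}$.

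I expect the only delicate point to be the bookkeeping in the previous paragraph: one must be careful that the chain-rule identity $dh_x = d(\mathfrak q_{D_j})_{\vf^t(x)}\circ d\vf^t_x$ holds on the relevant tangent space, i.e. that the local representation $h = \mathfrak q_{D_j}\circ(\text{inclusion into the flow box followed by }\vf^t)$ is valid near $x$, and that the restriction to $T_x\widehat\Lambda$ is legitimate. This is a matter of checking that $\vf^t$ maps a $\widehat\Lambda$--neighborhood of $x$ to a subset of the flow box $\vf^{[-4\rho_0,4\rho_0]}D_j$ on which $\mathfrak q_{D_j}$ inverts it appropriately, which follows from the flow box being a genuine diffeomorphic chart and from $|t|\le\rho$ being small compared to $\rho_0$ and $\mathfrak r$. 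Everything else is a direct citation of Lemmas~\ref{Lemma-regularity-q-t}, \ref{Lemma-1-form}, and \ref{Lemma-flow-Phi}.
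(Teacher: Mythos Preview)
Your proof is correct and follows essentially the same approach as the paper: the $2$--bi-Lipschitz $C^{1+\beta}$ property is read off from Lemma~\ref{Lemma-regularity-q-t}, and the derivative identity comes from writing $h=\mathfrak q_{D_j}\circ\vf^t$ near $x$, applying the chain rule, and invoking Lemma~\ref{Lemma-flow-Phi}(2) to identify $d(\mathfrak q_{D_j})_{\vf^t(x)}$ with $\mathfrak p_{\vf^t(x)}$. Your write-up simply spells out the domain bookkeeping and the identification $T_x\widehat\Lambda=N_x$ more carefully than the paper's terse two-line proof.
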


\begin{proof}
Write $g=\mathfrak q_{D_j}\restriction_{B_x}$. The first statement follows from Lemma \ref{Lemma-regularity-q-t}.
Now, since $g=\mathfrak q_{D_j}\circ \vf^t$,
Lemma \ref{Lemma-flow-Phi}(2) implies
$dg_x=d(\mathfrak q_{D_j})_{\vf^t(x)}\circ d\vf^{t}_x\restriction_{T_x\widehat{\Lambda}}=
\mathfrak p_{\vf^t(x)}\circ d\vf^t_x\restriction_{N_x}=\Phi^t\restriction_{N_x}$.
\end{proof}

In the sequel we will investigate some particular holonomy maps, defined as follows. 
Let $0<t,t'<\rho$ such that $f(x)=\vf^t(x)\in D_j$ and $f^{-1}(x)=\vf^{-t'}(x)\in D_k$. 

\medskip
\noindent
{\sc Holonomy maps:} The {\em forward holonomy map} at $x$ is 
$g_x^+:=\mathfrak q_{D_j}\restriction_{B_x}$. Similarly, the {\em backward holonomy map} at $x$ is 
$g_x^-:=\mathfrak q_{D_k}\restriction_{B_x}$.

\medskip
Note that $g_x^+$ differs from $f$ and from the Poincar\'e return to $\widehat\Lambda$,
see Figure \ref{figure-holonomy}.
Also, $(g_x^+)^{-1}=g_{f(x)}^-$.

\begin{figure}[hbt!]
\centering
\def\svgwidth{9cm}
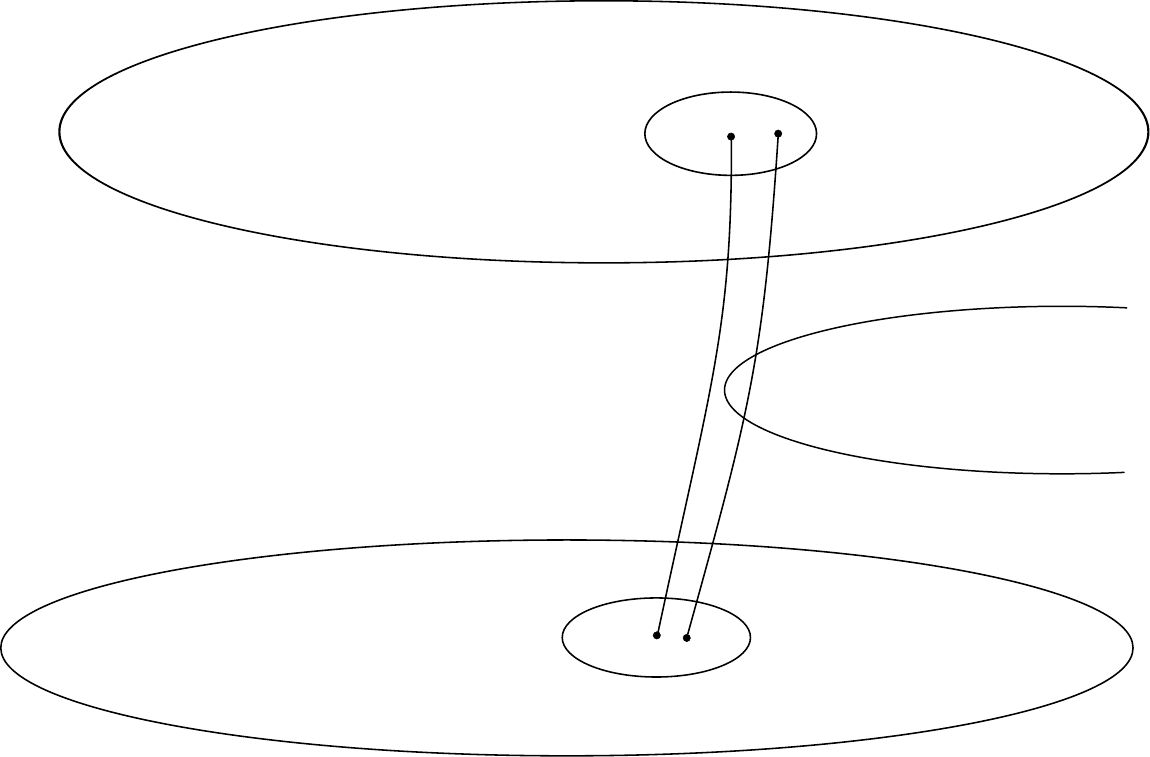\caption{The holonomy map $g_x^+$: it may differ
from $f$ and $f_{\widehat{\Lambda}}$.
}\label{figure-holonomy}
\end{figure}

\section{The non-uniformly hyperbolic locus}\label{s.nhu}
Up to now, we have fixed $\vf,\chi,\rho,\Lambda,\widehat\Lambda$ and $\theta$. In this section, we:
\begin{enumerate}[(1)]
\item Define the set $\nuh$ of points that exhibit a hyperbolicity of scale at least $\chi$. We fix $\varepsilon>0$ small
enough, and associate to each $x\in \nuh$ a number $Q(x)\in (0,1)$ that approaches zero 
as the quality of the hyperbolicity at $x$ deteriorates.
\item Introduce numbers $q(x)\in [0,1)$, that measure how fast $Q(\varphi^t(x))$
decreases to zero as $|t|\to\infty$.
We also associates analogous number $q^s(x)$ and $q^u(x)$
for future and past orbits.
\item Define the set $\nuh^\#$ of points $x\in\nuh$ whose hyperbolicity satisfies a recurrence property: there is $c(x)>0$ such that
$q(\varphi^t(x))>c(x)$ for some values of $t$ arbitrarily close to $\pm\infty$.
This set carries all $\chi$--hyperbolic measures.
\item Define Pesin charts $\Psi_x$ for each $x\in \Lambda\cap\nuh$.
We then prove that, in Pesin charts, the holonomy maps $g_x^{\pm}$ are close to hyperbolic linear maps.
\end{enumerate}

\subsection{The non-uniformly hyperbolic locus $\nuh$}\mbox{}

\medskip
\noindent
{\sc Non-uniformly hyperbolic locus $\nuh=\nuh(\vf,\chi,\rho,\theta)$:} It is the invariant set of points
$x\in M$ for which there are unitary vectors $n^s_x,n^u_x$ with the following properties:
\medskip

\begin{enumerate}[(NUH1)]
\item {\sc $s$--direction:} $\liminf\limits_{t\to+\infty}\tfrac{1}{t}\log\|\Phi^{-t} n^s_x\|>0$,
$\limsup\limits_{t\to+\infty}\tfrac{1}{t}\log\|\Phi^t n^s_x\|\leq -\chi$, and
$$ \displaystyle
s(x):=2e^{2\rho}\left(\int_0^{+\infty} e^{2\chi t}\|\Phi^t n^s_x\|^2dt\right)^{1/2}<+\infty.
$$
\medskip

\item  {\sc $u$--direction:} $\liminf\limits_{t\to+\infty}\tfrac{1}{t}\log\|\Phi^t n^u_x\|>0$,
$\limsup\limits_{t\to+\infty}\tfrac{1}{t}\log\|\Phi^{-t} n^u_x\|\leq -\chi$, and
$$\displaystyle
u(x):=2e^{2\rho}\left(\int_0^{+\infty} e^{2\chi t}\|\Phi^{-t}n^u_x\|^2dt\right)^{1/2}<+\infty.
$$
\end{enumerate}

\medskip
It is clear that $n^s_x,n^u_x$ are unique up to a choice of signs. We choose the sign
so that their angle is less than or equal to $\pi/2$, and make the following definition.

\medskip
\noindent
{\sc angle: $\alpha(x):=\angle(n^s_x,n^u_x)$.}

\medskip
Let us remind  that $\chi\in (0,1)$, see Section~\ref{ss.standing}.
From the estimate before~\eqref{definition-d}, we have
$$\int_0^{+\infty} e^{2\chi t}\|\Phi^t n^s_x\|^2dt\geq \int_0^{+\infty} e^{2\chi t}e^{-4\rho-2t}dt=\tfrac{e^{-4\rho}}{2(1-\chi)}>
\tfrac{e^{-4\rho}}{2},$$
therefore for each $x\in\nuh$ we have $s(x),u(x)\in[\sqrt{2},+\infty)$ and $\alpha(x)\neq 0$.

Conditions (NUH1)--(NUH2) are weaker than Lyapunov regularity, hence
${\rm NUH}$ contains all Lyapunov regular points with exponents greater than $\chi$
in absolute value. Moreover, a periodic point $x$ is in ${\rm NUH}$ iff all of its
exponents are greater than $\chi$ in absolute value. 
But ${\rm NUH}$ might contain points with Lyapunov exponents equal to $\pm\chi$,
and even points which are not Lyapunov regular, where the contraction rates oscillate infinitely often.
These conditions are different from \cite{Sarig-JAMS,Lima-Sarig,Lima-Matheus}, and
similar to \cite{ALP,Ben-Ovadia-ETDS}. This allows to identify the coded set, as
done in these latter works.
 
\begin{proposition}\label{Prop-NUH}
If $\mu$ is a $\chi$--hyperbolic measure, then $\mu[\nuh]=1$.
\end{proposition}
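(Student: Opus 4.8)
The plan is to reduce this statement to the standard Oseledets–Pesin theory on the section $\Lambda$ via the linear Poincar\'e flow $\Phi$. First I would recall that a $\chi$--hyperbolic measure $\mu$ is $\vf$--invariant, and since $X$ is non-singular and $\Lambda$ is a proper section with $M=\bigcup_i\vf^{[0,\rho)}D_i$, the flow $\vf$ is (up to the trivial direction along $X$) a suspension over $f=f_\Lambda$ with a bounded positive return time $r_\Lambda$. Thus $\mu$ disintegrates over an $f$--invariant probability $\nu$ on $\Lambda$, and $\mu$--a.e. point $x$ has its forward and backward orbit passing through $\Lambda$ infinitely often. It therefore suffices to show that $\nu$--a.e. $x\in\Lambda$ lies in $\nuh$, i.e. admits unit vectors $n^s_x,n^u_x\in N_x={\rm Ker}(\theta_x)$ satisfying (NUH1)--(NUH2).

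Next I would transfer the Lyapunov structure of $d\vf$ to $\Phi$. Because $\theta(X)=1$ everywhere and $\mathfrak p_x$ is the projection onto $N_x$ parallel to $X(x)$, the cocycle $\Phi^t$ over $\vf$ is conjugate to the quotient of $d\vf^t$ by the $X$--direction: indeed $d\vf^t_x(X_x)=X_{\vf^t(x)}$, so $d\vf^t$ descends to $TM/\langle X\rangle$, and $x\mapsto N_x$ is a continuous splitting of this quotient bundle. Hence the Lyapunov exponents of $\Phi$ (with respect to $\mu$) are exactly the two non-trivial Lyapunov exponents of $d\vf$. By $\chi$--hyperbolicity these are $\lambda^u(x)>\chi$ and $\lambda^s(x)<-\chi$ for $\mu$--a.e. $x$. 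Applying the Oseledets theorem to the H\"older cocycle $\Phi$ over $(\vf,\mu)$ gives, for $\mu$--a.e. $x$, a measurable $\Phi$--invariant splitting $N_x=E^s_x\oplus E^u_x$ with
$$\lim_{t\to+\infty}\tfrac1t\log\|\Phi^t v\|=\lambda^s(x)<-\chi\ \ (v\in E^s_x\setminus\{0\}),\qquad \lim_{t\to+\infty}\tfrac1t\log\|\Phi^{-t}v\|=-\lambda^u(x)<-\chi\ \ (v\in E^u_x\setminus\{0\}),$$
and symmetrically $\lim_{t\to+\infty}\tfrac1t\log\|\Phi^{-t}v\|=-\lambda^s(x)>0$ on $E^s_x$, $\lim_{t\to+\infty}\tfrac1t\log\|\Phi^{t}v\|=\lambda^u(x)>0$ on $E^u_x$. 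Take $n^s_x,n^u_x$ to be unit vectors spanning $E^s_x,E^u_x$. The limsup/liminf conditions in (NUH1)--(NUH2) are then immediate from these Oseledets limits (limsup $\le -\chi$, liminf $>0$).

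It remains to verify the finiteness of the integrals $s(x),u(x)$, i.e. $\int_0^\infty e^{2\chi t}\|\Phi^t n^s_x\|^2\,dt<\infty$ and the analogous one for $n^u_x$. This is the one genuinely quantitative point, but it is standard: fix a small $\delta\in(0,\lambda^s(x)\cdot 0+ (\,-\chi-\lambda^s(x)\,))$ — more precisely choose $\delta>0$ with $\lambda^s(x)+\delta<-\chi$. Since $\tfrac1t\log\|\Phi^t n^s_x\|\to\lambda^s(x)$, there is $T_0=T_0(x)$ with $\|\Phi^t n^s_x\|\le e^{(\lambda^s(x)+\delta)t}$ for all $t\ge T_0$; on $[0,T_0]$ the integrand is bounded by $e^{2\chi T_0}e^{2T_0}$ (using $\|\Phi^t\|\le e^{\rho+t}$). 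Then
$$\int_0^\infty e^{2\chi t}\|\Phi^t n^s_x\|^2\,dt\ \le\ T_0\,e^{2\chi T_0}e^{2T_0}\ +\ \int_{T_0}^\infty e^{2\chi t}e^{2(\lambda^s(x)+\delta)t}\,dt\ <\ \infty,$$
because the exponent $2\chi+2(\lambda^s(x)+\delta)<0$. The $u$--case is identical with $\Phi^{-t}$ and $\lambda^u(x)$. Finally the sign of $n^s_x,n^u_x$ is chosen so that $\angle(n^s_x,n^u_x)\le\pi/2$, defining $\alpha(x)$, which is nonzero since $E^s_x\ne E^u_x$ ($\lambda^s(x)<\lambda^u(x)$). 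This shows $x\in\nuh$ for $\mu$--a.e. $x$, proving $\mu[\nuh]=1$.

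The main obstacle is purely bookkeeping: one must be careful that the Oseledets exponents of the induced Poincar\'e flow $\Phi$ genuinely coincide with the non-trivial exponents of $d\vf$ — this uses that $N$ is a continuous bounded complement to $X$ and that $\mathfrak p$ has norm $\le 1+\rho$, so passing to the quotient does not distort growth rates — and that the sub-exponential control of $T_0(x)$ is only used $\mu$--a.e., which is all that is claimed. No uniformity over $x$ is needed here, so the argument is soft.
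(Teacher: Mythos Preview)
Your argument is correct and follows a somewhat different route from the paper's proof. You apply Oseledets directly to the cocycle $\Phi$ on $N$, invoking the general principle that $\Phi$ is conjugate to the quotient cocycle $d\vf^t$ on $TM/\langle X\rangle$ via a uniformly bounded identification (since $\angle(X,N^\perp)<\rho$), so its exponents coincide with the two nontrivial exponents of $d\vf$. The paper instead applies Oseledets to $d\vf$ on $TM$, takes the Oseledets directions $e^s_x,e^u_x\in T_xM$, projects them to $N_x$ by writing $n^s_x=\gamma^s(x)e^s_x+\delta^s(x)X_x$, and derives the explicit formula $\|\Phi^t n^s_x\|=\tfrac{|\gamma^s(x)|}{|\gamma^s(\vf^t(x))|}\|d\vf^t e^s_x\|$; the temperedness of the Oseledets angles then gives $\lim\tfrac1t\log|\gamma^s(\vf^t(x))|=0$, whence $\lim\tfrac1t\log\|\Phi^t n^s_x\|=\lambda^s(x)<-\chi$. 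Your approach is cleaner and more conceptual; the paper's is more hands-on and yields an explicit relation between $\Phi$ and $d\vf$ that makes the role of angle temperedness transparent. Two minor remarks: the reduction to the section $\Lambda$ in your first paragraph is unnecessary, since $\nuh$ is $\vf$--invariant and the conditions (NUH1)--(NUH2) are stated for arbitrary $x\in M$; and your ``bookkeeping'' step is the crux of the matter---it is exactly where the paper's proof spends its effort via the tempered-$\gamma^s$ computation, so it would be worth spelling out the bounded-complement argument in a line or two rather than relegating it to a closing caveat.
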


\begin{proof}
Fix a $\chi$--hyperbolic measure $\mu$. By the Oseledets theorem, there is a set $X\subset M$
with $\mu[X]=1$ such that for all $x\in X$ there are unitary vectors $e^s_x,e^u_x\in T_xM$ satisfying:
\begin{enumerate}[(1)]
\item $\lim\limits_{t\to\pm\infty}\tfrac{1}{t}\log\|d\vf^te^s_x\|<-\chi$ and
$\lim\limits_{t\to\pm\infty}\tfrac{1}{t}\log\|d\vf^te^u_x\|>\chi$.
\item $\lim\limits_{t\to\pm\infty}\tfrac{1}{t}\log|\sin\angle(e^s_{\vf^t(x)},e^u_{\vf^t(x)})|=
\lim\limits_{t\to\pm\infty}\tfrac{1}{t}\log|\sin\angle(X_{\vf^t(x)},e^{s/u}_{\vf^t(x)})|=0$.
\end{enumerate}
If $x\in X$ then $e^{s/u}_x\not\in{\rm span}(X_x)$, hence there are scalars $\gamma^{s/u}(x)$,
$\delta^{s/u}(x)$ such that
\begin{equation}\label{definition-normal-vectors}
n^s_x=\gamma^s(x)e^s_x+\delta^s(x)X_x\ \text{ and }\ n^u_x=\gamma^u(x)e^u_x+\delta^u(x)X_x
\end{equation}
are unitary vectors in $N_x$. If $X(x)\perp N_x$ then 
$\gamma^{s/u}(x)=\pm\tfrac{1}{\sin\angle (X_x,e^{s/u}_x)}$. Since by construction we have
$\angle(N_x,X(x)^\perp)<\rho$ (see Lemma \ref{Lemma-1-form}),
we conclude that $\gamma^{s/u}(x)=\pm\tfrac{e^{\pm 4\rho}}{\sin\angle (X_x,e^{s/u}_x)}$
and so condition (2) translates to
$$
\lim\limits_{t\to\pm\infty}\tfrac{1}{t}\log|\gamma^s(\vf^t(x))|=\lim\limits_{t\to\pm\infty}\tfrac{1}{t}\log|\gamma^u(\vf^t(x))|=0.
$$

We claim that $X\subset\nuh$, and we prove this showing that
$$\lim_{t\to\pm\infty}\tfrac{1}{t}\log\|\Phi^tn^s_x\|<-\chi\text{ and }\lim_{t\to\pm\infty}\tfrac{1}{t}\log\|\Phi^{-t}n^u_x\|<-\chi.
$$
We show the first estimate (the second is analogous). For that, we claim that
$\|\Phi^tn^s_x\|=\tfrac{|\gamma^s(x)|}{|\gamma^s(\vf^t(x))|}\|d\vf^t e^s_x\|$ for all $x\in X$.
By (\ref{definition-normal-vectors}),
\begin{align*}
d\vf^t n^s_x=d\vf^t[\gamma^s(x)e^s_x+\delta^s(x)X_x]=
\gamma^s(x)\|d\vf^t e^s_x\|e^s_{\vf^t(x)}+\delta^s(x)X_{\vf^t(x)},
\end{align*}
hence
\begin{align*}
&\ \Phi^tn^s_x={\mathfrak p}_{\vf^t(x)}[\gamma^s(x)\|d\vf^t e^s_x\|e^s_{\vf^t(x)}+\delta^s(x)X_{\vf^t(x)}]=
\gamma^s(x)\|d\vf^t e^s_x\|{\mathfrak p}_{\vf^t(x)}[e^s_{\vf^t(x)}]\\
&=\gamma^s(x)\|d\vf^t e^s_x\|{\mathfrak p}_{\vf^t(x)}\left[\tfrac{1}{\gamma^s(\vf^t(x))}n^s_{\vf^t(x)}-
\tfrac{\delta^s(\vf^t(x))}{\gamma^s(\vf^t(x))}X_{\vf^t(x)}\right]=
\tfrac{\gamma^s(x)}{\gamma^s(\vf^t(x))}\|d\vf^t e^s_x\|n^s_{\vf^t(x)}.
\end{align*}
Taking norms, we get that $\|\Phi^tn^s_x\|=\tfrac{|\gamma^s(x)|}{|\gamma^s(\vf^t(x))|}\|d\vf^t e^s_x\|$.
Hence
\begin{align*}
&\ \lim_{t\to\pm\infty}\tfrac{1}{t}\log\|\Phi^tn^s_x\|=-\lim_{t\to\pm\infty}\tfrac{1}{t}\log|\gamma^s(\vf^t(x))|+
\lim_{t\to\pm\infty}\tfrac{1}{t}\log\|d\vf^t e^s_x\|\\
&=\lim_{t\to\pm\infty}\tfrac{1}{t}\log\|d\vf^t e^s_x\|<-\chi.
\end{align*}
\end{proof}

\subsection{Oseledets-Pesin reduction}
Let $e_1=(1,0),e_2=(0,1)$ be the canonical basis of $\R^2$.
We define a change of coordinates that diagonalizes the induced
linear Poincar\'e flow.

\medskip
\noindent
{\sc Linear map $C(x)$:} For $x\in{\rm NUH}$, let $C(x):\R^2\to N_x$ be the linear map defined by
$$
C(x):e_1\mapsto \frac{n^s_x}{s(x)}\ ,\ C(x):e_2\mapsto\frac{n^u_x}{u(x)}\cdot
$$

\begin{lemma}\label{Lemma-linear-reduction}
The following holds for all $x\in\nuh$.
\begin{enumerate}[{\rm (1)}]
\item $\|C(x)\|\leq \|C(x)\|_{\rm Frob}\leq 1$
and $\|C(x)^{-1}\|_{\rm Frob}=\tfrac{\sqrt{s(x)^2+u(x)^2}}{|\sin\alpha(x)|}$.
\item $C(\vf^t(x))^{-1}\circ \Phi^t\circ C(x)$ is a diagonal matrix with diagonal entries
$A_t(x),B_t(x)$ satisfying:
$$e^{-4\rho}<|A_t(x)|<e^{-\chi t} \text{ and } e^{\chi t}<|B_t(x)|<e^{4\rho},
\;  \forall\, 0<t\leq 2\rho.$$
\item For all $|t|\leq 2\rho$:
$$
\frac{s(\vf^t(x))}{s(x)}=e^{\pm 10\rho},\ \frac{u(\vf^t(x))}{u(x)}=e^{\pm 10\rho},
\ \frac{|\sin\alpha(\vf^t(x))|}{|\sin\alpha(x)|}=e^{\pm 8\rho}.
$$
In particular, $$\frac{\|C(\vf^t(x))^{-1}\|_{\rm Frob}}{\|C(x)^{-1}\|_{\rm Frob}}=e^{\pm 18\rho}.$$
\end{enumerate}
\end{lemma}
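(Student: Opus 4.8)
The plan is to prove the three parts of Lemma~\ref{Lemma-linear-reduction} in order, each time reducing to an elementary linear-algebra computation together with the flow estimates already established, namely \eqref{definition-d} and the defining integrals $s(x),u(x)$ in (NUH1)--(NUH2).

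\textbf{Part (1).} Since $C(x)$ sends the canonical basis to $n^s_x/s(x)$ and $n^u_x/u(x)$, which are vectors of norm $1/s(x)\le 1/\sqrt 2$ and $1/u(x)\le 1/\sqrt 2$, the Frobenius norm of $C(x)$ is $\sqrt{s(x)^{-2}+u(x)^{-2}}\le 1$, and $\|C(x)\|\le\|C(x)\|_{\rm Frob}$ is the standard inequality recalled in the Notations subsection. For $C(x)^{-1}$: express the inverse of a $2\times2$ matrix whose columns are $v_1=n^s_x/s(x)$, $v_2=n^u_x/u(x)$; its rows are (up to the determinant factor) the vectors orthogonal to $v_2$ and $v_1$, and $\det = \frac{\sin\alpha(x)}{s(x)u(x)}$ since $n^s_x,n^u_x$ are unit vectors with angle $\alpha(x)$. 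A direct computation of the Frobenius norm of this inverse gives $\frac{\sqrt{s(x)^2+u(x)^2}}{|\sin\alpha(x)|}$. (This is exactly the computation done in \cite{Sarig-JAMS,Lima-Sarig}; I would include it in one or two lines.)

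\textbf{Part (2).} Because $\Phi^t$ maps $N_x$ into $N_{\vf^t(x)}$ and $C(\vf^t(x))$ is an isomorphism $\R^2\to N_{\vf^t(x)}$, the matrix $C(\vf^t(x))^{-1}\Phi^t C(x)$ is well defined. It is diagonal because $n^s$ and $n^u$ are \emph{$\Phi$--equivariant line fields}: $\Phi^t n^s_x$ is a scalar multiple of $n^s_{\vf^t(x)}$ and likewise for $n^u$ — this is immediate from uniqueness (up to sign) of the vectors satisfying (NUH1)--(NUH2), applied to the point $\vf^t(x)$, together with the flow property $\Phi^{t+t'}=\Phi^t\Phi^{t'}$ from Lemma~\ref{Lemma-flow-Phi}(1). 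Writing $\Phi^t n^s_x = \lambda^s_t(x)\, n^s_{\vf^t(x)}$, the first diagonal entry is $A_t(x)=\lambda^s_t(x)\,\tfrac{s(\vf^t(x))}{s(x)}$, and similarly $B_t(x)=\lambda^u_t(x)\,\tfrac{u(\vf^t(x))}{u(x)}$. To get the bounds for $0<t\le 2\rho$ I would use the key identity
$$
s(\vf^t(x))^2 = \lambda^s_t(x)^{-2}\left(s(x)^2 - 4e^{4\rho}\int_0^t e^{2\chi\tau}\|\Phi^\tau n^s_x\|^2\,d\tau\right),
$$
obtained by the change of variables $\tau\mapsto\tau+t$ in the integral defining $s(\vf^t(x))$ and using $\Phi^{\tau}n^s_{\vf^t(x)} = \lambda^s_t(x)^{-1}\Phi^{\tau+t}n^s_x$ (wait — more carefully, $\Phi^{\tau}\Phi^{t} n^s_x = \Phi^{\tau+t}n^s_x$, so $\Phi^\tau n^s_{\vf^t(x)} = \lambda^s_t(x)^{-1}\Phi^{\tau+t}n^s_x$). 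This shows $|A_t(x)|=\lambda^s_t(x)\tfrac{s(\vf^t x)}{s(x)}<1$ (the integral being positive) and in fact $|A_t(x)|\le e^{-\chi t}$ once one inserts the factor $e^{2\chi t}$ bookkeeping and the normalization $2e^{2\rho}$; the lower bound $|A_t(x)|>e^{-4\rho}$ comes from $\|\Phi^{-t}\|^{-1}\le |\lambda^s_t(x)| \le \|\Phi^t\|$-type control together with \eqref{definition-d} giving $\|\Phi^{\pm\tau}\|=e^{\pm4\rho}$ for $|\tau|\le 2\rho$. The $B$ bounds follow by the symmetric argument run backwards in time (replace $\Phi^t$ by $\Phi^{-t}$ and $n^s$ by $n^u$).

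\textbf{Part (3).} From the displayed identity for $s(\vf^t(x))^2$, both $s(\vf^t x)/s(x)$ and $\lambda^s_t(x)$ are squeezed between explicit $e^{\pm O(\rho)}$ bounds for $|t|\le 2\rho$: indeed $\lambda^s_t(x)=e^{\pm4\rho}$ by \eqref{definition-d}, and then the identity forces $s(\vf^t x)/s(x)=e^{\pm 10\rho}$ after absorbing the correction term, which is $O(\rho)$-small relative to $s(x)^2\ge 2$ since the integrand is bounded on $[0,2\rho]$. The estimate for $u$ is identical. For the angle: $\sin\alpha(\vf^t x) = |\det(\,n^s_{\vf^t x}\,|\,n^u_{\vf^t x})|$, and $\Phi^t$ maps the pair $(n^s_x,n^u_x)$ to $(\lambda^s_t n^s_{\vf^t x}, \lambda^u_t n^u_{\vf^t x})$, so $|\det \Phi^t|\cdot|\sin\alpha(x)| = |\lambda^s_t\lambda^u_t|\cdot|\sin\alpha(\vf^t x)|$ restricted to $N_x$; combining $|\det(\Phi^t|_{N_x})| = e^{\pm 8\rho}$ (from $\|\Phi^{\pm t}\|=e^{\pm 4\rho}$) with $|\lambda^{s/u}_t|=e^{\pm 4\rho}$ gives the ratio $e^{\pm 8\rho}$ after the $\rho$-additions cancel appropriately. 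Finally the conclusion about $\|C(\vf^t x)^{-1}\|_{\rm Frob}/\|C(x)^{-1}\|_{\rm Frob}$ is purely formal: substitute Part~(1)'s formula and use $\sqrt{s(\vf^t x)^2+u(\vf^t x)^2}/\sqrt{s(x)^2+u(x)^2}=e^{\pm 10\rho}$ (from the individual ratios) divided by $|\sin\alpha(\vf^t x)|/|\sin\alpha(x)|=e^{\pm 8\rho}$, for a total of $e^{\pm 18\rho}$.

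\textbf{Main obstacle.} The only non-mechanical point is establishing the change-of-variables identity for $s(\vf^t(x))^2$ correctly and tracking that the additive constants $10\rho$, $8\rho$, $18\rho$ are honestly what the estimates yield (rather than, say, $6\rho$ or $12\rho$); all the exponential bounds come from \eqref{definition-d} and one must be careful that the extra factor $e^{2\rho}$ in the normalization of $s,u$ and the bound $\|\mathfrak p_x\|<e^\rho$ are each accounted for exactly once. Everything else is a finite bookkeeping of $e^{\pm c\rho}$ factors.
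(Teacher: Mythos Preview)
Your proposal follows the same approach as the paper: Part~(1) is the identical matrix computation; Part~(2) proceeds exactly as the paper does, via the $\Phi$--equivariance of the lines $\R n^s$, $\R n^u$ and the change-of-variables identity relating $s(x)^2$ to $s(\vf^t(x))^2$ (the paper writes it as $s(x)^2 = 4e^{4\rho}\int_0^t \cdots + e^{2\chi t}\|\Phi^t n^s_x\|^2 s(\vf^t(x))^2$, which is your formula rearranged); Part~(3) for $s,u$ is again identical.

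The one point to tighten is your angle estimate. You correctly reduce $\tfrac{|\sin\alpha(\vf^t x)|}{|\sin\alpha(x)|}$ to $\tfrac{|\det(\Phi^t|_{N_x})|}{|\lambda^s_t\lambda^u_t|}$, but then say the bound is $e^{\pm 8\rho}$ ``after the $\rho$-additions cancel appropriately.'' As written this is a gap: if you only use $|\det\Phi^t|=e^{\pm 8\rho}$ and $|\lambda^s_t\lambda^u_t|=e^{\pm 8\rho}$ as independent facts, the quotient is a priori $e^{\pm 16\rho}$. The cancellation is real but needs the sharper observation that, writing $\sigma_1\geq\sigma_2$ for the singular values of $\Phi^t|_{N_x}$, one has $|\det\Phi^t|=\sigma_1\sigma_2$ while $|\lambda^s_t|,|\lambda^u_t|\in[\sigma_2,\sigma_1]$, so the quotient lies in $[\sigma_2/\sigma_1,\sigma_1/\sigma_2]$ and $\sigma_1/\sigma_2=\|\Phi^t\|\|\Phi^{-t}\|\leq e^{8\rho}$. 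This is exactly the content of the paper's inequality
\[
\frac{1}{\|L\|\|L^{-1}\|}\leq \frac{|\sin\angle(Lv,Lw)|}{|\sin\angle(v,w)|}\leq \|L\|\|L^{-1}\|,
\]
which it applies directly with $L=\Phi^t$. So your determinant route and the paper's route are equivalent once the missing step is supplied.
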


The proof is in Appendix \ref{Appendix-proofs}.
Part (2) is known as Oseledets-Pesin reduction, and represents the diagonalization of $\Phi$.

\subsection{Quantification of hyperbolicity: the parameters $\mathbf{Q(x),q(x), q^{s/u}(x)}$}
\label{section-parameters-qs-qu}

We now introduce another small parameter $\ve\in (0,\mathfrak r)$ such that
$\ve\ll \rho\ll 1$ (each symbol $\ll$ is defined by means of a finite number of inequalities
that need to be satisfied throughout the paper).
Instead of working with $\|C(x)^{-1}\|_{\rm Frob}$, it is more convenient to introduce:

\medskip
\noindent
{\sc The parameter $Q(x)$:} For $x\in\nuh$, let
$Q(x):=\ve^{3/\beta}\|C(x)^{-1}\|_{\rm Frob}^{-12/\beta}$.

\medskip
The hyperbolicity degenerates as $Q$ goes to $0$. Lemma~\ref{Lemma-linear-reduction}
immediately implies that
\begin{equation}\label{ratio-Q}
\tfrac{Q(\vf^t(x))}{Q(x)}=e^{\pm\frac{250\rho}{\beta}}, \ \ \forall x\in \nuh,\forall\, 0<t\leq 2\rho,
\end{equation}
and the following result.

\begin{proposition}
An invariant set $K\subset \nuh$ is uniformly hyperbolic if and only if $\inf\limits_{x\in K}Q(x)>0$.
\end{proposition}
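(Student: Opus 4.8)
The plan is to prove both implications by unpacking the definition $Q(x)=\ve^{3/\beta}\|C(x)^{-1}\|_{\rm Frob}^{-12/\beta}$, which shows that $Q$ is bounded away from zero on $K$ precisely when $\|C(x)^{-1}\|_{\rm Frob}$ is bounded above on $K$. So the proposition reduces to: an invariant set $K\subset\nuh$ is uniformly hyperbolic if and only if $\sup_{x\in K}\|C(x)^{-1}\|_{\rm Frob}<+\infty$. By Lemma~\ref{Lemma-linear-reduction}(1), $\|C(x)^{-1}\|_{\rm Frob}=\tfrac{\sqrt{s(x)^2+u(x)^2}}{|\sin\alpha(x)|}$, so this is in turn equivalent to: $s(x)$, $u(x)$ are bounded above on $K$ and $|\sin\alpha(x)|$ is bounded below on $K$. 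I would first record the cocycle identity $\Phi^{t+t'}=\Phi^t\circ\Phi^{t'}$ (Lemma~\ref{Lemma-flow-Phi}(1)) and the uniform bound \eqref{definition-d}, since these let me pass freely between ``bounds at integer/lattice times'' and ``bounds at all real times''.

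For the direction ($\Leftarrow$), assume $c:=\inf_{x\in K}Q(x)>0$, equivalently $s,u$ uniformly bounded and $\alpha$ uniformly bounded away from $0$. Then $C(x)^{-1}$ and $C(x)$ have uniformly bounded norms on $K$, so the conjugated cocycle $D_t(x):=C(\vf^t(x))^{-1}\Phi^t C(x)$ is uniformly comparable to $\Phi^t$ in norm. The point is that $D_t(x)$ is diagonal (by Lemma~\ref{Lemma-linear-reduction}(2) for small $t$, extended to all $t$ by the cocycle property and uniqueness of $n^s,n^u$): the $s$-direction $e_1$ is mapped into $e_1$ and the $u$-direction $e_2$ into $e_2$. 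From the defining integrals in (NUH1)--(NUH2) one extracts that $\|\Phi^t n^s_x\|\le e^{-\chi t}\cdot\text{(something controlled by $s(x)$)}$ uniformly — more carefully, using that $s(\vf^t(x))$ stays bounded and the standard argument that an $L^2$-in-time bound with exponential weight $e^{2\chi t}$ forces exponential decay at rate close to $\chi$ — to get constants $C'>0$, $\chi'\in(0,\chi]$ with $\|\Phi^t n^s_x\|\le C' e^{-\chi' t}\|n^s_x\|$ for all $x\in K$, $t\ge 0$, and symmetrically $\|\Phi^{-t} n^u_x\|\le C' e^{-\chi' t}$. The uniformly bounded angle then gives a genuine $\Phi$-invariant splitting $N=E^s\oplus E^u$ over $K$ with uniform hyperbolic estimates, and by \eqref{definition-d} one converts $\Phi$-hyperbolicity into $d\vf^t$-hyperbolicity transverse to $X$ (using $\|\mathfrak p_x\|<1+\rho$ and that $d\vf^t$ preserves $\mathrm{span}(X)$), i.e. $K$ is uniformly hyperbolic for $\vf$.

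For the direction ($\Rightarrow$), assume $K$ is uniformly hyperbolic: there is a continuous $d\vf^t$-invariant splitting $TM|_K = \mathrm{span}(X)\oplus E^s\oplus E^u$ with uniform constants $C_0\ge 1$, $\lambda>0$ such that $\|d\vf^t|_{E^s}\|\le C_0 e^{-\lambda t}$ and $\|d\vf^{-t}|_{E^u}\|\le C_0 e^{-\lambda t}$ for $t\ge 0$, and the angles between the three subbundles are bounded below. Projecting $E^s,E^u$ to $N=\mathrm{Ker}(\theta)$ parallel to $X$ gives $\Phi$-invariant directions spanned by unit vectors which must coincide with $n^s_x,n^u_x$ (by uniqueness in the definition of $\nuh$, noting $\lambda>\chi$ would be needed — so one should first shrink to $\lambda\ge\chi$, or observe uniform hyperbolicity plus $K\subset\nuh$ already forces the Oseledets directions to agree). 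The projection distorts norms by at most a factor $(1+\rho)/\cos(\text{angle})$, uniformly bounded since the angle between $E^{s/u}$ and $X$ is uniformly bounded below; hence $\|\Phi^t n^s_x\|\le C_1 e^{-\lambda t}$ uniformly on $K$. Plugging this into the integral $\int_0^\infty e^{2\chi t}\|\Phi^t n^s_x\|^2\,dt \le C_1^2\int_0^\infty e^{2(\chi-\lambda)t}\,dt = \tfrac{C_1^2}{2(\lambda-\chi)}$ gives $s(x)\le 2e^{2\rho}\bigl(\tfrac{C_1^2}{2(\lambda-\chi)}\bigr)^{1/2}$, a uniform bound; symmetrically for $u(x)$. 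The uniform lower bound on $|\sin\alpha(x)|$ follows from the uniform lower bound on $\angle(E^s_x,E^u_x)$ together with the bounded distortion of the projection $\mathfrak p_x$. Therefore $\|C(x)^{-1}\|_{\rm Frob}$ is uniformly bounded on $K$, so $\inf_K Q>0$.

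The main obstacle is the passage, in the ($\Leftarrow$) direction, from the finiteness of the weighted integrals $s(x),u(x)$ (plus their uniform boundedness) to a genuine uniform exponential rate $\|\Phi^t n^s_x\|\le C' e^{-\chi' t}$ valid for all $x\in K$: (NUH1) only asserts $\limsup\tfrac1t\log\|\Phi^t n^s_x\|\le-\chi$ pointwise, and the contraction can a priori oscillate, so one must genuinely use the $L^2$ control together with the (sub)multiplicativity of $t\mapsto\|\Phi^t n^s_x\|$ coming from \eqref{definition-d} and the cocycle property to upgrade this to a uniform bound — this is the standard ``Pliss/integral-to-exponential'' argument but needs to be run uniformly over $K$. (An alternative is to cite the analogous reduction for diffeomorphisms from \cite{Sarig-JAMS} or \cite{Lima-Sarig}, adapted to $\Phi$.) I would carry out the steps in the order: (i) reduce to boundedness of $\|C(x)^{-1}\|_{\rm Frob}$; (ii) reduce that to bounds on $s,u,\sin\alpha$; (iii) prove ($\Rightarrow$) by projecting the hyperbolic splitting and estimating the integrals; (iv) prove ($\Leftarrow$) via the integral-to-exponential upgrade and the angle bound.
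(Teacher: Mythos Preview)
The paper gives no proof of this proposition; it simply asserts it as an immediate consequence of Lemma~\ref{Lemma-linear-reduction} (the Oseledets--Pesin reduction). Your reduction to boundedness of $\|C(x)^{-1}\|_{\rm Frob}$, and hence of $s,u$ and $|\sin\alpha|^{-1}$, is exactly right, and your ($\Rightarrow$) direction via projecting the hyperbolic splitting and bounding the integrals is the intended argument. Your hesitation about needing the uniform rate $\lambda>\chi$ is a legitimate subtlety that the paper does not address; the hypothesis $K\subset\nuh$ is what rules out rate $\le\chi$ (e.g.\ a periodic orbit with stable exponent exactly $-\chi$ has $s=\infty$ and so is not in $\nuh$).

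Where you go astray is the ($\Leftarrow$) direction: you misidentify the main obstacle. No integral-to-exponential or Pliss-type argument is needed. You already observed that the diagonalized cocycle $D_t(x)=C(\vf^t(x))^{-1}\Phi^t C(x)$ is diagonal for all $t$; the point you then underuse is that its diagonal entries are \emph{multiplicative}, so Lemma~\ref{Lemma-linear-reduction}(2) (which gives $|A_t(x)|<e^{-\chi t}$ for $0<t\le 2\rho$) immediately yields $|A_t(x)|<e^{-\chi t}$ for every $t>0$. Combining this with the identity $\|\Phi^t n^s_x\|=|A_t(x)|\cdot s(x)/s(\vf^t(x))$ (see the proof of Lemma~\ref{Lemma-linear-reduction} in the appendix) and $s\ge\sqrt 2$ gives
\[
\|\Phi^t n^s_x\|\;\le\;\frac{\sup_K s}{\sqrt 2}\,e^{-\chi t}\qquad(t\ge 0),
\]
uniformly on $K$, and symmetrically for $n^u$. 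Together with the uniform angle bound this is uniform hyperbolicity for $\Phi$, hence for $\vf$. This is the one-line deduction the paper has in mind when it says the proposition follows from Lemma~\ref{Lemma-linear-reduction}.
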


\medskip
It will be important to identify the orbits in $\nuh$ whose hyperbolicity satisfies some recurrence
(to ensure e.g. the existence of stable and unstable manifolds) and ask how fast
$Q(\vf^t(x))$ can go to zero when $k\to \pm\infty$.
For that reason, we introduce:

\medskip
\noindent
{\sc The parameters $q(x),q^s(x),q^u(x)$:}  For $x\in\nuh$, define:
\begin{align*}
q(x)&:=\ve\inf\{e^{\ve|t|}Q(\vf^t(x)):t\in\R\}\\
q^s(x)&:=\ve\inf\{e^{\ve|t|}Q(\vf^t(x)):t\geq 0\}\\
q^u(x)&:=\ve\inf\{e^{\ve|t|}Q(\vf^t(x)):t\leq 0\}.
\end{align*}

\medskip
Clearly $0\leq q(x),q^s(x),q^u(x)\leq\ve Q(x)$, hence
these parameters are much smaller than $Q(x)$. Also,
$q^s(x)\wedge q^u(x)=q(x)$.
The families $\{q^s(\vf^t(x))\}_{t\in\R}$ and $\{q^u(\vf^t(x))\}_{t\in\R}$
represent the \emph{local quantifications of hyperbolicity} along the orbit $\{\vf^t(x)\}_{t\in\R}$.
We collect the following simple lemma, for later use.

\begin{lemma}\label{Lemma-q}
For all $x\in\nuh$ and $t\in\R$, it holds ${q(\vf^t(x))}=e^{\pm\ve |t|}q(x)$.
\end{lemma}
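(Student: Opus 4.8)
The plan is to prove the estimate $q(\vf^t(x)) = e^{\pm\ve|t|} q(x)$ directly from the definition of $q$ as an infimum over the orbit, using the group property of the flow. First I would recall that by definition
$$
q(\vf^t(x)) = \ve\inf\{e^{\ve|s|}Q(\vf^s(\vf^t(x))):s\in\R\} = \ve\inf\{e^{\ve|s|}Q(\vf^{s+t}(x)):s\in\R\},
$$
and then reparametrize by setting $u = s+t$, so that $s = u-t$ and the expression becomes $\ve\inf\{e^{\ve|u-t|}Q(\vf^u(x)):u\in\R\}$.

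The key step is the elementary triangle-type bound $|u| - |t| \le |u-t| \le |u| + |t|$, valid for all $u,t\in\R$. Multiplying by $\ve$ and exponentiating gives $e^{-\ve|t|} e^{\ve|u|} \le e^{\ve|u-t|} \le e^{\ve|t|} e^{\ve|u|}$. Since this holds termwise for every $u$, and since $Q(\vf^u(x)) > 0$, I can multiply through by $Q(\vf^u(x))$ and take the infimum over $u\in\R$ on all three sides. The factor $e^{\pm\ve|t|}$ is independent of $u$, so it pulls out of the infimum, yielding
$$
e^{-\ve|t|}\,\ve\inf_{u}\{e^{\ve|u|}Q(\vf^u(x))\} \le \ve\inf_{u}\{e^{\ve|u-t|}Q(\vf^u(x))\} \le e^{\ve|t|}\,\ve\inf_{u}\{e^{\ve|u|}Q(\vf^u(x))\}.
$$
The outer quantities are exactly $e^{\mp\ve|t|}q(x)$, and the middle quantity is $q(\vf^t(x))$ by the reparametrization above, which establishes $q(\vf^t(x)) = e^{\pm\ve|t|}q(x)$.

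There is essentially no obstacle here: the only mild point to be careful about is that the infimum defining $q(x)$ could in principle be zero, in which case one should check that the displayed chain of inequalities still makes sense — but multiplication by the positive constants $e^{\pm\ve|t|}$ preserves the value $0$, so the identity holds trivially in that case as well (and indeed for $x\in\nuh$ one always has $Q(x)\le 1$, so the quantities are finite). I would therefore present the argument as the two-line reparametrization followed by the termwise inequality and the observation that a positive constant factor commutes with $\inf$. The same computation applies verbatim to $q^s$ and $q^u$ if those analogues are ever needed, but the statement as given only concerns $q$.
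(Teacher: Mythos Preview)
Your proof is correct and follows essentially the same approach as the paper's: both arguments reparametrize the infimum defining $q(\vf^t(x))$ and apply the triangle inequality $|u|-|t|\le|u-t|\le|u|+|t|$ (which the paper writes compactly as $|t'|=|t'+t|\pm|t|$) to pull out the factor $e^{\pm\ve|t|}$. The paper's version is simply a more compressed one-line rendering of exactly the computation you spelled out.
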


\begin{proof}
Using that $|t'|=|t'+t|\pm|t|$, we have
$$
q(\vf^t(x))=e^{\pm \ve|t|}\ve \inf\{e^{\ve|t'+t|}Q(\vf^{t'+t}(x)):t'\in\R\}=e^{\pm\ve|t|}q(x).$$
The proof is complete.
\end{proof}

\subsection{The recurrently non-uniformly hyperbolic locus $\nuh^\#$}\mbox{}

\medskip
\noindent
{\sc Recurrently non-uniformly hyperbolic locus  $\nuh^\#=\nuh^\#(\vf,\chi,\rho,\theta,\ve)$:} 
It is the invariant set of points $x\in\nuh$ such that:
\begin{enumerate}[(NUH3)]
\item[(NUH3)] $q(x)>0$.
\item[(NUH4)] $\limsup\limits_{t\to+\infty}q(\vf^t(x))>0$ and $\limsup\limits_{t\to-\infty}q(\vf^t(x))>0$.
\end{enumerate}

\medskip
Note that if (NUH3) holds then $q(\vf^t(x)),q^s(\vf^t(x)),q^u(\vf^t(x))$ are positive for all $t\in\R$.
Condition (NUH4) requires that these values do not degenerate to zero in the limit. The set $\nuh^\#$
carries all $\chi$--hyperbolic measures, as we now prove.

\begin{proposition}\label{Prop-adaptedness}
If $\mu$ is a $\vf$--invariant probability measure with $\mu[\nuh]=1$, then $\mu[\nuh^\#]=1$. In particular,
if $\mu$ is $\chi$--hyperbolic then $\mu[\nuh^\#]=1$.
\end{proposition}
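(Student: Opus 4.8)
The plan is to reduce both (NUH3) and (NUH4) to the \emph{temperedness} statement that, for $\mu$-a.e.\ $x$, one has $\lim_{t\to\pm\infty}\tfrac1t\log Q(\vf^t(x))=0$.

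\textbf{Step 1 (temperedness).} Since $\mu[\nuh]=1$ and $Q\in(0,1)$ on $\nuh$, the function $\psi:=-\log Q$ is positive and finite $\mu$-a.e. Set $T:=\vf^{2\rho}$ (which preserves $\mu$), let $\mathcal I$ be its $\sigma$-algebra of invariant sets, and put $g:=\psi\circ T-\psi$. By \eqref{ratio-Q}, $|g|\le 250\rho/\beta$, so $g\in L^1(\mu)$, and telescoping gives $\tfrac1n\psi(T^{n}x)=\tfrac1n\psi(x)+\tfrac1n\sum_{k=0}^{n-1}g(T^{k}x)$ as well as $\tfrac1n\psi(T^{-n}x)=\tfrac1n\psi(x)-\tfrac1n\sum_{k=1}^{n}g(T^{-k}x)$. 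Applying Birkhoff's theorem to $T$ and to $T^{-1}$, and using $\tfrac1n\psi(x)\to0$, I get $\tfrac1n\psi(T^{n}x)\to\bar g$ and $\tfrac1n\psi(T^{-n}x)\to-\bar g$ $\mu$-a.e., where $\bar g:=\E{g\mid\mathcal I}$. Since $\psi\ge0$, both limits are $\ge0$, forcing $\bar g=0$ $\mu$-a.e.; hence $\tfrac1{2\rho n}\log Q(\vf^{\pm 2\rho n}(x))\to0$, and interpolating over the length-$\le2\rho$ gaps between consecutive multiples of $2\rho$ by means of \eqref{ratio-Q} gives the asserted limit for continuous $t$.

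\textbf{Step 2 ((NUH3)).} Fix $x$ in the full-measure set where Step~1 holds and $x\in\nuh$. Given the fixed $\ve>0$, choose $T_x>0$ so that $-\log Q(\vf^t(x))\le\tfrac\ve2|t|$ whenever $|t|\ge T_x$; then $e^{\ve|t|}Q(\vf^t(x))\ge e^{\ve|t|/2}\ge1$ for $|t|\ge T_x$, while iterating \eqref{ratio-Q} bounds $e^{\ve|t|}Q(\vf^t(x))$ below by a positive constant on the compact set $|t|\le T_x$. Therefore $\inf_{t\in\R}e^{\ve|t|}Q(\vf^t(x))>0$, i.e.\ $q(x)>0$; since $q\le q^s$ and $q\le q^u$, this also yields $q^s(x),q^u(x)>0$.

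\textbf{Step 3 ((NUH4)).} By Step~2, $\mu[\{q>0\}]=1$, and $\{q>0\}\cap\nuh=\bigcup_{n\ge1}A_n$ with $A_n:=\{q\ge1/n\}\cap\nuh$. Poincar\'e recurrence applied to the measure-preserving maps $\vf^1$ and $\vf^{-1}$ shows that $\mu$-a.e.\ point $x\in A_n$ satisfies $\vf^t(x)\in A_n$ along integer times $t\to+\infty$ and along integer times $t\to-\infty$, so $\limsup_{t\to\pm\infty}q(\vf^t(x))\ge1/n$ there. Hence $\mu(A_n\setminus G)=0$ for every $n$, where $G$ denotes the ($\vf$-invariant) Borel set cut out by (NUH4); letting $n\to\infty$ gives $\mu[G]\ge\mu[\{q>0\}]=1$. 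Combining Steps~2 and~3 yields $\mu[\nuh^\#]=1$, and the final assertion is then immediate from Proposition~\ref{Prop-NUH}. The hard part is Step~1: the subtlety is that $\psi=-\log Q$ need not be $\mu$-integrable, so one cannot simply use $\int g\,d\mu=0$; instead one exploits that the coboundary $g$ \emph{is} bounded (this is exactly \eqref{ratio-Q}) together with the one-sided bound $\psi\ge0$ --- the only place the normalization $Q<1$ enters --- to pin both Birkhoff limits $\pm\bar g$ down to $0$. The remaining steps are routine: positivity of an infimum, and Poincar\'e recurrence applied to the sublevel sets $\{q\ge1/n\}$.
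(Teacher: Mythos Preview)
Your proof is correct and follows essentially the same route as the paper: reduce (NUH3) to the temperedness statement $\lim_{t\to\pm\infty}\tfrac1t\log Q(\vf^t(x))=0$, prove temperedness via the bounded-cocycle equation \eqref{ratio-Q} for the time-$\rho$ (or time-$2\rho$) map, and then obtain (NUH4) by Poincar\'e recurrence on the sets $\{q\ge 1/n\}$. The one stylistic difference is in Step~1: the paper establishes the analogue of your ``Fact'' by first invoking Poincar\'e recurrence to see that $\log u(T^nx)$ stays bounded along a subsequence (so $0$ is a limit point of $\tfrac1n\log u(T^nx)$), and then appealing to Birkhoff to conclude the full limit exists and equals~$0$; you instead use Birkhoff on $g=\psi\circ T-\psi$ directly and exploit $\psi\ge0$ to force the conditional expectation $\bar g$ to vanish. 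Both arguments are standard and equivalent in strength here.
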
 

\begin{proof}
Note that
$$
\lim\limits_{n\to\pm\infty}\tfrac{1}{n}\log Q(\vf^{n\rho}(x))=0 \ \Longrightarrow \ 
\lim\limits_{t\to\pm\infty}\tfrac{1}{t}\log Q(\vf^t(x))=0 \ \Longrightarrow \ q(x)>0.
$$
To establish the first limit, we use the following basic fact of ergodic theory.

\medskip
\noindent
{\sc Fact:} Let $(X,\mu,T)$ be an invertible probability-preserving system, and $u:X\to(0,+\infty)$ measurable.
If there is $C>0$ such that $C^{-1}\leq \tfrac{u(Tx)}{u(x)}\leq C$ for $\mu$--a.e. $x\in X$, then
$\lim\limits_{n\to\pm\infty}\tfrac{1}{n}\log u(T^nx)=0$ for $\mu$--a.e. $x\in X$.

\begin{proof}[Proof of the fact.]
By the Poincar\'e recurrence theorem, $\liminf\limits_{n\to\pm\infty}u(T^nx)<+\infty$ a.e., 
hence $\liminf\limits_{n\to\pm\infty}\tfrac{1}{n}\log u(T^nx)=0$ a.e. Now, 
applying the Birkhoff ergodic theorem to the bounded function $U:=\log u\circ T-\log u$,
$\lim\limits_{n\to\pm\infty}\tfrac{1}{n}\log u(T^nx)$ exists a.e. Therefore 
$$
\lim\limits_{n\to\pm\infty}\tfrac{1}{n}\log u(T^nx)=\liminf\limits_{n\to\pm\infty}\tfrac{1}{n}\log u(T^nx)=0
$$
for $\mu$--a.e. $x\in X$.
\end{proof}

Now we prove the proposition. Assume that $\mu[\nuh]=1$. By (\ref{ratio-Q}),
$\tfrac{Q(\vf^\rho(x))}{Q(x)}=e^{\pm\frac{250\rho}{\beta}}$
for all $x\in\nuh$. Applying the Fact to the transformation $T=\vf^\rho$ and the function $Q$,
we conclude that (NUH3) holds $\mu$--a.e. Finally, by the Poincar\'e recurrence theorem,
(NUH4) also holds $\mu$--a.e.
\end{proof}

\subsection{The $\Z$--indexed versions of $\mathbf{ q^{s/u}(x)}$: the parameters $\mathbf{p^{s/u}(x)}$}
\label{section-Z-indexed}
We now define discrete time approximate versions of $q^s(x),q^u(x)$
that satisfy recursive explicit formulas, which we call {\em $\Z$--indexed versions} of $q^s(x),q^u(x)$.
Recall that $r_\Lambda$ is the Poincar\'e return time of the proper section $\Lambda$ of size $\rho/2$.
In particular, $0<\inf(r_\Lambda)\leq \sup(r_\Lambda)\leq\rho/2$.

\medskip
\noindent
{\sc $\Z$--indexed versions of $q^s,q^u$:} Let $x\in\nuh$. For each sequence
$\mathcal T=\{t_n\}_{n\in\Z}$ of real numbers with $\tfrac{1}{2}\inf(r_\Lambda)\leq t_{n+1}-t_n\leq 2\sup(r_\Lambda)$,
define:
\begin{align*}
p^s(x,\mathcal T,n)&:=\ve\inf\{e^{\ve(t_m-t_n)}Q(\vf^{t_m}(x)):m\geq n\} \\
p^u(x,\mathcal T,n)&:=\ve\inf\{e^{\ve(t_n-t_m)}Q(\vf^{t_m}(x)):m\leq n\}.
\end{align*}

\medskip
Clearly, $p^{s/u}(x,\mathcal T,n)\geq q^{s/u}(\vf^{t_n}(x))$.
As the choice of $\mathcal T$ will be always clear in the context, 
we will simply write
$p^{s/u}(\vf^{t_n}(x))$ for $p^{s/u}(x,\mathcal T,n)$. As a matter of fact, although the values
$p^{s/u}(\vf^{t_n}(x))$ do depend on the choice of $\mathcal T$,
they are not very sensitive to this choice.

\begin{proposition}\label{Prop-Z-par}
The following holds for all $x\in\nuh^\#$ and $\mathcal T= \{t_n\}_{n\in\Z}$ with 
$\tfrac{1}{2}\inf(r_\Lambda)\leq t_{n+1}-t_n\leq 2\sup(r_\Lambda)$.

\begin{enumerate}[{\rm (1)}]
\item {\sc Robustness:} Let $\mathfrak H:=\ve\rho+\tfrac{250\rho}{\beta}$. For all $n\in\Z$ and $t\in[t_n,t_{n+1}]$,
it holds:
$$
\frac{p^{s/u}(\vf^{t_n}(x))}{q^{s/u}(\vf^t(x))}=e^{\pm\mathfrak H}.
$$
\item {\sc Greedy algorithm:} For all $n\in\Z$ it holds:
\begin{align*}
p^s(\vf^{t_n}(x))&=\min\left\{e^{\ve(t_{n+1}-t_n)}p^s(\vf^{t_{n+1}}(x)),\ve Q(\vf^{t_n}(x))\right\}\\
p^u(\vf^{t_n}(x))&=\min\left\{e^{\ve(t_n-t_{n-1})}p^u(\vf^{t_{n-1}}(x)),\ve Q(\vf^{t_n}(x))\right\}.
\end{align*}
In particular:
\begin{align*}
\ve Q(\vf^{t_n}(x)) \geq\;\; & p^s(\vf^{t_n}(x))\geq e^{-\ve(t_n-t_m)}p^s(\vf^{t_m}(x)),\ \forall n\geq m, \\
\ve Q(\vf^{t_n}(x)) \geq\;\; & p^u(\vf^{t_n}(x))\geq e^{-\ve(t_m-t_n)}p^s(\vf^{t_m}(x)),\ \forall m\geq n.
\end{align*}
\item {\sc Maximality:} $p^s(\vf^{t_n}(x))=\ve Q(\vf^{t_n}(x))$
for infinitely many $n>0$, and $p^u(\vf^{t_n}(x))=\ve Q(\vf^{t_n}(x))$
for infinitely many $n<0$. 
\end{enumerate}
\end{proposition}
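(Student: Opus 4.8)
The plan is to prove the three statements in order, deriving each from the definitions of $p^{s/u}(x,\mathcal T,n)$ together with the control on $Q$ along short pieces of orbit provided by \eqref{ratio-Q} and the recurrence hypothesis defining $\nuh^\#$.

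\textbf{Robustness.} Fix $n$ and $t\in[t_n,t_{n+1}]$. For the $s$--parameter, I would compare $p^s(\vf^{t_n}(x))=\ve\inf\{e^{\ve(t_m-t_n)}Q(\vf^{t_m}(x)):m\ge n\}$ with $q^s(\vf^t(x))=\ve\inf\{e^{\ve\tau}Q(\vf^{t+\tau}(x)):\tau\ge 0\}$. Given any $m\ge n$, since $0\le t-t_n\le t_{n+1}-t_n\le 2\sup(r_\Lambda)\le\rho$, we have $|t_m-t|=|t_m-t_n|\pm|t-t_n|$, so $e^{\ve(t_m-t_n)}=e^{\pm\ve\rho}e^{\ve(t_m-t)}$; and if $t_m\ge t$ the point $\vf^{t_m}(x)$ is directly among those tested by $q^s(\vf^t(x))$, while if $t<t_m$ fails (only finitely many such $m$, namely $m=n$ when $t>t_n$) we use \eqref{ratio-Q} to write $Q(\vf^{t_m}(x))=e^{\pm250\rho/\beta}Q(\vf^{t}(x))$ since $|t_m-t|\le\rho\le 2\rho$. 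Conversely, given $\tau\ge 0$, let $m$ be the largest index with $t_m\le t+\tau$; then $0\le t+\tau-t_m\le t_{m+1}-t_m\le\rho$, so \eqref{ratio-Q} gives $Q(\vf^{t+\tau}(x))=e^{\pm250\rho/\beta}Q(\vf^{t_m}(x))$ and the exponential weights match up to $e^{\pm\ve\rho}$. Combining both directions yields the ratio $e^{\pm(\ve\rho+250\rho/\beta)}=e^{\pm\mathfrak H}$. The $u$--case is symmetric, replacing future by past.

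\textbf{Greedy algorithm.} This is essentially splitting the infimum defining $p^s(\vf^{t_n}(x))$ according to whether $m=n$ or $m\ge n+1$: the term $m=n$ contributes $\ve Q(\vf^{t_n}(x))$, and the terms $m\ge n+1$ contribute $\ve\inf\{e^{\ve(t_m-t_n)}Q(\vf^{t_m}(x)):m\ge n+1\}=e^{\ve(t_{n+1}-t_n)}\cdot\ve\inf\{e^{\ve(t_m-t_{n+1})}Q(\vf^{t_m}(x)):m\ge n+1\}=e^{\ve(t_{n+1}-t_n)}p^s(\vf^{t_{n+1}}(x))$, using the identity $t_m-t_n=(t_{n+1}-t_n)+(t_m-t_{n+1})$. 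The $p^u$ identity is the time-reversed version. The displayed inequalities then follow by iterating: $p^s(\vf^{t_n}(x))\le\ve Q(\vf^{t_n}(x))$ is immediate, and $p^s(\vf^{t_n}(x))\le e^{\ve(t_{n+1}-t_n)}p^s(\vf^{t_{n+1}}(x))$ chained from $n$ up to $m$ gives $p^s(\vf^{t_n}(x))\le e^{\ve(t_m-t_n)}p^s(\vf^{t_m}(x))$, i.e. $p^s(\vf^{t_m}(x))\ge e^{-\ve(t_m-t_n)}p^s(\vf^{t_n}(x))$ as stated (with $n,m$ renamed appropriately).

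\textbf{Maximality.} This is where the hypothesis $x\in\nuh^\#$, specifically (NUH4), enters: $\limsup_{t\to+\infty}q(\vf^t(x))>0$. Suppose for contradiction that $p^s(\vf^{t_n}(x))=\ve Q(\vf^{t_n}(x))$ for only finitely many $n>0$. Then by the greedy formula, beyond some $n_0$ one always has $p^s(\vf^{t_n}(x))=e^{\ve(t_{n+1}-t_n)}p^s(\vf^{t_{n+1}}(x))$, and chaining from $n_0$ to any $N>n_0$ gives $p^s(\vf^{t_{n_0}}(x))=e^{\ve(t_N-t_{n_0})}p^s(\vf^{t_N}(x))\ge e^{\ve(t_N-t_{n_0})}q^s(\vf^{t_N}(x))\ge e^{\ve(t_N-t_{n_0})}q(\vf^{t_N}(x))$. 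Since $t_N-t_{n_0}\to+\infty$ (the increments are bounded below by $\tfrac12\inf(r_\Lambda)>0$), choosing $N$ along a subsequence where $q(\vf^{t_N}(x))$ stays bounded below by a positive constant forces $p^s(\vf^{t_{n_0}}(x))=+\infty$, contradicting $p^s(\vf^{t_{n_0}}(x))\le\ve Q(\vf^{t_{n_0}}(x))<\infty$. Hence the maximality set is infinite for $n>0$; the $p^u$ statement follows symmetrically from the past part of (NUH4). I expect the main obstacle to be the bookkeeping in the Robustness proof — in particular handling the finitely many indices $m$ with $t_m$ on the ``wrong side'' of $t$ and keeping the two comparison directions' error constants consistent — whereas the greedy and maximality parts are short once the definitions are unwound.
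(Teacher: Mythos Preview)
Your proposal is correct and follows essentially the same approach as the paper. The paper organizes part~(1) slightly differently---it first factors $\tfrac{p^s(\vf^{t_n}(x))}{q^s(\vf^t(x))}$ through $q^s(\vf^{t_n}(x))$ using Lemma~\ref{Lemma-q}, then compares $p^s$ and $q^s$ at the common time $t_n$ by partitioning the continuous infimum into the intervals $[t_m,t_{m+1}]$---whereas you compare the two infima term-by-term directly; both routes yield the same constant $\mathfrak H$. (Minor slip: where you write ``if $t<t_m$ fails'' you mean ``if $t_m\ge t$ fails'', i.e.\ $t_m<t$.) For part~(3) the paper invokes part~(1) to get $\limsup_n p^s(\vf^{t_n}(x))>0$ and then derives the contradiction, while you use the chain $p^s\ge q^s\ge q$ together with (NUH4) directly; your version implicitly needs $\limsup_N q(\vf^{t_N}(x))>0$ along the discrete grid, which follows at once from Lemma~\ref{Lemma-q} since consecutive $t_N$ are $\rho$-close.
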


\begin{proof}
We prove the statements for $p^s$ (the proofs for $p^u$ are analogous).

\medskip
\noindent

\noindent
(1) Fix $x\in\nuh^\#$, $n\in\Z$, $t\in [t_n,t_{n+1}]$.
By Lemma \ref{Lemma-q}, we have $\tfrac{p^s(\vf^{t_n}(x))}{q^s(\vf^t(x))}=\tfrac{p^s(\vf^{t_n}(x))}{q^s(\vf^{t_n}(x))}
\cdot\tfrac{q^s(\vf^{t_n}(x))}{q^s(\vf^t(x))}=e^{\pm\ve\rho}\tfrac{p^s(\vf^{t_n}(x))}{q^s(\vf^{t_n}(x))}$,
hence we need to estimate $\tfrac{p^s(\vf^{t_n}(x))}{q^s(\vf^{t_n}(x))}$.
For $m\geq n$, let $\gamma_m:=e^{\ve(t_m-t_n)}Q(\vf^{t_m}(x))$ and
$\delta_m:=\inf\{e^{\ve(t-t_n)}Q(\vf^t(x)):t_m\leq t\leq t_{m+1}\}$. By definition,
we have $p^s(\vf^{t_n}(x))=\ve\inf\{\gamma_m:m\geq n\}$ and $q^s(\vf^{t_n}(x))=\ve\inf\{\delta_m:m\geq n\}$.
Since $\tfrac{Q(\vf^t(x))}{Q(\vf^{t_m}(x))}=e^{\pm\frac{250\rho}{\beta}}$ for
$t_m\leq t\leq t_{m+1}$ (see (\ref{ratio-Q})), we get:
\begin{align*}
&\ \gamma_m\geq \delta_m=e^{\ve(t_m-t_n)}\inf\{e^{\ve(t-t_m)}Q(\vf^t(x)):t_m\leq t\leq t_{m+1}\}\\
&\geq e^{\ve(t_m-t_n)}e^{-\frac{250\rho}{\beta}}Q(\vf^{t_m}(x))=e^{-\frac{250\rho}{\beta}}\gamma_m.
\end{align*}
Hence $1\leq \tfrac{p^s(\vf^{t_n}(x))}{q^s(\vf^{t_n}(x))}\leq e^{\frac{250\rho}{\beta}}$
and so $\tfrac{p^s(\vf^{t_n}(x))}{q^s(\vf^t(x))}=e^{\pm\mathfrak H}$.

\medskip
\noindent
(2) We have
\begin{align*}
&\ p^s(\vf^{t_n}(x))=\ve\inf\left\{e^{\ve(t_m-t_n)}Q(\vf^{t_m}(x)):m\geq n\right\}\\
&=\min\left\{\ve\inf\left\{e^{\ve(t_m-t_n)}Q(\vf^{t_m}(x)):m\geq n+1\right\},\ve Q(\vf^{t_n}(x))\right\}\\
&=\min\left\{e^{\ve(t_{n+1}-t_n)}p^s(\vf^{t_{n+1}}(x)),\ve Q(\vf^{t_n}(x))\right\},
\end{align*}
which proves the recursive relation. Clearly $p^s\leq \ve Q$. For, the other side of the inequality,
note that if $n\geq m$ then:
\begin{align*}
&\ p^s(\vf^{t_n}(x))=\ve\inf\{e^{\ve(t_\ell-t_n)}Q(\vf^{t_\ell}(x)):\ell\geq n\}\\
&=e^{-\ve(t_n-t_m)}\ve\inf\{e^{\ve(t_\ell-t_m)}Q(\vf^{t_\ell}(x)):\ell\geq n\}\\
&\geq e^{-\ve(t_n-t_m)}\ve\inf\{e^{\ve(t_\ell-t_m)}Q(\vf^{t_\ell}(x)):\ell\geq m\}=e^{-\ve(t_n-t_m)}p^s(\vf^{t_m}(x)).
\end{align*}

\medskip
\noindent
(3) The proof is based on \cite[Prop. 8.3]{Sarig-JAMS}.
Since $x\in\nuh^\#$, $\limsup_{t\to\infty}q^s(\vf^t(x))>0$.
By part (1), $\limsup_{n\to\infty}p^s(\vf^{t_n}(x))>0$ hence 
$\exists\delta_0>0$ such that $p^s(\vf^{t_n}(x))>\delta_0$ for infinitely many $n>0$.
By contradiction, assume $\exists n_0>0$ such that
$p^s(\vf^{t_n}(x))<\ve Q(\vf^{t_n}(x))$ for all $n\geq n_0$. By the greedy algorithm on part (2),
$p^s(\vf^{t_n}(x))=e^{\ve(t_{n+1}-t_n)}p^s(\vf^{t_{n+1}}(x))$ for all $n\geq n_0$.
This implies that $p^s(\vf^{t_{n_0}}(x))=e^{\ve(t_{n_0+\ell}-t_{n_0})}p^s(\vf^{t_{n_0+\ell}}(x))$
for all $\ell\geq 0$, hence $p^s(\vf^{t_{n_0}}(x))>e^{\ve(t_{n_0+\ell}-t_{n_0})}\delta_0$
for infinitely many $\ell\geq 0$, which is a contradiction since $e^{\ve(t_{n_0+\ell}-t_{n_0})}\to\infty$ as $\ell\to\infty$.
\end{proof}

\subsection{Pesin charts $\Psi_x$} Recall that $R[\mathfrak r]:=[-\mathfrak r,\mathfrak r]^2\subset\R^2$. 
We define Pesin charts for $x\in\Lambda\cap\nuh$.

\medskip
\noindent
{\sc Pesin chart at $x$:} It is the map $\Psi_x:R[\mathfrak r]\to \widehat{\Lambda}$ defined by
$\Psi_x:=\exp{x}\circ C(x)$.

\medskip
The center $x$ of the Pesin chart $\Psi_x$ always belongs to the reference section $\Lambda$,
while its image is contained in the security section $\widehat{\Lambda}$. 
In particular, when $x$ is close to the boundary of $\Lambda$, the image of $\Psi_x$ is {\em not}
contained in $\Lambda$. This definition is different from \cite{Lima-Sarig}, and it is the first
step to bypass the boundary effect mention in Section \ref{ss-method-proof}.

For $x\in \widehat{\Lambda}$, let $\iota_x:T_x\widehat{\Lambda}\to\R^2$ be an isometry.
If $x\in\Lambda,y\in \widehat{\Lambda}$ with $d(x,y)\leq 2\mathfrak r$,
we consider as in section~\ref{s.metric} an isometry $P_{y,x}:T_yM\to T_xM$.
If $A:\R^2\to T_y\widehat{\Lambda}$
is a linear map, we define $\widetilde{A}:\R^2\to \R^2$ by $\widetilde{A}:=\iota_x\circ P_{y,x}\circ A$.
The map $\widetilde A$ depends on $x$ but $\|\widetilde{A}\|$ does not.

\begin{lemma}\label{Lemma-Pesin-chart}
For all $x\in\Lambda\cap\nuh$, the Pesin chart $\Psi_x$ is a diffeomorphism onto its image and:
\begin{enumerate}[{\rm (1)}]
\item $\Psi_x$ is $2$--Lipschitz and $\Psi_x^{-1}$ is $2\|C(x)^{-1}\|$--Lipschitz.
\item $\|\widetilde{d(\Psi_x)_{v_1}}-\widetilde{d(\Psi_x)_{v_2}}\|\leq \mathfrak K\|v_1-v_2\|$
for all $v_1,v_2\in R[\mathfrak r]$.
\end{enumerate}
\end{lemma}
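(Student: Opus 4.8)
The plan is to unwind the definition $\Psi_x=\exp{x}\circ C(x)$ and verify each claim by composing the regularity estimates we already have for $\exp{x}$ (conditions (Exp1)--(Exp4)) with the linear-algebra bounds on $C(x)$ from Lemma~\ref{Lemma-linear-reduction}(1). First I would observe that $\Psi_x$ is a diffeomorphism onto its image: $C(x)$ is a linear isomorphism $\R^2\to N_x=T_x\widehat\Lambda$ (it is invertible since $\alpha(x)\neq 0$ for $x\in\nuh$, as noted after the definition of $\alpha$), and since $\|C(x)\|\leq 1$ the image $C(x)(R[\mathfrak r])$ is contained in the ball $B_x[\mathfrak r]\subset B_x[2\mathfrak r]$, on which $\exp{x}$ is a diffeomorphism by (Exp1). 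Hence $\Psi_x$ is a composition of diffeomorphisms.

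For part (1): the Lipschitz constant of $\Psi_x$ is at most $\Lip(\exp{x})\cdot\|C(x)\|$. By (Exp2) we have $\|d(\exp{x})_v\|\leq 2$ for $\|v\|\leq 2\mathfrak r$, and by Lemma~\ref{Lemma-linear-reduction}(1), $\|C(x)\|\leq 1$, so $\Psi_x$ is $2$--Lipschitz. For the inverse, $\Psi_x^{-1}=C(x)^{-1}\circ\exp{x}^{-1}$, and again by (Exp2), $\exp{x}^{-1}$ is $2$--Lipschitz on $B_x$, so $\Psi_x^{-1}$ is $2\|C(x)^{-1}\|$--Lipschitz. (One should check that $\exp{x}^{-1}$ composed with $C(x)^{-1}$ is defined on all of the image $\Psi_x(R[\mathfrak r])$, which is immediate since $\Psi_x$ is a bijection onto that image.)

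For part (2): we have $d(\Psi_x)_v=d(\exp{x})_{C(x)v}\circ C(x)$, so
$$
\widetilde{d(\Psi_x)_{v_1}}-\widetilde{d(\Psi_x)_{v_2}}
=\left(\widetilde{d(\exp{x})_{C(x)v_1}}-\widetilde{d(\exp{x})_{C(x)v_2}}\right)\circ C(x),
$$
where the tilde is the normalization from Section~\ref{s.metric} (pre/post-composing with the relevant parallel transports and the isometry $\iota_x$); note $C(x)$ lands in $T_x\widehat\Lambda$ so only a post-composition is involved and the identity above is exact. Taking norms and using submultiplicativity, $\|C(x)\|\leq 1$, and the fact that the Sasaki distance between the tangent vectors $C(x)v_1, C(x)v_2$ based at the same point $x$ is just $\|C(x)v_1-C(x)v_2\|\leq\|v_1-v_2\|$, condition (Exp3) (with $y_1=y_2=x$) gives
$$
\left\|\widetilde{d(\Psi_x)_{v_1}}-\widetilde{d(\Psi_x)_{v_2}}\right\|
\leq \mathfrak K\,\Sas(C(x)v_1,C(x)v_2)\,\|C(x)\|\leq \mathfrak K\|v_1-v_2\|,
$$
as claimed. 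The only genuinely delicate point is bookkeeping the basepoint conventions for the tilde operation and confirming that, since here all vectors $C(x)v_i$ are based at the single point $x$, the Sasaki distance reduces cleanly to the Euclidean norm of their difference and (Exp3) applies directly with coincident basepoints; this is where I would be most careful, but it should present no real obstacle.
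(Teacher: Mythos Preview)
Your proof is correct and follows essentially the same approach as the paper: unwind $\Psi_x=\exp{x}\circ C(x)$, use $\|C(x)\|\leq 1$ together with (Exp2) for part (1), and the chain rule with (Exp3) for part (2). One tiny slip: $R[\mathfrak r]$ has corners of norm $\sqrt{2}\,\mathfrak r$, so $C(x)(R[\mathfrak r])\subset B_x[\sqrt{2}\,\mathfrak r]$ rather than $B_x[\mathfrak r]$, but this is still inside $B_x[2\mathfrak r]$ and the argument is unaffected.
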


\begin{proof}
Since $C(x)$ is a contraction,
$C(x)R[\mathfrak r]\subset B_x[2\mathfrak r]$
and so $\Psi_x$ is well-defined with inverse $C(x)^{-1}\circ \exp{x}^{-1}$.
It is a diffeomorphism because $C(x)$ and $\exp{x}$ are.

\medskip
\noindent
(1) $C(x)$ is a contraction and $\exp{x}$ is 2--bi-Lipschitz in $B_x[2\mathfrak r]$.
Therefore $\Psi_x$ is $2$--Lipschitz and $\Psi_x^{-1}$ is $2\|C(x)^{-1}\|$--Lipschitz.

\medskip
\noindent
(2) Since $C(x)v_i\in B_x[2\mathfrak r]$, condition (Exp3) gives that
\begin{align*}
&\ \|\widetilde{d(\Psi_x)_{v_1}}-\widetilde{d(\Psi_x)_{v_2}}\|=
\|\widetilde{d(\exp{x})_{C(x)v_1}}\circ C(x)-
\widetilde{d(\exp{x})_{C(x)v_2}}\circ C(x)\|\\
&\leq \mathfrak K\|C(x)v_1-C(x)v_2\|\leq \mathfrak K\|v_1-v_2\|.
\end{align*}
The proof is complete.
\end{proof}

\subsection{Holonomy maps $g_x^{\pm}$ in Pesin charts}
The parameter $Q(x)$ defines the size of the domain where we can control $g_x^{\pm}$ in Pesin charts:
in these coordinates, $g_x^{\pm}$ are small perturbations of hyperbolic linear maps.

\newcounter{thm-nlp}
\setcounter{thm-nlp}{\value{theorem}}

\begin{theorem}\label{Thm-non-linear-Pesin}
The following holds for all $\ve>0$ small enough. For all $x\in\Lambda\cap\nuh$
the map $f_x^+:=\Psi_{f(x)}^{-1}\circ g_x^+\circ\Psi_x$ is well-defined on
$R[10Q(x)]$ and satisfies:
\begin{enumerate}[{\rm (1)}]
\item $d(f_x^+)_0=C(f(x))^{-1}\circ \Phi^{r_\Lambda(x)}\circ C(x)$ and
$e^{-4\rho}<m(d(f_x^+)_0)\leq \|d(f_x^+)_0\|< e^{4\rho}$.
\item $f_x^+=\begin{bmatrix}A & 0 \\ 0 & B\end{bmatrix}+H$ where:
\begin{enumerate}[{\rm (a)}]
\item $e^{-4\rho}<|A|<e^{-\chi r_\Lambda(x)}$ and $e^{\chi r_\Lambda(x)}<|B|<e^{4\rho}$,
cf. Lemma \ref{Lemma-linear-reduction}{\rm (2)}.
\item $H(0)=0$ and $dH_0=0$.
\item $\|H\|_{C^{1+\frac{\beta}{2}}}<\ve$.
\end{enumerate}
\end{enumerate}
A similar statement holds for $f_x^-:=\Psi_x^{-1}\circ g_{f(x)}^-\circ \Psi_{f(x)}$.
\end{theorem}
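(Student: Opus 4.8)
The plan is to reduce the whole statement to the chain rule, to identify $d(f_x^+)_0$ by a direct computation using Lemmas~\ref{Lemma-map-g} and~\ref{Lemma-linear-reduction}, and then to absorb the possibly enormous factor $\|C(x)^{-1}\|$ (coming from the inverse Pesin chart) into the smallness of the domain $R[10Q(x)]$. For well-definedness: since $s(x),u(x)\geq\sqrt2$ and $|\sin\alpha(x)|\leq1$, Lemma~\ref{Lemma-linear-reduction}(1) gives $\|C(x)^{-1}\|_{\rm Frob}\geq2$, hence $Q(x)\leq\ve^{3/\beta}2^{-12/\beta}<1$, which is as small as we like once $\ve$ is small. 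As $\Psi_x$ and $g_x^+=\mathfrak q_{D_j}\restriction_{B_x}$ are $2$--Lipschitz (Lemmas~\ref{Lemma-map-g} and~\ref{Lemma-Pesin-chart}), with $\Psi_x(0)=x$ and $g_x^+(x)=f(x)$, one gets $g_x^+(\Psi_x(R[10Q(x)]))\subset B(f(x),40\sqrt2\,Q(x))\subset B_{f(x)}$, which lies in the domain of $\exp{f(x)}^{-1}$ by (Exp1). Hence $f_x^+=C(f(x))^{-1}\circ\exp{f(x)}^{-1}\circ g_x^+\circ\exp{x}\circ C(x)$ is well-defined on $R[10Q(x)]$.

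Next I would compute the derivative at the origin. Using $d(\exp{x})_0=\mathrm{id}$, $d(\exp{f(x)}^{-1})_{f(x)}=\mathrm{id}$, and $d(g_x^+)_x=\Phi^{r_\Lambda(x)}$ (Lemma~\ref{Lemma-map-g}), the chain rule yields $d(f_x^+)_0=C(f(x))^{-1}\circ\Phi^{r_\Lambda(x)}\circ C(x)$. Since $0<r_\Lambda(x)<\rho/2\leq2\rho$, Lemma~\ref{Lemma-linear-reduction}(2) identifies this with $\mathrm{diag}(A,B)$ obeying (2)(a), and then $|A|<e^{-\chi r_\Lambda(x)}<1<e^{\chi r_\Lambda(x)}<|B|$ gives $e^{-4\rho}<m(d(f_x^+)_0)\leq\|d(f_x^+)_0\|<e^{4\rho}$, which is (1). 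Setting $H:=f_x^+-\mathrm{diag}(A,B)$, one has $f_x^+(0)=C(f(x))^{-1}\exp{f(x)}^{-1}(g_x^+(x))=0$, so $H(0)=0$, and $dH_0=0$ by construction; this is (2)(b).

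The core is the estimate $\|H\|_{C^{1+\beta/2}}<\ve$ on $R[10Q(x)]$. I would write $d(f_x^+)_v=M_3(z)\,M_2(y)\,M_1(v)$ with $M_1=d\Psi_x$, $M_2=dg_x^+$, $M_3=d\Psi_{f(x)}^{-1}$, $y=\Psi_x(v)$, $z=g_x^+(y)$, everything read after the parallel--transport normalization $\wt{\,\cdot\,}$ of Section~\ref{s.metric} (the geodesic--triangle errors being negligible, as noted there). Then $\|M_1\|,\|M_2\|\leq2$, $\|M_3\|\leq2\|C(f(x))^{-1}\|$, the maps $\mathfrak q_{D_j}$ have uniformly $\beta$--Hölder derivatives (Section~\ref{ss.proper-section}), and (Exp3)--(Exp4) together with Lemma~\ref{Lemma-Pesin-chart}(2) make $M_1,M_3$ Lipschitz with constants $\lesssim\mathfrak K$ and $\lesssim\mathfrak K\|C(f(x))^{-1}\|$, while $v\mapsto y,z$ are $\lesssim1$--Lipschitz. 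Telescoping the triple product and using $\|v_1-v_2\|\leq{\rm diam}\,R[10Q(x)]<1$ to turn Lipschitz factors into $\|v_1-v_2\|^\beta$ gives $\Hol{\beta}(dH)\leq C_1(\mathfrak K,L,\rho)\,\|C(f(x))^{-1}\|$ on $R[10Q(x)]$. Restricting the Hölder quotient to a set of diameter $\leq20\sqrt2\,Q(x)$ costs a factor $(20\sqrt2\,Q(x))^{\beta/2}$; since $Q(x)^{\beta/2}=\ve^{3/2}\|C(x)^{-1}\|_{\rm Frob}^{-6}$ and $\|C(f(x))^{-1}\|\leq e^{18\rho}\|C(x)^{-1}\|_{\rm Frob}$ (Lemma~\ref{Lemma-linear-reduction}(3)), one obtains $\Hol{\beta/2}(dH)\leq C_2\,\ve^{3/2}\|C(x)^{-1}\|_{\rm Frob}^{-5}\leq C_2\,2^{-5}\ve^{3/2}$, which is $<\ve/3$ for $\ve$ small. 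Since $dH_0=0=H(0)$ and the domain has diameter $<1$, also $\|dH\|_{C^0}\leq\Hol{\beta/2}(dH)$ and $\|H\|_{C^0}\leq\|dH\|_{C^0}$, so $\|H\|_{C^{1+\beta/2}}<\ve$, proving (2)(c).

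For $f_x^-$ one argues identically: $g_{f(x)}^-=(g_x^+)^{-1}$ gives $f_x^-=(f_x^+)^{-1}$ and $d(f_x^-)_0=C(x)^{-1}\circ\Phi^{-r_\Lambda(x)}\circ C(f(x))=\mathrm{diag}(1/A,1/B)$, now with $1/B$ the contracting and $1/A$ the expanding eigenvalue, still with the $e^{\pm4\rho}$--bounds, and the perturbative estimate is verbatim the same (with $\Psi_{f(x)}$ in front and $\Psi_x^{-1}$ last) on $R[10Q(f(x))]$, comparable to $R[10Q(x)]$ by~\eqref{ratio-Q}. I expect the main obstacle to be the perturbative step: carrying the parallel--transport normalizations cleanly through the telescoping of the triple product, and — the genuinely quantitative point — verifying that the large norm $\|C(x)^{-1}\|$ forced in by $\Psi_x^{-1}$ is exactly cancelled by the negative power of $\|C(x)^{-1}\|$ built into $Q(x)^{\beta/2}$. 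This is precisely why $Q$ carries the exponents $3/\beta$ and $12/\beta$, and why relaxing the target Hölder exponent from $\beta$ to $\beta/2$ leaves just enough slack.
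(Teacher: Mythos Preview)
Your proposal is correct and follows essentially the same approach as the paper's proof: the well-definedness via Lipschitz bounds, the chain-rule identification of $d(f_x^+)_0$ with the diagonal matrix from Lemma~\ref{Lemma-linear-reduction}(2), and the telescoped triple-product estimate in which the extra factor $\|C(f(x))^{-1}\|$ is absorbed by $Q(x)^{\beta/2}=\ve^{3/2}\|C(x)^{-1}\|_{\rm Frob}^{-6}$ after dropping the H\"older exponent from $\beta$ to $\beta/2$. The only cosmetic difference is that the paper passes from $Q(x)^{\beta/2}$ to $Q(f(x))^{\beta/2}$ via~\eqref{ratio-Q} and then cancels directly against $\|C(f(x))^{-1}\|$, whereas you convert $\|C(f(x))^{-1}\|$ to $\|C(x)^{-1}\|$ via Lemma~\ref{Lemma-linear-reduction}(3) first; both routes give the same $\ve^{3/2}$ bound.
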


The proof is in Appendix \ref{Appendix-proofs}.

\subsection{The overlap condition}\label{section-overlap}

We now control the change coordinates from $\Psi_x$ to $\Psi_y$ when $x,y$
are ``sufficiently close''. This can only be made when both $x,y$ and
$C(x),C(y)$ are very close. In the sequel we will make extensive use of Pesin
charts with different domains.

\medskip
\noindent
{\sc Pesin chart $\Psi_x^\eta$:} It is restriction of $\Psi_x$ to $R[\eta]$, where $0<\eta\leq Q(x)$.
\medskip

Recall that $d$ is the distance on $\widehat \Lambda$
associated to the induced Riemannian metric.

\medskip

\noindent
{\sc $\ve$--overlap:} We say that two Pesin charts $\Psi_{x_1}^{\eta_1},\Psi_{x_2}^{\eta_2}$
{\em $\ve$--overlap} if $\tfrac{\eta_1}{\eta_2}=e^{\pm\ve}$ and
$d(x_1,x_2)+\|\widetilde{C(x_1)}-\widetilde{C(x_2)}\|<(\eta_1\eta_2)^4$.
In particular,  $x_1,x_2$ belong to the same local connected component of $\Lambda$.
We write $\Psi_{x_1}^{\eta_1}\overset{\ve}{\approx}\Psi_{x_2}^{\eta_2}$.

\begin{lemma}
The following holds for $\ve>0$ small. If
$\Psi_{x_1}^{\eta_1}\overset{\ve}{\approx}\Psi_{x_2}^{\eta_2}$, then
$$\Psi_{x_i}(R[10Q(x_i)])\subset B_{x_1}\cap B_{x_2}.$$
\end{lemma}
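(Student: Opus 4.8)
The plan is to show that for each $i\in\{1,2\}$, the set $\Psi_{x_i}(R[10Q(x_i)])$ is contained in a ball around $x_i$ of radius much smaller than $\mathfrak{r}$, and then use the overlap condition to conclude that this ball is also contained in $B_{x_j}=B(x_j,2\mathfrak{r})$ for $j\neq i$.

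\textbf{Step 1: Estimating the image of the Pesin chart.} First I would use Lemma~\ref{Lemma-Pesin-chart}(1): $\Psi_{x_i}$ is $2$--Lipschitz, so for $v\in R[10Q(x_i)]$ we have $d(\Psi_{x_i}(v),x_i)=d(\Psi_{x_i}(v),\Psi_{x_i}(0))\leq 2\|v\|\leq 2\cdot 10\sqrt{2}\,Q(x_i)=20\sqrt{2}\,Q(x_i)$. Since $Q(x_i)<1$ and, more importantly, $Q(x_i)=\ve^{3/\beta}\|C(x_i)^{-1}\|_{\rm Frob}^{-12/\beta}\leq \ve^{3/\beta}\cdot 2^{-6/\beta}$ (using $\|C(x)^{-1}\|_{\rm Frob}\geq \sqrt{s(x)^2+u(x)^2}/|\sin\alpha(x)|\geq 2$ from Lemma~\ref{Lemma-linear-reduction}(1) and $s,u\geq\sqrt 2$), for $\ve$ small enough we get $20\sqrt 2\,Q(x_i)<\tfrac{\mathfrak r}{2}$, say. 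Hence $\Psi_{x_i}(R[10Q(x_i)])\subset B(x_i,\tfrac{\mathfrak r}{2})$.

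\textbf{Step 2: Using the overlap condition to swap centers.} By the $\ve$--overlap hypothesis, $d(x_1,x_2)+\|\widetilde{C(x_1)}-\widetilde{C(x_2)}\|<(\eta_1\eta_2)^4$; in particular $d(x_1,x_2)<(\eta_1\eta_2)^4<1$, and more crucially $d(x_1,x_2)$ is tiny—certainly $d(x_1,x_2)<\mathfrak r$ for $\ve$ small since $\eta_i\leq Q(x_i)$ is itself small. Then for any $z\in\Psi_{x_i}(R[10Q(x_i)])$ and $j\neq i$, the triangle inequality gives $d(z,x_j)\leq d(z,x_i)+d(x_i,x_j)<\tfrac{\mathfrak r}{2}+\mathfrak r<2\mathfrak r$, so $z\in B(x_j,2\mathfrak r)=B_{x_j}$. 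Combined with $z\in B(x_i,\tfrac{\mathfrak r}{2})\subset B_{x_i}$, this yields $\Psi_{x_i}(R[10Q(x_i)])\subset B_{x_1}\cap B_{x_2}$ for both $i$, which is exactly the claim.

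\textbf{Main obstacle.} The argument is essentially a chain of crude size estimates, so there is no deep difficulty; the only thing requiring a little care is \emph{bookkeeping the smallness of $Q(x_i)$ and $\eta_i$ in terms of $\ve$}. One must verify that the constants $20\sqrt 2\,Q(x_i)$ and $d(x_1,x_2)$ are genuinely dominated by a fixed fraction of $\mathfrak r$ once $\ve$ is chosen small enough—recalling that $\mathfrak r$ was fixed \emph{before} $\ve$, and $\ve\in(0,\mathfrak r)$ with $\ve\ll\rho\ll 1$, so this is legitimate. I would state the needed inequality (e.g. $20\sqrt 2\,\ve^{3/\beta}2^{-6/\beta}<\mathfrak r/2$ and $\eta_i^8<\mathfrak r$) as one more among the finitely many constraints defining ``$\ve$ small enough''. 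No step is expected to be genuinely hard.
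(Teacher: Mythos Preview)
Your proof is correct and follows essentially the same approach as the paper's: use the $2$--Lipschitz property of $\Psi_{x_i}$ to contain the image in a ball of radius $O(Q(x_i))=O(\ve^{3/\beta})$ around $x_i$, then use the overlap bound $d(x_1,x_2)<(\eta_1\eta_2)^4\leq\ve^{24/\beta}$ together with the triangle inequality to switch centers. The paper's write-up is slightly more economical (it bounds the single sum $40Q(x_1)+d(x_1,x_2)<40\ve^{3/\beta}+\ve^{24/\beta}<2\mathfrak r$ rather than splitting into $\mathfrak r/2+\mathfrak r$), but the argument is identical.
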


In particular, it makes sense to consider $\|\widetilde{C(x_1)}-\widetilde{C(x_2)}\|$.

\begin{proof}
Let $i=1$.
By Lemma \ref{Lemma-Pesin-chart}(1), $\Psi_{x_1}(R[10Q(x_1)])\subset B(x_1,40Q(x_1))$. This latter ball
is contained in $B_{x_1}$ since $40 Q(x_1)<40\ve^{3/\beta}<2\mathfrak r$ when $\ve>0$ is small.
Also:
$$
\Psi_{x_1}(R[10Q(x_1)])\subset B(x_1,40 Q(x_1))\subset
B(x_2,40 Q(x_1)+d(x_1,x_2)).
$$
Since $40Q(x_1)+d(x_1,x_2)< 40\ve^{3/\beta}+\ve^{24/\beta}<2\mathfrak r$
for small $\ve>0$, $\Psi_{x_1}(R[10Q(x_1)])\subset B_{x_2}$.
\end{proof}
The next result guarantees that $\ve$--overlap is strong enough to change coordinates.

\begin{proposition}\label{Lemma-overlap}
The following holds for $\ve>0$ small.
If $\Psi_{x_1}^{\eta_1}\overset{\ve}{\approx}\Psi_{x_2}^{\eta_2}$ then:
\begin{enumerate}[{\rm (1)}]
\item {\sc Control of $s,u$:}
$\frac{s(x_1)}{s(x_2)}=e^{\pm(\eta_1\eta_2)^3}$ and $\frac{u(x_1)}{u(x_2)}=e^{\pm(\eta_1\eta_2)^3}$.
\item {\sc Control of $\alpha$:} $\frac{|\sin\alpha(x_1)|}{|\sin\alpha(x_2)|}=e^{\pm(\eta_1\eta_2)^3}$.
\item {\sc Overlap:} $\Psi_{x_i}(R[e^{-2\ve}\eta_i])\subset \Psi_{x_j}(R[\eta_j])$ for $i,j=1,2$.
\item {\sc Change of coordinates:} For $i,j=1,2$, the map $\Psi_{x_i}^{-1}\circ\Psi_{x_j}$
is well-defined in $R[\mathfrak r]$,
and $\|\Psi_{x_i}^{-1}\circ\Psi_{x_j}-{\rm Id}\|_{C^2}<\ve(\eta_1\eta_2)^2$
where the norm is taken in $R[\mathfrak r]$.
\end{enumerate}
\end{proposition}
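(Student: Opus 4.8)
The plan is to reduce everything to two basic estimates: a comparison of the linear parts $C(x_1),C(x_2)$, and the near-identity behaviour of the transition map $\exp{x_1}^{-1}\circ\exp{x_2}$ on a ball of radius $2\mathfrak r$. First I would establish items (1) and (2) together. By definition $\|C(x)^{-1}\|_{\rm Frob}=\tfrac{\sqrt{s(x)^2+u(x)^2}}{|\sin\alpha(x)|}$, and the columns of $C(x)$ are $n^s_x/s(x)$ and $n^u_x/u(x)$; so knowing $\widetilde{C(x_1)}$ is close to $\widetilde{C(x_2)}$ and knowing $x_1,x_2$ are close (hence the parallel transports $P_{x_1,x_2}$ move $n^{s/u}_{x_1}$ only slightly — here I would use the Hölder continuity of $\Phi$ and the structural definition of $n^{s/u}_x$, or more simply extract $s,u,\alpha$ as smooth functions of the columns of $C$) one reads off that the ratios $s(x_1)/s(x_2)$, $u(x_1)/u(x_2)$, $|\sin\alpha(x_1)|/|\sin\alpha(x_2)|$ are $1+O(\|\widetilde{C(x_1)}-\widetilde{C(x_2)}\|)$. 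Since $\|\widetilde{C(x_1)}-\widetilde{C(x_2)}\|<(\eta_1\eta_2)^4$ and $\eta_i\leq Q(x_i)\leq\ve^{3/\beta}$, for $\ve$ small this $O(\cdot)$ error is $\leq (\eta_1\eta_2)^3$, giving the $e^{\pm(\eta_1\eta_2)^3}$ bounds. The one subtlety is that $\widetilde{C(x_i)}$ is normalized by an isometry $\iota_{x_i}$ and a basepoint choice; but $\|\cdot\|$ is basepoint-independent and the quantities $s,u,|\sin\alpha|$ are isometry-invariant functions of the Gram data of the columns, so this causes no harm.

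Next, for item (4), which is really the heart, I would write $\Psi_{x_i}^{-1}\circ\Psi_{x_j}=C(x_i)^{-1}\circ(\exp{x_i}^{-1}\circ\exp{x_j})\circ C(x_j)$. The middle factor $\Theta:=\exp{x_i}^{-1}\circ\exp{x_j}$ is, by (Exp1)--(Exp4), a $C^{1+\beta}$ (indeed $C^2$, using (Exp4) to control second derivatives) diffeomorphism defined on $B_x\supset$ the relevant ball, and since $d(x_i,x_j)<(\eta_1\eta_2)^4$ is tiny, $\Theta$ is $C^2$-close to the identity: $\|\Theta-\mathrm{Id}\|_{C^2}=O(d(x_i,x_j))=O((\eta_1\eta_2)^4)$, where the implied constant depends only on $\mathfrak K$. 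Writing $\Theta=\mathrm{Id}+\delta$ with $\delta(0)$ small and $\|\delta\|_{C^2}$ small, conjugating by $C$'s gives
\[
\Psi_{x_i}^{-1}\circ\Psi_{x_j}-\mathrm{Id}=C(x_i)^{-1}\bigl(C(x_j)-C(x_i)\bigr)+C(x_i)^{-1}\circ\delta\circ C(x_j).
\]
For the first term I use $\|C(x_i)^{-1}\|\leq \ve^{-3/\beta}Q(x_i)^{-\beta/12}$... more precisely $\|C(x)^{-1}\|_{\rm Frob}=(\ve^{3/\beta}/Q(x))^{\beta/12}$, so $\|C(x_i)^{-1}\|\leq (\eta_i)^{-\beta/12}\ve^{3/12}\leq \eta_i^{-1}$ crudely for $\ve$ small; combined with $\|C(x_j)-C(x_i)\|=\|\widetilde{C(x_1)}-\widetilde{C(x_2)}\|<(\eta_1\eta_2)^4$ (up to the basepoint-isometry correction, negligible by the remark in the excerpt) this contributes $O(\eta_i^{-1}(\eta_1\eta_2)^4)\ll \ve(\eta_1\eta_2)^2$. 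For the second term, $\|C(x_i)^{-1}\|\cdot\|C(x_j)\|\leq \eta_i^{-1}\cdot 1$ and $\|\delta\|_{C^2}=O((\eta_1\eta_2)^4)$ on the $C(x_j)$-image of $R[\mathfrak r]$, giving again $\ll \ve(\eta_1\eta_2)^2$; one must be a little careful that composing with the contraction $C(x_j)$ only helps, and that the $C^2$ norm of a composition picks up at most $\|C(x_i)^{-1}\|$ times derivative factors, all absorbed. Summing, $\|\Psi_{x_i}^{-1}\circ\Psi_{x_j}-\mathrm{Id}\|_{C^2}<\ve(\eta_1\eta_2)^2$ on $R[\mathfrak r]$, which is (4); well-definedness on $R[\mathfrak r]$ follows since $C(x_j)R[\mathfrak r]\subset B_{x_j}[2\mathfrak r]$ lands in the domain of $\exp{x_i}^{-1}$ by the preceding lemma.

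Finally item (3) is a direct consequence of (4): since $\|\Psi_{x_i}^{-1}\circ\Psi_{x_j}-\mathrm{Id}\|_{C^0}<\ve(\eta_1\eta_2)^2$, the map $\Psi_{x_j}^{-1}\circ\Psi_{x_i}$ moves points of $R[e^{-2\ve}\eta_i]$ by less than $\ve(\eta_1\eta_2)^2<(e^{2\ve}-1)e^{-2\ve}\eta_j$ (using $\tfrac{\eta_i}{\eta_j}=e^{\pm\ve}$ so $e^{-2\ve}\eta_i\leq e^{-\ve}\eta_j$), hence its image stays inside $R[\eta_j]$; equivalently $\Psi_{x_i}(R[e^{-2\ve}\eta_i])\subset\Psi_{x_j}(R[\eta_j])$. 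The main obstacle I anticipate is bookkeeping the powers of $\eta$ against the blow-up $\|C(x_i)^{-1}\|$: one must verify that the exponent $4$ in the overlap condition $(\eta_1\eta_2)^4$ beats the at most $\eta^{-1}$-type growth of $\|C^{-1}\|$ with room to spare for the target $\ve(\eta_1\eta_2)^2$ — this is exactly why the definition uses a high power, and the inequality $\ve^{3/\beta}\|C(x)^{-1}\|_{\rm Frob}^{12/\beta}=Q(x)^{-1}$ together with $\eta\leq Q$ must be invoked carefully rather than the crude bounds I sketched. A secondary, purely technical point is justifying that the basepoint dependence of $\widetilde{A}$-differences (the geodesic-triangle-area corrections flagged in Section~\ref{s.metric}) is genuinely of lower order than $(\eta_1\eta_2)^3$, which follows because those areas are $O(d(x_1,x_2)^2)=O((\eta_1\eta_2)^8)$.
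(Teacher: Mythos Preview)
Your proposal is correct and essentially matches the paper's proof in Appendix~A: the same decomposition
\[
\Psi_{x_i}^{-1}\circ\Psi_{x_j}-\mathrm{Id}=C(x_i)^{-1}(C(x_j)-C(x_i))+C(x_i)^{-1}\circ(\Theta-\mathrm{Id})\circ C(x_j)
\]
for (4), and the same extraction of $s,u,\alpha$ from the columns of $C$ for (1)--(2). Two minor differences worth noting: the paper proves (3) directly from (Exp1)--(Exp2) rather than as a corollary of (4), and it handles the $\|C^{-1}\|$-versus-$\eta$ bookkeeping via the clean bound $\|C(x)^{-1}\|\leq\|C(x)^{-1}\|_{\rm Frob}^{12/\beta}=\ve^{3/\beta}/Q(x)\leq\ve^{3/\beta}/(\eta_1\eta_2)$ (valid because $\|C^{-1}\|_{\rm Frob}\geq\sqrt{2}>1$), which is sharper and simpler than your crude $\eta_i^{-1}$ estimate. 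One caution: your aside about using ``H\"older continuity of $\Phi$ and the structural definition of $n^{s/u}_x$'' for (1)--(2) would not work, since the Oseledets directions are only measurable in $x$---the comparison must go through $\widetilde{C(x_i)}$, as in your second (and the paper's) approach.
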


The proof is in Appendix \ref{Appendix-proofs}.

\subsection{The maps $f_{x,y}^+,f_{x,y}^-$}

Let $x,y\in\Lambda\cap\nuh$ such that $\Psi_{f(x)}^{\eta}\overset{\ve}{\approx}\Psi_y^{\eta'}$.
In this section, we change $\Psi_{f(x)}$ by $\Psi_y$ in the definition of $f_x^+$ and obtain a result
similar to Theorem \ref{Thm-non-linear-Pesin}.

\medskip
\noindent
{\sc The maps $f_{x,y}^+$ and $f_{x,y}^-$:} If $\Psi_{f(x)}^{\eta}\overset{\ve}{\approx}\Psi_y^{\eta'}$,
we define the map $f_{x,y}^+:=\Psi_y^{-1}\circ g_x^+\circ \Psi_x$.
If $\Psi_{x}^{\eta}\overset{\ve}{\approx}\Psi_{f^{-1}(y)}^{\eta'}$, we define
$f_{x,y}^-:=\Psi_x^{-1}\circ g_y^-\circ \Psi_y$. 

\medskip
Since any meaningful estimate of $f_{x,y}^{\pm}$ in the $C^{1+\beta/2}$ norm cannot be better than
that of Theorem \ref{Thm-non-linear-Pesin}, and to keep estimates of size $\ve$, we
consider the $C^{1+\beta/3}$ norm of $f_{x,y}^\pm$.

\newcounter{keep-thm}
\setcounter{keep-thm}{\value{theorem}}
\setcounter{theorem}{\value{thm-nlp}}
\renewcommand{\thetheorem}{\arabic{section}.\arabic{theorem}'}

\begin{theorem}\label{Thm-non-linear-Pesin-2}
The following holds for all $\ve>0$ small enough.
If $x,y\in\Lambda\cap\nuh$ and $\Psi_{f(x)}^{\eta}\overset{\ve}{\approx}\Psi_{y}^{\eta'}$, then
$f_{x,y}^+$ is well-defined on $R[10Q(x)]$ and can be written as
$f_{x,y}^+=\begin{bmatrix}A & 0 \\ 0 & B\end{bmatrix}+H$ where:
\begin{enumerate}[{\rm (1)}]
\item $e^{-4\rho}<|A|<e^{-\chi r_\Lambda(x)}$, $e^{\chi r_\Lambda(x)}<|B|<e^{4\rho}$, cf. Lemma \ref{Lemma-linear-reduction}{\rm (2)}.
\item For $i=1,2$, it holds $\|H(0)\|<\ve\eta$, $\|dH_0\|<\ve\eta^{\beta/3}$,
$\Hol{\beta/3}(dH)<\ve$.
\end{enumerate}
If $\Psi_{x}^{\eta}\overset{\ve}{\approx}\Psi_{f^{-1}(y)}^{\eta'}$
then a similar statement holds for $f_{x,y}^-$.
\end{theorem}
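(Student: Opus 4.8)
The plan is to deduce Theorem~\ref{Thm-non-linear-Pesin-2} from Theorem~\ref{Thm-non-linear-Pesin} by composing the map $f_x^+$ already controlled there with the change-of-coordinates map $\Psi_{f(x)}^{-1}\circ\Psi_y$ controlled in Proposition~\ref{Lemma-overlap}. Explicitly, since $f_{x,y}^+=\Psi_y^{-1}\circ g_x^+\circ\Psi_x=(\Psi_y^{-1}\circ\Psi_{f(x)})\circ(\Psi_{f(x)}^{-1}\circ g_x^+\circ\Psi_x)=(\Psi_y^{-1}\circ\Psi_{f(x)})\circ f_x^+$, the first step is to record that $f_x^+$ is defined on $R[10Q(x)]$ and equals $\left[\begin{smallmatrix}A&0\\0&B\end{smallmatrix}\right]+H_0$ with $H_0(0)=0$, $d(H_0)_0=0$, $\|H_0\|_{C^{1+\beta/2}}<\ve$ (Theorem~\ref{Thm-non-linear-Pesin}). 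The domain claim is immediate since the domain of $f_{x,y}^+$ is that of $f_x^+$, and one only needs $f_x^+(R[10Q(x)])$ to land in the domain of $\Psi_y^{-1}\circ\Psi_{f(x)}$, which by Lemma~\ref{Lemma-linear-reduction}(2) and the $\ve$-overlap hypothesis is contained in a ball of radius comparable to $Q(f(x))$, hence inside $R[\mathfrak r]$ where Proposition~\ref{Lemma-overlap}(4) applies.

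Next I would write $\theta:=\Psi_y^{-1}\circ\Psi_{f(x)}=\mathrm{Id}+E$ with $\|E\|_{C^2}<\ve(\eta\eta')^2$ from Proposition~\ref{Lemma-overlap}(4); note $\eta\leq Q(f(x))$, so in particular $\eta\eta'\leq Q(f(x))^2$ is tiny. Then
$$
f_{x,y}^+=\theta\circ f_x^+=f_x^++E\circ f_x^+=\begin{bmatrix}A&0\\0&B\end{bmatrix}+\underbrace{H_0+E\circ f_x^+}_{=:H}.
$$
Part~(1) is then inherited verbatim from Theorem~\ref{Thm-non-linear-Pesin}(2)(a) (the linear part is unchanged). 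For part~(2), I would estimate the three quantities for $H=H_0+E\circ f_x^+$ separately. For $\|H(0)\|$: $H_0(0)=0$ and $\|E(f_x^+(0))\|\leq\|E\|_{C^0}<\ve(\eta\eta')^2<\ve\eta$ (using $\eta'\eta\leq Q(f(x))^2<1$), so $\|H(0)\|<\ve\eta$. For $\|dH_0\|$: $d(H_0)_0=0$ and $d(E\circ f_x^+)_0=dE_{f_x^+(0)}\circ d(f_x^+)_0$, whose norm is at most $\|dE\|_{C^0}\cdot\|d(f_x^+)_0\|<\ve(\eta\eta')^2\cdot e^{4\rho}$ by Theorem~\ref{Thm-non-linear-Pesin}(1); this is $<\ve\eta^{\beta/3}$ for $\ve$ small since $(\eta\eta')^2e^{4\rho}\ll\eta^{\beta/3}$. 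For the Hölder seminorm $\Hol{\beta/3}(dH)$: bound $\Hol{\beta/3}(dH_0)\leq\Hol{\beta/2}(dH_0)\cdot(\text{diam})^{\beta/6}$ or more simply use $\|H_0\|_{C^{1+\beta/2}}<\ve$ and interpolation on the bounded domain $R[10Q(x)]$ to get $\Hol{\beta/3}(dH_0)<\tfrac{\ve}{2}$; and $\Hol{\beta/3}(d(E\circ f_x^+))$ is controlled by the chain rule from $\|E\|_{C^2}$, $\|f_x^+\|_{C^1}$, $\Hol{\beta/3}(df_x^+)$, all bounded, giving a term $<\tfrac{\ve}{2}$ for $\ve$ small. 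Summing, $\Hol{\beta/3}(dH)<\ve$.

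Finally, the statement for $f_{x,y}^-=\Psi_x^{-1}\circ g_y^-\circ\Psi_y$ under $\Psi_x^\eta\overset{\ve}{\approx}\Psi_{f^{-1}(y)}^{\eta'}$ is handled symmetrically: write $f_{x,y}^-=(\Psi_x^{-1}\circ\Psi_{f^{-1}(y)})\circ f_y^-$ where $f_y^-=\Psi_{f^{-1}(y)}^{-1}\circ g_y^-\circ\Psi_y$ is the map from the "$f_x^-$" part of Theorem~\ref{Thm-non-linear-Pesin} (applied at the point $f^{-1}(y)$), and repeat the above composition estimate. I expect the main obstacle to be purely bookkeeping: tracking that the change-of-coordinates error $(\eta\eta')^2$ and the flow-displacement/chart-size constants conspire correctly so that all three error terms in part~(2) come out below $\ve$ (not merely $O(\ve)$), which forces careful use of the hierarchy $\ve\ll\rho\ll1$ and of the powers of $Q$; in particular one must check the passage from the $C^{1+\beta/2}$ control of $H_0$ to the $C^{1+\beta/3}$ control of $H$, where the loss of regularity in the exponent is exactly what absorbs the constants from the chain-rule estimates of $E\circ f_x^+$.
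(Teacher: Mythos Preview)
Your proposal is correct and follows essentially the same approach as the paper: both write $f_{x,y}^+=(\Psi_y^{-1}\circ\Psi_{f(x)})\circ f_x^+$, invoke Theorem~\ref{Thm-non-linear-Pesin} for $f_x^+$ and Proposition~\ref{Lemma-overlap}(4) for the change of coordinates, and then control $H(0)$, $dH_0$, and $\Hol{\beta/3}(dH)$ by chain-rule estimates using the smallness of $\ve(\eta\eta')^2$ and the passage from the $\beta/2$ to the $\beta/3$ H\"older exponent on the domain $R[10Q(x)]$. The only cosmetic difference is that you split $H=H_0+E\circ f_x^+$ and bound each piece, whereas the paper bounds $dH_v-dH_w=dg_{f_x^+(v)}\circ d(f_x^+)_v-dg_{f_x^+(w)}\circ d(f_x^+)_w$ directly without separating the two summands; the arithmetic is the same.
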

\renewcommand{\thetheorem}{\arabic{section}.\arabic{theorem}}
\begin{proof}
We write $f_{x,y}^+=(\Psi_y^{-1}\circ\Psi_{f(x)})\circ f_x^+=:g\circ f_x^+$ and see it as a
small perturbation of $f_x^+$. By Theorem \ref{Thm-non-linear-Pesin},
$$
f_x^+(0)=0,\ \|d(f_x^+)\|_{C^0}<2e^{4\rho},\ \|d(f_x^+)_v-d(f_x^+)_w\|\leq \ve\|v-w\|^{\beta/2},\forall v,w\in R[10Q(x)],
$$
where the $C^0$ norm is taken in $R[10Q(x)]$,
and by Proposition \ref{Lemma-overlap}(4) we have
$$
\|g-{\rm Id}\|<\ve(\eta\eta')^2,\ \|d(g-{\rm Id})\|_{C^0}<\ve(\eta\eta')^2,\ \|dg_v-dg_w\|\leq\ve(\eta\eta')^2\|v-w\|^{\beta/2}
$$
for $v,w\in R[\mathfrak r]$, where the $C^0$ norm is taken in $R[\mathfrak r]$.

We first prove that $f_{x,y}^+$ is well-defined on $R[10Q(x)]$. For $\ve>0$ small enough we have
$f_x^+(R[10Q(x)])\subset B(0,40e^{4\rho}Q(x))\subset R[\mathfrak r]$
since $40e^{4\rho}Q(x)<40e^{4\rho}\ve^{3/\beta}<\mathfrak r$.
By Proposition \ref{Lemma-overlap}(4), $f_{x,y}^+$ is well-defined.
 
Letting $A,B$ as in Lemma  \ref{Lemma-linear-reduction}, part (1) is clear, so we focus on part (2).
We have $\|H(0)\|=\|g(0)\|<\ve(\eta\eta')^2<\ve\eta$
and for $\ve>0$ small enough:
$$
\|dH_0\|\leq \|dg_0\circ d(f_x^+)_0-d(f_x^+)_0\|\leq \|d(g-{\rm Id})_0\|\|d(f_x^+)_0\|
<\ve(\eta\eta')^2 e^{4\rho}<\ve\eta^{\beta/3}.
$$
Finally, since $f_x^+(R[10Q(x)])\subset R[\mathfrak r]$, if $\ve>0$ is small then
for $v,w\in R[10Q(x)]$ it holds:
\begin{align*}
&\ \|dH_v-dH_w\|=\|dg_{f_x^+(v)}\circ d(f_x^+)_v-dg_{f_x^+(w)}\circ d(f_x^+)_w\|\\
&\leq \|dg_{f_x^+(v)}-dg_{f_x^+(w)}\|\|d(f_x^+)_v\|+\|dg_{f_x^+(w)}\|\|d(f_x^+)_v-d(f_x^+)_w\|\\
&\leq \ve(\eta\eta')^2\|f_x^+(v)-f_x^+(w)\|^{\beta/2}\|d(f_x^+)\|_{C^0}+\ve\|dg\|_{C^0}\|v-w\|^{\beta/2}\\
&\leq \left[\ve(\eta\eta')^2\|d(f_x^+)\|_{C^0}^{1+\beta/2}+40\ve\|dg\|_{C^0}Q(x)^{\beta/6}\right]\|v-w\|^{\beta/3}\\
&\leq \left[\eta^2\eta'^2(2e^{4\rho})^{1+\beta/2}+ 80Q(x)^{\beta/6}\right]\ve\|v-w\|^{\beta/3}\\
&\leq \left[\ve^{12/\beta}(2e^{4\rho})^{1+\beta/2}+80\ve^{1/2}\right]\ve\|v-w\|^{\beta/3}<\ve\|v-w\|^{\beta/3}.
\end{align*}
The proof is now complete.
\end{proof}

\section{Invariant manifolds and shadowing}

Up to now, we have fixed $\vf,\chi,\rho,\Lambda,\widehat\Lambda,\theta$ and $\ve$, where
$\rho,\ve$ are small parameters. In this section, we:
\begin{enumerate}[(1)]
\item Define \emph{$\ve$--double charts} $\Psi_x^{p^s,p^u}$, which are double versions of Pesin charts
whose stable and unstable sizes $p^s,p^u$ may differ. The parameters $p^s/p^u$ control separately the 
local stable/unstable hyperbolicity at $x$.
\item Define \emph{generalized pseudo-orbit}, which is a sequence $\underline v$ of $\varepsilon$--double charts
satisfying {\em edge conditions}, which are nearest neighbor conditions relating the parameters of consecutive
$\ve$--double charts.
\item Associate to each generalized pseudo-orbit its \emph{local stable and unstable manifolds}
$V^s[\underline v]$ and $V^u[\underline v]$. As a consequence, we obtain a \emph{shadowing lemma}.
\end{enumerate}

\subsection{Pseudo-orbits}\label{ss.pseudo.orbits}\mbox{}

\medskip
\noindent
{\sc $\ve$--double chart:} An {\em $\ve$--double chart} is a pair of Pesin charts
$\Psi_x^{p^s,p^u}=(\Psi_x^{p^s},\Psi_x^{p^u})$ where
$0<p^s,p^u\leq \ve Q(x)$.

\medskip
The parameters $p^s/p^u$ are local quantifications of the hyperbolicity at $x$.
One can think of them as a definite size for the stable and unstable manifolds at $x$.
\medskip

\noindent
{\sc Transition time:}
For two $\ve$--double charts $v=\Psi_x^{p^s,p^u}$, $w=\Psi_y^{q^s,q^u}$
we define $T(v,w)$ by
$$
\min\left\{\min\{T^+(z):z\in \Psi_x(R[\tfrac{1}{20}(p^s\wedge p^u)])\},
\\ \min\{-T^-(z):z\in \Psi_y(R[\tfrac{1}{20}(q^s\wedge q^u)])\}\right\},
$$
where $T^+:B_x\to\R$ and $T^-:B_y\to\R$ are the $C^{1+\beta}$ functions satisfying
$g_x^+=\vf^{T^+}$, $g_{f^{-1}(y)}^{-}=\vf^{T^-}$ with $T^+(x)=r_\Lambda(x)$ and $T^-(y)=-r_\Lambda(f^{-1}(y))$. 

\medskip
\noindent
{\sc Edge $v\overset{\ve}{\rightarrow}w$:} Given two $\ve$--double charts $v=\Psi_x^{p^s,p^u}$,
$w=\Psi_y^{q^s,q^u}$, we draw an {\em edge} from $v$ to $w$ if the two following conditions are
satisfied:
\medskip

\begin{enumerate}[iii\,]
\item[(GPO1)] $\Psi_{f(x)}^{q^s\wedge q^u}\overset{\ve}{\approx}\Psi_y^{q^s\wedge q^u}$ and
$\Psi_{f^{-1}(y)}^{p^s\wedge p^u}\overset{\ve}{\approx}\Psi_x^{p^s\wedge p^u}$.
\smallskip

\item[(GPO2)] The following estimates hold:
\begin{align}
\label{gpo2-a}&
e^{-\ve p^s}\min\{e^{\ve T(v,w)}q^s,e^{-\ve}\ve Q(x)\}\leq p^s\leq \min\{e^{\ve T(v,w)}q^s,\ve Q(x)\}\\
\label{gpo2-b}&
e^{-\ve q^u}\min\{e^{\ve T(v,w)}p^u,e^{-\ve}\ve Q(y)\}\leq q^u\leq \min\{e^{\ve T(v,w)}p^u,\ve Q(y)\}.
\end{align}
\end{enumerate}

\begin{remark}\label{rmk-time}
In the above notation, if $v\overset{\ve}{\rightarrow} w$ then by Theorem \ref{Thm-non-linear-Pesin-2} 
we have
$$
g_y^{-}(\Psi_y(R[\tfrac{1}{20}(q^s\wedge q^u)]))\subset \Psi_x(R[\tfrac{1}{15}(p^s\wedge p^u)])
$$
and so $T(v,w)=T^+(z)$ for some $z\in \Psi_x(R[\tfrac{1}{15}(p^s\wedge p^u)])$.
In particular, $T(v,w)\leq \rho$.
\end{remark}

\medskip
\noindent
{\sc $\ve$--generalized pseudo-orbit ($\ve$--gpo):} An {\em $\ve$--generalized pseudo-orbit ($\ve$--gpo)}
is a sequence $\un{v}=\{v_n\}_{n\in\Z}$ of $\ve$--double charts
such that $v_n\overset{\ve}{\rightarrow}v_{n+1}$ for all $n\in\Z$. We say that
$\un v$ is {\em regular} if there are $v,w$ such that $v_n=v$ for infinitely many $n>0$ and
$v_n=w$ for infinitely many $n<0$.

\medskip
\noindent
{\sc Positive and negative $\ve$--gpo:}
A {\em positive $\ve$--gpo} is a sequence $\un{v}^+=\{v_n\}_{n\geq 0}$ of $\ve$--double charts
such that $v_n\overset{\ve}{\rightarrow}v_{n+1}$ for all $n\geq 0$. A {\em negative $\ve$--gpo}
is a sequence $\un{v}^-=\{v_n\}_{n\leq 0}$ of $\ve$--double charts
such that $v_n\overset{\ve}{\rightarrow}v_{n+1}$ for all $n\leq -1$. 

\medskip
Condition (GPO1) allows to pass from an $\ve$--double chart at $x$
to an $\ve$--double chart at $y$ and vice-versa. Condition (GPO2) is a greedy recursion
that implies that the local quantifications of hyperbolicity are ``as large as possible''.
The need of (GPO2) will be clear in the proof of Theorem \ref{Thm-coarse-graining} (coarse graining)
and Theorem \ref{Thm-inverse} (inverse theorem).

\begin{lemma}\label{Lemma-minimum}
If $v=\Psi_x^{p^s,p^u},w=\Psi_y^{q^s,q^u}$ are $\ve$--double charts
satisfying {\rm (GPO2)} then $\tfrac{p^s\wedge p^u}{q^s\wedge q^u}=e^{\pm 2\ve}$.
\end{lemma}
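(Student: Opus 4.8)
The plan is to unwind the two inequalities (GPO2) and to recall that the transition time $T(v,w)$ is small. First I would observe, via Remark \ref{rmk-time}, that $T(v,w)\leq\rho$, and since we also have $T(v,w)\geq\tfrac12\inf(r_\Lambda)>0$ (the transition is realized at a genuine return of a point near $x$), the quantity $T(v,w)$ lies in a fixed bounded interval; in particular $e^{\pm\ve T(v,w)}=e^{\pm\ve\rho}$, which is harmless since $\ve\ll\rho\ll1$.

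Next I would compare $p^s\wedge p^u$ with $q^s\wedge q^u$. From \eqref{gpo2-a}, $p^s\leq e^{\ve T(v,w)}q^s$, and from \eqref{gpo2-b}, $q^u\leq e^{\ve T(v,w)}p^u$, i.e. $p^u\geq e^{-\ve T(v,w)}q^u$. These give a one-sided bound in each of two ``crossed'' directions, so the argument splits into cases according to which of $p^s,p^u$ realizes the minimum on the left and which of $q^s,q^u$ on the right. The clean way is: (i) $p^s\wedge p^u\leq p^s\leq e^{\ve T(v,w)}q^s$ and similarly, using the lower bounds in (GPO2) (the $e^{-\ve p^s}$ and $e^{-\ve q^u}$ factors, which are $e^{\pm\ve\ve Q}=e^{\pm O(\ve^2)}$ since $p^s,q^u\leq\ve Q\leq\ve$), one gets $p^s\geq e^{-\ve p^s}\min\{e^{\ve T(v,w)}q^s,e^{-\ve}\ve Q(x)\}$; and one must rule out the possibility that the $\ve Q(x)$ term is the active one unless $q^s\wedge q^u$ is comparably large. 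Here the bound $p^s\leq\ve Q(x)$ together with $p^s\geq e^{-O(\ve^2)}e^{-\ve}\ve Q(x)$ forces $p^s=e^{\pm O(\ve)}\ve Q(x)$ in that branch, and then $q^s\geq e^{-\ve T(v,w)}e^{\ve}p^s$ (from the lower half of \eqref{gpo2-a} when the first term is active) keeps everything within $e^{\pm2\ve}$. Doing the symmetric analysis with \eqref{gpo2-b} for the $u$-parameters, and then taking minima, yields $p^s\wedge p^u=e^{\pm2\ve}(q^s\wedge q^u)$ after absorbing all the $e^{\pm\ve\rho}$, $e^{\pm O(\ve^2)}$ errors into a single $e^{\pm2\ve}$ (legitimate since $\ve$ is taken small enough that $\ve\rho+C\ve^2<\ve$, say).

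The main obstacle I anticipate is the bookkeeping of which term attains the two inner minima in \eqref{gpo2-a} and \eqref{gpo2-b}: a priori the right-hand side can be governed either by the ``transported'' parameter ($e^{\ve T(v,w)}q^s$, resp. $e^{\ve T(v,w)}p^u$) or by the ``cap'' $\ve Q(x)$, resp. $\ve Q(y)$, and one cannot assume a single regime holds. The trick is that in every case $p^s$ is pinned between $e^{-\ve p^s}$ times something and $\ve Q(x)$, and $e^{-\ve p^s}\geq e^{-\ve^2}$, so $p^s$ never drops far below its upper cap or far below $e^{\ve T(v,w)}q^s$; combining this with the inequality in the opposite direction (which comes from reading \eqref{gpo2-a} for the pair but with the roles of the bounds reversed, or from the greedy-algorithm structure (GPO2) is modeled on) closes the estimate. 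I would present it as two short cases for the left minimum, each immediately reduced by the crossed inequality from the other line of (GPO2), and conclude by multiplying the resulting $e^{\pm\ve}$-type comparisons.
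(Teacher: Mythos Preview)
Your case-analysis plan can in principle be made to work, but the proposal has a genuine gap in the ``cap'' branch. When $e^{-\ve}\ve Q(x)$ is the active term in the lower bound of \eqref{gpo2-a}, you correctly note that $p^s=e^{\pm O(\ve)}\ve Q(x)$, but the inequality you then extract (a lower bound on $q^s$ in terms of $p^s$) does not by itself control $q^s\wedge q^u$ against $p^s\wedge p^u$: nothing you wrote bounds $p^u$ or links to $q^u$ in that branch. The missing ingredient is the \emph{crossed} double-chart condition $p^u\leq\ve Q(x)$ (and symmetrically $q^s\leq\ve Q(y)$). With it the cap branch closes: $p^s\geq e^{-\ve p^s-\ve}\ve Q(x)\geq e^{-\ve p^s-\ve}p^u\geq e^{-\ve p^s-\ve-\ve T}q^u\geq e^{-\ve p^s-\ve-\ve T}(q^s\wedge q^u)$, the third inequality coming from $q^u\leq e^{\ve T}p^u$ in \eqref{gpo2-b}. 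You only invoke $p^s\leq\ve Q(x)$ and $q^u\leq\ve Q(y)$, which are the wrong pair here. (There is also a sign slip --- the cap hypothesis yields $q^s\geq e^{-\ve-\ve T}p^s$, not $e^{\ve-\ve T}p^s$ --- and ``when the first term is active'' contradicts being in the cap branch.)

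The paper's proof uses these same crossed conditions, but upfront and without cases: since $p^u\leq\ve Q(x)$, one may replace $\ve Q(x)$ by $p^u$ in \eqref{gpo2-a} and then take the minimum with $p^u$ on both sides, obtaining
\[
e^{-\ve-\ve p^s}\min\{e^{\ve T}q^s,\,p^u\}\leq p^s\wedge p^u\leq \min\{e^{\ve T}q^s,\,p^u\};
\]
the analogous substitution $q^s\leq\ve Q(y)$ in \eqref{gpo2-b} sandwiches $q^s\wedge q^u$ around $\min\{e^{\ve T}p^u,\,q^s\}$. These two minima differ by a factor at most $e^{\pm\ve T}$, so the ratio lies in $[e^{-\ve(1+p^s+T)},e^{\ve(1+q^u+T)}]$, and $p^s,q^u<\ve<\tfrac14$, $T\leq\rho<\tfrac14$ give $e^{\pm2\ve}$ with no branching at all.
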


\begin{proof}
We have $e^{-\ve p^s}\min\{e^{\ve T(v,w)}q^s,e^{-\ve}\ve Q(x)\}\leq p^s\leq
\min\{e^{\ve T(v,w)}q^s,\ve Q(x)\}$, therefore
$e^{-\ve p^s}\min\{e^{\ve T(v,w)}q^s,e^{-\ve} p^u\}\leq p^s\wedge p^u\leq \min\{e^{\ve T(v,w)}q^s, p^u\}$
and so
$$
e^{-\ve-\ve p^s}\min\{e^{\ve T(v,w)}q^s,p^u\}\leq p^s\wedge p^u\leq \min\{e^{\ve T(v,w)}q^s, p^u\}.
$$
By the same reason,
$e^{-\ve-\ve q^u}\min\{e^{\ve T(v,w)}p^u,q^s\}\leq q^s\wedge q^u\leq \min\{e^{\ve T(v,w)}p^u,q^s\}$
hence
$$
e^{-\ve-\ve q^u-\ve T(v,w)}\min\{e^{\ve T(v,w)}q^s,p^u\}\leq q^s\wedge q^u\leq e^{\ve T(v,w)}\min\{e^{\ve T(v,w)}q^s,p^u\}.
$$
Together, these inequalities imply that
$$
e^{-\ve[1+p^s+T(v,w)]}\leq\tfrac{p^s\wedge p^u}{q^s\wedge q^u}\leq e^{\ve[1+q^u+T(v,w)]}.
$$
Since $p^s,q^u<\ve<0.25$ and $T(v,w)\leq\rho<0.25$,
it follows that $\tfrac{p^s\wedge p^u}{q^s\wedge q^u}=e^{\pm 2\ve}$.
\end{proof}

\begin{remark}
There is a big difference between (GPO2) above and all previous definitions
used in \cite{Sarig-JAMS,Lima-Matheus,Ben-Ovadia-high-dimension,Lima-Sarig,Lima-AIHP,ALP}.
The first is that we we only require inequalities, while previous work required equalities. One reason
is the following: while for diffeomorphisms the hyperbolicity acquired in an edge $v\overset{\ve}{\rightarrow} w$
is at least $e^\ve$, for flows
it is at least $e^{\ve T(v,w)}$. Since $T(v,w)$ usually does not belong to 
a countable set, neither does $\min\{e^{\ve T(v,w)}q^s,\ve Q(x)\}$. Therefore,
instead of requiring $p^s$ to be equal to this minimum we relax the assumption
to an ``approximate equality''. This approximate equality implies that either $p^s$ is of the
order of $e^{\ve T(v,w)}q^s$ and/or it is essentially maximal (of the order of $\ve Q(x)$).
The conditions we consider are weak enough to code all relevant orbits 
(Theorem \ref{Thm-coarse-graining}(2)) but still strong enough for the coding to be
``unique up to bounded error" (Theorem \ref{Thm-inverse}).
\end{remark}

\subsection{Graph transforms and invariant manifolds}\label{ss.graph.transform}

Let $v=\Psi_x^{p^s,p^u}$ be an $\ve$--double chart.

\medskip
\noindent
{\sc Admissible manifolds:} An {\em $s$--admissible manifold at $v$} is a set
of the form $$V=\Psi_x\{(t,F(t)):|t|\leq p^s\}$$ where $F:[-p^s,p^s]\to\R$ is a $C^{1+\beta/3}$ function
such that:
\begin{enumerate}
\item[(AM1)] $|F(0)|\leq 10^{-3}(p^s\wedge p^u)$.
\item[(AM2)] $|F'(0)|\leq \tfrac{1}{2}(p^s\wedge p^u)^{\beta/3}$.
\item[(AM3)] $\|F'\|_{C^0}+\Hol{\beta/3}(F')\leq\tfrac{1}{2}$ where the norms are taken in $[-p^s,p^s]$.
\end{enumerate}
The function $F$ is called the \emph{representing function} of $V$.
Similarly, a {\em $u$--admissible manifold at $v$} is a set
of the form $\Psi_x\{(G(t),t):|t|\leq p^u\}$ where $G:[-p^u,p^u]\to\R$ is a $C^{1+\beta/3}$ function
satisfying (AM1)--(AM3), with norms taken in $[-p^u,p^u]$.

\medskip
If $V_1,V_2$ are two $s$--admissible manifold at $v$, with representing functions
$F_1,F_2$, for $i\geq 0$ define $ d_{C^i}(V_1,V_2):=\|F_1-F_2\|_{C^i}$ where the
norm is taken in $[-p^s,p^s]$. The same applies to $u$--admissible manifolds.

In the sequel, we introduce {\em graph transforms}, which is the tool used to construct invariant 
manifolds. Since the proofs are adaptations of \cite{Sarig-JAMS}, we restrict the discussion to
stable manifolds. The main result of this section, Theorem \ref{Thm-stable-manifolds},
collects the basic properties of invariant manifolds. 
Given a $\ve$--double chart $v=\Psi_x^{p^s,p^u}$, we denote
by $\mathfs M^s(v)$ the set of its $s$--admissible manifolds.

\medskip
\noindent
{\sc The graph transform $\mathfs F_{v,w}^s$:} To any edge
$v\overset{\ve}{\rightarrow}w$ between $\ve$-double charts $v=\Psi_x^{p^s,p^u}$ and $w=\Psi_y^{q^s,q^u}$,
we associate the {\em graph transform} $\mathfs F_{v,w}^s:\mathfs M^s(w)\to\mathfs M^s(v)$
as being the map that sends an $s$--admissible manifold at $w$ with representing function $F:[-q^s,q^s]\to\R$ to the unique
$s$--admissible  manifold at $v$ with representing function $G:[-p^s,p^s]\to\R$ such that
$\{(t,G(t)):|t|\leq p^s\}\subset f_{x,y}^-\{(t,F(t)):|t|\leq q^s\}$.

\begin{lemma}\label{Prop-graph-transform}
If $\ve>0$ is small enough, then $\mathfs F_{v,w}^s$ is well-defined for any edge $v\overset{\ve}{\rightarrow}w$.
Furthermore, if
$V_1,V_2\in \mathfs M^s(w)$ then:
\begin{enumerate}[{\rm (1)}]
\item $ d_{C^0}(\mathfs F_{v,w}^s(V_1),\mathfs F_{v,w}^s(V_2))\leq e^{-\chi\inf(r_\Lambda)/2} d_{C^0}(V_1,V_2)$.
\item $ d_{C^1}(\mathfs F_{v,w}^s(V_1),\mathfs F_{v,w}^s(V_2))\leq
e^{-\chi\inf(r_\Lambda)/2}( d_{C^1}(V_1,V_2)+ d_{C^0}(V_1,V_2)^{\beta/3})$.
\end{enumerate}
\end{lemma}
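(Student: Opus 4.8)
The plan is to follow the classical graph-transform argument of Sarig, adapted to the present flow setting where the single hyperbolicity bound $e^{\pm\chi}$ of the diffeomorphism case is replaced by $e^{\pm\chi r_\Lambda(x)}$ and the nonlinear parts are controlled by Theorem~\ref{Thm-non-linear-Pesin-2}. The first task is to show $\mathfs F^s_{v,w}$ is well-defined: given an edge $v\overset{\ve}\rightarrow w$ and an $s$--admissible manifold $V=\Psi_y\{(t,F(t)):|t|\le q^s\}$ at $w$, I would pull $V$ back by $f^-_{x,y}=\Psi_x^{-1}\circ g^-_y\circ\Psi_y$, which by Theorem~\ref{Thm-non-linear-Pesin-2} has the block form $\begin{bmatrix}A&0\\0&B\end{bmatrix}+H$ with $|A|<e^{-\chi r_\Lambda(y)}$ (in the appropriate orientation for $f^-$, the stable/unstable roles being as dictated by the $f^-$ statement), $\|H(0)\|<\ve\eta$, $\|dH_0\|<\ve\eta^{\beta/3}$, $\Hol{\beta/3}(dH)<\ve$. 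Writing the image curve as a graph over the first coordinate, one solves an implicit-function/fixed-point problem for the new representing function $G$ on $[-p^s,p^s]$; the key estimates are (i) the domain $[-p^s,p^s]$ is not too large relative to $[-q^s,q^s]$ — this uses Lemma~\ref{Lemma-minimum} ($\tfrac{p^s\wedge p^u}{q^s\wedge q^u}=e^{\pm2\ve}$) together with (GPO2) to see that the contracted image still covers $[-p^s,p^s]$ — and (ii) verifying that $G$ satisfies (AM1)–(AM3). For (AM3) one estimates $\|G'\|_{C^0}$ and $\Hol{\beta/3}(G')$ from the matrix form: the linear part contracts the slope by roughly $|A/B|<e^{-2\chi r_\Lambda}\le 1$ and the $H$-contributions are $O(\ve)$ times admissible-size factors, so the bound $\tfrac12$ is preserved once $\ve$ is small; (AM1)–(AM2) follow similarly using $\|H(0)\|<\ve\eta$ and $\|dH_0\|<\ve\eta^{\beta/3}$, exactly as in \cite{Sarig-JAMS}.

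For the contraction estimates (1) and (2), I would take two $s$--admissible manifolds $V_1,V_2\in\mathfs M^s(w)$ with representing functions $F_1,F_2$ and let $G_i$ be the representing functions of $\mathfs F^s_{v,w}(V_i)$. The point $(t,G_i(t))$ is characterized by $f^-_{x,y}(t,G_i(t))=(\pi_1 f^-_{x,y}(t,G_i(t)),\,F_i(\pi_1 f^-_{x,y}(t,G_i(t))))$ — i.e. the image lands on the graph of $F_i$. Subtracting the two identities and using the block form, the second coordinate contributes a factor $|B|^{-1}\le e^{-\chi r_\Lambda}$ (or $|A|$, again depending on the orientation of $f^-$) in front of $\|F_1-F_2\|_{C^0}$, plus error terms controlled by $\Hol{\beta/3}(dH)<\ve$ and the slope bounds from (AM3); choosing $\ve$ small absorbs all errors so that the net $C^0$ contraction rate is at most $e^{-\chi\inf(r_\Lambda)/2}$, giving (1). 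For the $C^1$ estimate (2), one differentiates the defining identity and repeats the computation: the derivative of $G_i$ is again contracted by the hyperbolicity factor, but now the error picks up a term proportional to $\Hol{\beta/3}(dH)$ times $\|G_1-G_2\|_{C^0}^{\beta/3}$ (because $dH$ is only $\beta/3$-Hölder, comparing $dH$ at two nearby points on the two graphs costs a power $\beta/3$ of their $C^0$ distance), which is exactly the source of the $ d_{C^0}(V_1,V_2)^{\beta/3}$ term in the statement. Again $\ve$ small makes the prefactor $\le e^{-\chi\inf(r_\Lambda)/2}$.

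The main obstacle is keeping track of the flow-specific hyperbolicity factor $e^{\pm\chi r_\Lambda}$ rather than a uniform constant: since $r_\Lambda$ varies and is only bounded below by $\inf(r_\Lambda)$, one must be careful that at each step the genuine contraction $e^{-\chi r_\Lambda(\cdot)}$ — which could be close to $e^{-\chi\inf(r_\Lambda)}$ — still dominates the $\ve$-sized nonlinear errors, uniformly over all edges; this is where the loss from $\chi r_\Lambda$ to $\tfrac12\chi\inf(r_\Lambda)$ is built in, leaving half the contraction as a buffer for the errors, so that the choice of $\ve$ can be made once and for all independently of the particular edge. A secondary technical point is that the relevant nonlinear map for the stable graph transform is $f^-_{x,y}$, whose roles of $A$ and $B$ are the reverse of $f^+_{x,y}$; one must align the admissibility conventions (stable manifolds are graphs over the first coordinate, and $f^-$ expands that coordinate by $|B|>e^{\chi r_\Lambda}$ so its inverse contracts it) so that the graph transform is genuinely a contraction. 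Modulo these bookkeeping issues, every step is a routine transcription of the surface-diffeomorphism argument in \cite[\S4]{Sarig-JAMS}.
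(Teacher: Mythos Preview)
Your proposal is correct and follows essentially the same approach as the paper, which simply remarks that the proofs of \cite[Prop.~4.12 and 4.14]{Sarig-JAMS} go through verbatim once one replaces the diffeomorphism constants $C_f$ and $\chi$ by $e^{4\rho}$ and $\chi\inf(r_\Lambda)$, using the bounds $e^{-4\rho}<|A|<e^{-\chi\inf(r_\Lambda)}$ and $e^{\chi\inf(r_\Lambda)}<|B|<e^{4\rho}$ from Lemma~\ref{Lemma-linear-reduction}(2) and Theorem~\ref{Thm-non-linear-Pesin-2}(1). One small remark: for the domain-covering step you should invoke (GPO2) directly (which gives $p^s\le e^{\ve T(v,w)}q^s$) rather than Lemma~\ref{Lemma-minimum}, since the latter only controls $p^s\wedge p^u$ against $q^s\wedge q^u$.
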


When $M$ is compact and $f$ is a $C^{1+\beta}$ diffeomorphism,
this is \cite[Prop. 4.12 and 4.14]{Sarig-JAMS}.
The same proofs work by changing $C_f$ and $\chi$ in \cite{Sarig-JAMS} to
$e^{4\rho}$ and $\chi\inf(r_\Lambda)$ in our case, and observing that by Lemma \ref{Lemma-linear-reduction}(2)
and Theorem \ref{Thm-non-linear-Pesin-2}(1) we have
$e^{-4\rho}<|A|<e^{-\chi\inf(r_\Lambda)}$ and $e^{\chi\inf(r_\Lambda)}<|B|<e^{4\rho}$.

\medskip
\noindent
{\sc The stable manifold of positive $\ve$--gpo:} The {\em stable manifold} of a positive 
$\ve$--gpo $\un v^+=\{v_n\}_{n\geq 0}$ is 
$$
V^s[\un v^+]:=(\mathfs F_{v_0,v_1}^s\circ\cdots\circ\mathfs F_{v_{n-2},v_{n-1}}^s\circ\mathfs F_{v_{n-1},v_n}^s)(V_n)
$$
for some (any) choice  $(V_n)_{n\geq 0}$ with $V_n\in \mathfs M^s(v_n)$. The convergence occurs
in the $C^1$ topology.

\medskip
The proof of the good definition and $C^1$ convergence is done as in \cite[Prop. 4.15, part (1)]{Sarig-JAMS}.
Similarly, we introduce the {\em unstable manifold} $V^u[\un v^-]$ of a negative $\ve$--gpo.
We then arrive at the basic properties of $V^s[\un v^+]$ and $V^u[\un v^-]$.

\begin{theorem}[Stable manifold theorem]\label{Thm-stable-manifolds}
The following holds for all $\ve>0$ small enough.
Let ${\un v}^+=\{v_n\}_{n\geq 0}=\{\Psi_{x_n}^{p^s_n,p^u_n}\}_{n\geq 0}$
be a positive $\ve$--gpo.

\begin{enumerate}[{\rm (1)}]
\item {\sc Admissibility.} The set $V^s[{\un v}^+]$ is an $s$--admissible manifold at $v_0$, equal to
$$
V^s[{\un v}^+]=\{x\in \Psi_{x_0}(R[p^s_0]):(g^+_{x_{n-1}}\circ\dots\circ g^+_{x_0})(x)\in \Psi_{x_n}(R[10Q(x_n)]),\,\forall n\geq 0\}.
$$
\item {\sc Invariance.} $g_{x_0}^+(V^s[\{v_n\}_{n\geq 0}])\subset V^s[\{v_n\}_{n\geq 1}]$.
\smallskip
\item {\sc Hyperbolicity.} For all $y,y'$ in $V^s[\un v^+]$ and all $n\geq 0$: 
$$d(g^+_{x_{n-1}}\circ\dots\circ g^+_{x_0}(y),g^+_{x_{n-1}}\circ\dots\circ g^+_{x_0}(y'))
\leq d(\Psi^{-1}_{x_0}(y),\Psi^{-1}_{x_0}(y'))\;e^{-\frac{\chi \; \inf(r_\Lambda)}{2} n}.$$
For any unit vector $w$ tangent to $V^s[{\un v}^+]$ at a point $y$ and all $n\geq 0$:
\begin{align*}
\|d(g^+_{x_{n-1}}\circ\dots\circ g^+_{x_0})_yw\|&\leq 8p^s_0\; e^{-\frac{\chi \; \inf(r_\Lambda)}{2} n}\ \text{ and}\\
\|d(g^-_{x_{-n+1}}\circ\dots\circ g^-_{x_0})_yw\|&\geq 
\tfrac{1}{8}(p^s_0\wedge p^u_0)^{\frac{\beta}{12}}\; e^{\left(\frac{\chi \; \inf(r_\Lambda)}{2}-\frac{\beta\ve}{6}\right) n}.
\end{align*}
\item {\sc Bounded distortion.} For all $y,y'$ in $V^s[\un v^+]$, unit vectors $w,w'$ tangent
to $V^s[{\un v}^+]$ at $y,y'$ respectively and all $n\geq 0$,
$$
\left| \log\|d(g^+_{x_{n-1}}\circ\dots\circ g^+_{x_0})_yw\|-\log\|d(g^+_{x_{n-1}}\circ\dots\circ g^+_{x_0})_{y'}w'\| \right|
\leq Q(x_0)^{\beta/4}.
$$
\item {\sc H\"older property.} The map $\un v^+\mapsto V^s[\un v^+]$ is H\"older continuous:\\
There are $K>0$ and $\theta\in(0,1)$ such that for all $N\geq 0$, if $\un v^+,\un w^+$ are positive $\ve$--gpo's
with $v_n=w_n$ for $n=0,\ldots,N$
then $ d_{C^1}(V^s[\un v^+],V^s[\un w^+])\leq K\theta^N$.
\end{enumerate}
The curve $V^s[\un v^+]$ is called \emph{local stable manifold} of $\un v^+$.
A similar statement holds for unstable manifold $V^u[\un v^-]$ of
a negative $\varepsilon$-gpo $\un v^-$.
\end{theorem}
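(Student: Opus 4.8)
The whole statement follows the template of \cite[\S4]{Sarig-JAMS}, with the diffeomorphism $f$ replaced by the holonomy cocycle $g^+_{x_n}$ and the uniform contraction/expansion rate $\chi$ replaced by $\chi\inf(r_\Lambda)$; the perturbative estimates on $f^-_{x,y}$ are supplied by Theorem \ref{Thm-non-linear-Pesin-2}. So the plan is: first establish everything for the \emph{stable} side (the unstable side being the time-reversed copy, using $g^-$ and $f^+_{x,y}$), and throughout translate Sarig's constant $C_f$ into $e^{4\rho}$ and his $\chi$ into $\chi\inf(r_\Lambda)$.

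\textbf{Step 1 (Admissibility and the intrinsic description).} By definition $V^s[\un v^+]=\lim_n \mathfs F^s_{v_0,v_1}\circ\cdots\circ\mathfs F^s_{v_{n-1},v_n}(V_n)$, and by Lemma \ref{Prop-graph-transform}(1)--(2) the graph transforms contract $d_{C^0}$ by $e^{-\chi\inf(r_\Lambda)/2}$ and are contractions up to lower-order terms in $d_{C^1}$; hence the limit exists in $C^1$, is independent of the choices $V_n$, and is an $s$--admissible manifold at $v_0$ (this is exactly \cite[Prop.~4.15(1)]{Sarig-JAMS}). For the intrinsic formula, the inclusion ``$\subseteq$'' is immediate from the defining property of $\mathfs F^s$ (each forward image of a point of $V^s[\un v^+]$ lies on $V_n\subset \Psi_{x_n}(R[10Q(x_n)])$); for ``$\supseteq$'' one runs the standard argument: a point whose forward orbit stays in all the charts must, after pulling back by the graph transforms, lie on the limit manifold, because the $C^0$ contraction forces it onto the unique invariant graph.

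\textbf{Step 2 (Invariance and hyperbolicity).} Invariance $g^+_{x_0}(V^s[\{v_n\}_{n\ge0}])\subset V^s[\{v_n\}_{n\ge1}]$ is the cocycle/semiconjugacy identity $\mathfs F^s_{v_0,v_1}$ is a right inverse to the forward action of $g^+_{x_0}$ in charts, again as in \cite[Prop.~4.15]{Sarig-JAMS}. For the contraction estimate, work in the charts: using Theorem \ref{Thm-non-linear-Pesin-2}(1) the linear part of $f^-_{x_n,x_{n+1}}$ on the stable direction has norm $<e^{-\chi r_\Lambda}\le e^{-\chi\inf(r_\Lambda)}$, the nonlinear part $H$ has $C^1$-size $<\ve$ on $R[10Q]$, and the graph has slope $\le\tfrac12$ by (AM3); composing $n$ such maps and absorbing the $\ve$-errors (here one uses $\ve\ll\chi\inf(r_\Lambda)$) gives the factor $e^{-\chi\inf(r_\Lambda)n/2}$ in $\Psi_{x_0}$-coordinates, and the $2$-bi-Lipschitz bounds on $\Psi_{x_0}$ (Lemma \ref{Lemma-Pesin-chart}) transfer it to $d(\cdot,\cdot)$ on $\widehat\Lambda$. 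The tangent-vector bounds are the derivative version of the same computation: $\|d(g^+_{x_{n-1}}\circ\cdots\circ g^+_{x_0})_y w\|\le 8p^s_0 e^{-\chi\inf(r_\Lambda)n/2}$ comes from $\|d\Psi\|\le2$, $\|d\Psi^{-1}\|\le 2\|C^{-1}\|$, the relation $p^s_0\le\ve Q(x_0)$ and $Q=\ve^{3/\beta}\|C^{-1}\|^{-12/\beta}$; the backward lower bound $\tfrac18(p^s_0\wedge p^u_0)^{\beta/12}e^{(\chi\inf(r_\Lambda)/2-\beta\ve/6)n}$ uses the expanding linear part $|B|>e^{\chi r_\Lambda}$ of $f^+$ together with Proposition \ref{Prop-Z-par}(2) (the greedy bound $p^s_n\ge e^{-\ve(t_n-t_0)}p^s_0$, which produces the $-\beta\ve/6$ loss through the $Q$-to-$\|C^{-1}\|$ conversion).

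\textbf{Step 3 (Bounded distortion and H\"older dependence).} Bounded distortion: along $V^s[\un v^+]$ the log-derivative of the composed map at two points $y,y'$ differs by a telescoping sum $\sum_{k\ge0}\big|\log\|d(\cdot)_{y_k}w_k\|-\log\|d(\cdot)_{y'_k}w'_k\|\big|$; each term is $\lesssim \Hol{\beta/3}(df^\pm_{x_k})\cdot d(y_k,y'_k)^{\beta/3}$, and by Step 2 the distances $d(y_k,y'_k)$ decay geometrically from $d(y,y')\le 2p^s_0\le Q(x_0)$, so the sum is $\lesssim Q(x_0)^{\beta/3}\cdot(\text{geometric})< Q(x_0)^{\beta/4}$ for $\ve$ small — this is \cite[Prop.~4.12]{Sarig-JAMS} verbatim with the new constants. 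H\"older continuity in $\un v^+$: if $v_n=w_n$ for $n\le N$ then $\mathfs F^s_{v_0,v_1}\circ\cdots\circ\mathfs F^s_{v_{N-1},v_N}$ is common to both; applying Lemma \ref{Prop-graph-transform} $N$ times, $d_{C^1}(V^s[\un v^+],V^s[\un w^+])\le e^{-\chi\inf(r_\Lambda)N/2}\cdot(\text{bounded})$ once one knows $d_{C^1}$ of the two tail manifolds $\mathfs F^s_{v_N,v_{N+1}}(\cdots)$ and $\mathfs F^s_{w_N,w_{N+1}}(\cdots)$ is a priori bounded (both are $s$--admissible at $v_N$, and $d_{C^1}$ of any two such is bounded by the uniform $C^1$-size of admissible graphs); this gives $K\theta^N$ with $\theta=e^{-\chi\inf(r_\Lambda)/2}$.

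\textbf{Main obstacle.} None of the above is conceptually new over \cite{Sarig-JAMS}; the one point that genuinely requires care — and is the reason the theorem is stated with $\inf(r_\Lambda)$ and with the explicit $-\beta\ve/6$ loss — is that for flows the hyperbolicity gained per step is $e^{\pm\chi r_\Lambda(x_n)}$ with $r_\Lambda$ varying in a continuum, and the double-chart parameters $p^s_n,p^u_n$ satisfy only the \emph{inequalities} (GPO2), not equalities. So in Step 2 one cannot simply chain equalities $p^s_n=e^{\ve T}q^s_{n+1}$; instead one must invoke the greedy-algorithm consequences of Proposition \ref{Prop-Z-par}(2) and Lemma \ref{Lemma-minimum} ($\tfrac{p^s_n\wedge p^u_n}{p^s_{n+1}\wedge p^u_{n+1}}=e^{\pm2\ve}$) to control how the chart sizes shrink relative to the true hyperbolicity, and make sure all the $\ve$-errors accumulated over the continuum of transition times still sum to something $<\tfrac{\chi\inf(r_\Lambda)}{2}$. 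Once the bookkeeping of these parameter inequalities is set up, every estimate reduces to the corresponding one in \cite[\S4]{Sarig-JAMS}.
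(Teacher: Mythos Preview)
Your proposal is correct and follows essentially the same approach as the paper, which simply defers the proof to \cite[Prop.~4.15 and 6.3]{Sarig-JAMS} (and \cite[Prop.~6.5]{Sarig-JAMS}, \cite[Prop.~4.11]{ALP} for the backward estimate in part~(3)) with the substitutions $C_f\to e^{4\rho}$, $\chi\to\chi\inf(r_\Lambda)$, and Theorem~\ref{Thm-non-linear-Pesin-2} as the source of the hyperbolic normal form. One small correction: your invocation of Proposition~\ref{Prop-Z-par}(2) in Step~2 is misplaced, since that proposition concerns the intrinsic parameters $p^{s/u}(x,\mathcal T,n)$ rather than the chart parameters $p^s_n$ of an $\ve$--gpo; the bound you need ($p^s_{n}\wedge p^u_{n}\ge e^{-2\ve n}(p^s_0\wedge p^u_0)$) comes directly from Lemma~\ref{Lemma-minimum}, which you also cite and which the paper singles out as the key ingredient for the backward estimate.
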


\medskip
The above theorem is a strenghtening of the Pesin stable manifold theorem \cite{Pesin-Izvestia-1976}.
Its statement is similar to \cite{Sarig-JAMS}, and its proof is perfomed exactly as in
\cite[Prop. 4.15 and 6.3]{Sarig-JAMS}, noting that in Pesin charts the composition
$g^+_{x_{n-1}}\circ\dots\circ g^+_{x_0}$ is represented by $f_{x_{n-1},x_n}^+\circ\cdots\circ f_{x_0,x_1}^+$.
Since each $f_{x_i,x_{i+1}}^+$ is hyperbolic (Theorem \ref{Thm-non-linear-Pesin-2})
and each $\mathfs F_{v_i,v_{i+1}}^s$ is contracting (Lemma \ref{Prop-graph-transform}), the proof follows.
We note that the second estimate of part (3) is proved as in \cite[Prop. 6.5]{Sarig-JAMS},
see also the proof of \cite[Prop. 4.11]{ALP}.

\subsection{Shadowing}
We say that an $\ve$--gpo $\{\Psi_{x_n}^{p^s_n,p^u_n}\}_{n\in\Z}$ {\em shadows}
a point $x\in \widehat \Lambda$ if:
$$(g^+_{x_{n-1}}\circ\dots\circ g^+_{x_0})(x)\in \Psi_{x_n}(R[p^s_n\wedge p^u_n])\text{ for all $n\geq 0$},$$
$$(g^-_{x_{n+1}}\circ\dots\circ g^-_{x_0})(x)\in \Psi_{x_n}(R[p^s_n\wedge p^u_n])\text{ for all $n\leq 0$}.$$
An important property is the following.

\begin{proposition}\label{Prop-shadowing}
If $\ve$ is small enough, then every $\ve$--gpo $\un v$ shadows a unique point
$\{x\}=V^s[\un v]\cap V^u[\un v]$.
\end{proposition}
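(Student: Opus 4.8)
The plan is to follow the classical Bowen-type shadowing argument adapted to the present setting of $\ve$--gpo's, building the shadowing point as the intersection of a stable and an unstable admissible manifold.

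First I would observe that to an $\ve$--gpo $\un v = \{v_n\}_{n\in\Z}$ one can associate, for each $n$, the positive $\ve$--gpo $\un v^+_n := \{v_{n+k}\}_{k\geq 0}$ and the negative $\ve$--gpo $\un v^-_n := \{v_{n+k}\}_{k\leq 0}$, and hence the local stable manifold $V^s[\un v^+_n]$ and local unstable manifold $V^u[\un v^-_n]$ given by Theorem~\ref{Thm-stable-manifolds}. I will write $V^s[\un v]:=V^s[\un v^+_0]$ and $V^u[\un v]:=V^u[\un v^-_0]$. The key point is that both of these are admissible manifolds at $v_0=\Psi_{x_0}^{p^s_0,p^u_0}$: one is $s$--admissible (a graph over the first coordinate in the chart $\Psi_{x_0}$) and the other is $u$--admissible (a graph over the second coordinate). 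The admissibility conditions (AM1)--(AM3), in particular the $C^1$--smallness of the representing functions together with the smallness of $F(0)$ relative to $p^s_0\wedge p^u_0$, force the two graphs $\{(t,F(t)):|t|\le p^s_0\}$ and $\{(G(t),t):|t|\le p^u_0\}$ inside $R[\mathfrak r]$ to intersect in exactly one point: this is an elementary fixed-point/contraction argument on the plane (the map $t\mapsto F(G(t))$ is a contraction of $[-p^u_0,p^u_0]$ into itself, or one applies the standard transversality-of-nearly-horizontal-and-nearly-vertical-graphs lemma). This gives a unique point $x\in\Psi_{x_0}(R[p^s_0\wedge p^u_0])$ with $\{x\}=V^s[\un v]\cap V^u[\un v]$.

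Next I would verify that this $x$ is indeed shadowed by $\un v$ in the sense defined above. For $n\geq 0$: by the invariance property of stable manifolds (Theorem~\ref{Thm-stable-manifolds}(2)) applied inductively, $(g^+_{x_{n-1}}\circ\cdots\circ g^+_{x_0})(x)\in V^s[\un v^+_n]\subset\Psi_{x_n}(R[p^s_n])$; one must upgrade this to membership in $\Psi_{x_n}(R[p^s_n\wedge p^u_n])$, which follows because the image point lies on the admissible stable manifold at $v_n$ and also, by pushing the unstable manifold forward, stays within the controlled region — more directly, the admissibility of $V^s[\un v^+_n]$ gives that its $\Psi_{x_n}$--coordinates have first component bounded by $p^s_n$ and second component bounded by $10^{-3}(p^s_n\wedge p^u_n)+\tfrac12(p^s_n\wedge p^u_n)\le (p^s_n\wedge p^u_n)$ using (AM1)--(AM3), so actually the point lies in $\Psi_{x_n}(R[p^s_n]\times R[p^s_n\wedge p^u_n])$; combined with the analogous fact for the forward image of $V^u[\un v]$ landing in $V^u[\un v^-_n]$ whose first coordinate is $\le p^u_n$, the intersection point has both coordinates $\le p^s_n\wedge p^u_n$. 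The backward orbit is handled symmetrically using $g^-$ and the unstable manifolds. This establishes existence.

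For uniqueness, suppose $x,x'$ are both shadowed by $\un v$. Then both lie in $\Psi_{x_0}(R[p^s_0\wedge p^u_0])$ and their forward (resp. backward) images under the $g^+$ (resp. $g^-$) compositions stay in the boxes $\Psi_{x_n}(R[p^s_n\wedge p^u_n])$ for all $n$. Using the characterization of $V^s[\un v]$ from Theorem~\ref{Thm-stable-manifolds}(1) — a point whose forward orbit stays in $\Psi_{x_n}(R[10Q(x_n)])$ for all $n\ge0$ lies in $V^s[\un v]$ — both $x,x'\in V^s[\un v]$; similarly both lie in $V^u[\un v]$; hence $x=x'$ by the uniqueness of the intersection point. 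Alternatively, and this is cleaner, one invokes the hyperbolicity estimate (Theorem~\ref{Thm-stable-manifolds}(3)): if $x,x'$ are both shadowed, then $d$ of their forward images contracts, so $x,x'$ lie on a common stable manifold, and symmetrically on a common unstable manifold, forcing $x=x'$. I expect the main obstacle to be the careful bookkeeping in the existence step: showing that the intersection point's images land in the \emph{small} boxes $R[p^s_n\wedge p^u_n]$ rather than merely in $R[p^s_n]$ or $R[10Q(x_n)]$, which requires combining the admissibility bounds (AM1)--(AM3) for both the stable and unstable manifolds at each $v_n$ together with Lemma~\ref{Lemma-minimum} controlling the ratio $\tfrac{p^s_n\wedge p^u_n}{p^s_{n+1}\wedge p^u_{n+1}}$; everything else is a routine adaptation of \cite[Prop. 4.15 and 6.3]{Sarig-JAMS} and \cite{Lima-Sarig}.
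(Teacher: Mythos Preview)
Your approach is correct and essentially the same as the paper's, which also obtains the shadowing point as $V^s[\un v]\cap V^u[\un v]$ via Lemma~\ref{Lemma-admissible-manifolds}(1), uniqueness from the characterization in Theorem~\ref{Thm-stable-manifolds}(1), and existence from the invariance in Theorem~\ref{Thm-stable-manifolds}(2). The one simplification you miss: the paper sidesteps the bookkeeping of landing in the small boxes $\Psi_{x_n}(R[p^s_n\wedge p^u_n])$ by observing that the shadowing definition is \emph{equivalent} to the weaker one requiring only $\Psi_{x_n}(R[10Q(x_n)])$, which follows immediately from invariance since $V^s[\un v^+_n]\subset\Psi_{x_n}(R[p^s_n])\subset\Psi_{x_n}(R[10Q(x_n)])$; this avoids having to argue separately that the forward image lies on $V^u[\un v^-_n]$ (and incidentally your intermediate bound on the second coordinate is off, since $|t|$ ranges up to $p^s_n$, not $p^s_n\wedge p^u_n$, though your subsequent appeal to the unstable manifold does repair it).
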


\medskip
The proof uses the following property of admissible manifolds.

\begin{lemma}\label{Lemma-admissible-manifolds}
The following holds for all $\ve>0$ small enough. If $v=\Psi_x^{p^s,p^u}$ is an $\ve$--double chart,
then for every $V^{s/u}\in\mathfs M^{s/u}(v)$ it holds:
\begin{enumerate}[{\rm (1)}]
\item $V^s$ and $V^u$ intersect at a single point
$P\in \Psi_x(R[10^{-2}(p^s\wedge p^u)])$.
\item $\tfrac{\sin\angle(V^s,V^u)}{\sin\alpha(x)}=e^{\pm(p^s\wedge p^u)^{\beta/4}}$
and $|\cos\angle(V^s,V^u)-\cos\alpha(x)|<2(p^s\wedge p^u)^{\beta/4}$, where
$\angle(V^s,V^u)$ is the angle of intersection of $V^s$ and $V^u$ at $P$.
\end{enumerate}
\end{lemma}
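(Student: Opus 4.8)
The plan is to work entirely in the Pesin chart $\Psi_x$, where an $s$--admissible manifold is the graph $\{(t,F(t))\}$ of a function $F$ on $[-p^s,p^s]$ satisfying (AM1)--(AM3), and a $u$--admissible manifold is the graph $\{(G(t),t)\}$ on $[-p^u,p^u]$ of a function $G$ satisfying (AM1)--(AM3). First I would prove part (1): an intersection point corresponds to a solution of $(t,F(t))=(G(s),s)$, i.e. $t=G(F(t))$ with the second coordinate $s=F(t)$. By (AM2)--(AM3) the map $t\mapsto G(F(t))$ is a contraction on $[-p^s,p^s]$ with constant $\leq\tfrac14$ and, by (AM1), it sends a small neighborhood of $0$ of radius $\sim 10^{-3}(p^s\wedge p^u)$ into itself; the Banach fixed point theorem then yields a unique $t_*$, and a symmetric estimate on the $s$--coordinate $s_*=F(t_*)$ shows $|t_*|,|s_*|\leq 10^{-2}(p^s\wedge p^u)$, so the intersection point $P$ lies in $\Psi_x(R[10^{-2}(p^s\wedge p^u)])$. (One also uses that $\Psi_x$ is a diffeomorphism onto its image, Lemma \ref{Lemma-Pesin-chart}, so uniqueness in the chart gives uniqueness in $\widehat\Lambda$.)

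For part (2), I would first compute the angle between the two curves \emph{in the chart}. The tangent directions at $P$ are spanned by $(1,F'(t_*))$ and $(G'(s_*),1)$. By (AM2), $|F'(t_*)|$ and $|G'(s_*)|$ are each $\leq \tfrac12(p^s\wedge p^u)^{\beta/3}+\Hol{\beta/3}(F')\,|t_*|^{\beta/3}$, which by (AM3) and the bound $|t_*|\lesssim 10^{-2}(p^s\wedge p^u)$ is $O((p^s\wedge p^u)^{\beta/3})$. A direct trigonometric computation then gives that the sine of the angle between these two vectors is $1\pm O((p^s\wedge p^u)^{\beta/3})$ and its cosine is $O((p^s\wedge p^u)^{\beta/3})$; in particular in the chart the two admissible manifolds are almost orthogonal. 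Next I would transport this to the Riemannian angle on $\widehat\Lambda$ using the derivative $d(\Psi_x)_{(t_*,s_*)}=d(\exp{x})_{C(x)(t_*,s_*)}\circ C(x)$. Here $C(x)$ sends $e_1,e_2$ to $\tfrac{n^s_x}{s(x)},\tfrac{n^u_x}{u(x)}$, so the image of $e_1$ and $e_2$ under $C(x)$ make angle exactly $\alpha(x)=\angle(n^s_x,n^u_x)$; the correction coming from the off-diagonal, from $d\exp{x}$ being $O(\mathfrak K\|C(x)(t_*,s_*)\|)$ away from an isometry (by (Exp3), together with $d(\exp{x})_0=\mathrm{Id}$), and from the perturbations $F',G'$ is of size $O(\mathfrak K(p^s\wedge p^u)Q(x)+(p^s\wedge p^u)^{\beta/3})$. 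Since $p^s,p^u\leq\ve Q(x)$ and $Q(x)\leq\ve^{3/\beta}$, after absorbing constants into $\ve$ this is $\leq(p^s\wedge p^u)^{\beta/4}$, which gives the multiplicative estimate for $\sin$ and the additive one for $\cos$ claimed in the statement.

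The main obstacle I anticipate is bookkeeping the size of the error terms precisely enough to land with exponent $\beta/4$ in $(p^s\wedge p^u)^{\beta/4}$: one must carefully track the Hölder-$\beta/3$ bound on $F'$ and $G'$ at the (nonzero) base point $(t_*,s_*)$, the $\mathfrak K$-Lipschitz control of $d\exp{x}$ from (Exp3), and the fact that $C(x)$ itself is an exact change of frame between the orthonormal basis of $\R^2$ and the pair $(n^s_x,n^u_x)$ up to the scalings $s(x)^{-1},u(x)^{-1}$ — these scalings cancel in the angle but not in intermediate norm estimates — and then check that every accumulated error is dominated by $(p^s\wedge p^u)^{\beta/4}$ once $\ve$ is chosen small, using $Q(x)\leq\ve^{3/\beta}$ and $(p^s\wedge p^u)\leq\ve Q(x)$. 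Everything else is a routine fixed-point argument and elementary planar trigonometry; this appears as \cite[Prop. 4.17]{Sarig-JAMS} in the diffeomorphism setting, and the same scheme works here verbatim once $f$ is replaced by the holonomy maps and the relevant constants are the ones produced by Lemma \ref{Lemma-Pesin-chart} and Theorem \ref{Thm-non-linear-Pesin-2}.
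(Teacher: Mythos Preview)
Your proposal is correct and follows essentially the same approach as the paper, which simply notes that the proof of \cite[Prop.~4.11]{Sarig-JAMS} carries over verbatim because inside $\Psi_x(R[10Q(x)])$ the exponential-map estimates (Exp1)--(Exp4) hold. Your outline---a Banach fixed-point argument for $t\mapsto G(F(t))$ for part (1), and transporting the near-orthogonality in the chart via $d(\Psi_x)_{(t_*,s_*)}=d(\exp{x})_{C(x)(t_*,s_*)}\circ C(x)$ using (Exp3) and the definition of $C(x)$ for part (2)---is exactly Sarig's argument (note the paper cites Prop.~4.11, not 4.17).
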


When $M$ is compact and $f$ is a $C^{1+\beta}$ diffeomorphism,
the above lemma is \cite[Prop. 4.11]{Sarig-JAMS}.
The same proof works in our case, since inside $\Psi_x(R[10Q(x)])$ the estimates
(Exp1)--(Exp4) hold.

\begin{proof}[Proof of Proposition~\ref{Prop-shadowing}]
Let $\un v=\{v_n\}_{n\in\Z}=\{\Psi_{x_n}^{p^s_n,p^u_n}\}_{n\in\Z}$ be an $\ve$--gpo.
The proof is the same of \cite[Theorem 4.2]{Sarig-JAMS}, and follows the steps below: 
\begin{enumerate}[$\circ$]
\item By Theorem \ref{Thm-stable-manifolds}(1),
any point shadowed by $\un v$ must lie in $V^s[\{v_n\}_{n\geq 0}]\cap V^u[\{v_n\}_{n\leq 0}]$.
By Lemma \ref{Lemma-admissible-manifolds}(1), this intersection is a single point $\{x\}$.
We claim that $\un v$ shadows $x$. 
\item The definition of shadowing is equivalent to the following weaker definition: 
$\un v$ shadows $x$ if and only if
$$(g^+_{x_{n-1}}\circ\dots\circ g^+_{x_0})(x)\in \Psi_{x_n}(R[10Q(x_n)])\text{ for all $n\geq 0$},$$
$$(g^-_{x_{n+1}}\circ\dots\circ g^-_{x_0})(x)\in \Psi_{x_n}(R[10Q(x_n)])\text{ for all $n\leq 0$}.$$
\item By Theorem \ref{Thm-stable-manifolds}(2), if $n\geq 0$ then 
$g^+_{x_{n-1}}\circ\dots\circ g^+_{x_0}(P)\in V^s[\{v_{n+k}\}_{k\geq 0}]\subset \Psi_{x_n}(R[10Q(x_n)])$, 
and if $n\geq 0$ then $(g^-_{x_{n+1}}\circ\dots\circ g^-_{x_0})(x)\in V^u[\{v_{n+k}\}_{k\leq 0}]\subset \Psi_{x_n}(R[10Q(x_n)])$, and so the weaker definition of shadowing holds.
\end{enumerate}
This concludes the proof.
\end{proof}

\subsection{Additional properties}

Now, we relate stable/unstable manifolds of $\ve$--gpo's with stable/unstable manifolds of the flow
$\vf$. 

\begin{proposition}\label{Prop-center-stable}
The following holds for all $\ve>0$ small enough. Let $\un v=\{v_n\}_{n\geq 0}$ be a positive $\ve$--gpo
with $v_0=\Psi_x^{p^s,p^u}$, and let $F:[-p^s,p^s]\to\R$ be the representing function of $V^s=V^s[\un v^+]$.
Then there exists a function $\Delta:[-p^s,p^s]\to\R$ with $\Delta(0)=0$ such that
the curve $\wt V^s:=\{\vf^{\Delta(t)}[\Psi_x(t,F(t))]:|t|\leq p^s\}$ satisfies 
$d(\vf^t(\wt y),\vf^t(\wt z))\leq e^{-\frac{\chi\inf(r_\Lambda)}{2\sup(r_\Lambda)}t}$
for all $\wt y,\wt z\in \wt V^s$ and $t\geq 0$.
An analogous statement holds for negative $\ve$--gpo's. 
\end{proposition}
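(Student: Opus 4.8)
The plan is to convert the discrete-time contraction along the holonomy maps $g^+_{x_n}$, established in Theorem~\ref{Thm-stable-manifolds}(3), into a continuous-time contraction along the flow by inserting the correct flow displacements. First I would recall that for $y\in V^s=V^s[\un v^+]$, the forward orbit under $g^+_{x_{n-1}}\circ\cdots\circ g^+_{x_0}$ lands in $\widehat\Lambda$, but these points are not literally on the flow orbit of $y$: by definition of $g^+_x=\vf^{T^+}$ with $T^+(x)=r_\Lambda(x)$, the point $g^+_{x_0}(y)$ is $\vf^{T^+_{x_0}(y)}(y)$ for a time $T^+_{x_0}(y)$ that is $C^{1+\beta}$ in $y$ and close to $r_\Lambda(x_0)$, hence in $[\tfrac12\inf(r_\Lambda),\,\rho]$. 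So define $\Delta(t)$ to be (minus) the cumulative discrepancy needed so that $\vf^{\Delta(t)}[\Psi_x(t,F(t))]$ sits on the genuine flow orbit synchronising with the base point $\Psi_x(0,F(0))=x$; concretely, writing $S_n(y)$ for the $n$-th Birkhoff-type sum $T^+_{x_{n-1}}(\cdots)+\cdots+T^+_{x_0}(y)$ of the flow transition times along the coded orbit of $y$, the point $g^+_{x_{n-1}}\circ\cdots\circ g^+_{x_0}(y)=\vf^{S_n(y)}(y)$, and I want $\vf^{\Delta(t)}(y)$ to reach $\Psi_{x_n}(\cdots)$ at the same flow time as $x$ does. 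Thus $\Delta(t):=\lim_{n\to\infty}\big(S_n(x)-S_n(y)\big)$ where $y=\Psi_x(t,F(t))$ — this limit exists because the $T^+$'s have bounded Hölder constants (the $L$ from Section~\ref{ss.proper-section}, and (Exp1)--(Exp4)) and $d(\vf^{S_k(x)}(x),\vf^{S_k(y)}(y))$ decays geometrically by Theorem~\ref{Thm-stable-manifolds}(3), so $|S_{n+1}(x)-S_{n+1}(y)-(S_n(x)-S_n(y))|$ is summable. Clearly $\Delta(0)=0$.

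Next I would estimate $d(\vf^t(\wt y),\vf^t(\wt z))$ for $\wt y,\wt z\in\wt V^s$ and $t\ge 0$. Given $t$, choose $n=n(t)$ maximal with $S_n(x)\le t$ (using $x$ as a reference clock); then $n\inf(r_\Lambda)/2\le t\le (n+1)\sup(r_\Lambda)$, roughly $t\asymp n\sup(r_\Lambda)$ up to bounded error. By construction $\vf^t(\wt y)=\vf^{t-S_n(x)}\big(g^+_{x_{n-1}}\circ\cdots\circ g^+_{x_0}(y)\big)$ with $t-S_n(x)\in[0,\sup(r_\Lambda)]\subset[0,\rho]$, so using $\|d\vf^s\|\le e^{|s|}\le e^\rho$ for $|s|\le\rho$ together with Theorem~\ref{Thm-stable-manifolds}(3) (the contraction $d(g^+_{x_{n-1}}\circ\cdots\circ g^+_{x_0}(y),\,g^+_{x_{n-1}}\circ\cdots\circ g^+_{x_0}(z))\le d(\Psi_{x_0}^{-1}(y),\Psi_{x_0}^{-1}(z))e^{-\frac{\chi\inf(r_\Lambda)}{2}n}$), I get $d(\vf^t(\wt y),\vf^t(\wt z))\le e^\rho\, d(\Psi_{x_0}^{-1}(y),\Psi_{x_0}^{-1}(z))e^{-\frac{\chi\inf(r_\Lambda)}{2}n}$. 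Since $\Psi_{x_0}^{-1}(y),\Psi_{x_0}^{-1}(z)\in R[p^s_0]$ have distance $\le 2p^s_0\le 2\ve$, and $n\ge \frac{t}{\sup(r_\Lambda)}-1$, substituting $n$ gives $d(\vf^t(\wt y),\vf^t(\wt z))\le C_\ve\, e^{-\frac{\chi\inf(r_\Lambda)}{2\sup(r_\Lambda)}t}$. To remove the constant $C_\ve$ and get the clean bound stated (with no prefactor), I would absorb it into the exponential by shrinking $\ve$: since $p^s_0\le\ve Q(x)\le\ve$ and the extra factors ($e^\rho$, the Lipschitz constant $2\|C(x_0)^{-1}\|$ of $\Psi_{x_0}^{-1}$ — but note we use $\Psi^{-1}$ applied to points inside $\Psi_{x_0}(R[p^s_0])$, so really the diameter in chart coordinates is already $\le 2p^s_0$) are controlled, and $n$ can be taken $\ge t/\sup(r_\Lambda)-1$, one checks $e^\rho\cdot 2p_0^s\cdot e^{\frac{\chi\inf(r_\Lambda)}{2}}\le 1$ for $\ve$ small, yielding exactly the asserted inequality.

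The main obstacle I anticipate is bookkeeping the reparametrisation $\Delta$ cleanly: one must verify that $\wt V^s$ is a well-defined curve (i.e.\ $\Delta$ is finite and, say, continuous or $C^{\beta/3}$ in $t$), which requires the summability argument above, and this in turn rests on the Hölder control of the transition-time functions $T^+$ — available because the $g^+_x$ are $C^{1+\beta}$ with uniform constants (Lemma~\ref{Lemma-map-g}, Section~\ref{ss.proper-section}) — combined with the geometric decay from Theorem~\ref{Thm-stable-manifolds}(3). A secondary subtlety is matching clocks: $\vf^t(\wt y)$ and $\vf^t(\wt z)$ must be compared at the \emph{same} flow time $t$, whereas the discrete contraction compares iterates after the \emph{same number} of holonomy steps; since $|S_n(y)-S_n(z)|$ is bounded uniformly in $n$ (again by summability), passing between "same time'' and "same number of steps'' costs only a bounded flow time $\le$ some multiple of $\rho$, hence only a bounded multiplicative constant, which is then absorbed as above. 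The analogous statement for negative $\ve$--gpo's follows by applying the same argument to $g^-$ and the time-reversed flow, using the unstable contraction in backward time from Theorem~\ref{Thm-stable-manifolds}(3).
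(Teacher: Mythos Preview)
Your strategy is exactly the paper's: set $\Delta(t)=\lim_n\bigl(S_n(y)-S_n(x)\bigr)$ (you have a sign typo) where $S_n$ is the cumulative holonomy transition time, then convert the discrete contraction of Theorem~\ref{Thm-stable-manifolds}(3) into a flow estimate. The limit exists by the summability you sketch, using $\Lip(T_k)<1$ from Lemma~\ref{Lemma-regularity-q-t}(3) (Lipschitz, not just H\"older) together with the geometric decay of $d(G_k(y),G_k(z))$.

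There is, however, a genuine gap in your distance estimate. The identity $\vf^t(\wt y)=\vf^{t-S_n(x)}(G_n(y))$ is not exact: writing $\Delta=\Delta_n+E_n$ with $\Delta_n(t_0)=S_n(y)-S_n(x)$ and $E_n$ the tail of the series, the correct formula is $\vf^t(\wt y)=\vf^{t-S_n(x)+E_n(t_0)}(G_n(y))$, and likewise for $\wt z$ with $E_n(t_1)$. Since $E_n(t_0)\neq E_n(t_1)$ in general, the triangle inequality produces an \emph{additive} term $\|X\|_{C^0}\,|E_n(t_0)-E_n(t_1)|$, not a multiplicative one as your ``secondary subtlety'' paragraph suggests; if this tail were merely bounded the conclusion would fail. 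The repair is immediate from your own summability: the tail obeys $\|E_n\|_{C^0}\le\sum_{k\ge n}\Lip(T_k)\cdot 6p^s e^{-\frac{\chi\inf(r_\Lambda)}{2}k}<\ve\, e^{-\frac{\chi\inf(r_\Lambda)}{2}n}$ for $\ve$ small, so this additive term decays at the same exponential rate as $d(G_n(y),G_n(z))$, and the two contributions combine (for $\ve$ small) to give $d(\vf^t(\wt y),\vf^t(\wt z))\le e^{-\frac{\chi\inf(r_\Lambda)}{2}n}\le e^{-\frac{\chi\inf(r_\Lambda)}{2\sup(r_\Lambda)}t}$. This is precisely what the paper does.
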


In other words, $\wt V^s$ is a lift of $V^s$ to a curve that contracts in the future under the flow (we are not claiming
$\wt V^s$ is the local stable manifold of $\vf$ at $x$).

\begin{proof}
Write $v_n=\Psi_{x_n}^{p^s_n,p^u_n}$ with $\Psi_{x_0}^{p^s_0,p^u_0}=\Psi_x^{p^s,p^u}$. 
The idea is simple: $\Delta$ is the cumulative shear of a point of $V^s$ under iterations
of the maps $g_{x_n}^+$. Write $g_{x_n}^+=\vf^{T_n}$ where $T_n:B_{x_n}\to\R$
is a $C^{1+\beta}$ function with $T_n(x_n)=r_\Lambda(x_n)$. Let $G_0={\rm Id}$ and
$G_n:=g_{x_{n-1}}^+\circ\cdots\circ g_{x_0}^+$, $n\geq 1$. For $n\geq 0$, define $\tau_n:[-p^s,p^s]\to\R$ by
$$
\tau_n(t):=\sum_{k=0}^{n-1}T_k(G_k[\Psi_x(t,F(t))]),
$$
equal to the flow displacement of the point $\Psi_x(t,F(t))$
under the maps $g_{x_0}^+,g_{x_1}^+$,\ldots, $g_{x_{n-1}}^+$. Define $\Delta_n:[-p^s,p^s]\to\R$
by $\Delta_n(t):=\tau_n(t)-\tau_n(0)$ for $n\geq 0$,
and $\Delta:[-p^s,p^s]\to\R$ by $\Delta(t):=\lim\limits_{n\to+\infty}\Delta_n(t)$. We have:
\begin{enumerate}[$\circ$]
\item $\Lip{}(T_n)<1$, by Lemma \ref{Lemma-regularity-q-t}(3).
\item $\|\Delta-\Delta_n\|_{C^0}<\ve e^{-\frac{\chi}{2}n}$ for all $n\geq 0$, since
\begin{align*}
&\, \|\Delta-\Delta_n\|_{C^0}\leq \sum_{k=n}^\infty \|T_k(G_k[\Psi_x(\cdot,F(\cdot))])-T_k(G_k[\Psi_x(0,F(0))])\|_{C^0}\\
&\overset{!}{\leq}\sum_{k=n}^\infty \Lip{}(T_k)6p^s e^{-\frac{\chi\inf(r_\Lambda)}{2}k}
\leq \frac{6p^s}{1-e^{-\frac{\chi\inf(r_\Lambda)}{2}}}e^{-\frac{\chi\inf(r_\Lambda)}{2}n}\overset{!!}{<}
\ve e^{-\frac{\chi\inf(r_\Lambda)}{2}n},
\end{align*}
where in $\overset{!}{\leq}$ we used Theorem~\ref{Thm-stable-manifolds}(3) and
in $\overset{!!}{<}$ we used that
$\frac{6p^s}{1-e^{-\frac{\chi\inf(r_\Lambda)}{2}}}<\frac{6\ve^{3/\beta}}{1-e^{-\frac{\chi\inf(r_\Lambda)}{2}}}<\ve$
when $\ve>0$ is small enough.
\end{enumerate}

\medskip
Let $\wt V^s:=\{\vf^{\Delta(t)}[\Psi_x(t,F(t))]:|t|\leq p^s\}$. Fix
$\wt y,\wt z\in\wt V^s$, say $\wt y=\vf^{\Delta(t_0)}[\Psi_x(t_0,F(t_0))]=\vf^{\Delta(t_0)}(y)$
and $\wt z=\vf^{\Delta(t_1)}[\Psi_x(t_1,F(t_1))]=\vf^{\Delta(t_1)}(z)$ with $t_0,t_1\in[-p^s,p^s]$.
By definition, $y,z\in V^s$.
Fix $t\geq 0$, and take the unique $n\geq 0$ such that $\tau_{n-1}(0)<t\leq\tau_n(0)$. For such $n$,
write $\Delta=\Delta_n+E$, with $\|E\|_{C^0}<\ve e^{-\frac{\chi\inf(r_\Lambda)}{2}n}$.
Therefore
$$
\vf^t(\wt y)=\vf^{t+\Delta(t_0)}(y)=\vf^{t+\Delta_n(t_0)+E(t_0)}(y)=
\vf^{t-\tau_n(0)+E(t_0)}[G_n(y)],
$$
and similarly $\vf^t(\wt z)=\vf^{t-\tau_n(0)+E(t_1)}[G_n(z)]$, hence
\begin{align*}
d(\vf^t(\wt y),\vf^t(\wt z))& \leq d(\vf^{t-\tau_n(0)+E(t_0)}[G_n(y)],\vf^{t-\tau_n(0)+E(t_0)}[G_n(z)])+\\
&\ \ \ \ d(\vf^{t-\tau_n(0)+E(t_0)}[G_n(z)],\vf^{t-\tau_n(0)+E(t_1)}[G_n(z)])\\
&\leq \sup_{|\zeta|\leq 1}\Lip{}(\vf^\zeta) d(G_n(y),G_n(z))+\|X\|_{C^0}|E(t_0)-E(t_1)|\\
&\leq \left[6p^s\sup_{|\zeta|\leq 1}\Lip{}(\vf^\zeta)+2\ve \|X\|_{C^0}\right]e^{-\frac{\chi\inf(r_\Lambda)}{2}n}
\leq e^{-\frac{\chi\inf(r_\Lambda)}{2}n}
\end{align*}
for $\ve>0$ small. Since $t\leq \tau_n(0)\leq n\sup(r_\Lambda)$,
we get that $d(\vf^t(\wt y),\vf^t(\wt z))\leq e^{-\frac{\chi\inf(r_\Lambda)}{2\sup(r_\Lambda)}t}$.
\end{proof}

We note two important facts. Firstly, the choice of $\Delta(0)=0$ is arbitrary: given $y=\Psi_x(t,F(t))\in V^s$,
we can choose $\Delta$ so that $\Delta(t)=0$. The resulting smooth curve
$\widetilde V^s\ni y$ also satisfying Proposition \ref{Prop-center-stable}. 

The second is more relevant. Given $y\in V^s=V^s[\un v]$, lift $V^s$ to $\wt V^s\ni y$, and let
$\widetilde e^s_y$ be a unitary vector tangent to $\widetilde V^s$ at $y$ (it is defined up to a sign). By construction,
the projection of $\widetilde e^s_y$ in the flow direction is a multiple of $\widetilde n^s_y$.
Taking the angle $\angle(N_y,X(y))$ into account, we can prove that
$d\vf^t \widetilde e^s_y$ contracts exponentially fast as $t\to+\infty$, i.e.
$\limsup\limits_{t\to+\infty}\tfrac{1}{t}\log\|d\vf^t \widetilde e^s_y\|<0$ (just a $\limsup$, not necessarily a $\lim$).
The same holds for $u$--admissible manifolds $V^u[\un v]$.
Therefore, given an $\ve$--gpo $\un v=\{\Psi_{x_n}^{p^s_n,p^u_n}\}_{n\in\Z}$,
if $V^s[\un v]\cap V^u[\un v]=\{x\}$,
there are two smooth curves $\widetilde V^s,\widetilde V^u$ passing through $x$ satisfying the following:
\begin{enumerate}[$\circ$]
\item If $\widetilde e^s_x$ is a unitary vector tangent to $\widetilde V^s$ at $x$, then
$\limsup\limits_{t\to+\infty}\tfrac{1}{t}\log\|d\vf^t\widetilde e^s_x\|<0$.
\item If $\widetilde e^u_x$ is a unitary vector tangent to $\widetilde V^u$ at $x$, then 
proceeding as in \cite[Prop. 6.5]{Sarig-JAMS} we show that
$\limsup\limits_{t\to+\infty}\tfrac{1}{t}\log\|d\vf^t\widetilde e^u_x\|>0$.\footnote{One important ingredient in the proof
of \cite[Prop. 6.5]{Sarig-JAMS} is the estimate $p^s_{n+1}\wedge p^u_{n+1}\leq e^{\ve}(p^s_n\wedge p^u_n)$.
We have a similar result, by Lemma \ref{Lemma-minimum}.}
\end{enumerate}
These two properties above uniquely define the directions $\widetilde e^s_x,\widetilde e^u_x$ (up to a sign).
Therefore we can consider $\alpha(x),s(x),u(x)$, although we do not know that $s(x),u(x)$ are finite. 
Remember that $\widetilde n^s_x,\widetilde n^u_x\in N_x$, the tangent vectors to $V^s,V^u$ at $x$,
are the projections of $\widetilde e^s_x,\widetilde e^u_x$ in the flow direction. 

We finish this section proving another property about invariant manifolds.
\begin{proposition}\label{Prop-disjointness}
The following holds for $\ve>0$ small enough.
Let $\un v^+=\{v_n\}_{n\geq 0}$ and $\un w=\{w_n\}_{n\geq 0}$ be positive $\ve$--gpo's,
with $v_0=\Psi_x^{p^s,p^u}$ and $w_0=\Psi_x^{q^s,q^u}$. Then 
either $V^s[\un v^+],V^s[\un w^+]$ are disjoint or one contains the other.
\end{proposition}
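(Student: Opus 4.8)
The plan is to mimic the classical argument from \cite[Prop. 6.4]{Sarig-JAMS}, adapted to our holonomy setting. The key point is that $V^s[\un v^+]$ and $V^s[\un w^+]$ are both graphs over an interval centered at the same point $x$ in the \emph{same} Pesin chart $\Psi_x$, so comparing them reduces to comparing two representing functions, and the obstruction to ``disjoint or nested'' is an interior crossing point.

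First I would record that by Theorem \ref{Thm-stable-manifolds}(1), $V^s[\un v^+]\subset\Psi_x(R[p^s])$ and $V^s[\un w^+]\subset\Psi_x(R[q^s])$, so both are of the form $\Psi_x\{(t,F(t)):|t|\le p^s\}$ and $\Psi_x\{(t,G(t)):|t|\le q^s\}$ respectively, with $F,G$ admissible representing functions. Suppose the two curves are not disjoint and neither contains the other; then there exist parameters where $F-G$ changes sign, so by continuity there is an interior point $y\in V^s[\un v^+]\cap V^s[\un w^+]$ at which the two curves are \emph{not} tangent with the same one-sided behaviour -- more precisely, I would choose $y=\Psi_x(t_0,F(t_0))=\Psi_x(t_0,G(t_0))$ a point of the intersection that is not isolated from points of $V^s[\un v^+]\setminus V^s[\un w^+]$ on one side. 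The strategy is then to propagate this crossing forward under the holonomy maps $g^+_{x_n}$ and derive a contradiction with the uniform contraction/bounded-distortion estimates of Theorem \ref{Thm-stable-manifolds}(3)--(4).

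The mechanism: by invariance (Theorem \ref{Thm-stable-manifolds}(2)), $g^+_{x_{n-1}}\circ\cdots\circ g^+_{x_0}$ maps $V^s[\un v^+]$ into $V^s[\{v_{n+k}\}_{k\ge0}]$ and similarly for $\un w^+$; both images sit inside $\Psi_{x_n}(R[10Q(x_n)])$ and remain graphs over the first coordinate in the chart $\Psi_{x_n}$. If the two curves crossed transversally-in-the-weak-sense at $y$, then the image curves would cross at $G_n(y)$ with a separation in the second (unstable) coordinate that, by the hyperbolicity estimate, would be magnified at rate $e^{(\chi\inf(r_\Lambda)/2 - \beta\ve/6)n}$ if we instead iterate \emph{backward} along the stable curves -- but stable manifolds only extend forward, so the correct contradiction is the other way: two distinct points on $V^s[\un v^+]$ and $V^s[\un w^+]$ that agree at $y$ must, under forward iteration, stay within $R[10Q(x_n)]$ forever while their ``horizontal'' separation in the chart is governed by the graph-transform contraction (Lemma \ref{Prop-graph-transform}(1)), forcing $F$ and $G$ to agree on a neighbourhood of $t_0$; an open-closed argument on $\{t:F(t)=G(t)\}$ then shows $F\equiv G$ on the overlap of their domains, whence one curve contains the other. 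The cleanest route is: let $I=\{t\in[-\min(p^s,q^s),\min(p^s,q^s)]:F(t)=G(t)\}$; this set is closed, and I would show it is open in that interval by using that at any $t_0\in I$ the forward orbits of $\Psi_x(t_0,F(t_0))$ under the $g^+$'s determine the germ of the admissible manifold uniquely (the stable manifold through a shadowed point is unique, by Lemma \ref{Lemma-admissible-manifolds}(1) applied along the orbit), so $F$ and $G$ have the same $\infty$-jet structure near $t_0$ and in fact coincide near $t_0$. Hence $I$ is all of the interval, and one curve is contained in the other.

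The main obstacle I anticipate is the openness step: making precise that ``$F=G$ at $t_0$ implies $F=G$ near $t_0$'' requires knowing that the local stable manifold through a given point (as a germ) is \emph{unique}, which in turn needs that the forward holonomy images of the two curves both satisfy the admissibility bounds (AM1)--(AM3) at the shifted double charts and hence are both legitimate stable manifolds of the \emph{same} shifted gpo tail -- but the tails $\{v_{n+k}\}_{k\ge0}$ and $\{w_{n+k}\}_{k\ge0}$ need not coincide as symbol sequences. The resolution, following \cite{Sarig-JAMS}, is that one does not need the tails to coincide: one only needs that the point $y$ is shadowed by both, so its forward orbit stays in the $R[10Q(\cdot)]$-windows of both, and then Theorem \ref{Thm-stable-manifolds}(1) (the intrinsic characterization of $V^s[\cdot]$ as the set of points whose forward orbit stays in the windows) forces the two stable sets to agree on the connected component of $y$. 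I would therefore lean on that intrinsic description rather than on uniqueness of the gpo, which sidesteps the difficulty entirely; the remaining work is the routine verification that the graph structure is preserved and that ``agree on a connected component'' upgrades to ``one contains the other'' given that both are connected graphs over intervals containing $t=0$.
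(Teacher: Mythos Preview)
There is a genuine gap at the step where you invoke the intrinsic characterization from Theorem~\ref{Thm-stable-manifolds}(1). That characterization reads
\[
V^s[\un v^+]=\{z\in\Psi_{x_0}(R[p^s_0]):G_n^{\un v}(z)\in\Psi_{x_n}(R[10Q(x_n)])\ \forall n\ge0\},
\]
where $G_n^{\un v}=g^+_{x_{n-1}}\circ\cdots\circ g^+_{x_0}$ is built from the centers $x_n$ of $\un v^+$. To conclude $U^s\subset V^s$ you must show that every $y'\in U^s$ satisfies $G_n^{\un v}(y')\in\Psi_{x_n}(R[10Q(x_n)])$. But all you know about $y'$ is that $G_n^{\un w}(y')\in\Psi_{y_n}(R[10Q(y_n)])$, and for $n\ge1$ the maps $G_n^{\un v},G_n^{\un w}$ are compositions of holonomies centered at \emph{different} points, landing in \emph{different} discs, with \emph{different} target windows. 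Knowing that the single intersection point $z$ is shadowed by both gpo's gives you nothing about how $G_n^{\un v}$ acts on the rest of $U^s$. The same defect blocks the openness of $I=\{t:F(t)=G(t)\}$: tangency at $t_0$ (which you do get, since both tangent lines are $\mathbb{R}n^s_{y}$) is not enough for $C^{1+\beta/3}$ curves to coincide locally, and any stronger statement again needs the bridge between the $\un w$--dynamics and the $\un v$--windows.

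The paper supplies this bridge through the flow. It lifts $U^s=V^s[\un w^+]$ via Proposition~\ref{Prop-center-stable} to a curve $\wt U^s\ni z$ satisfying $d(\vf^t(\wt y),\vf^t(\wt y'))\le e^{-ct}$ for all $\wt y,\wt y'\in\wt U^s$; this contraction is intrinsic to $\vf$ and does not depend on which gpo parses the orbit. Writing $G_n^{\un v}(z)=\vf^{t_n}(z)$, one then has $G_n^{\un v}(U^s)=\mathfrak q_D[\vf^{t_n}(\wt U^s)]$, whose diameter is at most $2e^{-c'n}$. Since $z\in V^s$, Claim~1 (standard) gives $G_n^{\un v}(z)\in\Psi_{x_n}(R[\tfrac12 Q(x_n)])$, and combining this with the diameter bound and the sub-exponential lower bound $Q(x_n)\ge e^{-2\ve n}(p^s_0\wedge p^u_0)$ from Lemma~\ref{Lemma-minimum} forces $G_n^{\un v}(U^s)\subset\Psi_{x_n}(R[Q(x_n)])$ for all large $n$. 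Now the intrinsic characterization applies and yields $U^s\subset V^s$ directly, with no open--closed argument. The moral is that in the flow setting the two gpo's define genuinely different discrete dynamics, and Proposition~\ref{Prop-center-stable} is precisely the device that lets you compare them.
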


\begin{proof}
For $C^{1+\beta}$ surface diffeomorphism, this is \cite[Prop. 6.4]{Sarig-JAMS}.
We apply a similar idea, using Proposition \ref{Prop-center-stable}. 
Write $V^s=V^s[\un v^+]$ and $U^s=V^s[\un w^+]$.
If $V^s\cap U^s=\emptyset$, we are done, 
so assume there is $z\in V^s\cap U^s$. Assuming without loss of generality
that $q^s\leq p^s$, we will prove that $U^s\subset V^s$. The proof will follow from three claims as
in \cite[Prop. 6.4]{Sarig-JAMS}. Write $\un v^+=\{\Psi_{x_n}^{p^s_n,p^u_n}\}_{n\geq 0}$.
We continue using the same terminology
of the previous proposition, with $g_{x_ n}^+=\vf^{T_n}$ for $n\geq 0$, $G_0={\rm Id}$,
and $G_n=g_{x_{n-1}}^+\circ\cdots\circ g_{x_0}^+$ for $n\geq 1$.

\medskip
\noindent
{\sc Claim 1:} If $n$ is large enough then $G_ n(V^s)\subset \Psi_{x_n}(R[\tfrac{1}{2}Q(x_n)])$.

\begin{proof}[Proof of Claim $1$.] Same as \cite[Prop. 6.4]{Sarig-JAMS}, using that the representation 
of $g_{x_n}^+$ in Pesin charts satisfies Theorem \ref{Thm-non-linear-Pesin-2}.
\end{proof}

\medskip
\noindent
{\sc Claim 2:} If $n$ is large enough then $G_ n(U^s)\subset \Psi_{x_n}(R[Q(x_n)])$.

\begin{proof}[Proof of Claim $2$.] Lift $U^s$ to a curve $\wt U^s$ passing through $z$
and satisfying Proposition \ref{Prop-center-stable}. Fix $n\geq 0$, and let
$t_n=\sum_{k=0}^{n-1}T_k(G_k(z))$ be the total flow time of $z$ under $G_n$.
Let $z_n=G_n(z)=\vf^{t_n}(z)$. If $D\subset\widehat\Lambda$ is the disc containing $x_n$ then 
$$
G_n(U^s)=\mathfrak q_D[\vf^{t_n}(\wt U^s)].
$$
Let $c:=\inf(r_\Lambda)^2/2\sup(r_\Lambda)$. Since $\mathfrak q_D$ is $2$--Lipschitz (Lemma \ref{Lemma-regularity-q-t}(2)),
Lemma \ref{Lemma-map-g} and Proposition \ref{Prop-center-stable}
imply that
$$
{\rm diam}(G_n(U^s))={\rm diam}(\mathfrak q_D[\vf^{t_n}(\wt U^s)])\leq 2{\rm diam}(\vf^{t_n}(\wt U^s))
\leq 2e^{-\frac{\chi\inf(r_\Lambda)}{2\sup(r_\Lambda)}t_n}\leq 2e^{-\chi c n},
$$
since $t_n\geq \inf(r_\Lambda)n$. Hence
$\Psi_{x_n}^{-1}[G_n(U^s)]$ is contained in the ball with center $\Psi_{x_n}^{-1}(z_n)$
and radius $4\|C(x_n)^{-1}\|e^{-\chi c n}$. Since by Claim 1 we have
$\Psi_{x_n}^{-1}(z_n)\in R[\tfrac{1}{2}Q(x_n)]$, it is enough to prove that 
$4\|C(x_n)^{-1}\|e^{-\chi c n}<\tfrac{1}{2}Q(x_n)$. Using that
$Q(x_n)<\|C(x_n)^{-1}\|^{-1}$, we just need to prove
that $8Q(x_n)^{-2}e^{-\chi cn}<1$. We claim that $Q(x_n)^{-2}e^{-\chi cn}$
converges to zero exponentially fast as $n$ increases. Indeed, by Lemma \ref{Lemma-minimum}
we have
$Q(x_n)\geq p^s_n\wedge p^u_n\geq e^{-2\ve n}(p^s_0\wedge p^u_0)$
and so 
$$
Q(x_n)^{-2}e^{-\chi cn}\leq e^{4\ve n}(p^s_0\wedge p^u_0)^{-2}e^{-\chi cn}=
(p^s_0\wedge p^u_0)^{-2}e^{-(\chi c-4\ve)n}
$$
which converges to zero if $\ve>0$ is small enough.
\end{proof}

By Theorem \ref{Thm-stable-manifolds}(1), we conclude that
$G_n(U^s)\subset V^s[\{\Psi_{x_k}^{p^s_k,p^u_k}\}_{k\geq n}]$ for every
$n$ large enough. 

\medskip
\noindent
{\sc Claim 3:} $U^s\subset V^s$.

\begin{proof}[Proof of Claim $3$.] Fix $n$ large enough so that
$G_n(U^s)\subset V^s[\{\Psi_{x_k}^{p^s_k,p^u_k}\}_{k\geq n}]$, 
and proceed as in Claim 3 of \cite[Prop. 6.4]{Sarig-JAMS}.
\end{proof}

The proof of the proposition is complete.
\end{proof}

\section{First coding}\label{section-coarse-graining}

Up to now, we have fixed $\vf,\chi,\rho,\Lambda,\widehat\Lambda,\theta,\ve$ such that
$\ve\ll \rho\ll 1$, and we have constructed invariant manifolds for $\ve$--gpo's. 
We also defined shadowing. In this section, we:
\begin{enumerate}[$\circ$]
\item Construct a countable family of $\ve$--double 
charts whose $\ve$--gpo's they define shadow the whole set $\Lambda\cap\nuh^\#$.
\item Define a first coding, that is usually infinite-to-one.
\end{enumerate}

\subsection{Coarse graining}

This self-contained section comprises an important part of this work that cannot be obtained
using the methods of \cite{Sarig-JAMS,Lima-Sarig,Lima-Matheus}. Indeed,
condition (GPO2) in our definition of edge between $\ve$--double charts is a set of inequalities, 
so we need to show it is loose enough to shadow all points of $\Lambda\cap\nuh^\#$.
The proof of this fact requires an analysis of orbits at hyperbolic times,
where parameters are essentially uniquely defined.

\begin{theorem}[Coarse graining]\label{Thm-coarse-graining}
For all $0<\ve\ll \rho\ll 1$, there exists a countable family $\mathfs A$ of $\ve$--double charts
with the following properties:
\begin{enumerate}[{\rm (1)}]
\item {\sc Discreteness:} For all $t>0$, the set $\{\Psi_x^{p^s,p^u}\in\mathfs A:p^s,p^u>t\}$ is finite.
\item {\sc Sufficiency:} If $x\in\Lambda\cap\nuh^\#$ then there is a regular $\ve$--gpo
$\un v\in{\mathfs A}^{\Z}$ that shadows $x$.
\item {\sc Relevance:} For each $v\in \mathfs A$, $\exists\un{v}\in\mathfs A^\Z$ an $\ve$--gpo
with $v_0=v$ that shadows a point in $\Lambda\cap\nuh^\#$.
\end{enumerate}
\end{theorem}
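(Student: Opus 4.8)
The plan is to follow the strategy of \cite[\S5]{Sarig-JAMS} and \cite[\S6]{Lima-Sarig}, but with an extra
analysis at hyperbolic times to accommodate the inequalities in (GPO2). First I would fix, for each
$x\in\Lambda\cap\nuh^\#$, a \emph{natural parsing} of the orbit: the sequence
$\mathcal T(x)=\{t_n(x)\}_{n\in\Z}$ of consecutive return times of $x$ to $\Lambda$, which by
Section~\ref{ss.proper-section} satisfies $\tfrac12\inf(r_\Lambda)\le t_{n+1}-t_n\le 2\sup(r_\Lambda)$ (actually
$t_{n+1}-t_n=r_\Lambda(\vf^{t_n}(x))\in(0,\rho/2)$), so that the discrete parameters
$p^{s/u}(\vf^{t_n}(x))$ of Section~\ref{section-Z-indexed} are defined. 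Along this orbit one then has the
candidate coding sequence $\un w(x)=\{\Psi_{\vf^{t_n}(x)}^{p^s(\vf^{t_n}(x)),\,p^u(\vf^{t_n}(x))}\}_{n\in\Z}$;
the greedy algorithm of Proposition~\ref{Prop-Z-par}(2) and the robustness estimate
Proposition~\ref{Prop-Z-par}(1) show that this sequence satisfies (GPO2) with $T(v,w)$ replaced by the true
transition time $t_{n+1}-t_n$, and the maximality statement Proposition~\ref{Prop-Z-par}(3) guarantees
regularity. The work is then to \emph{discretize}: replace each $\Psi_{\vf^{t_n}(x)}^{p^s,p^u}$ by a nearby
element of a fixed countable set $\mathfs A$, in such a way that edges are preserved and shadowing still holds.

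The construction of $\mathfs A$ proceeds as in \cite[Proof of Thm.~5.1]{Sarig-JAMS}. I would fix a
countable dense subset $\{y_j\}$ of $\Lambda\cap\nuh$ with respect to the finitely many relevant
$C^1$-data (the point $y_j$, the chart $C(y_j)$, the return time $r_\Lambda$, the derivative
$d(f^\pm_{y_j})$, etc., all measured in Pesin charts as in Theorem~\ref{Thm-non-linear-Pesin}), together
with, for each such $y_j$, the countable set of admissible size-pairs $(p^s,p^u)$ of the form
$p^{s/u}\in\{e^{-\ve k/3}\colon k\in\N\}$ truncated at $\ve Q(y_j)$. Set $\mathfs A$ to be the
collection of all $\Psi_{y_j}^{p^s,p^u}$ so obtained; discreteness (property (1)) is immediate since for
fixed $t>0$ only finitely many $y_j$ satisfy $\ve Q(y_j)>t$ (as $Q$ is bounded below on the relevant
precompact set) and only finitely many admissible pairs exceed $t$. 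For sufficiency I would, given
$x\in\Lambda\cap\nuh^\#$ with its natural parsing, choose for each $n$ a chart
$v_n=\Psi_{y_{j(n)}}^{\bar p^s_n,\bar p^u_n}\in\mathfs A$ with $d(y_{j(n)},\vf^{t_n}(x))$ and
$\|\widetilde{C(y_{j(n)})}-\widetilde{C(\vf^{t_n}(x))}\|$ smaller than any prescribed (very high) power
of $\ve Q(\vf^{t_n}(x))$, and with $\bar p^{s/u}_n = e^{\pm\ve/3}p^{s/u}(\vf^{t_n}(x))$; the point is
that the coarseness of the grid on sizes is $e^{\ve/3}$, much finer than the slack $e^{\pm\ve}$ allowed in
(GPO2), while the spatial closeness is controlled by the $\ve$--overlap condition of
Section~\ref{section-overlap}. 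One then checks (GPO1) via the triangle inequality and the
$\ve$--overlap of $\Psi_{f(y_{j(n)})}$ with $\Psi_{y_{j(n+1)}}$ (using that $T(v_n,v_{n+1})$ is within
$O(\eta^4)$ of the true return time, by Remark~\ref{rmk-time}), and (GPO2) from
Proposition~\ref{Prop-Z-par}(1)--(2) after absorbing the $e^{\pm\ve/3}$ discretization errors into the
$e^{\pm\ve}$ tolerances. The resulting $\un v\in\mathfs A^\Z$ is a regular $\ve$--gpo, and by
Proposition~\ref{Prop-shadowing} it shadows a unique point, which one verifies coincides with $x$ by
comparing $V^s[\un v]\cap V^u[\un v]$ with $x$ using the Hölder dependence Theorem~\ref{Thm-stable-manifolds}(5)
and the bounded-distortion estimates. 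Relevance (property (3)) is then obtained by a standard diagonal
argument: each $v\in\mathfs A$ lies on some $\ve$--gpo shadowing a point of $\Lambda\cap\nuh$ by construction,
and an approximation/compactness argument (exactly as in \cite[Lemma 5.5]{Lima-Sarig} or the ``relevance''
step of \cite[Thm.~5.1]{Sarig-JAMS}) upgrades this to shadowing a point of $\Lambda\cap\nuh^\#$; one
realizes $v$ as a limit of the $n=0$ entries of gpo's shadowing points of $\Lambda\cap\nuh^\#$ and extracts a
convergent subsequence of the full sequences using discreteness.

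\textbf{Main obstacle.} The delicate point — the one genuinely new compared to the diffeomorphism case — is
verifying (GPO2) for the discretized sequence, because $T(v_n,v_{n+1})$ is not equal to the true return time
$r_\Lambda(\vf^{t_n}(x))$: it is the flow time of some point in a small box around $\vf^{t_n}(x)$, so it
differs by a quantity controlled only by $\Lip(T^+)$ times the box radius, hence by $O(\eta_n^4)$. One must
show this discrepancy, together with the $e^{\pm\ve/3}$ size discretization, still fits inside the
$e^{-\ve p^s}$-slack in \eqref{gpo2-a}--\eqref{gpo2-b}; this is where Proposition~\ref{Prop-Z-par}(1)
(robustness of $p^{s/u}$ at all times, not just hyperbolic ones) together with
Lemma~\ref{Lemma-regularity-q-t}(3) ($\Lip(T^+)<1$) is essential, and it must be arranged that the chosen
discretized sizes $\bar p^{s/u}_n$ really do satisfy the \emph{recursive} inequality rather than just an
approximate two-sided bound — i.e. one should define them by the same greedy/truncation rule applied to the
grid, not independently at each $n$. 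A secondary technical issue is that, since a single flow orbit admits no
canonical parsing, one must check that shadowing is insensitive to the choice of $\mathcal T$ within the
allowed window; this is handled, as in Section~\ref{section-control-ps-pu}, by the fact that between any two
hyperbolic times (which exist by Proposition~\ref{Prop-Z-par}(3)) the parameters are pinned down up to a
uniformly bounded factor regardless of the parsing.
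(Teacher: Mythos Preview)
Your plan has a genuine gap in the verification of (GPO2), rooted in a misreading of the allowed slack. You write that the grid spacing $e^{\ve/3}$ is ``much finer than the slack $e^{\pm\ve}$ allowed in (GPO2)'', but the slack in \eqref{gpo2-a} is not $e^{\pm\ve}$: when the minimum is realized by the first term $e^{\ve T}q^s$ (the generic ``growing'' situation), the lower and upper bounds differ only by the factor $e^{\ve p^s}$, which for small $p^s$ is $1+O(\ve p^s)$ --- arbitrarily close to $1$. Independent rounding of $p^s_n$ and $p^s_{n+1}$ to a uniform grid of step $e^{\ve/3}$ perturbs their ratio by that order, which cannot be absorbed into $e^{\ve p^s_n}$. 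Your closing remark that the sizes should be defined recursively rather than independently correctly diagnoses the difficulty, but a greedy recursion on a uniform grid still fails: the grid itself must be fine enough at each scale.

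The paper resolves this with two ingredients absent from your sketch. First, the size grid is \emph{adaptive}: one includes $q(x)$ in the discretized data $\Gamma(x)$ and takes $p^s,p^u\in I_{\ve,q(x)}:=\{e^{-\ve^2 q(x)k}:k\ge 0\}$, so that the spacing $e^{\ve^2 q(x)}$ is commensurate with the available slack (the alphabet condition (CG3) forces $p^s\wedge p^u\asymp q(x)$). Second, the sizes $p^s_n=a_nP^s_n$ are produced by a ``surgery'': one classifies each index as \emph{maximal} (where $P^s_n=\ve Q(x_n)$, so the looser slack $e^{-\ve}$ applies and one simply snaps to the grid) or \emph{growing} (where $P^s_n\ge e^{\ve^{1.5}}P^s_{n+1}$), and between consecutive maximal indices $n<m$ defines the correction factors $a_k\in(e^{-\ve},1]$ by \emph{backward} induction from $a_m$, choosing $a_k$ largest in $I_{\ve,q(x_k)}$ subject to $e^{-\frac{\ve}{4}P^s_k}a_{k+1}\le e^{\frac{\ve}{4}P^s_k}a_k\le a_{k+1}$. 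The geometric decay $P^s_k\le e^{-\ve^{1.5}(k-n)}P^s_n$ along a growing block gives $\sum_k P^s_k=O(\ve^{3/\beta-1})$, so the cumulative drift of the $a_k$ stays within $(e^{-\ve},1]$; the inductive condition on $a_k$ is then precisely what makes \eqref{gpo2-a} hold at growing indices, while at maximal indices the check is direct since $e^{\ve T_n}p^s_{n+1}>\ve Q(x_n)$ there. (A minor separate point: your compactness argument for Relevance is unnecessary --- one simply discards from $\mathfs A$ every chart that is not the zeroth entry of some $\ve$--gpo shadowing a point of $\Lambda\cap\nuh^\#$.)
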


Recall that $\un v=\{v_n\}_{n\in\Z}$ is regular if there are $v,w$ such that
$v_n=v$ for infinitely many $n>0$ and $v_n=w$ for infinitely many $n<0$.
According to Proposition \ref{Prop-adaptedness} and part (2) above,
the $\ve$--gpo's in $\mathfs A$ shadow almost every point with respect to every
$\chi$--hyperbolic measure.

\begin{proof}
When $M$ is a closed surface and $f$ is a diffeomorphism, the above statement is consequence
of Propositions 3.5, 4.5 and Lemmas 4.6, 4.7 of \cite{Sarig-JAMS}. When $M$ is a compact surface with boundary
and $f$ is a local diffeomorphism with bounded derivatives, this is Proposition 4.3 of \cite{Lima-Sarig}.
When $M$ is a surface and $f$ is a local diffeomorphism with unbounded derivatives, this is
Theorem 5.1 of \cite{Lima-Matheus}. Our proof follows a similar strategy of \cite{Sarig-JAMS,Lima-Sarig}
but the implementation is significantly harder, since the definition of edge is more complicated.
In particular, we need to control the cumulative shear between an orbit and an $\ve$--gpo.

\medskip
Let $\N_0=\N\cup\{0\}$, and let $X:=\Lambda^3\times {\rm GL}(2,\R)^3\times (0,1]$.
For $x\in\Lambda\cap\nuh^\#$, let
$\Gamma(x)=(\un x,\un C,\un Q)\in X$ with
\begin{align*}
\un x=(f^{-1}(x),x,f(x)),\ \un C=(C(f^{-1}(x)),C(x),C(f(x))),\ \un Q=(Q(x),q(x)).
\end{align*}
Let $Y=\{\Gamma(x):x\in\Lambda\cap\nuh^\#\}$. We want to construct a countable dense subset
of $Y$. Since the maps $x\mapsto C(x),Q(x),q(x)$ are usually just measurable,
we apply a precompactness argument.
For each $\un{\ell}=(\ell_{-1},\ell_0,\ell_1)\in\N_0^3$ and $m,j\in\N_0$, define
$$
Y_{\un \ell,m,j}:=\left\{\Gamma(x)\in Y:
\begin{array}{cl}
e^{\ell_i}\leq\|C(f^i(x))^{-1}\|<e^{\ell_i+1},&-1\leq i\leq 1\\
e^{-m-1}\leq Q(x)< e^{-m}&\\
e^{-j-1}\leq q(x)< e^{-j}&\\

\end{array}
\right\}.
$$

\medskip
\noindent
{\sc Claim 1:} $Y=\bigcup\limits_{\un\ell\in\N_0^3\atop{m,j\in\N_0}}Y_{\un\ell,m,j}$, and each
$Y_{\un\ell,m,j}$ is precompact in $X$.

\begin{proof}[Proof of Claim $1$.] 
The first statement is clear. We focus on precompactness.
Fix $\un\ell\in \N_0^3$, $m,j\in\N_0$, and take $\Gamma(x)\in Y_{\un\ell,m,j}$. Then
$\un x\in \Lambda^3$, a precompact subset of $M^3$.
For $|i|\leq 1$, $C(f^i(x))$ is an element of ${\rm GL}(2,\R)$ with norm $\leq 1$ and
inverse norm $\leq e^{\ell_i+1}$, hence it belongs to a compact subset of ${\rm GL}(2,\R)$.
This guarantees that $\un C$ belongs to a compact subset of ${\rm GL}(2,\R)^3$. Also,
$\un Q\in [e^{-m-1},1]\times [e^{-j-1},1]$, a compact subinterval of $(0,1]$. Since the product of precompact sets
is precompact, the claim is proved.
\end{proof}

By Claim 1, there exists a finite set
$Z_{\un\ell,m,j}\subset Y_{\un\ell,m,j}$ such that for every $\Gamma(x)\in Y_{\un\ell,m,j}$
there exists $\Gamma(y)\in Z_{\un\ell,m,j}$ with:
\begin{enumerate}[{\rm (a)}]
\item $ d(f^i(x),f^i(y))+\|\widetilde{C(f^i(x))}-\widetilde{C(f^i(y))}\|<\tfrac{1}{2}q(x)^8$,
$|i|\leq 1$.
\item $\tfrac{Q(x)}{Q(y)}=e^{\pm \ve/3}$ and $\tfrac{q(x)}{q(y)}=e^{\pm \ve/3}$.
\end{enumerate}
A fortiori, (a) implies that $f^i(x),f^i(y)$ belong to the same disc of $\Lambda$, for $|i|\leq 1$.
For $\eta>0$, let $I_{\ve,\eta}:=\{e^{-\ve^2\eta k}:k\geq 0\}$, a countable discrete set whose ``thickness''
depends on $\eta$.

\medskip
\noindent
{\sc The alphabet $\mathfs A$:} Let $\mathfs A$ be the countable family of $\Psi_x^{p^s,p^u}$ such that:
\begin{enumerate}[i i)]
\item[(CG1)] $\Gamma(x)\in Z_{\un\ell,m,j}$ for some
$(\un\ell,m,j)\in\N_0^3\times \N_0\times \N_0$.
\item[(CG2)] $0<p^s,p^u\leq \ve Q(x)$ and $p^s,p^u\in I_{\ve,q(x)}$.
\item[(CG3)] $e^{-\mathfrak H-1}\leq \tfrac{p^s\wedge p^u}{q(x)}\leq e^{\mathfrak H+1}$, where $\mathfrak H$
is given by Proposition \ref{Prop-Z-par}(1).
\end{enumerate}

\medskip
\noindent
{\em Proof of discreteness.}
Fix $t>0$, and let $\Psi_x^{p^s,p^u}\in\mathfs A$ with $p^s,p^u>t$.
If $\Gamma(x)\in Z_{\un\ell,m,j}$ then:
\begin{enumerate}[$\circ$]
\item Finiteness of $\un\ell$: we have $e^{\ell_0}\leq \|C(x)^{-1}\|<Q(x)^{-1}<t^{-1}$,
hence $\ell_0<|\log t|$. By Lemma \ref{Lemma-linear-reduction}(3), for $i=\pm 1$ we have
$$
e^{\ell_i}\leq \|C(f^i(x))^{-1}\|\leq \|C(f^i(x))^{-1}\|_{\rm Frob}\leq e^{18\rho}\|C(x)^{-1}\|_{\rm Frob}
<e^{18\rho}t^{-1},
$$
hence $\ell_{-1},\ell_1<18\rho+|\log t|=:T_t$, which is bigger than $|\log t|$.
\item Finiteness of $m$: $e^{-m}>Q(x)>t$, hence $m<|\log t|$.
\item Finiteness of $j$: $e^{-j}>q(x)\geq e^{-\mathfrak H-1}(p^s\wedge p^u) >e^{-\mathfrak H-1} t$,
hence $j\leq |\log t|+\mathfrak H+1$.
\end{enumerate}
Therefore
\begin{align*}
\#\left\{\Gamma(x):
\Psi_x^{p^s,p^u}\in\mathfs A\text{ s.t. }p^s,p^u>t\right\}\leq
\sum_{j=0}^{\lceil |\log t|+\mathfrak H\rceil+1}\sum_{m=0}^{\lceil |\log t|\rceil}
\sum_{-1\leq i\leq 1\atop{\ell_i=0}}^{T_t} \# Z_{\un\ell,m,j}
\end{align*}
is the finite sum of finite terms, hence finite. For each such $\Gamma(x)$,
$$
\#\{(p^s,p^u):\Psi_x^{p^s,p^u}\in\mathfs A\text{ s.t. }p^s,p^u>t\}\leq (\# I_{\ve,q(x)}\cap (t,1))^2$$
is finite, hence 
\begin{align*}
\#\left\{\Psi_x^{p^s,p^u}\in\mathfs A:p^s,p^u>t\right\}\leq 
\sum_{j=0}^{\lceil |\log t|+\mathfrak H\rceil+1}\sum_{m=0}^{\lceil |\log t|\rceil}\sum_{-1\leq i\leq 1\atop{\ell_i=0}}^{T_t}
\sum_{\Gamma(x)\in Z_{\un\ell,m,j}}(\# I_{\ve,q(x)}\cap (t,1))^2
\end{align*}
is the finite sum of finite terms, hence finite. This proves the discreteness of $\mathfs A$.

\medskip
\noindent
{\em Proof of sufficiency.}
Let $x\in\Lambda\cap\nuh^\#$. Take $(\ell_i)_{i\in\Z},(m_i)_{i\in\Z},(j_i)_{i\in\Z}$  such that:
\begin{align*}
& \|C(f^i(x))^{-1}\|\in [e^{\ell_i},e^{\ell_i+1}),Q(f^i(x))\in [e^{-m_i-1},e^{-m_i}),\\
&q(f^i(x))\in[e^{-j_i-1},e^{-j_i}).
\end{align*}
For $n\in\Z$, let $\un\ell^{(n)}=(\ell_{n-1},\ell_n,\ell_{n+1})$. Then $\Gamma(f^n(x))\in Y_{\un\ell^{(n)},m_n,j_n}$.
Take $\Gamma(x_n)\in Z_{\un\ell^{(n)},m_n,j_n}$ such that:
\begin{enumerate}[aaa)]
\item[(${\rm a}_n$)] $ d(f^i(f^n(x)),f^i(x_n))+
\|\widetilde{C(f^i(f^n(x)))}-\widetilde{C(f^i(x_n))}\|<\tfrac{1}{2}q(f^n(x))^8$, $|i|\leq 1$.
\item[(${\rm b}_n$)] $\tfrac{Q(f^n(x))}{Q(x_n)}=e^{\pm\ve/3}$ and $\tfrac{q(f^n(x))}{q(x_n)}=e^{\pm\ve/3}$.
\end{enumerate}
From now on the proof differs from \cite{Sarig-JAMS,Lima-Sarig,Lima-Matheus}.
Take $\{t_n\}_{n\in\Z}$ such that $f^n(x)=\vf^{t_n}(x)$, with $t_0=0$ and $g_{x_n}^+[f^n(x)]=\vf^{t_{n+1}-t_n}[f^n(x)]$.
Define
\begin{align*}
P_n^s&:=\ve\inf\{e^{\ve|t_{n+k}-t_n|}Q(x_{n+k}):k\geq 0\},\\
P_n^u&:=\ve\inf\{e^{\ve|t_{n+k}-t_{n}|}Q(x_{n+k}):k\leq 0\}.
\end{align*}
There is no reason for $\Psi_{x_n}^{P^s_n,P^u_n}$ belonging to $\mathfs A$ nor
for $\{\Psi_{x_n}^{P^s_n,P^u_n}\}_{n\in\Z}$ being an $\ve$--gpo. Indeed,
with the above definitions 
one of the inequalities in (GPO2) holds in the reverse direction. 
To satisfy (GPO2), we will slightly decrease each $P^s_n,P^u_n$.
Below we show how to make this ``surgery'' for $P^s_n$ (the method for
$P^u_n$ is symmetric).

Start noting the greedy recursion $P^s_n=\min\{e^{\ve(t_{n+1}-t_n)}P^s_{n+1},\ve Q(x_n)\}$
and that
\begin{align*}
&P^s_n=e^{\pm \ve}\ve\inf\{e^{\ve|t_{n+k}-t_n|}Q(f^{n+k}(x)):k\geq 0\}
=e^{\pm \ve}p^s(x,\mathcal T,n)=e^{\pm\left(\mathfrak H+\frac{\ve}{3}\right)}q^s(f^n(x)),
\end{align*}
by (${\rm b}_n$) above and Proposition \ref{Prop-Z-par}(1), where $\mathcal T=\{t_n\}_{n\in\Z}$.
We fix $\lambda:=\exp{}[\ve^{1.5}]$
and divide the indices $n\in\Z$ into two groups:
\begin{center}
$n$ is {\em growing} if $P^s_n\geq \lambda P^s_{n+1}$
and it is {\em maximal} otherwise.
\end{center}
Note that $\lambda$ has an exponent 
with order smaller than $\ve$. The definition of growing/maximal indices is motivated by the following: the parameter
$P^s_n$ gives a choice on the size of the stable manifold at $x_n$,
therefore we expect $P^s_n$ to be larger than $P^s_{n+1}$ at least by a multiplicative
factor bigger than $\lambda$, unless it reaches the maximal size $\ve Q(x_n)$.
In the first case the index is growing, and in the second it is maximal.
Assuming that $\ve>0$ is sufficiently small, we note two properties of this notion:
\begin{enumerate}[$\circ$]
\item If $n$ is maximal then $P^s_n=\ve Q(x_n)$: otherwise
$P^s_n=e^{\ve(t_{n+1}-t_n)}P^s_{n+1}\geq e^{\ve\inf(r_\Lambda)}P^s_{n+1}>\lambda P^s_{n+1}$,
which contradicts the assumption that $n$ is maximal.
\item There are infinitely many maximal indices $n>0$,
and infinitely many maximal indices $n<0$: the first claim follows exactly as in the proof of
Proposition \ref{Prop-Z-par}(3)  (remember we are assuming that $x\in\nuh^\#$
and so $\limsup\limits_{n\to+\infty} P^s_n>0$).
The second claim follows from direct computation: if there is $n_0$ such that every $n<n_0$ is growing then
$P^s_n\geq \lambda^{n_0-n}P^s_{n_0}$ for all $n<n_0$, which cannot hold since $\lambda^{n_0-n}\to\infty$
as $n\to-\infty$.
\end{enumerate}

We define $p^s_n=a_n P^s_n$ where $e^{-\ve}<a_n\leq 1$ are appropriately chosen.
We first define $a_n$ for the maximal indices $n\in\Z$ as the largest value in $(0,1]$ with
$a_nP^s_n\in I_{\ve,q(x_n)}$.
In particular, $e^{-\ve^2 q(x_n)}\leq a_n\leq 1$. Then we define $a_n$ for the growing indices.
Fix two consecutive maximal indices $n<m$ and define $a_{n+1},\ldots,a_{m-1}$ with a backwards 
induction as follows.
If $n<k<m$ and $a_{k+1}$ is well-defined then we choose $a_k$ largest as possible satisfying:
\begin{enumerate}[aa)]
\item[(i)] $e^{-\frac{\ve}{4}P^s_k}a_{k+1}\leq e^{\frac{\ve}{4}P^s_k}a_k\leq a_{k+1}$;
\item[(ii)] $a_k P^s_k\in I_{\ve,q(x_k)}$.
\end{enumerate}
This choice is possible because the interval $(e^{-\frac{\ve}{4}P^s_k}a_{k+1},a_{k+1}]$
intersects $I_{\ve,q(x_k)}$,
since $\tfrac{\ve}{4}P^s_k\geq \tfrac{\ve}{4}e^{-\left(\mathfrak H+\frac{\ve}{3}\right)}q^s(f^k(x))\geq
\tfrac{\ve}{4}e^{-\left(\mathfrak H+\frac{\ve}{3}\right)}q(f^k(x))
\geq \tfrac{\ve}{4}e^{-\left(\mathfrak H+\frac{2\ve}{3}\right)}q(x_k)>\ve^2 q(x_k)$.
The first condition implies that $0<a_{n+1}\leq\cdots\leq a_{m-1}\leq a_m\leq 1$.
The maximality on the choice of $a_k$ indeed implies the inequality
$e^{-\ve^2 q(x_k)}a_{k+1}\leq e^{\frac{\ve}{4}P^s_k}a_k\leq a_{k+1}$
for every growing $k$ (this is stronger than (i)).

Before continuing, we collect some estimates relating $q(x_k),P^s_k,p^s_k$.
Fix two consecutive maximal indices $n<m$. Then then following holds for all $\ve>0$ small enough:
\begin{enumerate}[$\circ$]
\item $\displaystyle\sum_{k=n+1}^m P^s_k<\ve^{\frac{3}{\beta}-1}$:
every $k=n+1,\ldots,m-1$ is growing,
thus $P^s_k\leq \lambda^{n+1-k}P^s_{n+1}$ for $k=n+1,\ldots,m$. This implies that
$$
\sum_{k=n+1}^m P^s_k\leq P^s_{n+1}\sum_{i=0}^{m-n-1}\lambda^{-i}<\ve^{\frac{3}{\beta}+1}\frac{1}{1-\lambda^{-1}}<
2\ve^{\frac{3}{\beta}-0.5}<\ve^{\frac{3}{\beta}-1},
$$
since $\lim\limits_{\ve\to 0}\tfrac{\ve^{1.5}}{1-\lambda^{-1}}=1$.
\item $\displaystyle\sum_{k=n+1}^m q(x_k)<\ve^{\frac{3}{\beta}-1}$: by the previous item,
$$
\sum_{k=n+1}^m q(x_k)\leq e^{\mathfrak H+\frac{2\ve}{3}}\sum_{k=n+1}^m P^s_k
<2e^{\mathfrak H+\frac{2\ve}{3}}\ve^{\frac{3}{\beta}-0.5}<\ve^{\frac{3}{\beta}-1}.
$$
\item $a_{n+1}>\lambda^{-1}$: using that $a_m\geq e^{-\ve^2 q(x_m)}>e^{-\ve P^s_m}$ and that
$e^{-\ve P^s_k}a_{k+1}\leq a_k$ for every growing $k$, we have
$$
a_{n+1}\geq \exp{}\left[-\ve\sum_{k=n+1}^{m-1}P^s_k\right]a_m\geq \exp{}\left[-\ve\sum_{k=n+1}^{m}P^s_k\right]
> \exp{}\left[-\ve^{\frac{3}{\beta}}\right]>\lambda^{-1},
$$
since $\ve^{\frac{3}{\beta}}<\ve^{1.5}$. 
\end{enumerate}
In particular, $a_k>\lambda^{-1}>e^{-\ve}$ for all $k\in\Z$.

\medskip
\noindent
{\sc Claim 2:} $\Psi_{x_n}^{p^s_n,p^u_n}\in\mathfs A$ for all $n\in\Z$.

\begin{proof}[Proof of Claim $2$.] We have to check (CG1)--(CG3).

\medskip
\noindent
(CG1) By definition, $\Gamma(x_n)\in Z_{\un\ell^{(n)},m_n,j_n}$.

\medskip
\noindent
(CG2) We have $p^s_n\leq P^s_n\leq \ve Q(x_n)$, and the same holds for $p^u_n$.
By definition, $p^s_n,p^u_n\in I_{\ve,q(x_n)}$.

\medskip
\noindent
(CG3) Since $e^{-\ve}< a_n\leq 1$ and $P^s_n=e^{\pm\left(\mathfrak H+\frac{2\ve}{3}\right)}q^s(x_n)$,
we have $e^{-\mathfrak H-2\ve}\leq
\tfrac{p^s_n}{q^s(x_n)}\leq e^{\mathfrak H+\ve}$. By the same reason,
$^{-\mathfrak H-2\ve}\leq\tfrac{p^u_n}{q^u(x_n)}\leq e^{\mathfrak H+\ve}$.
These inequalities imply that
$e^{-\mathfrak H-2\ve}\leq
\tfrac{p^s_n\wedge p^u_n}{q(x_n)}\leq e^{\mathfrak H+\ve}$
and so $e^{-\mathfrak H-1}\leq \tfrac{p^s_n\wedge p^u_n}{q(x_n)}\leq e^{\mathfrak H+1}$.
\end{proof}

\medskip
\noindent
{\sc Claim 3:} $\Psi_{x_n}^{p^s_n,p^u_n}\overset{\ve}{\rightarrow}\Psi_{x_{n+1}}^{p^s_{n+1},p^u_{n+1}}$
for all $n\in\Z$.

\begin{proof}[Proof of Claim $3$.] We have to check (GPO1)--(GPO2).

\medskip
\noindent
(GPO1) By (${\rm a}_n$) with $i=1$ and (${\rm a}_{n+1}$) with $i=0$, we have
\begin{align*}
&\ d(f(x_n),x_{n+1})+\|\widetilde{C(f(x_n))}-\widetilde{C(x_{n+1})}\|\\
&\leq  d(f^{n+1}(x),f(x_n))+
\|\widetilde{C(f^{n+1}(x))}-\widetilde{C(f(x_n))}\|\\
&\ \ \ \,+ d(f^{n+1}(x),x_{n+1})+
\|\widetilde{C(f^{n+1}(x))}-\widetilde{C(x_{n+1})}\|\\
&<\tfrac{1}{2}q(f^n(x))^8+\tfrac{1}{2}q(f^{n+1}(x))^8
\overset{!}{\leq} \tfrac{1}{2}(1+e^{8\ve})q(f^{n+1}(x))^8\\
&\overset{!!}{\leq} \tfrac{1}{2}e^{8\mathfrak H+\frac{56\ve}{3}}(1+e^{8\ve})(p^s_{n+1}\wedge p^u_{n+1})^8
\overset{!!!}{<}(p^s_{n+1}\wedge p^u_{n+1})^8,
\end{align*}
where in $\overset{!}{\leq}$ we used Lemma \ref{Lemma-q}, in $\overset{!!}{\leq}$ we used
(${\rm b}_n$)and the estimate used to prove (CG3) in the previous paragraph, and in $\overset{!!!}{<}$
we used that $\tfrac{1}{2}e^{8\mathfrak H+\frac{56\ve}{3}}(1+e^{8\ve})<1$ when $\ve,\rho>0$ are sufficiently small.
This proves that
$\Psi_{f(x_n)}^{p^s_{n+1}\wedge p^u_{n+1}}\overset{\ve}{\approx}\Psi_{x_{n+1}}^{p^s_{n+1}\wedge p^u_{n+1}}$.
Similarly, we prove that
$\Psi_{f^{-1}(x_{n+1})}^{p^s_n\wedge p^u_n}\overset{\ve}{\approx}\Psi_{x_n}^{p^s_n\wedge p^u_n}$.

\medskip
\noindent
(GPO2) We show that relation (\ref{gpo2-a}) holds for all $k\in\Z$:
$$
e^{-\ve p^s_k}\min\{e^{\ve T(v_k,v_{k+1})}p^s_{k+1},e^{-\ve}\ve Q(x_k)\}\leq p^s_k\leq
\min\{e^{\ve T(v_k,v_{k+1})}p^s_{k+1},\ve Q(x_k)\}.
$$
Relation (\ref{gpo2-b}) is proved similarly. For ease of notation, write $T_k=T(v_k,v_{k+1})$
and $\Delta_k=(t_{k+1}-t_k)-T_k$. Since $T_k$ is the minimal time, we have $\Delta_k\geq 0$.
Using Lemma \ref{Lemma-regularity-q-t}(3), condition (${\rm a}_n$) and Remark
\ref{rmk-time}, we also have the following upper bound for $\Delta_k$:
$$
\Delta_k\leq {\rm diam}(R[\tfrac{1}{15}(p^s_k\wedge p^u_k)])=\tfrac{\sqrt{2}}{15}(p^s_k\wedge p^u_k)
< \tfrac{p^s_k}{4}\cdot 
$$
We fix two consecutive maximal indices $n<m$ and establish the above inequality for $k=n,\ldots,m-1$.
We divide the proof into two cases: $k=n$ and $k\neq n$.
Assume first that $k=n$. For $\ve>0$ small enough (remember $a_{n+1}>\lambda^{-1}$),
$$
e^{\ve T_n}p^s_{n+1}=e^{\ve T_n} a_{n+1}P^s_{n+1}>\exp{}\left[\inf(r_\Lambda)\ve-\ve^{1.5}\right]P^s_{n+1}
>\lambda P^s_{n+1}>P^s_n=\ve Q(x_n).
$$
Therefore
$$
e^{-\ve p^s_n}\min\{e^{\ve T_n}p^s_{n+1},e^{-\ve}\ve Q(x_n)\}=e^{-\ve p^s_n}e^{-\ve}\ve Q(x_n)
<e^{-\ve}\ve Q(x_n)<a_nP^s_n=p^s_n
$$
and
$$
\min\{e^{\ve T_n}p^s_{n+1},\ve Q(x_n)\}=\ve Q(x_n)=P^s_n\geq p^s_n.
$$
This proves (\ref{gpo2-a}) for $k=n$.

Now let $k\neq n$, and call
${\rm I}=\min\{e^{\ve T_k}p^s_{k+1},e^{-\ve}\ve Q(x_k)\}$,
${\rm II}=\min\{e^{\ve T_k}p^s_{k+1},\ve Q(x_k)\}$.
We wish to show that $e^{-\ve p^s_k}{\rm I}\leq p^s_k\leq {\rm II}$.
Since $a_{k+1}\geq e^{-\ve\Delta_k}a_{k+1}>\exp{}\left[-\ve \tfrac{p^s_k}{4}-\ve^{1.5}\right]>\exp{}[-\ve]$,
we have
\begin{align*}
&\ {\rm I}=\min\{e^{-\ve\Delta_k}a_{k+1}e^{\ve (t_{k+1}-t_k)}P^s_{k+1},e^{-\ve}\ve Q(x_k)\}\\
&\leq a_{k+1}\min\{e^{\ve (t_{k+1}-t_k)}P^s_{k+1},\ve Q(x_k)\}=a_{k+1}P^s_k.
\end{align*}
Therefore $e^{-\ve p^s_k}{\rm I}\leq e^{-\frac{\ve}{2}P^s_k}a_{k+1}P^s_k\leq a_kP^s_k=p^s_k$,
where in the second inequality we used property (i) in the definition of $a_k$.

For the other inequality, start observing that
$$
p^s_k=a_kP^s_k = a_k\min\{e^{\ve(t_{k+1}-t_k)}P^s_{k+1},\ve Q(x_k)\}=
\min\{e^{\ve(t_{k+1}-t_k)}a_kP^s_{k+1},a_k \ve Q(x_k)\}.
$$
Clearly $a_k \ve Q(x_k)\leq \ve Q(x_k)$. Using that $\Delta_k\leq \tfrac{P^s_k}{4}$,
we have $e^{\ve\Delta_k}a_k\leq e^{\frac{\ve}{4}P^s_k}a_k\leq a_{k+1}$, where in the last
passage we used property (i) in the definition of $a_k$. Hence
$$
e^{\ve(t_{k+1}-t_k)}a_kP^s_{k+1}=e^{\ve T_k}e^{\ve \Delta_k}a_k P^s_{k+1}\leq e^{\ve T_k}a_{k+1}P^s_{k+1}
=e^{\ve T_k}p^s_{k+1}.
$$
The conclusion is that
$p^s_k\leq {\rm II}$.
The proof of Claim 3 is now complete.
\end{proof}

\medskip
\noindent
{\sc Claim 4:} $\{\Psi_{x_n}^{p^s_n,p^u_n}\}_{n\in\Z}$ is regular.

\begin{proof}[Proof of Claim $4$.]
Since $x\in\nuh^\#$ and $\tfrac{p^s_n\wedge p^u_n}{q(f^n(x))}=e^{\pm(\mathfrak H+1)}$,
we have
$\limsup\limits_{n\to+\infty}p^s_n\wedge p^u_n>0$ and $\limsup\limits_{n\to-\infty}p^s_n\wedge p^u_n>0$.
By the discreteness of $\mathfs A$, it follows that $\Psi_{x_n}^{p^s_n,p^u_n}$ repeats infinitely
often in the future and infinitely often in the past.
\end{proof}

\medskip
\noindent
{\sc Claim 5:} $\{\Psi_{x_n}^{p^s_n,p^u_n}\}_{n\in\Z}$ shadows $x$.

\begin{proof}[Proof of Claim $5$.]
By (${\rm a}_n$) with $i=0$, we have
$\Psi_{f^n(x)}^{p^s_n\wedge p^u_n}\overset{\ve}{\approx}\Psi_{x_n}^{p^s_n\wedge p^u_n}$, hence 
by Proposition \ref{Lemma-overlap}(3) we have $f^n(x)=\Psi_{f^n(x)}(0)\in \Psi_{x_n}(R[p^s_n\wedge p^u_n])$,
thus $\{\Psi_{x_n}^{p^s_n,p^u_n}\}_{n\in\Z}$ shadows $x$.
This concludes the proof of sufficiency.
\end{proof}

\medskip
\noindent
{\em Proof of relevance.} The alphabet $\mathfs A$ might not a priori satisfy
the relevance condition, but we can easily reduce it to a sub-alphabet $\mathfs A'$ satisfying (1)--(3).
Call $v\in\mathfs A$ relevant if there is $\un v\in\mathfs A^\Z$ with $v_0=v$ such that $\un{v}$ shadows
a point in $\Lambda\cap\nuh^\#$. Since $\nuh^\#$ is $\vf$--invariant, every $v_i$ is relevant.
Hence $\mathfs A'=\{v\in\mathfs A:v\text{ is relevant}\}$ is discrete
because $\mathfs A'\subset\mathfs A$, it is sufficient and relevant by definition.
\end{proof}

\subsection{First coding}\label{ss.first.coding}

Let $\Sigma$ be the TMS associated to the graph with vertex set $\mathfs A$ given by
Theorem \ref{Thm-coarse-graining} and
edges $v\overset{\ve}{\to}w$. An element $\un v\in\Sigma$ is an $\ve$--gpo,
so let $\pi:\Sigma\to \widehat \Lambda$ by
$$
\{\pi(\un v)\}:=V^s[\un v]\cap V^u[\un v].
$$
Here are the main properties of the triple $(\Sigma,\sigma,\pi)$.

\begin{proposition}\label{Prop-pi}
The following holds for all $0<\ve\ll \rho\ll 1$.
\begin{enumerate}[{\rm (1)}]
\item Each $v\in\mathfs A$ has finite ingoing and outgoing degree, hence $\Sigma$ is locally compact.
\item $\pi:\Sigma\to \widehat \Lambda$ is H\"older continuous.
\item $\pi[\Sigma^\#]\supset\Lambda\cap\nuh^\#$.
\end{enumerate} 
\end{proposition}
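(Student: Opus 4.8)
The triple $(\Sigma,\sigma,\pi)$ is the suspension-free symbolic model, so the three claims are exactly the standard package of properties for a first coding, and I would establish them one by one following the templates of \cite{Sarig-JAMS,Lima-Sarig}, adapting to the continuous-time setting where necessary.

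\emph{Part (1): local compactness.} Fix $v=\Psi_x^{p^s,p^u}\in\mathfs A$. If there is an edge $w\overset{\ve}{\to}v$ with $w=\Psi_y^{q^s,q^u}$, then by Lemma~\ref{Lemma-minimum} we have $p^s\wedge p^u=e^{\pm2\ve}(q^s\wedge q^u)$, and since $T(w,v)\le\rho$, condition (GPO2) forces $q^s,q^u$ to be bounded below by a fixed multiple of $p^s\wedge p^u$ — roughly $q^s\ge e^{-\ve\rho}(p^s\wedge p^u)$ and similarly for $q^u$ via the $e^{\ve T}$ factor; in any case $q^s,q^u>t_0$ for some $t_0=t_0(v)>0$. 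By the discreteness property (Theorem~\ref{Thm-coarse-graining}(1)) there are only finitely many such $\ve$--double charts in $\mathfs A$, hence finitely many ingoing edges. The argument for outgoing edges is symmetric (using the other inequality in (GPO2) and Lemma~\ref{Lemma-minimum} in the reverse direction). Thus $\Sigma$ is locally compact.

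\emph{Part (2): H\"older continuity of $\pi$.} This follows from the H\"older property in Theorem~\ref{Thm-stable-manifolds}(5). If $\un v,\un w\in\Sigma$ agree on coordinates $-N,\dots,N$, then $V^s[\{v_n\}_{n\ge0}]$ and $V^s[\{w_n\}_{n\ge0}]$ are $C^1$-close at distance $\le K\theta^N$, and likewise for the unstable manifolds using the negative half. By Lemma~\ref{Lemma-admissible-manifolds}(1) the intersection point of an $s$- and a $u$-admissible manifold depends Lipschitz-continuously (in the chart $\Psi_{x_0}$) on the two representing functions — transversality is uniform since $|\sin\angle(V^s,V^u)|$ is comparable to $|\sin\alpha(x_0)|$, which is bounded below on the relevant scale by (NUH1)--(NUH2). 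Combining, $d(\pi(\un v),\pi(\un w))\le C\theta^N$ for suitable $C$; since $d(\un v,\un w)=e^{-N'}$ where $N'$ is the first disagreement index, this is precisely H\"older continuity in the symbolic metric. One must check the basepoints $x_0$ are the same (or $\ve$-overlapping) — they are, by (GPO1) applied at index $0$, and overlapping charts change coordinates with $C^2$-distance $O(\ve(\eta\eta')^2)$ by Proposition~\ref{Lemma-overlap}(4), which is absorbed into the constant.

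\emph{Part (3): $\pi[\Sigma^\#]\supset\Lambda\cap\nuh^\#$.} Let $x\in\Lambda\cap\nuh^\#$. By the sufficiency part of Theorem~\ref{Thm-coarse-graining}(2) there is a \emph{regular} $\ve$--gpo $\un v\in\mathfs A^\Z$ that shadows $x$; regularity means precisely $\un v\in\Sigma^\#$. By Proposition~\ref{Prop-shadowing}, $\un v$ shadows a \emph{unique} point, namely $V^s[\un v]\cap V^u[\un v]=\{\pi(\un v)\}$. But $x$ is shadowed by $\un v$, so by uniqueness $\pi(\un v)=x$. Hence $x\in\pi[\Sigma^\#]$.

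\textbf{Main obstacle.} The routine steps are (2) and (3) — they are essentially bookkeeping on top of the invariant-manifold and shadowing machinery already proved. The one place requiring genuine care is (1): because (GPO2) is a system of \emph{inequalities} rather than equalities, one must verify that an incoming (resp. outgoing) edge still pins down the parameters $q^s,q^u$ of the neighbor from below, uniformly in terms of $p^s,p^u$ and the uniform bound $T(v,w)\le\rho$ (Remark~\ref{rmk-time}). This is exactly what Lemma~\ref{Lemma-minimum} delivers for $q^s\wedge q^u$, and the separate lower bounds on $q^s$ and on $q^u$ come from reading (GPO2) together with $T(v,w)\ge0$ and $p^s\wedge p^u\le\ve Q(x)$. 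Once that quantitative lower bound is in hand, discreteness of $\mathfs A$ closes the argument. I would write Part (1) carefully and treat (2)--(3) briskly, citing the relevant earlier results.
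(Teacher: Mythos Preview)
Your proposal is correct and follows the paper's approach exactly: Part (1) via Lemma~\ref{Lemma-minimum} and Theorem~\ref{Thm-coarse-graining}(1), Part (2) via Theorem~\ref{Thm-stable-manifolds}(5), and Part (3) via Theorem~\ref{Thm-coarse-graining}(2) together with the uniqueness in Proposition~\ref{Prop-shadowing}. One simplification: your ``main obstacle'' is not actually an obstacle, because Lemma~\ref{Lemma-minimum} already gives $q^s\wedge q^u\ge e^{-2\ve}(p^s\wedge p^u)$, and this immediately implies \emph{both} $q^s$ and $q^u$ exceed $t_0:=e^{-2\ve}(p^s\wedge p^u)$, so discreteness of $\mathfs A$ applies directly without any separate analysis of $q^s$ and $q^u$ via (GPO2).
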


Part (1) follows from Lemma \ref{Lemma-minimum} and Theorem \ref{Thm-coarse-graining}(1),
part (2) follows from Theorem \ref{Thm-stable-manifolds}(5),
and part (3) follows from Theorem \ref{Thm-coarse-graining}(2).
It is important noting that $(\Sigma,\sigma,\pi)$ is {\em not} the TMS that satisfies the Main Theorem,
since $\pi$ might be (and usually is) infinite-to-one.
We use $\pi$ to induce a locally finite cover of $\Lambda\cap\nuh^\#$, which will then be
refined to generate a new TMS whose TMF is the one satisfying the Main Theorem.

We finish this section introducing the TMF generated by $(\Sigma,\sigma,\pi)$.
Remember that $r_\Lambda:\Lambda\to(0,\rho/2]$ is the first return time to $\Lambda$.

\medskip
\noindent
{\sc The roof function $r:\Sigma\to (0,\rho)$:} Given $\un v=\{\Psi_{x_n}^{p^s_n,p^u_n}\}_{n\in\Z}\in\Sigma$,
let $x=\pi(\un v)$ and assume that $x_1$ belongs to the disc $D\subset{\widehat \Lambda}$.
Define $r(\un v):=r_\Lambda(x_0)-\mathfrak t_D[\vf^{r(x_0)}(x)]$.

\medskip
Since $g_{x_0}^+=\mathfrak q_D\circ \vf^{r(x_0)}$,
$r(\un v)$ is the time increment for $\vf$ between the points $\pi(\un v)$ and $g_{x_0}^+[\pi(\un v)]$.
In particular, $\vf^{r(\un v)}[\pi(\un v)]=\pi[\sigma(\un v)]$ belongs to $\widehat\Lambda$ but not necessarily to
$\Lambda$. (Note: even if $\pi(\un v),\vf^{r(\un v)}[\pi(\un v)]\in\Lambda$, the values of 
$r(\un v)$ and $r_\Lambda[\pi(\un v)]$ may be different.)

\medskip
\noindent
{\sc The triple $(\Sigma_r,\sigma_r,\pi_r)$:} We take $(\Sigma_r,\sigma_r)$
to be the TMF associated to the TMS $(\Sigma,\sigma)$ and roof function $r$,
and $\pi_r:\Sigma_r\to M$ to be the map defined by $\pi_r[(\un v,t)]=\vf^t[\pi(\un v)]$.

\medskip
The next proposition collects the main properties of $(\Sigma_r,\sigma_r,\pi_r)$.

\begin{proposition}\label{Prop-pi_R}
The following holds for all $0<\ve\ll \rho\ll 1$.
\begin{enumerate}[{\rm (1)}]
\item $\pi_r\circ\sigma_r^t=\vf^t\circ\pi_r$, for all $t\in\R$.
\item $\pi_r$ is H\"older continuous with respect to the Bowen-Walters distance.
\item $\pi_r[\Sigma_r^\#]\supset\nuh^\#$.
\end{enumerate}
\end{proposition}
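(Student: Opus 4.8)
The plan is to derive the three properties of $(\Sigma_r,\sigma_r,\pi_r)$ directly from the corresponding properties of $(\Sigma,\sigma,\pi)$ established in Proposition~\ref{Prop-pi}, together with elementary facts about the suspension construction.

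\textbf{Equivariance (part (1)).} First I would check the identity $\vf^{r(\un v)}[\pi(\un v)]=\pi[\sigma(\un v)]$. By definition of $r$ and of the holonomy map $g_{x_0}^+=\mathfrak q_D\circ\vf^{r_\Lambda(x_0)}$, we have $\vf^{r(\un v)}[\pi(\un v)]=g_{x_0}^+[\pi(\un v)]$, and by the shadowing/invariance statements (Theorem~\ref{Thm-stable-manifolds}(2) applied to both $V^s$ and $V^u$, or more simply Proposition~\ref{Prop-shadowing} applied to $\sigma(\un v)$), this point is exactly $V^s[\sigma\un v]\cap V^u[\sigma\un v]=\pi(\sigma\un v)$. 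Iterating the cocycle identity $r_n=\sum_{k<n}r\circ\sigma^k$, one gets $\vf^{r_n(\un v)}[\pi(\un v)]=\pi[\sigma^n(\un v)]$ for all $n\in\Z$. Then for a general point $(\un v,t')\in\Sigma_r$ and $t\in\R$, writing $\sigma_r^t(\un v,t')=(\sigma^n\un v,\,t'+t-r_n(\un v))$ with $n$ the unique integer with $r_n(\un v)\le t'+t<r_{n+1}(\un v)$, we compute
\[
\pi_r[\sigma_r^t(\un v,t')]=\vf^{t'+t-r_n(\un v)}[\pi(\sigma^n\un v)]
=\vf^{t'+t-r_n(\un v)}\vf^{r_n(\un v)}[\pi(\un v)]=\vf^{t'+t}[\pi(\un v)]=\vf^t[\pi_r(\un v,t')],
\]
which is the desired equivariance.

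\textbf{H\"older continuity (part (2)).} This is where the care is needed. The roof function $r$ is H\"older continuous: it is built from $\pi$ (H\"older by Proposition~\ref{Prop-pi}(2)), the flow $\vf^{r_\Lambda(x_0)}$, and the $C^{1+\beta}$ map $\mathfrak t_D$, and $r_\Lambda$ restricted to a fixed disc is H\"older (in fact better); moreover $r$ is bounded away from $0$ and $\infty$ since $0<\inf(r_\Lambda)\le r<\rho$. Hence $(\Sigma_r,\sigma_r)$ is a genuine TMF and the Bowen--Walters metric $d_r$ is well-behaved, with the Lipschitz-in-time estimate $d_r(\sigma_r^t z,\sigma_r^t z')\le C\,d_r(z,z')^\kappa$ for $|t|\le 1$ quoted from \cite[Lemma 5.8]{Lima-Sarig}. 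To bound $d_M(\pi_r(\un v,t),\pi_r(\un w,t'))$ for nearby $(\un v,t),(\un w,t')$, I would distinguish the ``same fiber'' contribution and the ``base'' contribution exactly as in \cite[\S5]{Lima-Sarig}: when $\un v=\un w$ the two points differ by a flow time $|t-t'|$, and $d_M(\vf^t\pi\un v,\vf^{t'}\pi\un v)\le \|X\|_{C^0}|t-t'|$; when $t=t'$ it follows from $d_M(\pi\un v,\pi\un w)\le \mathrm{const}\cdot d(\un v,\un w)^{\text{H\"older exp}}$ (Proposition~\ref{Prop-pi}(2)) composed with the uniform bound $\|d\vf^t\|\le e^{|t|}\le e^{\rho}$ for $|t|\le\rho$. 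Combining these via the triangle inequality and the comparison between $d_r$ and (symbolic distance, fiber coordinate) that is built into the Bowen--Walters metric yields H\"older continuity of $\pi_r$. I would cite \cite[\S5]{Lima-Sarig} for the bookkeeping since the argument there transfers verbatim once we know $r$ and $\pi$ are H\"older.

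\textbf{Covering $\nuh^\#$ (part (3)).} Let $p\in\nuh^\#$. Since $\Lambda$ is a proper section, $M=\bigcup_i\vf^{[0,\rho)}D_i$, so there is $t_0\in[0,\rho)$ with $x:=\vf^{t_0}(p)\in\Lambda$; because $\nuh^\#$ is $\vf$-invariant, $x\in\Lambda\cap\nuh^\#$. By Proposition~\ref{Prop-pi}(3) there is $\un v\in\Sigma^\#$ with $\pi(\un v)=x$. Then $(\un v,0)\in\Sigma_r^\#$ — wait, we need $p$, not $x$; but $p=\vf^{-t_0}(x)=\vf^{-t_0}[\pi_r(\un v,0)]=\pi_r[\sigma_r^{-t_0}(\un v,0)]$ by part (1), and $\sigma_r^{-t_0}(\un v,0)$ has the form $(\sigma^{n}\un v, s)$ for some $n\le 0$ and $s\in[0,r(\sigma^n\un v))$; since $\sigma^n\un v\in\Sigma^\#$ (the regular set is shift-invariant), this point lies in $\Sigma_r^\#$. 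Hence $p\in\pi_r[\Sigma_r^\#]$.

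\textbf{Main obstacle.} The only nontrivial point is part (2): one must verify that the Bowen--Walters metric comparison really does reduce H\"older continuity of $\pi_r$ to that of $\pi$ and $r$, handling the fiber gluing $(\un v,r(\un v))\sim(\sigma\un v,0)$ correctly near the ``ceiling''. This is routine but technical, and I would lean on the fact that it was already carried out in \cite[\S5]{Lima-Sarig} in an essentially identical setting; the remaining pieces (parts (1) and (3)) are short formal consequences of the suspension formalism and the properness of $\Lambda$.
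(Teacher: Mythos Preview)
Your proposal is correct and follows essentially the same approach as the paper: part~(1) is the standard suspension equivariance computation (the paper just says ``direct from the definition of $\pi_r$''), part~(2) is deferred to the argument of \cite[Lemma~5.9]{Lima-Sarig} exactly as the paper does, and part~(3) is the flow-saturation argument, which the paper writes as $\pi_r[\Sigma_r^\#]=\bigcup_{t\in\R}\vf^t[\pi_r(\Sigma^\#\times\{0\})]\supset\bigcup_{t\in\R}\vf^t[\Lambda\cap\nuh^\#]=\nuh^\#$. Your version of part~(3) is simply the pointwise unpacking of this identity.
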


\begin{proof}
Part (1) is direct from the definition of $\pi_r$. The proof of Part (2) uses Proposition \ref{Prop-pi}(2),
and follows by the same methods used in the proof of \cite[Lemma 5.9]{Lima-Sarig}.
To prove part (3), let $S:=\Sigma^\#\times\{0\}\subset\Sigma_r^\#$.
By Proposition \ref{Prop-pi}(3), $\pi_r[S]\supset\Lambda\cap\nuh^\#$.
Since $\pi_r[\Sigma_r^\#]=\bigcup\limits_{t\in\R}\vf^t[\pi_r(S)]$ and
$\nuh^\#=\bigcup\limits_{t\in\R}\vf^t[\Lambda\cap\nuh^\#]$,
we get that $\pi_r[\Sigma_r^\#]\supset\nuh^\#$. 
\end{proof}

\section{Inverse theorem}\label{Section-inverse}

In the previous section, we have constructed a first coding $\pi:\Sigma\to \widehat\Lambda$.
As mentioned, it is usually infinite-to-one. In this section, we investigate how $\pi$ loses injectivity:
if $\un v\in\Sigma$ and $x=\pi(\un v)$, what is the relation between the parameters defining $\un v$ and
those associated to the orbit of $x$? Our goal is to analyze this as an {\em inverse problem}: 
fixed $x\in \widehat\Lambda$,
the parameters of $\un v$ are defined ``up to bounded error''. The answer to this inverse problem is what
we call an {\em inverse theorem}. From now on, we require that
$\un v\in\Sigma^\#$, where $\Sigma^\#$ is the {\em regular set} of $\Sigma$:
$$
\Sigma^\#:=\left\{\un v\in\Sigma:\exists v,w\in V\text{ s.t. }\begin{array}{l}v_n=v\text{ for infinitely many }n>0\\
v_n=w\text{ for infinitely many }n<0
\end{array}\right\}.
$$

\medskip
Recall $r:\Sigma\to (0,\rho)$, the roof function defined before
Proposition \ref{Prop-pi_R}. Let $r_n$ denote its $n$--th Birkhoff sum with respect to the shift map
$\sigma:\Sigma\to\Sigma$. Let $\un v=\{\Psi_{x_n}^{p^s_n,p^u_n}\}_{n\in\Z}\in\Sigma$,
and let $x=\pi(\un v)$. Then:
\begin{enumerate}[$\circ$]
\item $\vf^{r_n(\un v)}(x)=\pi[\sigma^n(\un v)]$, a point in $\widehat\Lambda$ that is close to $x_n$.
\item $g_{x_n}^+[\vf^{r_n(\un v)}(x)]=\vf^{r_{n+1}(\un v)}(x)$.
\end{enumerate}
Let $p^{s/u}(\vf^{r_n(\un v)}(x))$ be the $\Z$--indexed version of the parameter $q^{s/u}$
with respect to the sequence of times $\{r_n(\un v)\}_{n\in\Z}$ (see Section \ref{section-Z-indexed} for the definition).

\begin{theorem}[Inverse theorem]\label{Thm-inverse}
The following holds for all $0<\ve\ll \rho\ll 1$.
If $\un v=\{\Psi_{x_n}^{p^s_n,p^u_n}\}_{n\in\Z}\in\Sigma^\#$
and $x=\pi(\un v)$, then $x\in\nuh^\#$ and the following are true.
\begin{enumerate}[{\rm (1)}]
\item $d(\vf^{r_n(\un v)}(x),x_n)<50^{-1}(p^s_n\wedge p^u_n)$.
\item $\tfrac{\sin\alpha(x_n)}{\sin\alpha(\vf^{r_n(\un v)}(x))}=e^{\pm (p^s_n\wedge p^u_n)^{\beta/4}}$,
$|\cos\alpha(x_n)-\cos\alpha(\vf^{r_n(\un v)}(x))|<2(p^s_n\wedge p^u_n)^{\beta/4}$.
\item $\tfrac{s(x_n)}{s(\vf^{r_n(\un v)}(x))}=e^{\pm \sqrt{\ve}}$ and
$\tfrac{u(x_n)}{u(\vf^{r_n(\un v)}(x))}=e^{\pm \sqrt{\ve}}$.
\item $\tfrac{Q(x_n)}{Q(\vf^{r_n(\un v)}(x))}=e^{\pm \sqrt[3]{\ve}}$.
\item[{\rm (5)}] $\tfrac{p^s_n}{p^s(\vf^{r_n(\un v)}(x))}=e^{\pm\sqrt[3]{\ve}}$ and
$\tfrac{p^u_n}{p^u(\vf^{r_n(\un v)}(x))}=e^{\pm\sqrt[3]{\ve}}$.
\item[{\rm (6)}] $\Psi_{x_n}^{-1}\circ\Psi_{\vf^{r_n(\un v)}(x)}$ and $\Psi_{\vf^{r_n(\un v)}(x)}^{-1}\circ\Psi_{x_n}$
can be written in the form $(-1)^\sigma v+\delta+\Delta(v)$ for $v\in R[10Q(x)]$,
where $\sigma\in\{0,1\}$, $\delta$ is a vector with $\|\delta\|<50^{-1}(p^s_0\wedge p^u_0)$ and
$\Delta$ is a vector field such that $\Delta(0)=0$ and $\|d\Delta\|_{C^0}<\sqrt[3]{\ve}$ on $R[10Q(x)]$.
\end{enumerate}
\end{theorem}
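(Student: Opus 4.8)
The plan is to follow the strategy of the corresponding inverse theorem in \cite{Sarig-JAMS}, adapting every estimate to account for the flow direction, the continuous transition times, and—most importantly—the inequality version of (GPO2). The whole argument rests on isolating \emph{hyperbolic times}: by Proposition~\ref{Prop-Z-par}(3) applied with $\mathcal T=\{r_n(\un v)\}_{n\in\Z}$ (legitimate since $r(\un v)\in(0,\rho)$ and $\inf(r_\Lambda)>0$ give the required bounds on $r_{n+1}-r_n$ once one checks $r_{n+1}(\un v)-r_n(\un v)$ is comparable to $r_\Lambda(x_n)$), there are infinitely many forward indices where $p^s_n$ is essentially maximal and infinitely many backward indices where $p^u_n$ is essentially maximal. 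At such an index, the shadowing property (Theorem~\ref{Thm-stable-manifolds}) forces $\vf^{r_n(\un v)}(x)$ to lie deep inside $\Psi_{x_n}(R[\cdot])$, the admissible manifolds $V^s[\sigma^n\un v]$ and $V^u[\sigma^n\un v]$ to be genuinely stable/unstable for the flow in the sense of the remarks after Proposition~\ref{Prop-disjointness}, and hence the tangent directions $\widetilde e^s,\widetilde e^u$ at $\vf^{r_n(\un v)}(x)$ to be the true Oseledets-type directions. This is where one first shows $x\in\nuh$ (the $\limsup$ conditions coming from maximality), then $x\in\nuh^\#$ (recurrence of maximal indices), and where one gets the ``anchor'' estimates $\tfrac{s(x_n)}{s(\vf^{r_n}(x))}=e^{\pm\text{small}}$, etc., at the hyperbolic times themselves.

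Granting the anchor estimates at hyperbolic times, I would propagate them to \emph{all} $n$ by the greedy-recursion / robustness machinery. Part (1) comes from Proposition~\ref{Lemma-overlap}(3) together with the fact—proved as in \cite{Sarig-JAMS}—that shadowing forces $\Psi_{x_n}^{p^s_n\wedge p^u_n}\overset{\ve}{\approx}\Psi_{\vf^{r_n(\un v)}(x)}^{p^s_n\wedge p^u_n}$; part (2) is then Lemma~\ref{Lemma-admissible-manifolds}(2) applied at $x$; part (6) is Proposition~\ref{Lemma-overlap}(4). For parts (3)--(5), the point is that $s,u,Q$ and the discrete parameters $p^{s/u}$ all vary in a controlled multiplicative way between consecutive returns (Lemma~\ref{Lemma-linear-reduction}(3), \eqref{ratio-Q}, and the greedy algorithm Proposition~\ref{Prop-Z-par}(2)), so one runs the telescoping argument: between two consecutive hyperbolic times the cumulative drift of $\log\tfrac{s(x_n)}{s(\vf^{r_n}(x))}$, etc., is bounded by a geometric sum dominated by $\ve^{3/\beta}$-type terms, exactly as the sums $\sum P^s_k<\ve^{3/\beta-1}$ in the coarse-graining proof. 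The exponents $\sqrt\ve,\sqrt[3]\ve$ in the statement are the slack one allows for this accumulated drift.

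The step I expect to be the genuine obstacle is controlling the \emph{cumulative shear between the orbit of $x$ and the sequence of centers $x_n$} in the presence of the inequality version of (GPO2). Because $p^s_n$ is only required to be ``approximately'' $\min\{e^{\ve T(v_n,v_{n+1})}p^s_{n+1},\ve Q(x_n)\}$ rather than equal to it, a given orbit can be shadowed by two $\ve$-gpo's whose transition-time sequences drift apart, and one must show this drift stays uniformly bounded no matter how many intermediate returns to $\Lambda$ occur. The resolution mirrors the coarse-graining surgery: one shows the discrepancy $\Delta_k=(r_{k+1}(\un v)-r_k(\un v))-T(v_k,v_{k+1})$ satisfies $0\le\Delta_k\le\tfrac{\sqrt2}{15}(p^s_k\wedge p^u_k)$ via Remark~\ref{rmk-time} and Lemma~\ref{Lemma-regularity-q-t}(3), and then that, between two hyperbolic times, $\sum\Delta_k$ is controlled by $\sum(p^s_k\wedge p^u_k)$, which is a geometric-type sum because $p^s_k\wedge p^u_k$ decays between hyperbolic times (Lemma~\ref{Lemma-minimum}: $\tfrac{p^s_k\wedge p^u_k}{p^s_{k+1}\wedge p^u_{k+1}}=e^{\pm2\ve}$ combined with the maximality structure). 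Feeding this shear bound into Proposition~\ref{Prop-center-stable} (the lifts $\wt V^s,\wt V^u$) is what ultimately identifies $\widetilde n^{s/u}_x$ with the contracting/expanding directions and closes the loop. Once the shear is tamed, the remaining verifications are the same bookkeeping as in \cite[\S6]{Sarig-JAMS}, and I would simply cite that structure and indicate the flow-specific modifications rather than reproduce every inequality.
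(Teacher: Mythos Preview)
Your proposal has the right shape for Part~(5) --- the shear control via $\Delta_k$ and the geometric sums between maximal indices is exactly what the paper does in Section~\ref{section-control-ps-pu} --- but it has a genuine gap for Part~(3) and for showing $x\in\nuh$, and your argument for Part~(1) is circular.

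For Part~(1), you cannot first establish $\Psi_{x_n}^{p^s_n\wedge p^u_n}\overset{\ve}{\approx}\Psi_{\vf^{r_n(\un v)}(x)}^{p^s_n\wedge p^u_n}$: the $\ve$--overlap condition requires $\|\widetilde{C(x_n)}-\widetilde{C(\vf^{r_n(\un v)}(x))}\|$ to be small, which presupposes control of $s,u,\alpha$ at $\vf^{r_n(\un v)}(x)$, i.e.\ Parts~(2)--(4). The paper obtains Part~(1) directly and independently from Lemma~\ref{Lemma-admissible-manifolds}(1): $\vf^{r_n(\un v)}(x)=\pi[\sigma^n(\un v)]$ is the intersection of an $s$--admissible and a $u$--admissible manifold at $v_n$, hence lies in $\Psi_{x_n}(R[10^{-2}(p^s_n\wedge p^u_n)])$, and the $2$--Lipschitz bound on $\Psi_{x_n}$ finishes it.

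The more serious problem is Part~(3). Your ``anchor at hyperbolic times, then telescope'' strategy works for the parameters $p^{s/u}$, $Q$, because those satisfy explicit greedy recursions tying them to each other along the orbit. But $s(x_n)$ and $s(\vf^{r_n(\un v)}(x))$ are \emph{integrals over the entire future orbits of two different points}; knowing that $p^s_n\approx\ve Q(x_n)$ at some index $n$ tells you nothing a priori about the ratio $s(x_n)/s(\vf^{r_n(\un v)}(x))$. The paper's mechanism is entirely different: it proves an \emph{Improvement Lemma} (Lemma~\ref{Lemma-improvement}) stating that if $\tfrac{s(z)}{s(y)}=e^{\pm\xi}$ with $\xi\geq\sqrt\ve$, then after one backward graph transform the ratio improves to $e^{\pm(\xi-Q(y)^{\beta/4})}$. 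The regularity $\un v\in\Sigma^\#$ gives a vertex repeating infinitely often, and \emph{relevance} of that vertex (Theorem~\ref{Thm-coarse-graining}(3)) produces a reference point in $\nuh^\#$ on the same stable manifold with finite $s$; bounded distortion (Theorem~\ref{Thm-stable-manifolds}(4)) propagates finiteness of $s$ along the leaf, and then iterating the improvement lemma from the repeating indices drives the ratio down to $e^{\pm\sqrt\ve}$ everywhere. This is also how $x\in\nuh$ is established (Proposition~\ref{Prop-image-pi}): you cannot get $s(x)<\infty$ from hyperbolic times alone without first producing a nearby point where $s$ is already known to be finite, and that is precisely what relevance supplies. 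Your outline omits both the improvement lemma and the use of relevance, and I do not see how to close Part~(3) without them.
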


Part (1) is a direct consequence of Lemma \ref{Lemma-admissible-manifolds}.
Indeed, since $\vf^{r_n(\un v)}(x)=\pi[\sigma^n(\un v)]$, this point is the intersection
of a $s$--admissible and a $u$--admissible manifold at $\Psi_{x_n}^{p^s_n,p^u_n}$.
By Lemma \ref{Lemma-admissible-manifolds}(1) and since Pesin charts are $2$--Lipschitz,
we get that $d(\vf^{r_n(\un v)}(x),x_n)<50^{-1}(p^s_n\wedge p^u_n)$.

\subsection{An improvement lemma}

This section comprises the core of the proof that $x\in\nuh$ and of part (3) above.
It states that the graph transforms $\mathfs F^s/\mathfs F^u$ improve the ratios of $s/u$--parameters,
therefore we call it an {\em improvement lemma}.

\begin{lemma}[Improvement lemma]\label{Lemma-improvement}
The following holds for all $0<\ve\ll \rho\ll 1$. Let $v\overset{\ve}{\to}w$ with
$v=\Psi_x^{p^s,p^u},w=\Psi_y^{q^s,q^u}$, let $W^s\in\mathfs M^s[w]$ be the stable manifold
of a positive $\ve$--gpo, and let $V^s=\mathfs F^s_{v,w}(W^s)$, then:
\begin{enumerate}[{\rm (1)}]
\item  If $s(z)<\infty$ for some (every) $z\in W^s$, then $s(z')<\infty$ for every $z'\in V^s$.
\item Let $z\in W^s$ with $g_{y}^-(z)\in V^s$. For $\xi\geq {\sqrt{\ve}}$, if
$\tfrac{s(z)}{s(y)}=e^{\pm\xi}$ then $\tfrac{s(g_{y}^-(z))}{s(x)}=e^{\pm(\xi-Q(y)^{\beta/4})}$.
\end{enumerate} 
\end{lemma}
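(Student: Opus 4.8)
The plan is to reduce everything to estimating the quantity $s(z)^2 = 4e^{4\rho}\int_0^\infty e^{2\chi t}\|\Phi^t\widetilde n^s_z\|^2\,dt$ along the flow and under the holonomy maps $g_y^-$, and then to compare the contracting direction at $z$ with the one at $g_y^-(z)=:z'$. First I would set up notation: both $z\in W^s$ and $z'\in V^s$ lie on stable manifolds of positive $\varepsilon$-gpo's (for $z'$, the gpo starting at $v$), so by the discussion following Proposition~\ref{Prop-center-stable} each of them carries a well-defined contracting direction $\widetilde e^s$, hence a well-defined $\widetilde n^s\in N$, and one may consider $s(\cdot)$, $u(\cdot)$, $\alpha(\cdot)$ at these points even before knowing finiteness. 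The key geometric input is that the tangent direction to a stable admissible manifold, pushed forward by $g_y^-$, lands on the tangent direction of $V^s$ (this is exactly the defining property of the graph transform $\mathfs F^s_{v,w}$ together with the invariance in Theorem~\ref{Thm-stable-manifolds}(2)). So $\widetilde n^s_{z'}$ is (up to sign and up to projection in the flow direction via $\mathfrak p$) the image of $\widetilde n^s_z$ under $\Phi^{T}$ where $T=T^-(z)\in(-\rho,0)$ is the holonomy time, i.e. $g_y^-=\vf^{T^-}$.

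For part (1): if $s(z)<\infty$, I would write $s(z')^2$ as an integral $\int_0^\infty e^{2\chi t}\|\Phi^t\widetilde n^s_{z'}\|^2\,dt$ and split it at $t=|T|$. On $[0,|T|]$ the integrand is bounded by $e^{2\chi|T|}e^{2|T|}\|\widetilde n^s_{z'}\|^2 \le e^{O(\rho)}$, finite since $|T|\le\rho$. On $[|T|,\infty)$ I change variables $t\mapsto t-|T|$ and use that $\Phi^t\widetilde n^s_{z'} = \Phi^{t-|T|}\Phi^{|T|}\widetilde n^s_{z'}$, and that $\Phi^{|T|}\widetilde n^s_{z'}$ is, up to the bounded factor $\|\mathfrak p\|<1+\rho$ coming from the projection and up to a positive scalar, proportional to $\widetilde n^s_z$ — because the contracting direction is $\Phi$-equivariant along the stable manifold. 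Thus this tail is comparable to $s(z)^2$ times a bounded constant, giving finiteness of $s(z')$. Here I'd need to be a little careful that the constants of comparison are controlled by $Q(y)^{\beta/4}$-type errors rather than just $O(\rho)$, which brings in the admissible-manifold estimates (Lemma~\ref{Lemma-admissible-manifolds}, Theorem~\ref{Thm-stable-manifolds}) on how close $\widetilde n^s_{z'}$ is to the honest stable direction.

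For part (2): this is the quantitative version of the same computation. Writing $a:=s(z)$, $b:=s(y)$, the hypothesis is $a/b=e^{\pm\xi}$; I want $s(z')/s(x)=e^{\pm(\xi-Q(y)^{\beta/4})}$, i.e. the ratio \emph{improves} by $Q(y)^{\beta/4}$. The mechanism is: $s(z')^2$ and $s(x)^2$ are each a "short'' integral over $[0,|T|]$ (governed by $\Phi$ over a time $<\rho$, hence extremely close to the identity estimate and, crucially, \emph{identical up to error $\le Q(y)^{\beta/4}$ for $z'$ versus $x$} because $z'$ and $x$ are in the same chart $\Psi_x(R[10Q(x)])$ by part (1) of the theorem / Lemma~\ref{Lemma-admissible-manifolds}, and the holonomy time functions $T^-$ and the flow have Lipschitz control from Lemma~\ref{Lemma-regularity-q-t}(3)) plus a "long'' integral which, after the change of variables, reproduces $e^{2\chi|T|}\|\Phi^{|T|}\widetilde n^s_{z'}\|^2 \cdot (s(z)/(\text{norm factor}))^2$ and likewise $s(y)$ at the other end. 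The ratio $s(z')/s(x)$ then equals $s(z)/s(y)$ times a correction factor that is $e^{\pm O(Q(y)^{\beta/4})}$; since $\xi\ge\sqrt\varepsilon \gg Q(y)^{\beta/4}$ (as $Q(y)<\varepsilon^{3/\beta}$, so $Q(y)^{\beta/4}<\varepsilon^{3/4}\ll\sqrt\varepsilon$ — I should check the exact exponent bookkeeping here), the product of the bound $e^{\pm\xi}$ with a factor $e^{\pm c Q(y)^{\beta/4}}$ still lies in $e^{\pm(\xi - Q(y)^{\beta/4})}$ after absorbing: $e^{\xi}\cdot e^{cQ(y)^{\beta/4}} \le e^{\xi - Q(y)^{\beta/4}}$ provided the improvement built into the comparison of the two "short'' integrals contributes a genuine $-2Q(y)^{\beta/4}$ or so.

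The main obstacle, as I see it, is precisely this last bookkeeping: extracting a \emph{net negative} contribution $-Q(y)^{\beta/4}$ rather than merely showing the ratio is preserved up to $+O(Q(y)^{\beta/4})$. The honest reason the ratio improves is that $s(z')$ is computed at the \emph{correct} point $x$ with the \emph{correct} direction, whereas the deficit $e^{\pm\xi}$ at $z$ relative to $y$ gets contracted by the hyperbolicity of $\Phi$ over the return (the factor $|A_T|<e^{-\chi|T|}$ from Lemma~\ref{Lemma-linear-reduction}(2) / Theorem~\ref{Thm-non-linear-Pesin}), but one must show this contraction beats the accumulated chart-comparison errors — and that is where $\xi\ge\sqrt\varepsilon$ is used. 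I would model the argument on the analogous improvement lemmas in \cite{Sarig-JAMS} (and \cite{ALP}), adapting the time-$1$ estimates there to the continuous-time return via the bounds $\|\Phi^t\|=e^{\pm 4\rho}$ for $|t|\le 2\rho$ and the Lipschitz control of the holonomy times. I would carry out the $s$ case in detail and note that the $u$ case for $V^u$ and $\mathfs F^u$ is symmetric.
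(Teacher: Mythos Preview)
Your setup is sound and matches the paper: decompose $s(\cdot)^2$ into a ``short'' integral over the return time plus a ``long'' tail that reproduces $s$ at the forward point times a factor. But you have misidentified the mechanism that produces the \emph{net} improvement in part~(2), and your proposed fix via the hyperbolicity factor $|A_T|<e^{-\chi|T|}$ will not work.

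Here is the issue. Writing $g_y^-(y)=\vf^{-t_0}(y)$ and $g_y^-(z)=\vf^{-t_1}(z)$, the recursion $s(\cdot)^2 = (\text{short integral}) + (\text{factor})\cdot s(\text{forward point})^2$ gives
\[
s(g_y^-(y))^2 = I_1 + I_2\, s(y)^2, \qquad s(g_y^-(z))^2 = I_3 + I_4\, s(z)^2,
\]
where $I_1,I_3$ are the short integrals and $I_2 = e^{2\chi t_0}\|\Phi^{-t_0}n^s_y\|^{-2}$, $I_4 = e^{2\chi t_1}\|\Phi^{-t_1}n^s_z\|^{-2}$. One then shows $I_1/I_3 = e^{\pm Q(y)^{\beta/4}}$ and $I_2/I_4 = e^{\pm Q(y)^{\beta/4}}$ using the H\"older control of $\Phi$ together with the admissible-manifold estimates bounding $d(y,z)$ and $\|n^s_y - P_{z,y}n^s_z\|$ by $O(Q(y)^{\beta/3})$. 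So far this is exactly your plan. But notice that the factor $I_2$ (which is where $|A_T|$ lives) multiplies \emph{both} $s(y)^2$ and, up to $e^{\pm Q(y)^{\beta/4}}$, $s(z)^2$. It therefore cancels out of the ratio $s(g_y^-(z))/s(g_y^-(y))$ to leading order and cannot be the source of the improvement.

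The improvement comes instead from the \emph{common additive term} $I_1$. After replacing $I_3,I_4$ by $I_1,I_2$ at cost $e^{\pm Q(y)^{\beta/4}}$, one has
\[
\frac{I_1 + I_2\, s(z)^2}{I_1 + I_2\, s(y)^2} \;\le\; \frac{I_1 + e^{2\xi} I_2\, s(y)^2}{I_1 + I_2\, s(y)^2}
\;=\; e^{2\xi}\left[1 - \frac{I_1(1-e^{-2\xi})}{I_1 + I_2\, s(y)^2}\right],
\]
so the presence of $I_1>0$ pulls the ratio strictly below $e^{2\xi}$. The hypothesis $\xi\ge\sqrt\ve$ is used to lower-bound $1-e^{-2\xi}\ge \ve^{1/2}$, and the uniform lower bound $I_1\ge 2e^{-4\rho}\inf(r_\Lambda)$ combined with $s(y)^{-2}\ge \|C(y)^{-1}\|^{-2}\ge Q(y)^{\beta/6}$ yields $\tfrac{I_1(1-e^{-2\xi})}{I_1+I_2 s(y)^2}\gtrsim \ve^{1/2}Q(y)^{\beta/6} \gg Q(y)^{\beta/4}$. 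That is the bookkeeping you were looking for; no appeal to one-step contraction is needed (or helpful). Finally, the passage from $s(g_y^-(y))$ to $s(x)$ is handled separately via $g_y^-(y)=f^{-1}(y)$ and the overlap condition (GPO1), giving $\tfrac{s(f^{-1}(y))}{s(x)}=e^{\pm(p^s\wedge p^u)^3\cdot(q^s\wedge q^u)^3}$, which is negligible.
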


We note that the ratio improves.

\begin{proof}
When $M$ is a closed surface and $f$ is a $C^{1+\beta}$ diffeomorphism,
this is \cite[Lemma 7.2]{Sarig-JAMS}. When $M$ is a surface (possibly with boundary) and
$f$ is a local diffeomorphism with unbounded derivatives, this is \cite[Lemma 6.3]{Lima-Matheus}.
The main difference from these results to what we will do below is that our parameters $s,u$ 
envolve integrals instead of sums. So we need to be careful on how to split the integrals in a way
that we can control each part reasonably. In the sequel, we will use the parallel transports $P_{z,y}$
and the maps $\widetilde A$ defined in the beginning of Section \ref{s.metric}.
We will also use estimate (\ref{definition-d}), which states that 
$\|\Phi^t\|=e^{\pm 4\rho}$ for $|t|\leq 2\rho$.

\medskip
\noindent
{\sc Claim 1:} $\exists\mathfrak C=\mathfrak C(M,\vf,\theta)>0$ such that
if $z\in B_y$ and $v\in T_y\Lambda,w\in T_z\Lambda$ with $\|v\|=\|w\|=1$ then for all $|t|\leq 2\rho$:
\begin{align*}
&\, |\|\Phi^t(v)\|-\|\Phi^t(w)\||\leq\mathfrak C[d(y,z)^\beta+\|v-P_{z,y}w\|]\hspace{.2cm}\text{and}\\
&\left|\frac{\|\Phi^t(v)\|}{\|\Phi^t(w)\|}-1\right|\leq \mathfrak C[d(y,z)^\beta+\|v-P_{z,y}w\|].
\end{align*}
In particular
$\left|\log\|\Phi^t(v)\|-\log\|\Phi^t(w)\|\right|\leq \mathfrak C[d(y,z)^\beta+\|v-P_{z,y}w\|]$.

\begin{proof}[Proof of Claim $1$] The inequalities are direct consequences of the H\"older continuity of $\Phi$,
as follows: if $\mathfrak C_0=\mathfrak C_0(M,\vf,\theta)>0$ is a constant such that 
$$
|\|\Phi^t(v)\|-\|\Phi^t(w)\||\leq\mathfrak C_0[d(y,z)^\beta+\|v-P_{z,y}w\|]\\
$$
for all $y,z,v,w$ as above, then the claim holds with
$\mathfrak C:=e^{4\rho}\mathfrak C_0$.
\end{proof}

Now we start the proof of the lemma. We have $g_y^-(y)=f^{-1}(y)$, therefore
$\tfrac{s(g_y^-(z))}{s(x)}=\tfrac{s(g_y^-(z))}{s(g_y^-(y))}\cdot \tfrac{s(f^{-1}(y))}{s(x)}$.
Since $(p^s\wedge p^u)^3(q^s\wedge q^u)^3\ll Q(y)^{\beta/4}$,
Proposition \ref{Lemma-overlap}(1) implies $\tfrac{s(f^{-1}(y))}{s(x)}=e^{\pm Q(y)^{\beta/4}}$.
Thus it is enough to show that $\tfrac{s(g_y^-(z))}{s(g_y^-(y))}=e^{\pm(\xi-2Q(y)^{\beta/4})}$.
We show one side of the inequality (the other is similar). Note that this is the term that gives the improvement.

Write $g_y^-=\vf^{T^-}$ where $T^-$ is a $C^{1+\beta}$ function with $T^-(y)=-r_\Lambda(f^{-1}(y))$. Then
$g_y^-(y)=\vf^{T^-(y)}(y)$ and $g_y^-(z)=\vf^{T^-(z)}(z)$. For simplicity of notation,
let $t_0=-T^-(y)$ and $t_1=-T^-(z)$, then $g_y^-(y)=\vf^{-t_0}(y)$ and $g_y^-(z)=\vf^{-t_1}(z)$.
In the proof of Lemma \ref{Lemma-linear-reduction}
(see Appendix \ref{Appendix-proofs}), we saw that
$$
s(x)^2=4e^{4\rho}\int_0^t e^{2\chi t'}\|\Phi^{t'}n^s_x\|^2dt'+e^{2\chi t}\|\Phi^t n^s_x\|^2 s(\vf^t(x))^2
$$
for $x\in\nuh$ and $t\in\R$.
Therefore we can decompose $s(g_y^-(y))^2$ and $s(g_y^-(z))^2$ as follows:
\begin{eqnarray*}
&s(g_y^-(y))^2=\underbrace{4e^{4\rho}\int_0^{t_0} e^{2\chi t}\|\Phi^{t}n^s_{g_y^-(y)}\|^2dt}_{=:I_1}
+\underbrace{e^{2\chi t_0}\|\Phi^{t_0} n^s_{g_y^-(y)}\|^2}_{=:I_2} s(y)^2=:I_1+I_2 s(y)^2\\
&\ s(g_y^-(z))^2=\underbrace{4e^{4\rho}\int_0^{t_1} e^{2\chi t}\|\Phi^{t}n^s_{g_y^-(z)}\|^2dt}_{=:I_3}
+\underbrace{e^{2\chi t_1}\|\Phi^{t_1} n^s_{g_y^-(z)}\|^2}_{=:I_4} s(z)^2=:I_3+I_4 s(z)^2.
\end{eqnarray*}
Using that $\|\Phi^{t_0} n^s_{g_y^-(y)}\|=\|\Phi^{-t_0}n^s_y\|^{-1}$ and an analogous equation for $z$, we have
\begin{align*}
&I_1=4e^{4\rho}\int_0^{t_0} e^{2\chi t}\|\Phi^{-t}n^s_{y}\|^{-2}dt\,,\ \  I_2=e^{2\chi t_0}\|\Phi^{-t_0} n^s_{y}\|^{-2},\\
&I_3=4e^{4\rho}\int_0^{t_1} e^{2\chi t}\|\Phi^{-t}n^s_{z}\|^{-2}dt\,,\ \ I_4=e^{2\chi t_1}\|\Phi^{-t_1} n^s_z\|^{-2}.
\end{align*}

Before continuing, we need to make some estimates.

\medskip
\noindent
{\sc Claim 2:} $d(y,z)<Q(y)$ and $\|n^s_y-P_{z,y}n^s_z\|<4\ve^{1/4}Q(y)^{\beta/4}$.

\begin{proof}[Proof of Claim $2$]
We proceed as in \cite[Lemma 6.3]{Lima-Matheus}.
Let $F,G$ be the representing function of $V^s,W^s$ respectively, and let $z=\Psi_y(t,G(t))$.
Since $\Lip(G)<\ve$, we have $\|\colvec[.6]{t\\ G(t)}\|\leq |t|+|G(t)|\leq |t|(1+\Lip(G))+|G(0)|\leq (1+\ve)q^s+10^{-3}(q^s\wedge q^u)<2q^s$, therefore $d(y,z)<4q^s\leq 4\ve Q(y)<Q(y)$ for small $\ve>0$.

To bound the second term, we first estimate $\sin\angle(n^s_y,P_{z,y}n^s_z)$.
Since $n^s_y$ is the unitary vector in the direction of
$d(\Psi_y)_0\colvec{1\\0}=d(\exp{y})_0\circ C(y)\colvec{1\\0}$
and $n^s_z$ is the unitary vector in the direction of
$d(\Psi_y)_{(t,G(t))}\colvec{1\\ G'(t)}=d(\exp{y})_{C(y)\colvec[.6]{t\\G(t)}}\circ C(y)\colvec{1\\ G'(t)}$,
the angles they define are the same. In other words, if
$$
A=\widetilde{d(\exp{y})_0\circ C(y)},B=\widetilde{d(\exp{y})_{C(y)\colvec[.6]{t\\G(t)}}\circ C(y)},
v_1=\colvec{1\\0},v_2=\colvec{1\\ G'(t)}
$$
then $\sin\angle(n^s_y,P_{z,y}n^s_z)=\sin\angle(Av_1,Bv_2)$. Using (\ref{gen-ineq-angles}) 
with $L=A$, $v=v_1$, $w=A^{-1}Bv_2$, we get
\begin{align*}
&\, |\sin\angle(Av_1,Bv_2)|\leq \|A\|\|A^{-1}\||\sin\angle(v_1,A^{-1}Bv_2)|\\
&\leq \|C(y)^{-1}\|\left[|\sin\angle(v_1,v_2)|+|\sin\angle(v_2,A^{-1}Bv_2)|\right].
\end{align*}
We have $|\sin\angle(v_1,v_2)|\leq |G'(t)|\leq (q^s\wedge q^u)^{\beta/3}\leq Q(y)^{\beta/3}$.
Also, by (Exp3):
\begin{align*}
&\,\|A^{-1}B-{\rm Id}\|\leq \|A^{-1}\|\|A-B\|\leq
\|C(y)^{-1}\| \left\|\widetilde{d(\exp{y})_0}-\widetilde{d(\exp{y})_{C(y)\colvec[.6]{t\\ G(t)}}}\right\|\\
&\leq 2\mathfrak K q^s\|C(y)^{-1}\|
\leq 2\mathfrak K \ve^{1/4}Q(y)^{1-\beta/12}<\tfrac{1}{4}Q(y)^{\beta/3}\ll 1.
\end{align*}
This implies that $v_2,A^{-1}Bv_2$ are almost unitary vectors, therefore
$$
|\sin\angle(v_2,A^{-1}Bv_2)|\leq 2\|v_2-A^{-1}Bv_2\|\leq 4\|A^{-1}B-{\rm Id}\|<Q(y)^{\beta/3},$$
and so $|\sin\angle(n^s_y,P_{z,y}n^s_z)|<2\|C(y)^{-1}\|Q(y)^{\beta/3}$.
Since $\|n^s_y\|=\|P_{z,y}n^s_z\|=1$ and the angle between them is small, we conclude that
for small $\ve>0$:
$$
\|n^s_y-P_{z,y}n^s_z\|\leq 2|\sin\angle(n^s_y,P_{z,y}n^s_z)|<4\|C(y)^{-1}\|Q(y)^{\beta/3}\leq
4\ve^{1/4}Q(y)^{\beta/4}.
$$
\end{proof}

\medskip
\noindent
{\sc Claim 3:} $\tfrac{I_1}{I_3}={\rm exp}[\pm Q(y)^{\beta/4}]$
and $\tfrac{I_2}{I_4}={\rm exp}[\pm Q(y)^{\beta/4}]$.

\begin{proof}[Proof of Claim $3$]
We first bound $\tfrac{I_1}{I_3}$. Since $t_0,t_1\geq \tfrac{\inf(r_\Lambda)}{2}$, we have
$I_1,I_3\geq 4e^{4\rho}\tfrac{\inf(r_\Lambda)}{2}e^{-8\rho}=2e^{-4\rho}\inf (r_\Lambda)$ are uniformly
bounded away from zero. We have
$$
I_1-I_3=4e^{4\rho}\int_0^{t_0}e^{2\chi t}(\|\Phi^{-t}n^s_y\|^{-2}-\|\Phi^{-t}n^s_z\|^{-2})dt
-4e^{4\rho}\int_{t_0}^{t_1}e^{2\chi t}\|\Phi^{-t}n^s_z\|^{-2}dt
$$
We estimate each integral separately.
\begin{enumerate}[$\circ$]
\item By Claims 1 and 2:
\begin{align*}
&\ 4e^{4\rho}\int_0^{t_0}e^{2\chi t}\left(\|\Phi^{-t}n^s_y\|^{-2}-\|\Phi^{-t}n^s_z\|^{-2}\right)dt\leq 
4e^{4\rho}\int_0^{t_0}e^{2\rho}2e^{12\rho}\left|\|\Phi^{-t}n^s_y\|-\|\Phi^{-t}n^s_z\|\right|dt\\
&\leq 8\rho e^{18\rho}\mathfrak C[d(y,z)^\beta+\|n^s_y-P_{z,y}n^s_z\|]
\leq 16\rho e^{18\rho}\mathfrak CQ(y)^{\beta/3}<\ve^{1/8}Q(y)^{\beta/4}.
\end{align*}
\item By Lemma \ref{Lemma-regularity-q-t}(3) and the proof of Claim 2: 
\begin{align*}
&\ 4e^{4\rho}\int_{t_0}^{t_1}e^{2\chi t}\|\Phi^{-t}n^s_z\|^{-2}dt\leq 4e^{14\rho}|t_1-t_0|
\leq 4e^{14\rho}d(y,z)<16e^{14\rho}\ve Q(y)<Q(y).
\end{align*}
\end{enumerate}
Therefore $|I_1-I_3|<\ve^{1/8}Q(y)^{\beta/4}+Q(y)<2\ve^{1/8}Q(y)^{\beta/4}$, and so
$$
\left|\tfrac{I_1}{I_3}-1\right|<[2e^{-4\rho}\inf(r_\Lambda)]^{-1}2\ve^{1/8}Q(y)^{\beta/4}<\tfrac{1}{2}Q(y)^{\beta/4}.
$$
Since $e^{-2t}<1-t<1+t<e^{2t}$ for small $t>0$, the above inequality implies that
$\tfrac{I_1}{I_3}={\rm exp}[\pm Q(y)^{\beta/4}]$.

The estimate of $\tfrac{I_2}{I_4}$ is easier. We have
$\tfrac{I_4}{I_2}=e^{2\chi(t_1-t_0)}\tfrac{\|\Phi^{-t_0}n^s_y\|^2}{\|\Phi^{-t_0}n^s_z\|^2}$, and:
\begin{enumerate}[$\circ$]
\item $2\chi(t_1-t_0)=\pm 4\chi d(y,z)=\pm 4\chi Q(y)=\pm\tfrac{1}{2}Q(y)^{\beta/4}$,
hence $e^{2\chi(t_1-t_0)}={\rm exp}[\pm\tfrac{1}{2}Q(y)^{\beta/4}]$.
\item By Claim 1,
\begin{align*}
\left|\tfrac{\|\Phi^{-t_0}n^s_y\|}{\|\Phi^{-t_0}n^s_z\|}-1\right|\leq\mathfrak C[d(y,z)^\beta+\|n^s_y-P_{z,y}n^s_z\|]
<2\mathfrak C Q(y)^{\beta/3}<\tfrac{1}{8}Q(y)^{\beta/4},
\end{align*}
therefore $\tfrac{\|\Phi^{-t_0}n^s_y\|}{\|\Phi^{-t_0}n^s_z\|}={\rm exp}[\pm\tfrac{1}{4}Q(y)^{\beta/4}]$
and so $\tfrac{\|\Phi^{-t_0}n^s_y\|^2}{\|\Phi^{-t_0}n^s_z\|^2}={\rm exp}[\pm\tfrac{1}{2}Q(y)^{\beta/4}]$.
\end{enumerate}
These two items together imply that $\tfrac{I_2}{I_4}={\rm exp}[\pm Q(y)^{\beta/4}]$.
\end{proof}

Now we complete the proof of the lemma. By Claim 3, we can write
$\tfrac{s(g_y^-(z))^2}{s(g_y^-(y))^2}=\tfrac{I_3+I_4s(z)^2}{I_1+I_2s(y)^2}=
{\rm exp}[\pm Q(y)^{\beta/4}]\tfrac{I_1+I_2s(z)^2}{I_1+I_2s(y)^2}$. Since we want
to show that $\tfrac{s(g_y^-(z))^2}{s(g_y^-(y))^2}={\rm exp}[\pm(2\xi-4Q(y)^{\beta/4})]$,
it remains to prove that $\tfrac{I_1+I_2s(z)^2}{I_1+I_2s(y)^2}={\rm exp}[\pm(2\xi-5Q(y)^{\beta/4})]$.
We show one side of the inequality and leave the other to the reader. By assumption,
$s(z)\leq e^{\xi}s(y)$, hence
$$
\tfrac{I_1+I_2s(z)^2}{I_1+I_2s(y)^2}\leq\tfrac{I_1+e^{2\xi}I_2s(y)^2}{I_1+I_2s(y)^2}=
e^{2\xi}-\tfrac{I_1(e^{2\xi}-1)}{I_1+I_2s(y)^2}=e^{2\xi}\left[1-\tfrac{I_1(1-e^{-2\xi})}{I_1+I_2s(y)^2}\right].
$$
It is enough to show that $\tfrac{I_1(1-e^{-2\xi})}{I_1+I_2s(y)^2}>5Q(y)^{\beta/4}$,
since this implies
$$e^{2\xi}\left[1-\tfrac{I_1(1-e^{-2\xi})}{I_1+I_2s(y)^2}\right]<e^{2\xi}(1-5Q(y)^{\beta/4})<e^{2\xi-5Q(y)^{\beta/4}}.$$
Note that:
\begin{enumerate}[$\circ$]
\item $I_1\geq 2e^{-4\rho}\inf(r_\Lambda)$, as established in the proof of Claim 3.
\item $1-e^{-2\xi}\geq 1-e^{-2\ve^{1/2}}>\ve^{1/2}$ when $\ve>0$ is small enough.
\item Since $\sup(r_\Lambda)<1$, we have
$I_1< 4e^{14\rho}$
and $I_2\leq e^{10\rho}$. Since $s(y)\geq \sqrt{2}$,
it follows that $I_1+I_2s(y)^2<5e^{14\rho}s(y)^2$.
\end{enumerate}
Altogether, we get that
\begin{align*}
&\,\tfrac{I_1(1-e^{-2\xi})}{I_1+I_2s(y)^2}>\tfrac{2}{5}e^{-18\rho}\inf(r_\Lambda)\ve^{1/2}s(y)^{-2}
\geq\tfrac{2}{5}e^{-18\rho}\inf(r_\Lambda)\ve^{1/2}\|C(y)^{-1}\|^{-2}\\
&\geq\tfrac{2}{5}e^{-18\rho}\inf(r_\Lambda)Q(y)^{\beta/6}=
\tfrac{2}{5}e^{-18\rho}\inf(r_\Lambda)Q(y)^{-\beta/12}Q(y)^{\beta/4}\\
&\geq\tfrac{2}{5}e^{-18\rho}\inf(r_\Lambda)\ve^{-1/4}Q(y)^{\beta/4}>5Q(y)^{\beta/4},
\end{align*}
since $\tfrac{2}{5}e^{-18\rho}\inf(r_\Lambda)\ve^{-1/4}>5$ for $\ve>0$ small enough.
\end{proof}

Now we can prove that $x\in\nuh$.

\begin{proposition}\label{Prop-image-pi}
If $0<\ve\ll \rho\ll 1$, then $\pi[\Sigma^\#]\subset\nuh$.
\end{proposition}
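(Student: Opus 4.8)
The plan is to show that for $\un v=\{\Psi_{x_n}^{p^s_n,p^u_n}\}_{n\in\Z}\in\Sigma^\#$ and $x=\pi(\un v)$, the point $x$ satisfies (NUH1)--(NUH2). Write $y_n:=\vf^{r_n(\un v)}(x)=\pi[\sigma^n(\un v)]$, so that $g_{x_n}^+$ carries $y_n$ to $y_{n+1}$. By the discussion following Proposition \ref{Prop-center-stable}, since $\un v\in\Sigma^\#$ we already know that $V^s[\un v]\cap V^u[\un v]=\{x\}$ produces two smooth curves $\wt V^s,\wt V^u$ through $x$ whose tangent directions $\wt e^s_x,\wt e^u_x$ satisfy $\limsup_{t\to+\infty}\tfrac1t\log\|d\vf^t\wt e^s_x\|<0$ and $\limsup_{t\to+\infty}\tfrac1t\log\|d\vf^t\wt e^u_x\|>0$, and whose projections $\wt n^s_x,\wt n^u_x\in N_x$ are well-defined up to sign with $\alpha(x),s(x),u(x)$ making formal sense. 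The content of the proposition is exactly that $s(x),u(x)<+\infty$ together with the sign/lim-sup estimates forced by (NUH1)--(NUH2).

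First I would run the Improvement Lemma (Lemma \ref{Lemma-improvement}) along the positive $\ve$--gpo $\sigma^n(\un v)$ to get finiteness of $s(x)$. By Maximality (Proposition \ref{Prop-Z-par}(3), transported through Theorem \ref{Thm-inverse} -- though at this point in the paper we only have part (1) of the inverse theorem available, so I would argue directly) there are infinitely many indices $n_k\to+\infty$ at which $p^s_{n_k}$ is maximal, i.e. $p^s_{n_k}=\ve Q(x_{n_k})$; at such an index one controls $s(x_{n_k})$ directly from $Q(x_{n_k})=\ve^{3/\beta}\|C(x_{n_k})^{-1}\|^{-12/\beta}$ and $\|C(x_{n_k})^{-1}\|_{\rm Frob}=\sqrt{s(x_{n_k})^2+u(x_{n_k})^2}/|\sin\alpha(x_{n_k})|$, so $s(x_{n_k})$ is finite (this is just the alphabet condition (CG1): $\Gamma(x_{n_k})\in Z_{\un\ell,m,j}$ which records $\|C(x_{n_k})^{-1}\|$). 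Then Lemma \ref{Lemma-improvement}(1) applied finitely many times along the edges $v_0\to v_1\to\cdots\to v_{n_k}$ propagates finiteness of $s$ backward to $z\in V^s[\un v]$, in particular to $x$; and Lemma \ref{Lemma-improvement}(2) gives the quantitative bound $\tfrac{s(x)}{s(x_0)}=e^{\pm\sqrt\ve}$ (with the ratio improving at each step, so that the error does not accumulate -- this is the telescoping of $\sum_n Q(x_n)^{\beta/4}$, which converges since $Q(x_n)\geq p^s_n\wedge p^u_n\geq e^{-2\ve n}(p^s_0\wedge p^u_0)$ by Lemma \ref{Lemma-minimum}, and one only needs the sum over a window between consecutive maximal indices to be small). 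The symmetric argument with $\mathfs F^u$ and the unstable manifold along the negative $\ve$--gpo gives $u(x)<+\infty$.

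Next I would extract the asymptotic exponent inequalities. For the $s$--direction: applying the finiteness estimate at every forward time $r_n(\un v)$ gives $s(y_n)<+\infty$ with $s(y_n)\asymp s(x_n)$; since $s(y_n)^2\geq 4e^{2\rho}\int_0^\infty e^{2\chi t}\|\Phi^t n^s_{y_n}\|^2dt$ and $s(y_n)$ stays comparable to $\ve^{-1}Q(x_n)^{-\beta/12}\cdot(\ldots)$ which grows at most like $e^{o(n)}$ by $\ve$--slowly-varying bounds coming from (GPO2)/Lemma \ref{Lemma-minimum}, one deduces $\limsup_{t\to+\infty}\tfrac1t\log\|\Phi^t n^s_x\|\leq-\chi$. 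The $\liminf$ condition $\liminf_{t\to+\infty}\tfrac1t\log\|\Phi^{-t}n^s_x\|>0$ follows from the second (expanding) estimate in Theorem \ref{Thm-stable-manifolds}(3): $\|d(g^-_{x_{-n+1}}\circ\cdots\circ g^-_{x_0})_yw\|\geq\tfrac18(p^s_0\wedge p^u_0)^{\beta/12}e^{(\tfrac{\chi\inf(r_\Lambda)}2-\tfrac{\beta\ve}6)n}$ for $w$ tangent to $V^s[\un v^+]$, transferred to $\wt n^s_x$ (up to the bounded angle factor $\angle(N_x,X(x))<\rho$ and the flow-time/return-number conversion $t\asymp n\,r_\Lambda$). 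The $u$--direction is symmetric, using the negative $\ve$--gpo and the analogue of Theorem \ref{Thm-stable-manifolds} for $V^u$. Finally, since $\alpha(x)\neq 0$ (the two directions are genuinely transverse because $V^s,V^u$ meet transversally by Lemma \ref{Lemma-admissible-manifolds}(2)), the normalizations $s(x),u(x)\in[\sqrt2,\infty)$ and $\alpha(x)\neq 0$ hold, so $x\in\nuh$; as the conclusion is for every $\un v\in\Sigma^\#$ we get $\pi[\Sigma^\#]\subset\nuh$.

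The main obstacle I expect is the bookkeeping in the finiteness step: unlike the diffeomorphism case in \cite{Sarig-JAMS}, the parameters $s,u$ are defined by \emph{integrals} $\int_0^\infty e^{2\chi t}\|\Phi^t n^s\|^2\,dt$ rather than sums, and the chart transitions $g^{\pm}$ have continuum-valued transition times, so one must split these integrals at the (non-canonical) return times and bound the ``in-between flow segments'' using (\ref{definition-d}) and Lemma \ref{Lemma-regularity-q-t}(3) -- precisely the care taken in the proof of Lemma \ref{Lemma-improvement}. The delicate point is to make sure the cumulative multiplicative error $\prod_n e^{\pm Q(x_n)^{\beta/4}}$ over all forward steps is controlled; this uses that between two consecutive maximal indices the $P^s$'s (hence the $Q(x_n)^{\beta/4}$'s) decay geometrically, exactly as in the sufficiency proof of Theorem \ref{Thm-coarse-graining}, so that the infinite product converges and one gets the clean bound $e^{\pm\sqrt\ve}$ rather than something that degrades with $n$.
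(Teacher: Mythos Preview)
Your argument has a genuine gap in the finiteness step for $s(x)$. You correctly note that the chart centers $x_{n}$ satisfy $s(x_{n})<\infty$ (they lie in $\Lambda\cap\nuh^\#$ by construction of $\mathfs A$), and you want to feed this into Lemma~\ref{Lemma-improvement}(1) along the edges $v_0\to\cdots\to v_{n_k}$. But Lemma~\ref{Lemma-improvement}(1) requires as input that $s(z)<\infty$ for some point $z$ \emph{lying on the stable manifold} $W^s$; the chart center $x_{n_k}$ does not lie on $V^s[\sigma^{n_k}(\un v)]$ (it is the center of the Pesin chart, not the shadowed point $\pi[\sigma^{n_k}(\un v)]$). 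So you have no point on the relevant stable manifold where $s$ is a~priori finite, and the induction cannot start. Your appeal to Proposition~\ref{Prop-Z-par}(3) for maximal indices is also circular, since that statement is about $p^{s/u}(\vf^{t_n}(x))$ and presupposes $x\in\nuh^\#$, which is exactly what is to be proved.

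The paper closes this gap with two ingredients you did not invoke: \emph{Relevance} (Theorem~\ref{Thm-coarse-graining}(3)) and \emph{Bounded distortion} (Theorem~\ref{Thm-stable-manifolds}(4)). Since $\un v\in\Sigma^\#$, some symbol $v$ repeats along $n_k\to+\infty$; taking $v_0=v$, Relevance produces $\un w\in\Sigma^\#$ with $w_0=v$ and $y:=\pi(\un w)\in\nuh^\#$. Now $y$ \emph{does} lie on $V:=V^s[\un w]$, and Bounded distortion upgrades $s(y)<\infty$ to a uniform bound $L_0:=\sup_{y'\in V}s(y')<\infty$. Since $V\in\mathfs M^s(v_0)=\mathfs M^s(v_{n_k})$, one graph-transforms $V$ along $\un v$ to get $V_k\to V^s[\un v]$ in $C^1$; points $z_k\in V_k$ with $z_k\to x$ have forward iterates landing in $V$, so the Improvement Lemma now legitimately gives $s(z_k)/s(x_0)\leq\max\{e^{\sqrt\ve},L_0/s(x_0)\}$, and a Fatou-type limiting argument (using $n^s_{z_k}\to n^s_x$ and continuity of $\Phi$) yields $s(x)<\infty$. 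Note also that the Improvement Lemma says the ratio \emph{improves} from $\xi$ to $\xi-Q^{\beta/4}$ at each step, so there is no infinite product $\prod e^{\pm Q(x_n)^{\beta/4}}$ to control as you suggest. Your treatment of the $\liminf$ condition via the second estimate in Theorem~\ref{Thm-stable-manifolds}(3) is correct and matches the paper.
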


\begin{proof}
Let $\un v=\{\Psi_{x_n}^{p^s_n,p^u_n}\}_{n\in\Z}\in\Sigma^\#$, and let $x=\pi(\un v)$.
We need to prove properties (NUH1) and (NUH2) for $x$. We prove the first property (the second is symmetric). 
The proof that $s(x)<+\infty$ for surface diffeomorphisms is contained in Claims 1 and 2 in \cite[Prop. 7.3]{Sarig-JAMS}, and uses four facts, which we also have here:
\begin{enumerate}[$\circ$]
\item The derivative of the diffeomorphism is continuous: in our context, the induced linear
Poincar\'e flow $\Phi$ is continuous.
\item Every vertex of the alphabet $\mathfs A$ is relevant: in our context, this is 
Theorem \ref{Thm-coarse-graining}(3).
\item Bounded distortion along invariant manifolds: in our context, this is Theorem \ref{Thm-stable-manifolds}(4).
\item Improvement lemma: in our context, this is Lemma \ref{Lemma-improvement}.
\end{enumerate}
Let us give the details. Let $n_k\to+\infty$ such that $(v_{n_k})_{k\geq 0}$ is constant. 
Since $\pi[\Sigma^\#]$ and $\nuh$ are invariant,
we can assume that $n_0=0$. Since $v_0$ is relevant, there is $\un w=\{w_n\}_{n\in\Z}$
with $w_0=v_0$ such that $y=\pi(\un w)\in \nuh^\#$. In particular, $s(y)<+\infty$.
Let $V:=V^s[\un w]$. We claim that $\sup_{y'\in V}s(y')<+\infty$. To prove this, fix
$y'\in V$. Using the same notation of Proposition \ref{Prop-disjointness},
let 
$$
t_n=\sum_{k=0}^{n-1}T_k[(g_{x_{k-1}}^+\circ\cdots\circ g_{x_0}^+)(y)]\ \text{ and }\
t_n'=\sum_{k=0}^{n-1}T_k[(g_{x_{k-1}}^+\circ\cdots\circ g_{x_0}^+)(y')].
$$
In particular, $(g_{x_{n-1}}^+\circ \cdots g_{x_0}^+)(y)=\vf^{t_n}(y)$
and $(g_{x_{n-1}}^+\circ \cdots g_{x_0}^+)(y')=\vf^{t_n'}(y')$.
By Lemma \ref{Lemma-regularity-q-t}(3)
and Theorem \ref{Thm-stable-manifolds}(3), we have 
\begin{align*}
&|t_n-t_n'|\leq 
\sum_{k=0}^{n-1}{\rm Lip}(T_k) d((g_{x_{n-1}}^+\circ \cdots g_{x_0}^+)(y),(g_{x_{n-1}}^+\circ \cdots g_{x_0}^+)(y'))\\
&\leq 
d(\Psi_{x_0}^{-1}(y),\Psi_{x_0}^{-1}(y'))\sum_{k=0}^{n-1}e^{-\frac{\chi\inf(r_\Lambda)}{2}k}<\frac{4p^s_0}{1-e^{-\frac{\chi\inf(r_\Lambda)}{2}}}
\ll \ve\ll \rho.
\end{align*}
Since $\|d(g_{x_{n-1}}^+\circ \cdots g_{x_0}^+)_y w\|=\|\Phi^{t_n}n^s_y\|$
and $\|d(g_{x_{n-1}}^+\circ \cdots g_{x_0}^+)_{y'}w'\|=\|\Phi^{t_n'}n^s_{y'}\|$, it follows 
from Theorem \ref{Thm-stable-manifolds}(4) and estimate (\ref{definition-d}) that
$\tfrac{\|\Phi^{t_n}n^s_y\|}{\|\Phi^{t_n}n^s_{y'}\|}=\tfrac{\|\Phi^{t_n}n^s_y\|}{\|\Phi^{t_n'}n^s_{y'}\|}\cdot
\tfrac{\|\Phi^{t_n'}n^s_{y'}\|}{\|\Phi^{t_n}n^s_{y'}\|}=e^{\pm(Q(x_0)^{\beta/4}+4\rho)}=e^{\pm 6\rho}$.
Now we interpolate this estimate. Given $t\geq 0$, let $n$ such that 
$t_n\leq t\leq t_{n+1}$. Since $|t_{n+1}-t_n|\leq \rho$, using estimate (\ref{definition-d}) again
gives that
$$
\tfrac{\|\Phi^{t}n^s_y\|}{\|\Phi^{t}n^s_{y'}\|}=\tfrac{\|\Phi^{t}n^s_y\|}{\|\Phi^{t_n}n^s_{y}\|}\cdot
\tfrac{\|\Phi^{t_n}n^s_y\|}{\|\Phi^{t_n}n^s_{y'}\|}\cdot \tfrac{\|\Phi^{t_n}n^s_{y'}\|}{\|\Phi^{t}n^s_{y'}\|}=e^{\pm 14\rho}.
$$ 
This implies that
$\tfrac{s(y)}{s(y')}=e^{\pm 14\rho}$. Since $y'\in V$ is arbitrary, $L_0:=\sup_{y'\in V}s(y')<+\infty$.

The next step is to prove that $s(x)<+\infty$. Recalling that $V\in \mathfs M^s(v_0)= \mathfs M^s(v_{n_k})$, define 
$V_k:=(\mathfs F^s_{v_0,v_1}\circ\mathfs F^s_{v_1,v_2}\circ\cdots\circ \mathfs F^s_{v_{n_k-1},v_{n_k}})[V]$.
By Section \ref{ss.graph.transform}, $(V_k)_{k\geq 0}$ converges in the $C^1$ topology to 
$V^s[\un v]$. In other words, if $G$ is the representing function of $V^s[\un v]$ and $G_k$
is the representing function of $V_k$, then $(G_k)_{k\geq 0}$ converges to $G$ in the $C^1$ topology.
Writing $x=\Psi_{x_0}(t,G(t))$, let $z_k=\Psi_{x_0}(t,G_k(t))\in V_k$ and 
$y_k=(g_{x_{n_k-1}}^+\circ\cdots\circ g_{x_0}^+)(z_k)$.
By Theorem \ref{Thm-stable-manifolds}(2), 
we have $y_k\in V$ and so $s(y_k)\leq L_0$. Consider the ratio $\tfrac{s(y_k)}{s(x_0)}$,
which is bounded by $\tfrac{L_0}{s(x_0)}$. Since $x_0=x_k$ by our choice of $n_k$, we can apply Lemma \ref{Lemma-improvement} along the sequence
of edges $v_0\to v_1\to\cdots\to v_{n_k}$. We obtain that
$\tfrac{s(z_k)}{s(x_0)}\leq \max\left\{e^{\sqrt{\ve}},\tfrac{L_0}{s(x_0)}\right\}=:L_1$.
Since $\Phi$ is continuous and $n^s_{x_k}\to n^s_x$ as $k\to+\infty$,
for every $T\geq 0$ we have
$$
4e^{4\rho}\int_0^T e^{2\chi t}\|\Phi^t n^s_x\|^2dt\leq 
\limsup_{k\to+\infty}4e^{4\rho}\int_0^T e^{2\chi t}\|\Phi^t n^s_{x_k}\|^2
\leq s(x_k)^2\leq L_1^2s(x_0)^2.
$$
Taking $T\to+\infty$, we conclude that $s(x)\leq L_1s(x_0)$.

Now we prove that 
$\liminf\limits_{t\to+\infty}\tfrac{1}{t}\log\|\Phi^{-t}n^s_x\|>0$.
Let $t_n=r_n(\un v)$ (see before the statement of Theorem \ref{Thm-inverse} for the definition
of $r_n(\un v)$). For $n\geq 0$, we have $0\leq -t_{-n}\leq n\sup(r_\Lambda)$, hence it is enough to prove that
$\liminf\limits_{n\to+\infty}\tfrac{1}{n}\log\|\Phi^{t_{-n}}n^s_x\|>0$.
 This can also be done as in the case of diffeomorphisms, as follows.
The second estimate of Theorem \ref{Thm-stable-manifolds}(3) gives that 
$\|\Phi^{r_n(\un v)}n^s_x\|\leq \|C(x_0)^{-1}\|e^{-\frac{\chi\inf (r_\Lambda)}{2}n}$
for every $n\geq 0$. Applying this to $\sigma^{-n}(\un v)$ and $G_{-n}(x)=\pi[\sigma^{-n}(\un v)]$,
we get that
$$
\|\Phi^{t_{-n}}n^s_x\|=\|\Phi^{-t_{-n}}n^s_{G_{-n}(x)}\|^{-1}\geq \|C(x_{-n})^{-1}\|^{-1}e^{\frac{\chi\inf (r_\Lambda)}{2}n}.
$$
Since
$\|C(x_{-n})^{-1}\|^{-1}\geq Q(x_{-n})^{\frac{\beta}{12}}\geq (p^s_n\wedge p^u_n)^{\frac{\beta}{12}}
\geq (e^{-2\ve n}p^s_0\wedge p^u_0)^{\frac{\beta}{12}}$, we have that 
$$
\liminf_{n\to+\infty}\tfrac{1}{n}\log\|\Phi^{t_{-n}}n^s_x\|\geq \tfrac{\chi\inf(r_\Lambda)}{2}-\tfrac{\beta\ve}{6},
$$
which is positive if $\ve>0$ is small enough.
\end{proof}

\subsection{Control of $\alpha(x_n),s(x_n),u(x_n),Q(x_n)$}

We now prove parts (2)--(4) of Theorem \ref{Thm-inverse}.
Part (2) follows directly from \ref{Lemma-admissible-manifolds}(2), as follows:
since $\vf^{r_n(\un v)}(x)=\pi[\sigma^n(\un v)]$ is the intersection point of a
$s$--admissible and a $u$--admissible manifold at $\Psi_{x_n}^{p^s_n,p^u_n}$, we have
$$
\tfrac{\sin\alpha(x_n)}{\sin\alpha(\vf^{r_n(\un v)}(x))}=e^{\pm (p^s_n\wedge p^u_n)^{\beta/4}}
\text{ and }|\cos\alpha(x_n)-\cos\alpha(\vf^{r_n(\un v)}(x))|<2(p^s_n\wedge p^u_n)^{\beta/4}.
$$

Now we proceed to control $s(x_n)$ and $u(x_n)$.

\begin{proposition}
The following holds for all $0<\ve\ll \rho\ll 1$.
If $\un v=\{\Psi_{x_n}^{p^s_n,p^u_n}\}_{n\in\Z}\in\Sigma^\#$ and $x=\pi(\un v)$ then for all $n\in\Z$:
$$
\tfrac{s(x_n)}{s(\vf^{r_n(\un v)}(x))}=e^{\pm\sqrt{\ve}}\text{ and }\tfrac{u(x_n)}{u(\vf^{r_n(\un v)}(x))}=e^{\pm\sqrt{\ve}}
$$
\end{proposition}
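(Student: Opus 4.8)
The plan is to iterate the improvement lemma (Lemma~\ref{Lemma-improvement}) backwards along $\underline v$, starting from a recurrence time at which we already control the ratio, much as in the proof of Proposition~\ref{Prop-image-pi} but now retaining the \emph{quantitative} two-sided form $e^{\pm(\xi-Q(y)^{\beta/4})}$. Fix $n$; since $\Sigma^\#$ is shift-invariant, $\sigma^n\underline v\in\Sigma^\#$, and because $\varphi^{r_n(\underline v)}(x)=\pi(\sigma^n\underline v)$ while the $0$-th chart of $\sigma^n\underline v$ is $v_n$ (with center $x_n$), it suffices to treat $n=0$: writing $y:=\pi(\underline v)$, $y^{(j)}:=\varphi^{r_j(\underline v)}(y)=\pi(\sigma^j\underline v)$ and $x_j$ for the center of $v_j$, I must show $s(x_0)/s(y)=e^{\pm\sqrt{\ve}}$ (the $u$-statement is symmetric). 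First I would record: $y^{(j)}\in\nuh$ for every $j$ by Proposition~\ref{Prop-image-pi}, so $s(y^{(j)})\in[\sqrt2,+\infty)$ is finite; $y^{(j)}$ lies on the stable manifold $V^s[\{v_{j+i}\}_{i\ge0}]$ of the tail of $\sigma^j\underline v$ by forward invariance (Theorem~\ref{Thm-stable-manifolds}(2)); and $g_{x_j}^-(y^{(j)})=y^{(j-1)}$, because by (GPO1) the discs carrying $f(x_{j-1}),x_j$ coincide and those carrying $f^{-1}(x_j),x_{j-1}$ coincide, so $g_{x_j}^-$ is the germ of $(g_{x_{j-1}}^+)^{-1}$. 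Thus $g_{x_j}^-(y^{(j)})=y^{(j-1)}\in V^s[\{v_{(j-1)+i}\}_{i\ge0}]=\mathfs F^s_{v_{j-1},v_j}\big(V^s[\{v_{j+i}\}_{i\ge0}]\big)$, which is exactly the geometric setup required by Lemma~\ref{Lemma-improvement}.

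Next I would produce the a priori bound at recurrence times. Since $\underline v\in\Sigma^\#$, fix a symbol $v$ and $m_k\uparrow+\infty$ with $v_{m_k}=v$; let $x_v$ be its center and $Q_v:=Q(x_v)>0$ (for every index $j$ with $v_j=v$ we have $x_j=x_v$, hence $Q(x_j)=Q_v$). Applying the proof of Proposition~\ref{Prop-image-pi} to $\sigma^{m_k}\underline v$, whose $0$-th symbol is $v$ and whose relevant positive $\ve$-gpo comes from Theorem~\ref{Thm-coarse-graining}(3), yields a bound $s(y^{(m_k)})\le L(v)\,s(x_v)$ with $L(v)=\max\{e^{\sqrt{\ve}},\,e^{14\rho}s(\pi(\underline w_v))/s(x_v)\}$ depending only on the symbol $v$; combined with $s(y^{(m_k)}),s(x_v)\ge\sqrt2$ this furnishes a constant $C=C(v)$, independent of $k$, such that, writing $s(y^{(j)})/s(x_j)=e^{\pm\xi_j}$ with $\xi_j\ge0$, we have $\xi_{m_k}\le C$.

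Then I would iterate. Put $\eta_j:=\max\{\xi_j,\sqrt{\ve}\}$. For $1\le j\le m_k$, apply Lemma~\ref{Lemma-improvement} to the edge $v_{j-1}\overset{\ve}{\rightarrow}v_j$ with $W^s=V^s[\{v_{j+i}\}_{i\ge0}]$, $z=y^{(j)}$, $g_{x_j}^-(z)=y^{(j-1)}$ and $\xi=\eta_j\,(\ge\sqrt{\ve})$; this is legitimate because $s(y^{(j)})/s(x_j)=e^{\pm\xi_j}=e^{\pm\eta_j}$ and because part~(1) guarantees $s$ is finite throughout $W^s$. This gives $\xi_{j-1}\le\eta_j-Q(x_j)^{\beta/4}$, hence $\eta_{j-1}\le\max\{\eta_j-Q(x_j)^{\beta/4},\sqrt{\ve}\}$, and by downward induction
$$
\eta_0\le\max\Big\{\eta_{m_k}-\textstyle\sum_{j=1}^{m_k}Q(x_j)^{\beta/4},\ \sqrt{\ve}\Big\}.
$$
Now $\sum_{j=1}^{m_k}Q(x_j)^{\beta/4}\ge\#\{1\le j\le m_k:v_j=v\}\cdot Q_v^{\beta/4}\to+\infty$ as $k\to\infty$, while $\eta_{m_k}\le\max\{C,\sqrt{\ve}\}$, so for $k$ large one gets $\eta_0=\sqrt{\ve}$, i.e. $\xi_0\le\sqrt{\ve}$ and $s(x_0)/s(y)=e^{\pm\sqrt{\ve}}$. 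For $u$ one runs the mirror argument: negative $\ve$-gpo's, the unstable manifolds $V^u[\cdot]$, a recurrence time $m_k\downarrow-\infty$, and the $u$-analogue of Lemma~\ref{Lemma-improvement} propagated forwards along $v_{m_k}\to\cdots\to v_0$. The main obstacle I anticipate is the a priori bound: one must re-read the proof of Proposition~\ref{Prop-image-pi} carefully enough to see that its bound on $s(y^{(m_k)})$ is \emph{uniform} in $k$ — which works precisely because along a recurrence subsequence the symbol (hence the associated relevant gpo and all the constants it produces) is fixed — and then observe that the per-step gains $Q(x_j)^{\beta/4}$ sum to $+\infty$ thanks to recurrence, which is what pins the ratio to its "fixed point'' $e^{\pm\sqrt{\ve}}$ rather than merely $e^{\pm O(\rho)}$.
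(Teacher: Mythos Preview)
Your proof is correct and follows essentially the same approach as the paper's (which in turn follows \cite[Prop.~7.3]{Sarig-JAMS}): reduce to $n=0$ by shift-invariance, obtain a uniform a~priori bound on the ratio at the recurrence times $m_k$ via the argument of Proposition~\ref{Prop-image-pi}, and then iterate the improvement lemma (Lemma~\ref{Lemma-improvement}) backwards from $m_k$ to $0$, using that the per-step gains $Q(x_j)^{\beta/4}$ sum to infinity along the recurrence subsequence. Your write-up is in fact more detailed than the paper's sketch; the key observations you flag --- that $g_{x_j}^-(y^{(j)})=y^{(j-1)}$, that the a~priori bound depends only on the recurring symbol $v$, and that the telescoped inequality $\eta_0\le\max\{\eta_{m_k}-\sum_{j=1}^{m_k}Q(x_j)^{\beta/4},\sqrt\ve\}$ forces $\eta_0=\sqrt\ve$ --- are exactly the ingredients the paper alludes to.
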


\begin{proof}
When $M$ is compact and $f$ is a $C^{1+\beta}$ diffeomorphism,
this is \cite[Prop. 7.3]{Sarig-JAMS}, and our proof follows the same methods.
To ease notation, write $z_n=\vf^{r_n(\un v)}(x)$, $n\in\Z$.
We sketch the proof for the first estimate:
\begin{enumerate}[$\circ$]
\item By Proposition \ref{Prop-image-pi}, $\pi[\Sigma^\#]\subset\nuh$ 
hence $s(x)<\infty$.
\item As in Claim 1 of \cite[Prop. 7.3]{Sarig-JAMS}, there is $\xi\geq\sqrt{\ve}$
and a sequence $n_k\to+\infty$ such that $\tfrac{s(x_{n_k})}{s(z_{n_k})}=e^{\pm\xi}$.
\item Since $g_{x_n}^-(z_n)=z_{n-1}$, we can apply Lemma \ref{Lemma-improvement} along
$\un v$ and the points $z_n$: if
$v_n=v$ for infinitely many $n>0$, then the ratio improves at each of these indices.
\end{enumerate}
The conclusion is that $\tfrac{s(x_n)}{s(z_n)}=e^{\pm\sqrt{\ve}}$ for all $n\in\Z$.
\end{proof}

\medskip
Part (4) is consequence of parts (2) and (3).
Remind that
$$
Q(x):=\ve^{3/\beta}\|C(x)^{-1}\|_{\rm Frob}^{-12/\beta}=
\ve^{3/\beta}\left(\tfrac{\sqrt{s(x)^2+u(x)^2}}{|\sin\alpha(x)|}\right)^{-12/\beta}.
$$
By part (2), $\tfrac{\sin\alpha(x_n)}{\sin\alpha(z_n)}=e^{\pm\sqrt{\ve}}$.
By part (3), $\tfrac{\sqrt{s(x_n)^2+u(x_n)^2}}{\sqrt{s(z_n)^2+u(z_n)^2}}=e^{\pm\sqrt{\ve}}$.
Therefore $\tfrac{\|C(x_n)^{-1}\|_{\rm Frob}}{\|C(z_n)^{-1}\|_{\rm Frob}}=e^{\pm 2\sqrt{\ve}}$, and so
$\tfrac{Q(x_n)}{Q(z_n)}=\tfrac{\|C(x_n)^{-1}\|_{\rm Frob}^{-12/\beta}}{\|C(z_n)^{-1}\|_{\rm Frob}^{-12/\beta}}
={\rm exp}[\pm\tfrac{24}{\beta}\sqrt{\ve}]={\rm exp}[\pm\sqrt[3]{\ve}]$ when $\ve>0$ is small enough.

\subsection{Control of $p^s_n,p^u_n$}\label{section-control-ps-pu}

Up to now, we have proved that $x\in\nuh$ and Parts (1)--(4) of Theorem 
\ref{Thm-inverse}. Now we prove Part (5). In particular, it follows that
$x\in\nuh^\#$. 
We continue to write  $z_n=\vf^{r_n(\un v)}(x)$, as in the previous section.
The control of $p^{s/u}_n$ consists on proving that it is comparable to $p^{s/u}(z_n)$.
To have the control from below, we will use that $\{\Psi_{x_n}^{p^s_n,p^u_n}\}_{n\in\Z}\in\Sigma^\#$
implies that the parameters $p^{s/u}_n$ are almost maximal infinitely often.
Proposition \ref{Prop-Z-par}(3) is the statement of maximality for $p^{s/u}(z_n)$.
The statement for $p^{s/u}_n$ is in the next lemma.
For simplicity of notation, write $T_k=T(v_k,v_{k+1})$.

\begin{lemma}\label{Lemma-max-in-chart}
If $\{\Psi_{x_n}^{p^s_n,p^u_n}\}_{n\in\Z}\in\Sigma^\#$ then
$\min\{e^{\ve T_n}p^s_{n+1},e^{-\ve}\ve Q(x_n)\}=e^{-\ve}\ve Q(x_n)$
for infinitely many $n>0$, and $\min\{e^{\ve T_n}p^u_n,e^{-\ve}\ve Q(x_{n+1})\}=e^{-\ve}\ve Q(x_{n+1})$
for infinitely many $n<0$.
\end{lemma}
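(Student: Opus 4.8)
The plan is to transfer the maximality statement of Proposition~\ref{Prop-Z-par}(3) — which holds for the intrinsic parameters $p^{s/u}(z_n)$ along the orbit of $x=\pi(\un v)$ — to the chart parameters $p^{s/u}_n$, using the comparison estimates already established in Theorem~\ref{Thm-inverse}(4),(5) together with the greedy recursion (GPO2). I will only treat the statement for $p^s_n$ with $n>0$; the case of $p^u_n$ with $n<0$ is symmetric. First I would recall that since $\un v\in\Sigma^\#$, Proposition~\ref{Prop-image-pi} and the results of the preceding subsections give $x\in\nuh^\#$, so Proposition~\ref{Prop-Z-par}(3) applies to the sequence $\mathcal T=\{r_n(\un v)\}_{n\in\Z}$: we have $p^s(z_n)=\ve Q(z_n)$ for infinitely many $n>0$. (Here I am using that $\tfrac12\inf(r_\Lambda)\le r_{n+1}(\un v)-r_n(\un v)\le 2\sup(r_\Lambda)$, which follows from Remark~\ref{rmk-time} and the fact that $r(\un v)$ differs from $r_\Lambda$ by the bounded flow-box correction $\mathfrak t_D$, together with the lower bound $\inf(r_\Lambda)>0$.)

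Next I would fix such an index $n>0$ with $p^s(z_n)=\ve Q(z_n)$ and show that, for it, $\min\{e^{\ve T_n}p^s_{n+1},e^{-\ve}\ve Q(x_n)\}=e^{-\ve}\ve Q(x_n)$; equivalently, $e^{\ve T_n}p^s_{n+1}\ge e^{-\ve}\ve Q(x_n)$. Since $e^{\ve T_n}\ge e^{\ve\inf(r_\Lambda)/2}$ (as $T_n\ge\tfrac12\inf(r_\Lambda)$ by Remark~\ref{rmk-time}), it suffices to show $p^s_{n+1}\ge e^{-\ve}e^{-\ve\inf(r_\Lambda)/2}\ve Q(x_n)$. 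By Theorem~\ref{Thm-inverse}(5) applied at index $n+1$, $p^s_{n+1}=e^{\pm\sqrt[3]\ve}p^s(z_{n+1})$, and by the greedy recursion of Proposition~\ref{Prop-Z-par}(2), $p^s(z_n)=\min\{e^{\ve(r_{n+1}(\un v)-r_n(\un v))}p^s(z_{n+1}),\ve Q(z_n)\}$. Since $p^s(z_n)=\ve Q(z_n)$, this forces $e^{\ve(r_{n+1}(\un v)-r_n(\un v))}p^s(z_{n+1})\ge\ve Q(z_n)$, i.e. $p^s(z_{n+1})\ge e^{-2\ve\sup(r_\Lambda)}\ve Q(z_n)$. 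Combining with Theorem~\ref{Thm-inverse}(4) ($Q(z_n)=e^{\pm\sqrt[3]\ve}Q(x_n)$) and the $e^{\pm\sqrt[3]\ve}$ comparison between $p^s_{n+1}$ and $p^s(z_{n+1})$, I obtain $p^s_{n+1}\ge e^{-2\sqrt[3]\ve-2\ve\sup(r_\Lambda)}\ve Q(x_n)$, which is $\ge e^{-\ve-\ve\inf(r_\Lambda)/2}\ve Q(x_n)$ once $\ve$ is small enough (recall $\sup(r_\Lambda)<1$ and the exponent $2\sqrt[3]\ve$ dominates the $\ve$-scale terms; all these are of order smaller than a fixed positive power of $\ve$, so the inequality holds in the regime $\ve\ll\rho\ll1$). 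Hence the minimum is attained at $e^{-\ve}\ve Q(x_n)$ for each such $n$, and there are infinitely many of them.

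The argument for $p^u_n$ with $n<0$ is the mirror image: Proposition~\ref{Prop-Z-par}(3) gives $p^u(z_n)=\ve Q(z_n)$ for infinitely many $n<0$, the backward greedy recursion forces $p^u(z_{n-1})\ge e^{-2\ve\sup(r_\Lambda)}\ve Q(z_n)$ at those indices (note the indexing: one compares $\min\{e^{\ve T_n}p^u_n,e^{-\ve}\ve Q(x_{n+1})\}$, so the relevant shift is by one step), and the same $e^{\pm\sqrt[3]\ve}$ comparisons from Theorem~\ref{Thm-inverse}(4),(5) convert this into $\min\{e^{\ve T_n}p^u_n,e^{-\ve}\ve Q(x_{n+1})\}=e^{-\ve}\ve Q(x_{n+1})$. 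The only point requiring a little care — and the main obstacle — is bookkeeping the various $e^{\pm\sqrt[3]\ve}$, $e^{\pm\ve}$ and $e^{\pm\ve\sup(r_\Lambda)}$ factors so that the final inequality survives with the explicit constant $e^{-\ve}$ on the right-hand side; this hinges on the fact that $\sqrt[3]\ve$ is of strictly larger order than $\ve$ and $\ve\sup(r_\Lambda)$, so for $\ve$ sufficiently small the dominant error is $e^{O(\sqrt[3]\ve)}$ and it is absorbed by the crude bound. I expect no conceptual difficulty beyond this, since all the analytic content (finiteness of $s,u$, the improvement lemma, the comparison estimates) is already in place by the time this lemma is invoked.
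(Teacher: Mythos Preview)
Your approach is circular. In the paper, Lemma~\ref{Lemma-max-in-chart} is precisely what is used to prove Theorem~\ref{Thm-inverse}(5) (see Step~1 of Section~\ref{section-control-ps-pu}, which splits into cases according to whether $n$ satisfies Lemma~\ref{Lemma-max-in-chart}). So invoking Theorem~\ref{Thm-inverse}(5) here closes a loop. Relatedly, the statement $x\in\nuh^\#$ is only concluded \emph{after} part~(5) is established (this is spelled out at the beginning of Section~\ref{section-control-ps-pu}); Proposition~\ref{Prop-image-pi} gives only $x\in\nuh$, so Proposition~\ref{Prop-Z-par}(3) is not yet available either.

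Even if one ignores the logical order, your final bookkeeping goes the wrong way. You obtain $p^s_{n+1}\ge e^{-2\sqrt[3]{\ve}-2\ve\sup(r_\Lambda)}\,\ve Q(x_n)$ and want this to exceed $e^{-\ve-\ve\inf(r_\Lambda)/2}\,\ve Q(x_n)$. But for small $\ve$ one has $2\sqrt[3]{\ve}\gg \ve$, so $e^{-2\sqrt[3]{\ve}}<e^{-\ve}$ and the implication fails. The remark that ``$\sqrt[3]{\ve}$ is of strictly larger order than $\ve$'' is exactly why the $e^{O(\sqrt[3]{\ve})}$ error \emph{cannot} be absorbed into an $e^{-\ve}$ budget; you have the direction backwards.

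The paper's proof avoids all of this by working directly with the chart parameters. If the minimum equalled $e^{\ve T_N}p^s_{N+1}$ for all large $N$, then the left inequality in (GPO2) gives $p^s_N\ge e^{-\ve p^s_N}e^{\ve T_N}p^s_{N+1}=e^{\ve(T_N-p^s_N)}p^s_{N+1}>\lambda\,p^s_{N+1}$ with $\lambda=e^{\ve^{1.5}}>1$, forcing $p^s_N\to 0$. This contradicts $\limsup_{N\to\infty} p^s_N>0$, which follows immediately from $\un v\in\Sigma^\#$ (a symbol repeats infinitely often, freezing the pair $(p^s_N,p^u_N)$ along a subsequence). No comparison with the orbit parameters $p^{s/u}(z_n)$, and no appeal to Theorem~\ref{Thm-inverse}(4),(5), is needed.
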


\begin{proof}
The strategy is the same used in the proof of Proposition \ref{Prop-Z-par}(3). 
We prove the first statement (the second is identical). By contradiction, assume that
there exists $n\in\Z$ such that $\min\{e^{\ve T_N}p^s_{N+1},e^{-\ve}\ve Q(x_N)\}=e^{\ve T_N}p^s_{N+1}$
for all $N\geq n$. By (GPO2), it follows that $p^s_N\geq e^{\ve(T_N-p^s_N)}p^s_{N+1}$ for all $N\geq n$.
Let $\lambda=\exp{}[\ve^{1.5}]$,
then $\ve(T_N-p^s_N)\geq \ve(\inf(r_\Lambda)-\ve)>\lambda$ when $\ve>0$ is sufficiently small.
Hence $p^s_N> \lambda p^s_{N+1}$ for all $N\geq n$, and so $p^s_n\geq \lambda^{N-n}p^s_N$
for all $N\geq n$. This is contradiction, since $p^s_n<\ve$ and $\limsup\limits_{N\to+\infty}p^s_N>0$.
\end{proof}

Now we prove Theorem \ref{Thm-inverse}(5).
We will prove the statement for $p^s_n$ and $p^s(z_n)$
(the proof for $p^u_n$ and $p^u(z_n)$ is identical). 

\medskip
\noindent
{\bf Step 1.} $p^s_n\geq e^{-\sqrt[3]{\ve}}p^s(z_n)$ for all $n\in\Z$.

\medskip
We divide the proof into two cases, according to whether $n$ satisfies Lemma \ref{Lemma-max-in-chart}
or not. Assume first that it does, i.e. $\min\{e^{\ve T_n}p^s_{n+1},e^{-\ve}\ve Q(x_n)\}=e^{-\ve}\ve Q(x_n)$.
By (GPO2), we have $p^s_n\geq e^{-\ve p^s_n}e^{-\ve}\ve Q(x_n)\geq e^{-2\ve}\ve Q(x_n)$.
By Theorem \ref{Thm-inverse}(4), we get that
$$
p^s_n\geq  e^{-2\ve}\ve Q(x_n)\geq e^{-2\ve-O(\sqrt{\ve})}\ve Q(z_n)
\geq e^{-2\ve-O(\sqrt{\ve})}p^s(z_n)
\geq e^{-\sqrt[3]{\ve}}p^s(z_n).
$$

\medskip
Now assume that $n$ does not satisfy Lemma \ref{Lemma-max-in-chart}. Take the
smallest $m>n$ that satisfies Lemma \ref{Lemma-max-in-chart}. Hence
$\min\{e^{\ve T_k}p^s_{k+1},e^{-\ve}\ve Q(x_k)\}=e^{\ve T_k}p^s_{k+1}$
for $k=n,\ldots,m-1$. By (GPO2), we get that $p^s_k\geq e^{\ve(T_k-p^s_k)}p^s_{k+1}>\lambda p^s_{k+1}$
for $k=n,\ldots,m-1$. Therefore
$p^s_k\leq \lambda^{n-k}p^s_n$ for $k=n,\ldots,m-1$. Writing $\Delta_k=(t_{k+1}-t_k)-T_k\geq 0$,
this latter estimate gives two consequences:
\begin{enumerate}[$\circ$]
\item $\displaystyle\sum_{k=n}^{m-1}p^s_k<\ve$: indeed,
$$
\sum_{k=n}^{m-1}p^s_k\leq p^s_n\sum_{k=n}^{m-1}\lambda^{n-k}\leq \ve^{\frac{3}{\beta}}\frac{1}{1-\lambda^{-1}}
<2\ve^{\frac{3}{\beta}-1.5}<\ve,
$$
since $\lim\limits_{\ve\to 0}\tfrac{\ve^{1.5}}{1-\lambda^{-1}}=1$.
\item $\displaystyle\sum_{k=n}^{m-1} \Delta_k<\ve$: since the transition time from $x_k$ to $x_{k+1}$
is 2--Lipschitz (Lemma \ref{Lemma-regularity-q-t}(3)), we have
$$
\sum_{k=n}^{m-1}\Delta_k\leq 4\sum_{k=n}^{m-1}p^s_k<8\ve^{\frac{3}{\beta}-1.5}<\ve.
$$
\end{enumerate}
Using that $p^s_k\geq e^{\ve(T_k-p^s_k)}p^s_{k+1}=e^{-\ve(p^s_k+\Delta_k)}e^{\ve(t_{k+1}-t_k)}p^s_{k+1}$
for $k=n,\ldots,m-1$, we conclude that
\begin{align*}
&\ p^s_n\geq \exp{}\left[-\ve\sum_{k=n}^{m-1}p^s_k-\ve\sum_{k=n}^{m-1}\Delta_k\right]e^{\ve(t_m-t_n)}p^s_m\\
&\geq \exp{}\left[-2\ve^2-2\ve-O(\sqrt{\ve})\right]e^{\ve(t_m-t_n)}p^s(z_m)
\geq e^{-\sqrt[3]{\ve}}p^s(z_n),
\end{align*}
where in the last inequality we used Proposition \ref{Prop-Z-par}(2).

\medskip
\noindent
{\bf Step 2.} $p^s(z_n)\geq e^{-\sqrt[3]{\ve}}p^s_n$ for all $n\in\Z$.

\medskip
The motivation for this inequality is that $p^s(z_n)$ grows at least as much as $p^s_n$, since
$p^s(z_n)$ satisfies the recursive equality $p^s(z_n)=\min\{e^{\ve(t_{n+1}-t_n)}p^s(z_{n+1}),\ve Q(z_n)\}$ while by (GPO2) we have the recursive inequality $p^s_n\leq \min\{e^{\ve T_n}p^s_{n+1},\ve Q(x_n)\}$
and $t_{n+1}-t_n\geq T_n$. For ease of notation, let $n=0$ (the general case is identical).
By the above recurve equality and inequality, we have
$$p^s(z_0)=\ve\inf\{e^{\ve t_n}Q(z_n):n\geq 0\}\ \text{ and }\
p^s_0\leq \ve\inf\{e^{\ve(T_0+\cdots+T_{n-1})}Q(x_n):n\geq 0\}.
$$
Using Part (4) and that $t_n=\sum\limits_{k=0}^{n-1}(t_{k+1}-t_k)\geq \sum\limits_{k=0}^{n-1}T_k$, 
we conclude that
$$
\ p^s(z_0)=\ve\inf\{e^{\ve t_n}Q(z_n):n\geq 0\}\geq e^{-\sqrt[3]{\ve}}\ve \inf\{e^{\ve(T_0+\cdots+T_{n-1})}Q(x_n):n\geq 0\}=e^{-\sqrt[3]{\ve}} p^s_0.
$$
Steps 1 and 2 conclude the proof of Part (5). In particular, since $\{\Psi_{x_n}^{p^s_n,p^u_n}\}_{n\in\Z}\in\Sigma^\#$,
it follows that $x\in\nuh^\#$.

\subsection{Control of $\Psi_{x_0}^{-1}\circ\Psi_x$}

In the case of diffeomorphisms, this is \cite[Thm. 5.2]{Sarig-JAMS}, whose idea
of proof is the following: if $\un v=\{\Psi_{x_n}^{p^s_n,p^u_n}\}_{n\in\Z},
\un w=\{\Psi_{y_n}^{q^s_n,q^u_n}\}_{n\in\Z}\in\Sigma^\#$ with $\pi(\un v)=\pi(\un w)=x$,
then the parameters of $\Psi_{x_n}^{p^s_n,p^u_n}$ and $\Psi_{y_n}^{q^s_n,q^u_n}$ are comparable,
hence $\Psi_{y_n}^{-1}\circ\Psi_{x_n}$ is close to $\pm{\rm Id}$.
In our case, we know that $x\in\nuh^\#$, hence the Pesin charts along the orbit of $x$ are well-defined.
By parts (1)--(5), the parameters of $\Psi_{x_0}^{p^s_0,p^u_0}$ and
$\Psi_x^{q^s(x),q^u(x)}$ are comparable, therefore we can apply the same proof 
of \cite[Thm 5.2]{Sarig-JAMS} to conclude that both $\Psi_{x_0}^{-1}\circ\Psi_{x}$ and
$\Psi_{x}^{-1}\circ\Psi_{x_0}$ can be written in the form $(-1)^\sigma v+\delta+\Delta(v)$ for $v\in R[10Q(x)]$,
where $\sigma\in\{0,1\}$ and $\Delta$ is a vector field such that $\Delta(0)=0$ and $\|d\Delta\|_{C^0}<\sqrt[3]{\ve}$
on $R[10Q(x)]$. The proof will be complete once we estimate $\|\delta\|$.

Assume that $(\Psi_{x_0}^{-1}\circ\Psi_{x})(v)=(-1)^\sigma v+\delta+\Delta(v)$ as above, and write
$p=p^s_0\wedge p^u_0$. By Lemma \ref{Lemma-admissible-manifolds}(1),
$x=\Psi_{x_0}(\eta)$ for some $\eta\in R[10^{-2}p]$. In particular $\|\eta\|\leq 10^{-2}\sqrt{2}p<50^{-1}p$.
Since $\Psi_x(0)=x$, taking $v=0$ we conclude that
$\eta=\delta$, hence $\|\delta\|<50^{-1}p$.
Similarly, if $\Psi_x^{-1}\circ\Psi_{x_0}=(-1)^\sigma v+\delta+\Delta(v)$ then $v=\eta$ gives
$0=(-1)^\sigma\eta+\delta+\Delta(\eta)$ and so
$$
\|\delta\|\leq \|\eta\|+\|\Delta(\eta)\|\leq (1+\|d\Delta\|_{C^0})\|\eta\|\leq (1+\sqrt[3]{\ve})10^{-2}\sqrt{2}p< 50^{-1}p.
$$

\section{A countable locally finite section}\label{Section-locally-finite-section}

Up to now, we have:
\begin{enumerate}[$\circ$]
\item Constructed a countable family $\mathfs A$ of $\ve$--double charts,
see Theorem \ref{Thm-coarse-graining}.
\item Letting $\Sigma$ be the TMS defined by $\mathfs A$ with the edge condition
defined in Section \ref{ss.pseudo.orbits}, we constructed a H\"older continuous map
$\pi:\Sigma\to \widehat\Lambda$ that ``captures'' all orbits in $\nuh^\#$, see
Propositions \ref{Prop-pi} and \ref{Prop-pi_R}. The map $\pi$ is defined as
$\{\pi(\un v)\}:=V^s[\un v]\cap V^u[\un v]$.
\item Although $\pi$ is not finite-to-one, we solved the inverse problem by analyzing 
when $\pi$ loses injectivity, see Theorem \ref{Thm-inverse}.
\end{enumerate}
We now use these information to construct a countable family $\mathfs Z$ of subsets of $\widehat\Lambda$
with the following properties:
\begin{enumerate}[$\circ$]
\item The union of elements of $\mathfs Z$, from now on also denoted by $\mathfs Z$,
is a section that contains $\Lambda\cap\nuh^\#$.
\item $\mathfs Z$ is {\em locally finite}: each point $x\in\mathfs Z$ belongs to at most finitely many
rectangles $Z\in\mathfs Z$.
\item Every element $Z\in \mathfs Z$ is a {\em rectangle}: each point $x\in Z$ has
{\em invariant fibres} $W^s(x,Z)$, $W^u(x,Z)$ in $Z$,
and these fibres induce a local product structure on $Z$.
\item $\mathfs Z$ satisfies a {\em symbolic Markov property}.

\end{enumerate} 
In this section, all statements assume that $0<\ve\ll \rho\ll 1$, so we will omit this information.

\subsection{The Markov cover $\mathfs Z$}
Let $\mathfs Z:=\{Z(v):v\in\mathfs A\}$, where
$$
Z(v):=\{\pi(\un v):\un v\in\Sigma^\#\text{ and }v_0=v\}.
$$
In other words, $\mathfs Z$ is the family of sets induced by $\pi$ under
the natural partition of $\Sigma^\#$ into
cylinders at the zeroth position. 
Using admissible manifolds, we define {\em invariant fibres} inside each $Z\in\mathfs Z$. Let $Z=Z(v)$.

\medskip
\noindent
{\sc $s$/$u$--fibres in $\mathfs Z$:} Given $x\in Z$, let $W^s(x,Z):=V^s[\{v_n\}_{n\geq 0}]\cap Z$
be the {\em $s$--fibre} of $x$ in $Z$ for some (any) $\un v=\{v_n\}_{n\in\Z}\in\Sigma^\#$
such that $\pi(\un v)=x$ and $v_0=v$. Similarly, let $W^u(x,Z):=V^u[\{v_n\}_{n\leq 0}]\cap Z$ be
the {\em $u$--fibre} of $x$ in $Z$.

\medskip
By Proposition \ref{Prop-disjointness}, the above definitions do not depend on the choice of $\un v$, 
and any two $s$--fibres ($u$--fibres) either coincide or are disjoint. We also
define $V^s(x,Z):=V^s[\{v_n\}_{n\geq 0}]$ and $V^u(x,Z):=V^u[\{v_n\}_{n\leq 0}]$.
We can make two distinctions between $V^{s/u}(x,Z)$ and $W^{s/u}(x,Z)$:
\begin{enumerate}[$\circ$]
\item $V^{s/u}(x,Z)$ are smooth curves, while $W^{s/u}(x,Z)$ are usually fractal sets.
\item $V^{s/u}(x,Z)$ are {\em not} subsets of $Z$, while $W^{s/u}(x,Z)$ are.
\end{enumerate}

\subsection{Fundamental properties of $\mathfs Z$}\label{subsec-fundpropZ}

\medskip
Although $\mathfs Z$ is usually a fractal set (and hence not a proper section),
we can still define its Poincar\'e return map. Indeed, if $x=\pi(\un v)\in\mathfs Z$ with
$\un v\in\Sigma^\#$ then $\vf^{r_n(\un v)}(x)=\pi[\sigma^n(\un v)]\in\mathfs Z$ for all $n\in\N$.
Define $r_{\mathfs Z}:\mathfs Z\to (0,\rho)$ by $r_{\mathfs Z}(x):=\min\{t>0:\vf^t(x)\in\mathfs Z\}$. 

\medskip
\noindent
{\sc The return map $H$:} It is the map $H:\mathfs Z\to\mathfs Z$ defined by $H(x):=\vf^{r_{\mathfs Z}}(x)$.

\medskip
Below we collect the main properties of $\mathfs Z$.

\begin{proposition}\label{Prop-Z}
The following are true.
\begin{enumerate}[{\rm (1)}]
\item {\sc Covering property:} $\mathfs Z$ is a cover of $\Lambda\cap\nuh^\#$.
\item {\sc Local finiteness:} For every $Z\in\mathfs Z$,
$$
\#\left\{Z'\in\mathfs Z:\left(\bigcup_{|n|\leq 1}H^n[Z]\right)\cap Z'\neq\emptyset\right\}<\infty.
$$
\item {\sc Local product structure:} For every $Z\in\mathfs Z$ and every $x,y\in Z$, the intersection
$W^s(x,Z)\cap W^u(y,Z)$ consists of a single point, and this point belongs to $Z$.
\item {\sc Symbolic Markov property:} If $x=\pi(\un v)\in\mathfs Z$ with
$\un v=\{v_n\}_{n\in\Z}=\{\Psi_{x_n}^{p^s_n,p^u_n}\}_{n\in\Z}\in\Sigma^\#$, then
\begin{align*}
&g_{x_0}^+(W^s(x,Z(v_0)))\subset W^s(g_{x_0}^+(x),Z(v_1))\text{ and }\\
&g_{x_1}^{-}(W^u(g_{x_0}^+(x),Z(v_1)))\subset W^u(x,Z(v_0)).
\end{align*}
\end{enumerate}
\end{proposition}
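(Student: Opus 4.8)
The plan is to prove the four properties of Proposition~\ref{Prop-Z} in order, leaning heavily on the analogous arguments of \cite[\S10--11]{Sarig-JAMS} and the specific tools established above for flows.

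\textbf{Covering property.} This is essentially immediate: by Theorem~\ref{Thm-coarse-graining}(2), every $x\in\Lambda\cap\nuh^\#$ is shadowed by a regular $\ve$--gpo $\un v\in\mathfs A^\Z$, i.e. $\un v\in\Sigma^\#$. By Proposition~\ref{Prop-shadowing}, the unique point shadowed by $\un v$ is $V^s[\un v]\cap V^u[\un v]=\pi(\un v)$, and since $\un v$ shadows $x$ we must have $x=\pi(\un v)$. Hence $x\in Z(v_0)\in\mathfs Z$.

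\textbf{Local finiteness.} The key point is that $\mathfs Z$ inherits discreteness from $\mathfs A$ via the inverse theorem. Suppose $x=\pi(\un v)\in Z$ with $v_0=\Psi_{x_0}^{p^s_0,p^u_0}$, and suppose $x\in Z'=Z(w)$ with $w=\Psi_{y_0}^{q^s_0,q^u_0}$, i.e. $x=\pi(\un w)$ with $w_0=w$, $\un w\in\Sigma^\#$. Applying Theorem~\ref{Thm-inverse}(4)--(5) to both $\un v$ and $\un w$ (with $n=0$), we get $\tfrac{Q(x_0)}{Q(x)}=e^{\pm\sqrt[3]\ve}$, $\tfrac{Q(y_0)}{Q(x)}=e^{\pm\sqrt[3]\ve}$, $\tfrac{p^{s/u}_0}{p^{s/u}(x)}=e^{\pm\sqrt[3]\ve}$, $\tfrac{q^{s/u}_0}{p^{s/u}(x)}=e^{\pm\sqrt[3]\ve}$, so that $p^s_0\wedge p^u_0$ and $q^s_0\wedge q^u_0$ are comparable to $q(x)$ up to a bounded multiplicative error, hence bounded below by a fixed positive quantity depending only on $x$. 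Moreover, by Theorem~\ref{Thm-inverse}(1), $d(x,y_0)<50^{-1}(q^s_0\wedge q^u_0)$, and by Theorem~\ref{Thm-inverse}(6) the change of coordinates $\Psi_{x_0}^{-1}\circ\Psi_{y_0}$ is close to $\pm\mathrm{Id}$, so $\|\widetilde{C(x_0)}-\widetilde{C(y_0)}\|$ is also controlled. Thus $w$ ranges over $\ve$--double charts whose parameters are $\geq t:=t(x)>0$ and whose base points lie in a fixed small ball; by Theorem~\ref{Thm-coarse-graining}(1) (discreteness) there are only finitely many such $w\in\mathfs A$. For the full statement one applies this to $H[Z]$ and $H^{-1}[Z]$ as well, using that $r_{\mathfs Z}<\rho$ and the uniform bounds above to see that the relevant parameters stay comparable; this follows the structure of \cite[Prop.~10.5]{Sarig-JAMS}. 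I expect this to be the main obstacle, since one must carefully track that the $H$--images do not shrink the parameters uncontrollably — but this is exactly what Lemma~\ref{Lemma-minimum} and Proposition~\ref{Prop-Z-par}(2) give us.

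\textbf{Local product structure.} Fix $Z=Z(v)$ and $x,y\in Z$. Choose $\un v,\un w\in\Sigma^\#$ with $\pi(\un v)=x$, $\pi(\un w)=y$, $v_0=w_0=v$. Then $V^s(x,Z)=V^s[\{v_n\}_{n\geq0}]$ and $V^u(y,Z)=V^u[\{w_n\}_{n\leq0}]$ are respectively an $s$--admissible and a $u$--admissible manifold at $v=\Psi_x^{p^s,p^u}$ (Theorem~\ref{Thm-stable-manifolds}(1)), so by Lemma~\ref{Lemma-admissible-manifolds}(1) they intersect at a single point $z\in\Psi_x(R[10^{-2}(p^s\wedge p^u)])$. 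It remains to see $z\in Z$: consider the $\ve$--gpo $\un u$ defined by $u_n=v_n$ for $n\geq0$ and $u_n=w_n$ for $n\leq0$ (this is an $\ve$--gpo since $u_{-1}=w_{-1}\overset{\ve}\to w_0=v=v_0=u_0$, and regular since both tails repeat), and note $\pi(\un u)=V^s[\{u_n\}_{n\geq0}]\cap V^u[\{u_n\}_{n\leq0}]=V^s[\{v_n\}_{n\geq0}]\cap V^u[\{w_n\}_{n\leq0}]=z$. Since $u_0=v$, we get $z\in Z(v)=Z$. Then $W^s(x,Z)\cap W^u(y,Z)=(V^s(x,Z)\cap Z)\cap(V^u(y,Z)\cap Z)=\{z\}$ as required; uniqueness of the $s$/$u$--fibre through $z$ follows from Proposition~\ref{Prop-disjointness}.

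\textbf{Symbolic Markov property.} Let $x=\pi(\un v)$ with $\un v=\{\Psi_{x_n}^{p^s_n,p^u_n}\}_{n\in\Z}\in\Sigma^\#$. Then $g_{x_0}^+(x)=\pi(\sigma\un v)$ (by definition of $\pi$ and the shadowing property, since $g_{x_0}^+[\pi(\un v)]=\pi[\sigma(\un v)]$ as noted before Proposition~\ref{Prop-pi_R}), and $\sigma\un v\in\Sigma^\#$ with $(\sigma\un v)_0=v_1$. For any $y\in W^s(x,Z(v_0))$, write $y=\pi(\un w)$ with $\un w\in\Sigma^\#$, $w_0=v_0$; by Proposition~\ref{Prop-disjointness}, $y\in V^s[\{w_n\}_{n\geq0}]$ coincides with being in $V^s[\{v_n\}_{n\geq0}]$ since these $s$--admissible manifolds at $v_0$ share the point $y$, hence are equal or nested — in fact, since $W^s$ is well-defined independently of the choice of $\un v$, we may take $\un w=\un v$ on the nonnegative indices. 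By Theorem~\ref{Thm-stable-manifolds}(2), $g_{x_0}^+(V^s[\{v_n\}_{n\geq0}])\subset V^s[\{v_n\}_{n\geq1}]$, so $g_{x_0}^+(y)\in V^s[\{v_n\}_{n\geq1}]$. Also $g_{x_0}^+(y)\in\mathfs Z$: indeed $g_{x_0}^+(y)=\vf^{r(\un w')}(y)$ for an appropriate $\ve$--gpo $\un w'\in\Sigma^\#$ built by concatenating a past of $y$ with $\{v_n\}_{n\geq1}$ shifted, so $g_{x_0}^+(y)=\pi(\sigma\un w')\in Z(v_1)$. Therefore $g_{x_0}^+(y)\in V^s[\{v_n\}_{n\geq1}]\cap Z(v_1)=W^s(g_{x_0}^+(x),Z(v_1))$, which is the first inclusion. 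The second inclusion, for $u$--fibres, is proved symmetrically using $g_{x_1}^-=(g_{x_0}^+)^{-1}$ and Theorem~\ref{Thm-stable-manifolds}(2) for unstable manifolds of negative $\ve$--gpo's. This mirrors \cite[Prop.~10.9]{Sarig-JAMS}.
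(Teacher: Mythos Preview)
Your proof is correct and follows essentially the same approach as the paper: parts (1), (3), (4) match the paper's arguments (which for (3) and (4) simply cite \cite[Prop.~10.5 and 10.9]{Sarig-JAMS}), and your part (2) uses the same mechanism of comparing $p^s\wedge p^u$ and $q^s\wedge q^u$ via the inverse theorem and then invoking discreteness. One small remark on (2): the paper streamlines the $H^{\pm 1}$ case by passing through $q(x)$---using Proposition~\ref{Prop-Z-par}(1) to get $\tfrac{p^s\wedge p^u}{q(x)}=e^{\pm(\sqrt[3]\ve+\mathfrak H)}$ and Lemma~\ref{Lemma-q} to get $\tfrac{q(x)}{q(H^n(x))}=e^{\pm 2\ve}$---which yields a bound depending only on $Z$ (not on the particular $x\in Z$) with no extra work; your sketch reaches the same conclusion but the phrasing ``depending only on $x$'' should read ``depending only on $Z$''.
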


Before proceeding to the proof, we use part (3) to give the following definition: 
for $x,y\in Z$, let $[x,y]_Z:=$ intersection point of $W^s(x,Z)$ and $W^u(y,Z)$, and
call it the {\em Smale bracket} of $x,y$ in $Z$.

\begin{proof}
We have $\mathfs Z=\pi[\Sigma^\#]$. Since $\pi[\Sigma^\#]\supset\Lambda\cap\nuh^\#$
by Proposition \ref{Prop-pi}(3), it follows that $\mathfs Z$ contains $\Lambda\cap\nuh^\#$.
This proves (1).

\medskip
\noindent
(2) Write $Z=Z[\Psi_x^{p^s,p^u}]$, and take $Z'=Z[\Psi_y^{q^s,q^u}]$ such that
$$\left(\bigcap_{|n|\leq 1}H^n[Z]\right)\cap Z'\neq\emptyset.$$ We will estimate the ratio
$\tfrac{p^s\wedge p^u}{q^s\wedge q^u}$. By assumption, there is $x\in Z$ such that $x'=H^n(x)\in Z'$
for some $|n|\leq 1$. Let $\un v\in\Sigma^\#$ with $v_0=\Psi_x^{p^s,p^u}$ such that $x=\pi(\un v)$.
Recalling that $p^{s/u}(x)=p^{s/u}(x,\mathcal T,0)$ for $\mathcal T=\{R_n(\un v)\}_{n\in\Z}$,
the following holds:
\begin{enumerate}[$\circ$]
\item $x\in Z$, hence by Theorem \ref{Thm-inverse}(5) we have
$\tfrac{p^s}{p^s(x)}=e^{\pm\sqrt[3]{\ve}}\text{ and }\tfrac{p^u}{p^u(x)}=e^{\pm\sqrt[3]{\ve}}$,
and so $\tfrac{p^s\wedge p^u}{p^s(x)\wedge p^u(x)}=e^{\pm\sqrt[3]{\ve}}$.
By Proposition \ref{Prop-Z-par}(1), we have $\tfrac{p^s(x)\wedge p^u(x)}{q(x)}=e^{\pm\mathfrak H}$.
The conclusion is that $\tfrac{p^s\wedge p^u}{q(x)}=e^{\pm(\sqrt[3]{\ve}+\mathfrak H)}$.
\item $x'\in Z'$, hence by the same reason $\tfrac{q^s\wedge q^u}{q(x')}=e^{\pm(\sqrt[3]{\ve}+\mathfrak H)}$.
\item $x'=\vf^t(x)$ with $|t|\leq 2\rho$, hence by Lemma \ref{Lemma-q} we have
$\tfrac{q(x)}{q(x')}=e^{\pm 2\ve}$.
\end{enumerate}
Altogether, we conclude that $\tfrac{p^s\wedge p^u}{q^s\wedge q^u}=e^{\pm2(\sqrt[3]{\ve}+\ve+\mathfrak H)}$,
and so
$$
\left\{Z'\in\mathfs Z:\left(\bigcap_{|n|\leq 1}H^n[Z]\right)\cap Z'\neq\emptyset\right\}\subset\{\Psi_y^{q^s,q^u}\in\mathfs A:(q^s\wedge q^u)\geq e^{-2(\sqrt[3]{\ve}+\ve+\mathfrak H)}(p^s\wedge p^u)\}.
$$
By Theorem \ref{Thm-coarse-graining}(1), this latter set is finite.

\medskip
\noindent
(3) We proceed as in \cite[Prop. 10.5]{Sarig-JAMS}.
Let $Z=Z(v)$, and take
$x,y\in Z$, say $x=\pi(\un v),y=\pi(\un w)$ with $\un v,\un w\in\Sigma^\#$, where
$\un v=\{v_n\}_{n\in\Z}=\{\Psi_{x_n}^{p^s_n,p^u_n}\}_{n\in\Z}$ and
$\un w=\{w_n\}_{n\in\Z}=\{\Psi_{y_n}^{q^s_n,q^u_n}\}_{n\in\Z}$ with $v_0=w_0=v$.
We let $z=\pi(\un u)$ where $\un u=\{u_n\}_{n\in\Z}$ is defined by
$$
u_n=\left\{\begin{array}{ll}v_n&,n\geq 0\\ w_n&,n\leq 0.\end{array}\right.
$$
We claim that $\{z\}=W^s(x,Z)\cap W^u(y,Z)$. To prove this, first remember that
$V^s[\{u_n\}_{n\geq 0}]\cap V^u[\{u_n\}_{n\geq 0}]$ intersects at a single point
(Lemma \ref{Lemma-admissible-manifolds}(1)), and that $z$ belongs to such intersection.
Therefore, it is enough to show that $z\in\pi[\Sigma^\#]$, which is clear since
$\un u\in\Sigma^\#$.

\medskip
\noindent
(4) Proceed exactly as in \cite[Prop. 10.9]{Sarig-JAMS}.
\end{proof}

Let $Z=Z(v), Z'=Z(w)$ where $v=\Psi_x^{p^s,p^u},w=\Psi_y^{q^s,q^u}\in\mathfs A$, and assume
that $Z\cap \vf^{[-2\rho,2\rho]}Z'\neq\emptyset$. Let $D,D'$ be the 
connected components of $\widehat\Lambda$ such that $Z\subset D$ and $Z'\subset D'$. We wish to 
compare $s$--fibres of $Z$ with $u$--fibres of $Z'$ and vice-versa. 
To do that, we apply the holonomy maps $\mathfrak q_D$ and $\mathfrak q_{D'}$. Given $z\in Z,z'\in Z'$, define
\begin{align*}
\{[z,z']_Z\}&:=V^s(z,Z)\cap \mathfrak q_D[V^u(z',Z')]\\
\{[z,z']_{Z'}\}&:=\mathfrak q_{D'}[V^s(z,Z)]\cap V^u(z',Z').
\end{align*}
The next proposition proves that $[z,z']_Z$ and $[z,z']_{Z'}$ consist of single points, and some
compatibility properties that will be used in the next section. 

\begin{proposition}\label{Prop-overlapping-charts}
Let $Z=Z(v), Z'=Z(w)$ where $v=\Psi_x^{p^s,p^u},w=\Psi_y^{q^s,q^u}\in\mathfs A$, and assume
that $Z\cap \vf^{[-2\rho,2\rho]}Z'\neq\emptyset$. Let $D,D'$ be the 
connected components of $\widehat\Lambda$ such that $Z\subset D$ and $Z'\subset D'$.
The following are true.
\begin{enumerate}[{\rm (1)}]
\item $\mathfrak q_{D'}\circ \Psi_x(R[\tfrac{1}{2}(p^s\wedge p^u)])\subset \Psi_y(R[q^s\wedge q^u])$.
\item If $z\in Z$ with $z'=\mathfrak q_{D'}(z)\in Z'$, then $\mathfrak q_{D'}[W^{s/u}(z,Z)]\subset V^{s/u}(z',Z')$.
\item If $z\in Z,z'\in Z'$ then $[z,z']_Z$, $[z,z']_{Z'}$ are points with
$[z,z']_Z=\mathfrak q_{D}([z,z']_{Z'})$.
\end{enumerate}
\end{proposition}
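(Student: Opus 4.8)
The plan is to reduce everything to a statement about the holonomy maps $\mathfrak q_{D'}$ (and $\mathfrak q_{D}$) restricted to a small neighborhood, which are honest $2$--bi-Lipschitz $C^{1+\beta}$ diffeomorphisms by Lemma \ref{Lemma-regularity-q-t} and Lemma \ref{Lemma-map-g}. First I would observe that the hypothesis $Z\cap\vf^{[-2\rho,2\rho]}Z'\neq\emptyset$ together with the inclusion $Z\subset D$, $Z'\subset D'$, and $Z,Z'\subset\widehat\Lambda$ forces $D'$ to intersect the flow box $\vf^{[-4\rho_0,4\rho_0]}D$, so $\mathfrak q_{D'}$ is defined on a neighborhood of $Z$ and agrees, where both make sense, with the transition holonomy; moreover, by Theorem \ref{Thm-inverse} the parameters $p^s\wedge p^u$ and $q^s\wedge q^u$ of $Z$ and $Z'$ are comparable up to a factor $e^{\pm 2(\sqrt[3]\ve+\ve+\mathfrak H)}$ (the computation is exactly that of Proposition \ref{Prop-Z}(2)). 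This comparability is what makes (1) provable.

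For part (1), I would use that $\mathfrak q_{D'}\restriction_{B_x}$ has derivative at $x$ equal to $\Phi^t\restriction_{N_x}$ for the small transition time $|t|\leq 2\rho$ (Lemma \ref{Lemma-map-g}), hence in Pesin charts $\Psi_y^{-1}\circ\mathfrak q_{D'}\circ\Psi_x$ is, by Theorem \ref{Thm-non-linear-Pesin-2} applied to the relevant edge together with the change-of-coordinates estimate of Proposition \ref{Lemma-overlap}(4), a small $C^1$--perturbation of a hyperbolic linear map whose linear part has co-norm $>e^{-4\rho}$. A direct size estimate then shows the image of $R[\tfrac12(p^s\wedge p^u)]$ lands inside $R[q^s\wedge q^u]$, using $p^s\wedge p^u\leq\ve Q(x)$, the parameter comparability above, and $\ve\ll\rho$. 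For part (2), the point is that $s$-- and $u$--admissible manifolds are defined via the maps $f_{x,y}^\pm$, i.e. via the $g^\pm$ holonomies in Pesin charts; since $\mathfrak q_{D'}$ is precisely the coordinate change relating $\Psi_x$ to $\Psi_y$ up to the flow displacement, the graph-transform characterization of $V^{s/u}$ in Theorem \ref{Thm-stable-manifolds}(1) shows $\mathfrak q_{D'}$ carries the smooth curve $W^{s/u}(z,Z)$ (which sits inside $V^{s/u}(z,Z)$, itself living in $\Psi_x(R[10Q(x)])$) into $V^{s/u}(z',Z')$; here one invokes Proposition \ref{Prop-center-stable} to pass between the flow picture and the section picture, just as in the proof of Claim 2 of Proposition \ref{Prop-disjointness}. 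Part (3) is then a transversality statement: $[z,z']_Z$ is the intersection of the smooth curve $V^s(z,Z)$ with $\mathfrak q_D[V^u(z',Z')]$, and by part (2) applied with roles of $D,D'$ interchanged this second curve is a $u$--admissible-type curve near $z$; Lemma \ref{Lemma-admissible-manifolds}(1) (transversal intersection of an $s$-- and a $u$--admissible manifold at a double chart) gives a unique intersection point, and because $\mathfrak q_{D'}$ is a bijection on the relevant neighborhood with $\mathfrak q_{D}$ its local inverse, applying $\mathfrak q_{D'}$ to $[z,z']_Z$ and using $\mathfrak q_{D'}\circ V^s(z,Z)\subset V^s(z',Z')$ and $\mathfrak q_{D'}\circ\mathfrak q_D=\mathrm{Id}$ on $V^u(z',Z')$ identifies it with $[z,z']_{Z'}$.

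The main obstacle I anticipate is the bookkeeping in part (1): one must carefully track that the transition time $t$ with $z'=\vf^t(z)$ is genuinely $\leq 2\rho$ (so that the estimates $\|\Phi^t\|=e^{\pm4\rho}$ of \eqref{definition-d} and Lemma \ref{Lemma-map-g} apply) and that the perturbative $C^1$--error coming from Proposition \ref{Lemma-overlap}(4) and Theorem \ref{Thm-non-linear-Pesin-2}(2) is dominated by the gap between $\tfrac12(p^s\wedge p^u)$ and $q^s\wedge q^u$ — this requires the parameter-comparability to beat the constants $e^{\pm 2(\sqrt[3]\ve+\ve+\mathfrak H)}$, which is where the hierarchy $\ve\ll\rho\ll1$ and the exponent $3/\beta$ in the definition of $Q$ get used. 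A secondary subtlety is making sure the curves $\mathfrak q_D[V^u(z',Z')]$ and $\mathfrak q_{D'}[V^s(z,Z)]$ are defined on all of the relevant domain $R[\mathfrak r]$ and not merely on a sub-window; this is handled by part (1) applied symmetrically, so the logical order is: prove (1) for both orderings of $(D,D')$, then (2), then (3).
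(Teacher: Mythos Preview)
Your overall architecture is right, but there is a genuine gap in part (1). You invoke Theorem \ref{Thm-non-linear-Pesin-2} ``applied to the relevant edge'' and Proposition \ref{Lemma-overlap}(4), but the hypothesis $Z\cap\vf^{[-2\rho,2\rho]}Z'\neq\emptyset$ does \emph{not} give you an edge $v\overset{\ve}{\to}w$ nor an $\ve$--overlap between $\Psi_x$ and $\Psi_y$: there is no a priori control on $d(x,y)+\|\widetilde{C(x)}-\widetilde{C(y)}\|$ of the form $(\eta_1\eta_2)^4$, so neither result applies directly to the pair $(\Psi_x,\Psi_y)$. The map $\Upsilon=\Psi_y^{-1}\circ\mathfrak q_{D'}\circ\Psi_x$ is not $f_{x,y}^+$ for some edge, and its linear part is not a hyperbolic matrix of the Theorem \ref{Thm-non-linear-Pesin-2} type; it is rather a small perturbation of $\pm\mathrm{Id}$.

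The paper's fix is to factor through the actual witness point: pick $z\in Z$ with $z'=\vf^t(z)\in Z'$, $|t|\leq 2\rho$, and write
\[
\Upsilon=(\Psi_y^{-1}\circ\Psi_{z'})\circ(\Psi_{z'}^{-1}\circ\mathfrak q_{D'}\circ\Psi_z)\circ(\Psi_z^{-1}\circ\Psi_x).
\]
The outer factors are controlled by Theorem \ref{Thm-inverse}(6) (since $z\in Z(v)$ and $z'\in Z(w)$), each of the form $\pm\mathrm{Id}+\delta+\Delta$ with $\|\delta\|<50^{-1}p$ (resp.\ $50^{-1}q$) and $\|d\Delta\|_{C^0}<\sqrt[3]{\ve}$. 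The middle factor is handled by Remark \ref{rmk-holonomy}: Theorem \ref{Thm-non-linear-Pesin} applies to \emph{any} holonomy $\mathfrak q_{D'}$ with $z'=\mathfrak q_{D'}(z)$, giving a diagonal linear part with entries $e^{\pm 8\rho}$ plus a small remainder. Composing yields $\Upsilon=\pm(\mathrm{Id}+\Delta)$ with $\|\Upsilon(0)\|\leq\tfrac{3}{25}[1+O(\ve^{1/3})+O(\rho)]q$ and $\|d\Delta\|_{C^0}\leq e^{8\rho}-1+O(\ve^{1/3})$; then the inclusion $\Upsilon(R[\tfrac12 p])\subset R[q]$ follows from $\tfrac{3}{25}+\tfrac{1}{\sqrt 2}<1$. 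Your parameter comparison $p/q=e^{\pm[O(\sqrt[3]\ve)+O(\rho)]}$ is correct and is used here, but it is obtained via Theorem \ref{Thm-inverse}(5), Proposition \ref{Prop-Z-par}(1) and Lemma \ref{Lemma-q} applied at $z$ and $z'$, not from any edge condition.

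For part (2), your idea is in the right direction but the mechanism is not Proposition \ref{Prop-center-stable}: the paper uses the characterization in Theorem \ref{Thm-stable-manifolds}(1) and the holonomy compatibility $G^n_{\un w}\circ\mathfrak q_{D'}=\mathfrak q_{D_k}\circ G^m_{\un v}$ (both are flow-time functions agreeing at $z$), then Proposition \ref{Prop-Z}(4) and part (1) applied at level $m$. Part (3) you have essentially right.
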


When $M$ is compact and $f$ is a diffeomorphism, this is \cite[Lemmas 10.8 and 10.10]{Sarig-JAMS}.
A very similar method of proof works in our case: Theorem \ref{Thm-non-linear-Pesin} also works when
we change $g_x^+$ to $\mathfrak q_{D'}$, so we can control the composition
$\Psi_y^{-1}\circ\mathfrak q_{D'}\circ\Psi_x$. The details are in Appendix \ref{Appendix-proofs}.
We will also need more information regarding the Smale product of nearby charts.

\begin{proposition}\label{Prop-overlapping-charts-2}
Let $Z, Z',Z''$ such that $Z\cap \vf^{[-2\rho,2\rho]}Z'\neq\emptyset$, $Z\cap \vf^{[-2\rho,2\rho]}Z''\neq\emptyset$,
and let $D$ be the connected components of $\widehat\Lambda$ such that $Z\subset D$.
Assume that $z'\in Z'$ such that $\vf^t(z')\in Z''$ for some $|t|\leq 2\rho$. For every $z\in Z$, it holds
$$
[z,z']_Z=[z,\vf^t(z')]_Z.
$$
\end{proposition}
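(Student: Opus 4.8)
The statement asserts that the Smale bracket $[z,z']_Z$ of a point $z\in Z$ with a point $z'\in Z'$ is unchanged if we replace $z'$ by $\vf^t(z')\in Z''$ (for $|t|\le 2\rho$). Recall from the definition just before Proposition~\ref{Prop-overlapping-charts} that $[z,z']_Z$ is the (unique) intersection point of the smooth curve $V^s(z,Z)$ with $\mathfrak q_D[V^u(z',Z')]$, and likewise $[z,\vf^t(z')]_Z=V^s(z,Z)\cap \mathfrak q_D[V^u(\vf^t(z'),Z'')]$. Since both are single points lying on the common curve $V^s(z,Z)$, it suffices to prove that
$$
\mathfrak q_D[V^u(z',Z')]\cap V^s(z,Z)=\mathfrak q_D[V^u(\vf^t(z'),Z'')]\cap V^s(z,Z),
$$
i.e. that the two candidate intersection points coincide. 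The plan is to show that $\mathfrak q_D$ carries $V^u(z',Z')$ and $V^u(\vf^t(z'),Z'')$ to curves that, although not equal, agree near their intersection with $V^s(z,Z)$ — more precisely, that both contain the point $[z,z']_Z$ and that a transversality/uniqueness argument then forces equality of the two brackets.

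First I would observe that $z'$ and $\vf^t(z')$ lie on the same $\vf$-orbit, and both lie (resp.) in $Z'\subset D'$ and $Z''\subset D''$; since $\mathfrak q_{D'}(z')=z'$ and $\mathfrak q_{D''}(\vf^t(z'))=\vf^t(z')$ up to the flow-box identifications, the two discs $D'$ and $D''$ are related by a holonomy along the flow of displacement $\le 2\rho$. The key point is that the curve $V^u(\vf^t(z'),Z'')$ is the image under the holonomy map $\mathfrak q_{D''}$ of (a piece of) the curve $V^u(z',Z')$ lifted along the flow: indeed $V^u(\cdot,Z)$ is built from unstable admissible manifolds, which by Proposition~\ref{Prop-center-stable} (and the remarks following it) admit flow lifts $\widetilde V^u$ that are genuine pieces of flow-invariant-type curves, and $\mathfrak q_{D''}$ applied to the flowed lift of $\widetilde V^u(z',Z')$ reproduces $V^u(\vf^t(z'),Z'')$. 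Concretely I would write $\widetilde V^u$ for the flow lift of $V^u(z',Z')$ through $z'$; then $\vf^t(\widetilde V^u)$ is a smooth curve through $\vf^t(z')$, and $\mathfrak q_{D''}[\vf^t(\widetilde V^u)]=V^u(\vf^t(z'),Z'')$, while $\mathfrak q_{D'}[\widetilde V^u]=V^u(z',Z')$. Composing holonomies, $\mathfrak q_D[V^u(\vf^t(z'),Z'')]$ and $\mathfrak q_D[V^u(z',Z')]$ are the images under $\mathfrak q_D$ of two different holonomy-projections of the \emph{same} flow-curve $\widetilde V^u$; but the composition $\mathfrak q_D\circ\mathfrak q_{D''}\circ\vf^t$ and $\mathfrak q_D\circ\mathfrak q_{D'}$, when restricted to $\widetilde V^u$, are both equal to the holonomy from $\widetilde V^u$ onto $D$ along the flow (here I would invoke Lemma~\ref{Lemma-flow-Phi}(2) and the cocycle property of $\Phi$, i.e. that holonomy maps along the flow compose, using that all relevant displacements are $\le 2\rho<4\rho_0$ so the flow boxes are well-defined diffeomorphisms by the discussion in Section~\ref{section-discs}). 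Hence $\mathfrak q_D[V^u(z',Z')]$ and $\mathfrak q_D[V^u(\vf^t(z'),Z'')]$ are literally the same curve, and the two brackets coincide.

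The technical care needed is twofold. First, one must check that all the holonomy maps in sight are defined on the relevant domains: $V^u(z',Z')$ and its flow lift stay inside balls of radius $\ll \mathfrak r$ around points of $\Lambda$ (this follows since admissible manifolds at $v=\Psi_y^{q^s,q^u}$ are contained in $\Psi_y(R[q^s])\subset \Psi_y(R[\ve Q(y)])$, which is tiny), and the displacement $|t|\le 2\rho$ keeps everything inside a single chart $\Theta$ as in the proof of Lemma~\ref{Lemma-regularity-q-t}, so the composition-of-holonomies identity $\mathfrak q_D\circ\mathfrak q_{D''}\circ\vf^t=\mathfrak q_D\circ\mathfrak q_{D'}$ on $\widetilde V^u$ is justified. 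Second — and this is the step I expect to be the main obstacle — one must make sure that $V^u(\vf^t(z'),Z'')$ really is $\mathfrak q_{D''}[\vf^t(\widetilde V^u)]$ \emph{as a curve}, not merely up to reparametrization or up to a bounded perturbation; this requires matching the definition of $V^u(\cdot,Z'')$ (as a graph transform limit of admissible manifolds over $\Psi_{x_n''}$) with the flow lift, and I would handle it exactly as in the proof of Claim~2 of Proposition~\ref{Prop-disjointness}, where $G_n(U^s)=\mathfrak q_D[\vf^{t_n}(\widetilde U^s)]$ was established. Granting that identification, the rest is the holonomy bookkeeping above. I would then also remark that $[z,\vf^t(z')]_Z$ is a single point by Proposition~\ref{Prop-overlapping-charts}(3) applied to the pair $(Z,Z'')$, so that the equality of curves immediately gives the equality of bracket points.
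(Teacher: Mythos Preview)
Your strategy coincides with the paper's: show that $\mathfrak q_{D''}$ carries (a piece of) $V^u(z',Z')$ into $V^u(z'',Z'')$, then use the holonomy composition $\mathfrak q_D\circ\mathfrak q_{D''}=\mathfrak q_D$ to conclude. The identification step you flag as the main obstacle is indeed handled exactly by the method of Claim~2 of Proposition~\ref{Prop-disjointness}, and that is precisely the paper's Claim~2. The detour through the flow lift $\widetilde V^u$ is unnecessary---the paper works directly with $\mathfrak q_{D''}$ on $V^u(z',Z')$ and invokes the graph-transform characterization of $V^u$ from Theorem~\ref{Thm-stable-manifolds}(1)---but it is not wrong.

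There is, however, a genuine gap. Your claim that the two projected curves are ``literally the same'' is too strong (and contradicts your own plan, where you only ask that both contain $[z,z']_Z$). The curves $V^u(z',Z')$ and $V^u(z'',Z'')$ are graphs over intervals whose lengths are dictated by the $q^u$ parameters of the double charts defining $Z'$ and $Z''$; there is no reason for these to match after projection to $D$. What the Prop.~\ref{Prop-disjointness} argument yields is only an inclusion
\[
\mathfrak q_{D''}\bigl[V^u(z',Z')\cap\Psi_y(R[\tfrac12 q])\bigr]\subset V^u(z'',Z''),
\]
valid on the \emph{inner half} of the chart. To close the argument you must verify that the bracket point $[z,z']_Z$ actually arises from this inner piece---otherwise the inclusion of curves does not force equality of brackets. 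This quantitative step is the paper's Claim~1: writing $\Psi_x^{-1}\circ\mathfrak q_D\circ\Psi_y$ as a perturbation of $\pm\mathrm{Id}$ (with the bounds obtained in the proof of Prop.~\ref{Prop-overlapping-charts}), one represents $\mathfrak q_D[V^u(z',Z')\cap\Psi_y(R[\tfrac12 q])]$ as a graph $\Psi_x\{(H(\tau),\tau):|\tau|\le\tfrac13 p\}$ with $|H(0)|<\tfrac{4}{25}p$ and $\|H'\|_{C^0}<\tfrac12$, and then checks that the composition $H\circ G$ (with $G$ the representing function of $V^s(z,Z)$) contracts $[-\tfrac13 p,\tfrac13 p]$ into itself, so the unique fixed point lies there. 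Your ``technical care'' paragraph addresses only whether the holonomies are defined, not this localization of the bracket; without it the proof is incomplete.
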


Note that $[z,z']_Z$ is defined by $Z,Z'$ while $[z,\vf^t(z')]_Z$ is defined by 
$Z,Z''$. The equality shows a compatibility of the Smale product along small flow displacements. 
It holds because such displacements barely change the sizes of invariant
fibres, hence the unique intersection is preserved.

\section{A refinement procedure}\label{Section-refinement}

Up to now, we have constructed a countable family $\mathfs Z$ of subsets of $\widehat\Lambda$
with the following properties:
\begin{enumerate}[$\circ$]
\item The union of elements of $\mathfs Z$, from now on also denoted by $\mathfs Z$,
is a section that contains $\Lambda\cap\nuh^\#$.
\item $\mathfs Z$ is locally finite: each point $x\in\mathfs Z$ belongs to at most finitely many
rectangles $Z\in\mathfs Z$.
\item Every element $Z\in \mathfs Z$ is a rectangle: each point $x\in Z$ has
invariant fibres $W^s(x,Z)$, $W^u(x,Z)$ in $Z$,
and these fibres induce a local product structure on $Z$.
\item $\mathfs Z$ satisfies a {\em symbolic} Markov property.
\end{enumerate}
In this section, we will refine $\mathfs Z$ to generate a countable family of disjoint sets
$\mathfs R$ that satisfy a {\em geometrical} Markov property.
We stress the difference from a symbolic to a geometrical Markov property:
by Proposition \ref{Prop-Z}(4), $g_{x_0}^\pm$ satisfy a symbolic Markov property;
our goal is to obtain a Markov property for the first return map $H$.
In general the orbit of $x$ can intersect $\mathfs Z$
between $x$ and $g_{x_0}^+(x)$, in which case we will have that $g_{x_0}^+(x)\neq H(x)$.
Therefore the symbolic Markov property
of Proposition \ref{Prop-Z}(4) does not directly translate into a geometrical Markov
property for $H$. To accomplish this latter property, we will use a refinement procedure
developed by Bowen \cite{Bowen-Symbolic-Flows}, motivated by the work of
Sina{\u\i} \cite{Sinai-Construction-of-MP,Sinai-MP-U-diffeomorphisms}. The difference
from our setup to Bowen's is that, while in Bowen's case all families are finite, in ours
it is usually countable. Fortunately, as implemented in \cite{Sarig-JAMS}, the refinement procedure
works well for countable covers with the local finiteness property, which we have by
Proposition \ref{Prop-Z}(2).

\subsection{The partition $\mathfs R$}
We first see that the map $g_{x_0}^+$ can be deduced from $H$ by a bounded time change.
\begin{lemma}\label{l.time}
There exists $N\geq 1$ such that for any $x=\pi(\un v)\in\mathfs Z$ there exists $0< n < N$
such that $g_{x_0}^+(x)=H^n(x)$.
\end{lemma}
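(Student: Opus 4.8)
The statement asserts a uniform bound $N$ on the number of returns of $x$ to the cover $\mathfs Z$ that occur strictly before the holonomy image $g_{x_0}^+(x)$, i.e.\ before the first flow time at which we pass to the next symbol. The plan is to locate all the intermediate hits of $\mathfs Z$ inside a time window of length at most $\rho$ (by Remark~\ref{rmk-time}, the transition time $T(v_0,v_1)$ is at most $\rho$, and $g_{x_0}^+(x)=\vf^{T^+(x)}(x)$ with $T^+(x)$ comparable to $r_\Lambda(x_0)\leq\rho/2$, so in any case the relevant flow interval has length $\le\rho<1$), and then bound the number of points of $\mathfs Z$ that can lie on a flow segment of length $\le\rho$ starting at a point of $\mathfs Z$. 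First I would make precise that $g_{x_0}^+(x)=\vf^{s}(x)$ for some $0<s\le\rho$ and that all iterates $H^k(x)$ with flow time in $(0,s)$ are of the form $\vf^{t}(x)$ with $t\in(0,\rho)$; this reduces the statement to: there is $N$ such that for every $x\in\mathfs Z$ the set $\{t\in(0,\rho]:\vf^t(x)\in\mathfs Z\}$ has at most $N$ elements, and $g_{x_0}^+(x)$ is among those times. Since $r_{\mathfs Z}>0$ and in fact each return spends flow time at least some definite amount, a crude bound would suffice, but the subtlety is that $\mathfs Z$ is only a countable (not finite) cover, so ``definite minimal return time'' is not immediate from compactness of $\mathfs Z$ alone.

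The key input is local finiteness, Proposition~\ref{Prop-Z}(2), together with the fact that each $Z=Z(v)\in\mathfs Z$ is contained in a disc $D_i$ of the proper section $\widehat\Lambda$, and that $\widehat\Lambda=\bigsqcup D_i$ is a \emph{proper} section: its partial order axiom forces that for $i\ne j$, at least one of $\overline{D_i}\cap\vf^{[0,4\rho]}\overline{D_j}$ or $\overline{D_j}\cap\vf^{[0,4\rho]}\overline{D_i}$ is empty. Thus a flow orbit of length $\le\rho$ can meet the \emph{discs} of $\widehat\Lambda$ at most $\mathfrak n$ times (once per disc, essentially by the partial order), hence a fortiori it meets $\mathfs Z$ at most $\mathfrak n$ times --- but one has to be careful, because many $Z\in\mathfs Z$ live in the same disc $D_i$. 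So the cleaner route is: fix $Z$ with $x\in Z$; any later hit $\vf^t(x)\in Z'$ with $t\in(0,\rho]$ has $Z'\cap\vf^{[-\rho,\rho]}Z\ne\emptyset$, in particular (with a little room, $\rho<2\rho$) $Z'\cap\vf^{[-2\rho,2\rho]}Z\ne\emptyset$. Now I invoke local finiteness in the form of Proposition~\ref{Prop-Z}(2), or rather a direct consequence of the proof of Proposition~\ref{Prop-Z}(2): the set of $Z'\in\mathfs Z$ with $Z'\cap\vf^{[-2\rho,2\rho]}Z\ne\emptyset$ has cardinality bounded in terms of the ratio $p^s\wedge p^u$ of $Z$ --- but this is not uniform in $Z$. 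To get a \emph{uniform} $N$ I instead argue as follows: a hit $\vf^t(x)\in Z'\subset D_j$ with small $t$ means $x\in D_i$, $\vf^t(x)\in D_j$, $0<t\le\rho$; the proper-section partial order then orders the discs visited along $\vf^{(0,\rho]}(x)$, so at most $\mathfrak n$ discs are visited, and within a single disc $D_j$ a flow segment of length $<\rho<\rho_0$ can cross $D_j$ at most once because $\Gamma_{D_j}$ is a diffeomorphism on the flow box (Lemma~\ref{Lemma-regularity-q-t} and the construction of flow boxes). Hence the orbit segment $\vf^{(0,\rho]}(x)$ meets $\widehat\Lambda$ in at most $\mathfrak n$ points, and each such point lies in a unique disc, so $x$ has at most $\mathfrak n$ returns to $\mathfs Z$ in that window. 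Taking $N=\mathfrak n+1$ finishes the count; the strict inequalities $0<n<N$ come from $g_{x_0}^+(x)=\vf^{s}(x)$ with $s>0$ (so $n\ge1$) and from the fact that $g_{x_0}^+(x)$ is itself a hit of $\mathfs Z$ (it lies in $Z(v_1)$, by definition of the coding and Proposition~\ref{Prop-Z}(4)), hence one of the at-most-$\mathfrak n$ intermediate returns, so $n\le\mathfrak n<N$.

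The remaining point to nail down carefully is that $g_{x_0}^+(x)=\vf^{s}(x)$ \emph{does} belong to $\mathfs Z$: this holds because if $x=\pi(\un v)$ with $\un v\in\Sigma^\#$ and $v_0=\Psi_{x_0}^{p^s_0,p^u_0}$, then $\vf^{r_0(\un v)}(x)=\pi[\sigma(\un v)]\in Z(v_1)\subset\mathfs Z$, and by the definition of the roof function $r$ (Section~\ref{ss.first.coding}) one has $\vf^{r(\un v)}[\pi(\un v)]=g_{x_0}^+[\pi(\un v)]$ up to the identification $\mathfrak q_D\circ\vf^{r_\Lambda(x_0)}$ --- more precisely $g_{x_0}^+(x)$ and $\pi[\sigma(\un v)]$ differ by a flow time (the one absorbed into $\mathfrak q_D$), and what we actually need is just that $g_{x_0}^+(x)\in\widehat\Lambda$ (clear, as $g_{x_0}^+=\mathfrak q_{D_j}\restriction_{B_x}$ has image in $D_j\subset\widehat\Lambda$) together with $g_{x_0}^+(x)\in\mathfs Z$, which follows since $g_{x_0}^+(x)=\pi[\sigma(\un v)]$ exactly when $r(\un v)=r_\Lambda(x_0)-\mathfrak t_D[\cdots]$ is chosen as in the definition; one then checks $\pi[\sigma(\un v)]\in Z(v_1)$. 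I expect the main obstacle to be precisely this bookkeeping --- reconciling the three nearby points $f(x_0)$-type return, $g_{x_0}^+(x)$, and $\pi[\sigma(\un v)]$, all within flow time $O(\rho)$ of each other --- and making sure the ``$g_{x_0}^+(x)$ is among the $H$-returns'' claim is exact rather than merely up to a bounded flow perturbation; once that is pinned down, the uniform counting bound via the proper-section structure is routine.
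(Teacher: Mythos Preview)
Your approach is correct and is essentially the same as the paper's: both arguments observe that $g_{x_0}^+(x)=\pi[\sigma(\un v)]\in Z(v_1)\subset\mathfs Z$, write $g_{x_0}^+(x)=\vf^t(x)$ with $0<t\le\rho$, and then uniformly bound the number of hits of the orbit to $\mathfs Z\subset\widehat\Lambda$ in the window $(0,t]$ using the proper-section structure of $\widehat\Lambda$. The paper's argument is slightly slicker: rather than your disc-counting via the partial order and flow-box injectivity (yielding $N=\mathfrak n+1$), it simply invokes the standard fact that a proper section has $\inf(r_{\widehat\Lambda})>0$, so $n\cdot\inf(r_{\widehat\Lambda})\le t\le\rho$ gives $N=\lceil\rho/\inf(r_{\widehat\Lambda})\rceil+1$; your disc-by-disc analysis is exactly what underlies this positivity, so the two arguments are equivalent, and your bookkeeping worry about $g_{x_0}^+(x)$ versus $\pi[\sigma(\un v)]$ is unnecessary since these coincide by the definition of the roof function $r$.
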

\begin{proof}
We have $g_{x_0}^+(x)\in\mathfs Z$, so $g_{x_0}^+(x)=H^n(x)$
for some $n>0$.
Remember that $\widehat\Lambda=\bigcup_{i=1}^\mathfrak{n} D_i$ is a proper section of size $\rho/2$
(see Section \ref{Section-sections} for the definitions). In particular $\inf(r_{\widehat\Lambda})>0$.
Since $\mathfs Z\subset\widehat\Lambda$, every hit of $x$
to $\mathfs Z$ is also a hit to $\widehat\Lambda$. Writing $g_{x_0}^+(x)=\vf^t(x)$ for some $t\leq\rho$,
we conclude that $n\inf(r_{\widehat\Lambda})\leq t\leq\rho$, therefore
$n\leq \left\lceil \tfrac{\rho}{\inf(r_{\widehat\Lambda})}\right\rceil$.
We thus define $N:=\left\lceil \tfrac{\rho}{\inf(r_{\widehat\Lambda})}\right\rceil+1$.
\end{proof}

Therefore Proposition \ref{Prop-Z}(4) implies that for every $x\in\mathfs Z$ there are 
$0<k,\ell<N$ such that $H^k(x)$ satisfies a Markov property in the stable direction and
$H^{-\ell}(x)$ satisfies a Markov property in the unstable direction.

At this point, it is worth mentioning the method that Bowen used
to construct Markov partitions for Axiom A flows \cite{Bowen-Symbolic-Flows}:
\begin{enumerate}[(1)]
\item[(1)] Fix a global section for the flow; inside this section, construct a finite family of rectangles
(sets that are closed under the Smale bracket operation). Let $H$ be the Poincar\'e return map
of this family.
\item[(2)] Apply the method of Sina{\u\i} of successive approximations to get a new family
of rectangles $\mathfs Z$ with the following property: if $H$ is the Poincar\'e return map of $\mathfs Z$,
then for every $x\in\mathfs Z$ there are $k,\ell>0$ such that $H^k(x)$ satisfies a Markov property in the stable
direction and $H^{-\ell}(x)$ satisfies a Markov property in the unstable direction. In addition, there is a global
constant $N>0$ such that $k,\ell<N$.
\item[(3)] Apply a refinement procedure to $\mathfs Z$ such that the resulting partition $\mathfs R$ is a
disjoint family of rectangles satisfying the Markov property for $H$.
\end{enumerate}
The attentive reader might have note that, so far, we did implement steps (1) and (2) above,
with the difference that while Bowen used the method of successive approximations, we used
the method of $\ve$--gpo's. It remains to establish step (3), and we will do this closely
following Bowen \cite{Bowen-Symbolic-Flows}.

For each $Z\in\mathfs Z$, let
$$
\mathfs I_Z:=\left\{Z'\in\mathfs Z:\vf^{[-\rho,\rho]}Z\cap Z'\neq\emptyset\right\}.
$$
By Theorem \ref{Thm-inverse}, $\mathfs I_Z$ is finite.
Let $D$ is the connected component of $\widehat\Lambda$ such that $Z\subset D$.
By continuity, having chosen the discs $D_i$ small enough
the following property holds:
\begin{equation}\label{e.2rho}
\text{If $Z'\in\mathfs I_Z$ then $Z'\subset \vf^{[-2\rho,2\rho]}D$.}
\end{equation}
Therefore $\mathfrak q_D(Z')$ is a well-defined subset of $D$.
For each $Z'\in\mathfs I_Z$ we consider the partition of $Z$ into four subsets as follows:
\begin{align*}
E_{Z,Z'}^{su}&=\{x\in Z: W^s(x,Z)\cap\mathfrak{q}_{D}(Z')\neq\emptyset,
W^u(x,Z)\cap\mathfrak{q}_{D}(Z')\neq\emptyset\}\\
E_{Z,Z'}^{s\emptyset}&=\{x\in Z: W^s(x,Z)\cap\mathfrak{q}_{D}(Z')\neq\emptyset,
W^u(x,Z)\cap\mathfrak{q}_{D}(Z')=\emptyset\}\\
E_{Z,Z'}^{\emptyset u}&=\{x\in Z: W^s(x,Z)\cap\mathfrak{q}_{D}(Z')=\emptyset,
W^u(x,Z)\cap\mathfrak{q}_{D}(Z')\neq\emptyset\}\\
E_{Z,Z'}^{\emptyset\emptyset}&=\{x\in Z: W^s(x,Z)\cap\mathfrak{q}_{D}(Z')=\emptyset,
W^u(x,Z)\cap\mathfrak{q}_{D}(Z')=\emptyset\}.
\end{align*}
Call this partition
$\mathfs P_{Z,Z'}:=\{E_{Z,Z'}^{su},E_{Z,Z'}^{s\emptyset},E_{Z,Z'}^{\emptyset u},E_{Z,Z'}^{\emptyset\emptyset}\}$.
Clearly, $E^{su}_{Z,Z'}=Z\cap \mathfrak{q}_D(Z')$.

\medskip
\noindent
{\sc The partition $\mathfs E_Z$:} It is the coarser partition of $Z$ that refines
all of $\mathfs P_{Z,Z'}$, $Z'\in\mathfs I_Z$.

\medskip
To define a partition of $\mathfs Z$, we define an equivalence relation on $\mathfs Z$.

\medskip
\noindent
{\sc Equivalence relation $\simN$ on $\mathfs Z$:} For $x,y\in\mathfs Z$, we write
$x\simN y$ if for any $|k|\leq N$:
\begin{enumerate}[aa)]
\item[(i)] For all $Z\in\mathfs Z$: $H^k(x)\in Z\Leftrightarrow H^k(y)\in Z$.
\item[(ii)] For all $Z\in\mathfs Z$ such that $H^k(x),H^k(y)\in Z$, the points $H^k(x),H^k(y)$
belong to the same element of $\mathfs E_Z$. 
\end{enumerate}

\medskip
Clearly $\simN$ is an equivalence relation in $\mathfs Z$, hence it defines a partition of $\mathfs Z$.
Before proceeding, let us state a fact that will be used in the sequel: if $x\simN y$ with $x\in Z=Z(\Psi_{x_0}^{p^s_0,p^u_0})\in\mathfs Z$, then there exists $|k|\leq N$ such that
$g_{x_0}^+(x)=H^k(x)$ and $g_{x_0}^+(y)=H^k(y)$. To see this, write
$x=\pi(\un v)$ with $v_0=\Psi_{x_0}^{p^s_0,p^u_0}$,
and let $D'$ be the connected component of $\widehat\Lambda$ with $Z(v_1)\subset D'$.
On one hand, $g_{x_0}^+(y)=\mathfrak q_{D'}(y)$. On the other hand,
since $H^k(x)\in Z(v_1)\subset D'$ for some $|k|\leq N$, the definition of $\simN$ implies that
$H^k(y)\in Z(v_1)\subset D'$, hence $H^k(y)=\mathfrak q_{D'}(y)$. A similar result holds for $g_{x_0}^-$.

\medskip
\noindent
{\sc The Markov partition $\mathfs R$:} It is the partition of $\mathfs Z$ whose elements are the
equivalence classes of $\simN$.

\medskip
By definition, $\mathfs R$ is a refinement of $\mathfs Z$.

\begin{lemma}\label{Lemma-local-finite}
The partition $\mathfs R$ satisfies the following properties.
\begin{enumerate}[{\rm (1)}]
\item For every $Z\in\mathfs Z$, $\#\{R\in\mathfs R:R\subset \vf^{[-\rho,\rho]}Z\}<\infty$.
\item For every $R\in\mathfs R$, $\#\{Z\in\mathfs Z:R\subset \vf^{[-\rho,\rho]}Z\}<\infty$.
\end{enumerate}
\end{lemma}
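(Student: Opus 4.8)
The plan is to derive both finiteness statements from the local finiteness of $\mathfs Z$ established in Proposition \ref{Prop-Z}(2), combined with the fact that the refinement $\mathfs R$ of $\mathfs Z$ is built from only finitely much local data per element of $\mathfs Z$.

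First I would prove part (1). Fix $Z\in\mathfs Z$. If $R\in\mathfs R$ satisfies $R\subset \vf^{[-\rho,\rho]}Z$, then, picking any $x\in R$, its orbit meets $Z$ within time $\rho$; since every element of $\mathfs R$ is contained in a unique element of $\mathfs Z$, say $R\subset Z'\in\mathfs Z$, and since hits to $\mathfs Z$ are hits to $\widehat\Lambda$ with return time bounded below by $\inf(r_{\widehat\Lambda})>0$, there are at most $N$ iterates of $H$ between $x$ and a point of $Z$. More directly: $Z'\cap\vf^{[-\rho,\rho]}Z\neq\emptyset$, so $Z'\in\mathfs I_Z$, and by Theorem \ref{Thm-inverse} (as invoked for the definition of $\mathfs I_Z$) the set $\mathfs I_Z$ is finite. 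So it suffices to bound, for each fixed $Z'\in\mathfs I_Z$, the number of $R\in\mathfs R$ with $R\subset Z'$. Now $\mathfs R\restriction_{Z'}$ is the partition of $Z'$ into $\simN$-classes, which refines the partition $\mathfs E_{Z'}$ together with the constraints coming from the sets $Z''\in\mathfs Z$ and the partitions $\mathfs E_{Z''}$ visited by $H^k$, $|k|\leq N$. The partition $\mathfs E_{Z'}$ is the common refinement of finitely many $4$-set partitions $\mathfs P_{Z',Z''}$ with $Z''\in\mathfs I_{Z'}$, and $\mathfs I_{Z'}$ is finite; likewise each $H^k(Z')$ meets only finitely many elements of $\mathfs Z$ by Proposition \ref{Prop-Z}(2) (applied to $Z'$, noting $\bigcup_{|n|\leq 1}H^n[Z']$ iterated at most $N$ times still meets finitely many $Z''$, using local finiteness repeatedly, or directly using that $N$ is a fixed bound), and on each such $Z''$ only finitely many elements of $\mathfs E_{Z''}$ are relevant. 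Hence the $\simN$-classes inside $Z'$ are obtained by intersecting finitely many finite partitions, giving finitely many of them. Summing over $Z'\in\mathfs I_Z$ (finite) yields (1).

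For part (2), fix $R\in\mathfs R$ and let $R\subset Z'\in\mathfs Z$. If $Z\in\mathfs Z$ satisfies $R\subset\vf^{[-\rho,\rho]}Z$, then in particular $Z'\cap\vf^{[-\rho,\rho]}Z\neq\emptyset$, i.e. $Z\in\{Z\in\mathfs Z: Z'\cap\vf^{[-\rho,\rho]}Z\neq\emptyset\}$. By the symmetry of the relation $Z\cap\vf^{[-\rho,\rho]}Z'\neq\emptyset$ (up to enlarging the time interval by a factor, which is absorbed as in \eqref{e.2rho}), this set is contained in a set of the form $\mathfs I_{Z'}$ or at worst $\{Z: Z'\cap\vf^{[-2\rho,2\rho]}Z\neq\emptyset\}$, which is finite by the same argument behind the finiteness of $\mathfs I_{Z'}$ — namely Theorem \ref{Thm-inverse}, which forces comparability of the parameters $p^s\wedge p^u$ for overlapping charts (Proposition \ref{Prop-Z}(2)'s proof), combined with the discreteness Theorem \ref{Thm-coarse-graining}(1). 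Therefore only finitely many $Z\in\mathfs Z$ can satisfy $R\subset\vf^{[-\rho,\rho]}Z$.

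The main obstacle I anticipate is the bookkeeping in part (1): one must be careful that the definition of $\simN$ involves not just $\mathfs E_Z$ on the $Z$ containing a given point but also the partitions $\mathfs E_{Z''}$ on all $Z''$ hit by the $H$-orbit within $|k|\leq N$ steps. Controlling this requires iterating the local finiteness of Proposition \ref{Prop-Z}(2) a bounded number ($N$) of times to see that only finitely many triples $(k, Z'', E)$ with $|k|\leq N$, $Z''\in\mathfs Z$, $E\in\mathfs E_{Z''}$ are ever relevant to points of a fixed $Z'\in\mathfs I_Z$; once that finiteness is in hand, the $\simN$-classes in $Z'$ are cut out by finitely many finite partitions and the count is immediate. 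The parameter comparisons needed to show each relevant $\mathfs I_{Z''}$ is finite are exactly those already carried out in the proof of Proposition \ref{Prop-Z}(2), so no new estimate is required — only the observation that a bounded composition of finite-to-finite relations is finite-to-finite.
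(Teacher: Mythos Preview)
Your proposal is correct and follows the same strategy as the paper: for (1), any such $R$ lies in some $Z'\in\mathfs I_Z$ (finite), and each $Z'$ contains only finitely many elements of $\mathfs R$; for (2), any such $Z$ lies in $\mathfs I_{Z'}$ for a fixed $Z'\supset R$. The paper's proof of (1) is terser, simply asserting $\#\{R\in\mathfs R:R\subset Z'\}\leq 4^{\#\mathfs I_{Z'}}$ (the size of $\mathfs E_{Z'}$) without explicitly accounting for the iterates $H^k$, $|k|\leq N$, that enter the definition of $\simN$; your more careful bookkeeping via repeated application of Proposition~\ref{Prop-Z}(2) is the honest justification, and your flagging of this as the main obstacle is apt.
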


\begin{proof}\mbox{}

\noindent
(1) Start noting that, for every $Z\in\mathfs Z$,
$\#\{R\in\mathfs R:R\subset Z\}\leq 4^{\#\mathfs I_Z}$. Hence
$$
\#\{R\in\mathfs R:R\subset \vf^{[-\rho,\rho]}Z\}\leq
\sum_{Z'\in\mathfs I_Z}\#\{R\in\mathfs R:R\subset Z'\}
\leq \sum_{Z'\in\mathfs I_Z}4^{\#\mathfs I_{Z'}}<+\infty
$$
since the last summand is the finite sum of finite numbers.

\medskip
\noindent
(2) For any $Z'\in\mathfs Z$ such that $Z'\supset R$, we have
$\{Z\in\mathfs Z:R\subset \vf^{[-\rho,\rho]}Z\}\subset \mathfs I_{Z'}$.
Since each $\mathfs I_{Z'}$ is finite, the result follows.
\end{proof}

\subsection{The Markov property}
The final step in the refinement procedure is to show that $\mathfs R$ is a Markov
partition for the map $H$, in the sense of Sina{\u\i} \cite{Sinai-MP-U-diffeomorphisms}. 

\medskip
\noindent
{\sc $s$/$u$--fibres in $\mathfs R$:} Given $x$ in $R\in\mathfs R$, we define the {\em $s$--fibre}
and {\em $u$--fibre} of $x$ by:
\begin{align*}
W^s(x,R):=
\bigcap_{Z\in\mathfs Z:Z\supset R} V^s(x,Z)\cap R
\, \text{, }\quad W^u(x,R):=
\bigcap_{Z\in\mathfs Z:Z\supset R} V^u(x,Z)\cap R.
\end{align*}

By Proposition \ref{Prop-disjointness}, any two $s$--fibres ($u$--fibres) either coincide or are disjoint.

\begin{proposition}\label{Prop-R}
The following are true.
\begin{enumerate}[{\rm (1)}]
\item {\sc Product structure:} For every $R\in\mathfs R$ and every $x,y\in R$, the intersection
$W^s(x,R)\cap W^u(y,R)$ is a single point, and this point is in $R$. Denote it by $[x,y]$.
\item {\sc Hyperbolicity:} If $z,w\in W^s(x,R)$ then $d(H^n(z),H^n(w))\xrightarrow[n\to\infty]{}0$, and
if $z,w\in W^u(x,R)$ then $d(H^n(z),H^n(w))\xrightarrow[n\to-\infty]{}0$. The rates are exponential.
\item {\sc Geometrical Markov property:} Let $R_0,R_1\in\mathfs R$. If $x\in R_0\cap H^{-1}(R_1)$ then 
$$
H(W^s(x,R_0))\subset W^s(H(x),R_1)\, \text{ and }\, H^{-1}(W^u(H(x),R_1))\subset W^u(x,R_0).
$$
\end{enumerate}
\end{proposition}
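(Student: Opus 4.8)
The plan is to follow the classical Bowen--Sina\u\i\ argument for Markov partitions, as implemented in \cite{Sarig-JAMS}, but keeping track of the subtlety that the symbolic Markov property of Proposition \ref{Prop-Z}(4) is stated for the holonomy maps $g_{x_0}^\pm$ rather than for the return map $H$, and that passing from one to the other involves a bounded time change controlled by Lemma \ref{l.time}. The three items are logically independent and I would prove them in the order given.

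\textbf{Part (1): product structure.} First I would fix $R\in\mathfs R$ and take $Z\in\mathfs Z$ with $R\subset Z$. Given $x,y\in R$, Proposition \ref{Prop-Z}(3) produces a single point $w=[x,y]_Z\in W^s(x,Z)\cap W^u(y,Z)\subset Z$. The point to verify is that $w\in R$, i.e.\ $w\simN x$ (equivalently $w\simN y$). For this I would use that $w$ lies on the same $s$--fibre as $x$ and the same $u$--fibre as $y$: by Proposition \ref{Prop-Z}(4) (symbolic Markov) together with Lemma \ref{l.time}, the forward $H$--orbit of $w$ shadows that of $x$ in the stable direction and its backward $H$--orbit shadows that of $y$ in the unstable direction, through charts compatible up to bounded flow displacement; invoking Proposition \ref{Prop-overlapping-charts} and Proposition \ref{Prop-overlapping-charts-2}, the conditions (i)--(ii) in the definition of $\simN$ for $|k|\leq N$ are preserved. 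Since both $x,y\in R$ satisfy those conditions and $w$ interpolates between them along fibres, one gets $H^k(w)$ in the same $Z'\in\mathfs Z$ and the same element of $\mathfs E_{Z'}$ as $H^k(x)$ for $0\le k\le N$ and as $H^k(y)$ for $-N\le k\le 0$; a short combinatorial check (identical to \cite[Prop.\ 11.2]{Sarig-JAMS}) then gives $w\simN x$ for all $|k|\le N$. Independence of the defining $Z$ follows from Proposition \ref{Prop-disjointness}.

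\textbf{Part (2): hyperbolicity.} This is essentially immediate from what we already have. If $z,w\in W^s(x,R)$, then by definition $z,w\in V^s(x,Z)$ for the (any) $Z\supset R$, and writing $g_{x_0}^+=H^{n_0}$ with $0<n_0<N$ via Lemma \ref{l.time}, the iterates $H^{jn_0}(z),H^{jn_0}(w)$ lie on the stable manifold $V^s[\{v_m\}_{m\ge j}]$ and contract at rate $e^{-\chi\inf(r_\Lambda)j/2}$ by Theorem \ref{Thm-stable-manifolds}(3); the intermediate returns (there are at most $N$ of them between consecutive applications of $g_{x_0}^+$) only add a bounded multiplicative factor by the Lipschitz bounds of Lemma \ref{Lemma-regularity-q-t} and the flow bound $\|d\vf^t\|\le e^{|t|}$. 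Hence $d(H^n(z),H^n(w))\to 0$ exponentially; the unstable case is symmetric using $V^u$.

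\textbf{Part (3): geometrical Markov property, the main obstacle.} This is the heart of the matter and where the refinement into $\mathfs R$ is used. Let $x\in R_0\cap H^{-1}(R_1)$, so $H(x)\in R_1$. Pick $Z_0\in\mathfs Z$ with $R_0\subset Z_0$; I need to find $Z_1\in\mathfs Z$ with $R_1\subset Z_1$ for which the symbolic Markov property applies after accounting for the time change. The idea, following Bowen, is: because $x\simN$ its $R_0$--companions and $H(x)\simN$ its $R_1$--companions, the partition $\mathfs E_{Z}$ was built precisely so that the sets $E^{s\emptyset},E^{\emptyset u},E^{su},E^{\emptyset\emptyset}$ record whether a stable/unstable fibre of $x$ ``crosses'' a neighbouring rectangle; the equivalence relation $\simN$ forces $W^s(x,R_0)$ to map \emph{into} (not just \emph{onto a piece straddling the boundary of}) $W^s(H(x),R_1)$. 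Concretely I would: (a) use the remark after the definition of $\simN$ to get $|k|\le N$ with $g_{(x_0)}^+(x)=H^k(x)$ and the same $k$ working for all points $\simN$--equivalent to $x$; (b) apply Proposition \ref{Prop-Z}(4) to get $g_{x_0}^+(W^s(x,Z_0(v_0)))\subset W^s(g_{x_0}^+(x),Z(v_1))$; (c) translate this inclusion of $\mathfs Z$--fibres into an inclusion of $\mathfs R$--fibres by checking that $H(W^s(x,R_0))$ lands in a single $\simN$--class, which is exactly the content of conditions (i)--(ii) at indices shifted by one, guaranteed because the $\mathfs E_Z$'s refine all $\mathfs P_{Z,Z'}$; (d) reassemble over the finitely many $Z\supset R_1$ (Lemma \ref{Lemma-local-finite}(2)) and intermediate returns. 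The delicate point, and the one I expect to cost the most work, is step (c): ensuring that no intermediate hit of the orbit segment between $x$ and $H(x)$ to $\mathfs Z$ splits the image fibre across a boundary of some rectangle — this is where Proposition \ref{Prop-overlapping-charts-2} (compatibility of Smale brackets along small flow displacements) and the defining property \eqref{e.2rho} of $\mathfs I_Z$ are essential, and it is the analogue of the most technical part of \cite[\S11]{Sarig-JAMS} and \cite{Bowen-Symbolic-Flows}. The unstable inclusion $H^{-1}(W^u(H(x),R_1))\subset W^u(x,R_0)$ is proved symmetrically using the second line of Proposition \ref{Prop-Z}(4).
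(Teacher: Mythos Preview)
Your outline for parts (1) and (2) is fine, though longer than necessary. For (1), the paper simply observes that each element of $\mathfs P_{Z,Z'}$ is a rectangle (closed under the Smale bracket of Proposition~\ref{Prop-Z}(3)), that rectangles are preserved under holonomies $\mathfrak q_{D_i}$ and under intersection within a disc, and hence every $R\in\mathfs R$ (being a finite intersection of $H^k$--preimages of such elements, $|k|\le N$) is itself a rectangle. Your explicit verification that $[x,y]_Z\simN x$ works, but it duplicates this structural observation. Part (2) is as you say.

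For part (3) you have the correct reduction---take $y\in W^s(x,R_0)$ and show $H(x)\simN H(y)$---but your description of the key step is not quite aimed at the actual difficulty. Since $x\simN y$, conditions (i)--(ii) already hold for $H^j(x),H^j(y)$ with $-N\le j\le N$; what must be \emph{gained} is the single index $j=N+1$. Condition (i) at $N+1$ follows from (ii) at $N$ via the $E^{su}_{Z,Z'}$ structure (as you implicitly recognize). For (ii) at $N+1$, the stable half is free because $y\in W^s(x,R_0)$ forces $W^s(H^{N+1}(x),Z)=W^s(H^{N+1}(y),Z)$ by Proposition~\ref{Prop-overlapping-charts}(2). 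The genuine work is the \emph{unstable} half: given $z\in W^u(H^{N+1}(x),Z)\cap\mathfrak q_D(Z')$, produce a point of $W^u(H^{N+1}(y),Z)\cap\mathfrak q_D(Z')$. Here one must go \emph{backward}: write $H^{N+1}(x)=\pi(\un v)$, use Lemma~\ref{l.time} to locate $\widetilde x:=\pi[\sigma^{-1}(\un v)]=H^k(x)$ with $0\le k\le N$, pull $z$ back to $\widetilde z\in W^u(\widetilde x,\widetilde Z)$ via the symbolic Markov property of $\mathfs Z$, identify a $\widetilde Z'\in\mathfs I_{\widetilde Z}$ hit by the backward $\un w$--orbit of (a small flow push of) $z$, then use $\simN$ at index $k$ to transfer the intersection $W^u(\widetilde x,\widetilde Z)\cap\mathfrak q_{\widetilde D}(\widetilde Z')\ne\emptyset$ to $\widetilde y=H^k(y)$, and finally push forward via Proposition~\ref{Prop-overlapping-charts}(3) and the symbolic Markov property again. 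This backward-then-forward construction is what actually makes the $\mathfs E_Z$--refinement pay off; your phrase ``guaranteed because the $\mathfs E_Z$'s refine all $\mathfs P_{Z,Z'}$'' names one ingredient but not the mechanism.

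Your identification of ``intermediate hits of the orbit segment between $x$ and $H(x)$'' as the delicate point is misdirected: by definition of $H$ there are no intermediate $\mathfs Z$--hits between $x$ and $H(x)$. The difficulty is not between indices $0$ and $1$, but at index $N+1$, where the window of $\simN$--information has just expired and one must reach back inside it via Lemma~\ref{l.time} and the symbolic Markov property of Proposition~\ref{Prop-Z}(4).
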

\begin{proof}
The sets $R\in\mathfs R$ are defined from the sets $Z\in\mathfs Z$
and the partitions $\mathfs E_Z$. By Proposition~\ref{Prop-Z} and by the definition
of the partitions $\mathfs P_{Z,Z'}$, each $Z$ and each element of $\mathfs E_Z$
is a rectangle. Note that rectangles are preserved under the holonomy maps
$\mathfrak{q}_{D_i}$ and that rectangles contained in a same disc $D_i$
are preserved under intersections.
Consequently the sets $R\in\mathfs R$ are also rectangles
and so part (1) follows. Part (2) is a direct consequence of the properties of the
stable and unstable manifolds obtained in Theorem~\ref{Thm-stable-manifolds}(3).
It remains to prove part (3).

Fix $R_0,R_1\in\mathfs R$ and $x\in R_0\cap H^{-1}(R_1)$.
We check that $H(W^s(x,R_0))\subset W^s(H(x),R_1)$ (the other inclusion is proved
similarly). Let $y\in W^s(x,R_0)$. By Proposition~\ref{Prop-overlapping-charts}(2)
and the definition of $W^s(H(x),R_1)$, it is enough to check that $H(x)\simN H(y)$.
Since $x\simN y$, we already know that $H^k(x),H^k(y)$ satisfy the properties (i) and (ii)
defining the relation $\simN$ when $-N\leq k\leq N$, hence it is enough to prove that this is also
true for $k=N+1$. The property (ii) for $k=N$ says that
$H^{N}(x), H^{N}(y)$ belong to the same elements of the partitions
$\mathfs E_Z$. We claim that this implies that $H^{N+1}(x),H^{N+1}(y)$ belong to the same
sets $Z\in \mathfs Z$, which gives (i) for $k=N+1$. To see this, let $Z'\in\mathfs Z$
such that $H^{N+1}(x)\in Z'$, and let $D'$ be the connected component of $\widehat\Lambda$
that contains $Z'$.
Let $Z\in\mathfs Z$ containing $H^N(x),H^N(y)$. Noting that 
$H^N(x)\in E^{su}_{Z,Z'}$, it follows from property (ii) for $k=N$ that 
$H^N(y)\in E^{su}_{Z,Z'}$, hence $\mathfrak q_{D'}(H^N(y))\in Z'$. If
$\mathfrak q_{D'}(H^N(y))=H^{N+1}(y)$, the claim is proved. If not, 
there is $Z''\in\mathfs Z$ such that $H^{N+1}(y)\in Z''$,
and so repeating the same argument with the roles of $x,y$ interchanged
gives that $\mathfrak q_{D''}(H^N(x))\in Z''$, a contradiction since 
the time transition from $Z$ to $Z''$ is smaller than time transitions from $Z$ to $Z'$.
Hence property (i) for $k=N+1$ is proved, and
it remains to prove property (ii) for $k=N+1$.

Let $Z\in \mathfs Z$ be a rectangle which contains $H^{N+1}(x),H^{N+1}(y)$
and let $D$ be the connected component of $\widehat\Lambda$ that contains $Z$.
We need to show that $H^{N+1}(x),H^{N+1}(y)$ belong to the same element of $\mathfs E_Z$.
We first note that $W^s(H^{N+1}(x),Z)=W^s(H^{N+1}(y),Z)$:
since $x,y$ belong to the same $s$--fibre of a rectangle in $\mathfs Z$,
this can be checked by applying Proposition~\ref{Prop-overlapping-charts}(2) inductively.
In particular, we have the following property:
\begin{equation}\label{e.stable}
\forall Z'\in \mathfs I_Z,\quad
W^s(H^{N+1}(x),Z)\cap \mathfrak{q}_{D}(Z')\neq \emptyset
\iff W^s(H^{N+1}(y),Z)\cap \mathfrak{q}_{D}(Z')\neq \emptyset.
\end{equation}
We then prove the analogous property for the sets $W^u(H^{N+1}(x),Z)$, $W^u(H^{N+1}(y),Z)$.
In Figure \ref{figure-markov} we draw the points we will define below.

Let us consider $Z'\in  \mathfs I_Z$ and assume for instance that
$W^u(H^{N+1}(x),Z)\cap \mathfrak{q}_{D}(Z')$ contains a point $z$
(the case when $W^u(H^{N+1}(y),Z)\cap \mathfrak{q}_{D}(Z')\neq \emptyset$ is treated analogously).
Write $H^{N+1}(x)=\pi(\un v)$ with
$\un v=\{v_n\}_{n\in\Z}=\{\Psi_{x_n}^{p^s_n,p^u_n}\}_{n\in\Z}\in\Sigma^\#$ and $Z=Z(v_0)$.
By Lemma~\ref{l.time}, there exists $0\leq k\leq N$ such that 
the point $\widetilde x:=H^k(x)$ coincides with $\pi[\sigma^{-1}(\un v)]$.
The rectangle $\widetilde Z:=Z(v_{-1})$ contains $\widetilde x$.
The symbolic Markov property in Proposition~\ref{Prop-Z}(4)
implies that the image of $W^u(\widetilde x,\widetilde Z)$ under $g^+_{x_{-1}}$
contains $W^u(H^{N+1}(x),Z)$, hence the point $z$.
In particular, the backward orbit of $z$ under the flow intersects $W^u(\widetilde x,\widetilde Z)$
at some point $\widetilde z$.

By the definition of $z$ and Property~\ref{e.2rho}, we have $\varphi^s(z)\in Z'$
for some $|s|\leq 2\rho$, thus we can write $\varphi^s(z)=\pi(\un w)$ with $\un w=\{w_n\}_{n\in\Z}\in\Sigma^\#$ and
$Z'=Z(w_0)$. Since all transition times of holonomy maps 
are bounded by $\rho$, necessarily the piece of orbit $\vf^{[0,\rho]}(\widetilde z)$
contains some $\pi[\sigma^{-b}(\un w)]$ with $b\geq 1$. Let $b\geq 1$ and $0\leq \widetilde s\leq \rho$
with $\pi[\sigma^{-b}(\un w)]=\varphi^{\widetilde s}(\widetilde z)$.
Consequently the rectangle
$\widetilde Z':=Z(w_{-b})$ belongs to $ \mathfs I_{\widetilde Z}$. 
Moreover, $\widetilde z$ belongs to the intersection between $W^u(\widetilde x,\widetilde Z)$ and 
$\mathfrak q_{D'}(\widetilde Z')$, where $\widetilde D$ is the connected component of $\widehat\Lambda$ 
containing $\widetilde Z$.

\begin{figure}[hbt!]
\centering
\def\svgwidth{14cm}
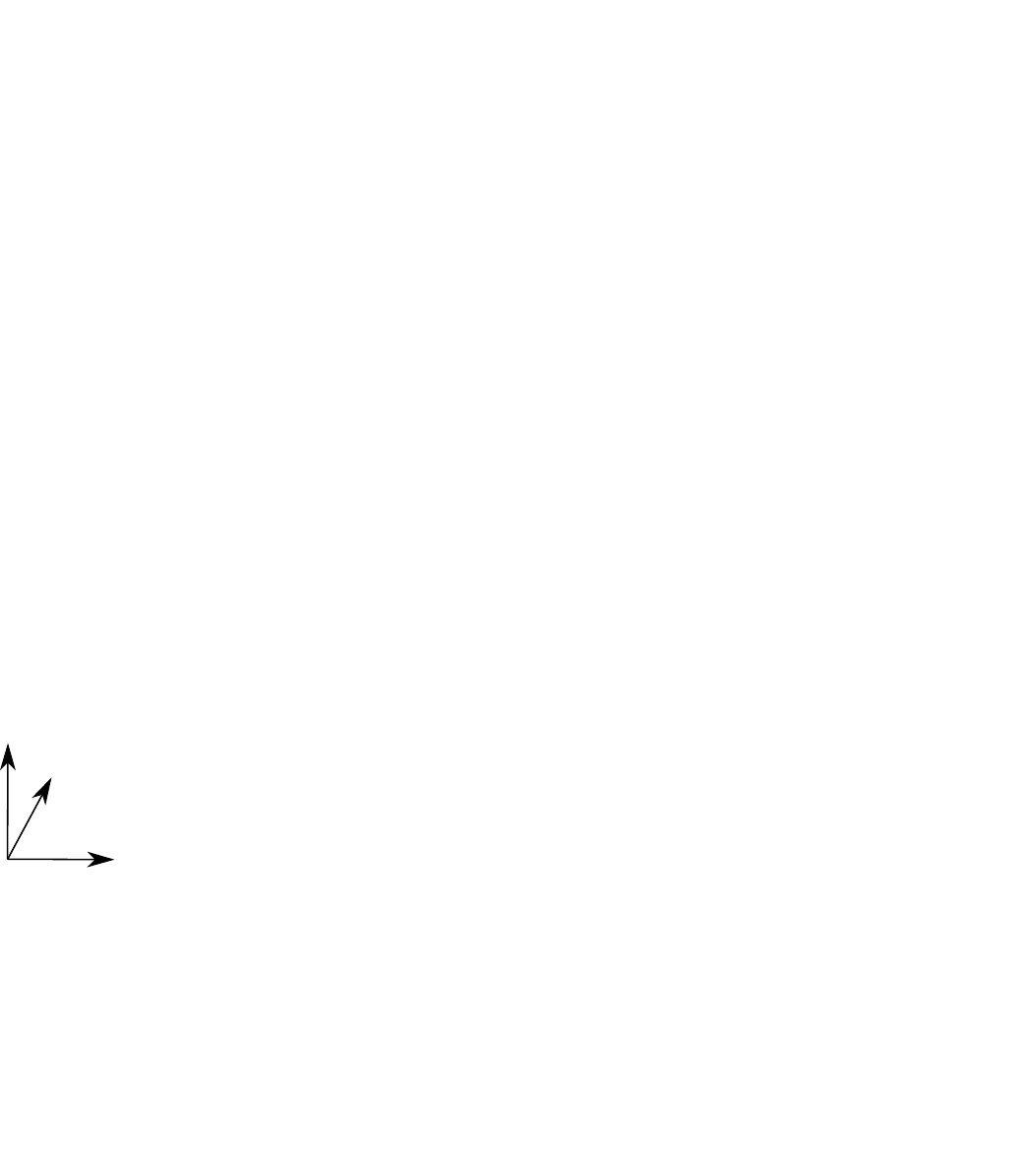\caption{Proof of the Markov partition.}
\label{figure-markov}
\end{figure}

By the induction assumption, the point $\widetilde y:=H^k(y)$ also belongs to $\widetilde Z$ and
to the same element of the partition $\mathfs P_{\widetilde Z, \widetilde Z'}$ as $\widetilde x$.
Since $W^u(\widetilde x,\widetilde Z)$ intersects $\mathfrak q_{\widetilde D}(\widetilde Z')$,
the $u$--fibre $W^u(\widetilde y,\widetilde Z)$ intersects it as well at some point $\widetilde t$.
Note that $[\widetilde z, \widetilde t]_{\widetilde Z}=[\widetilde z, \widetilde y]_{\widetilde Z}$
also belongs to $W^u(\widetilde y,\widetilde Z)$ and to
$\mathfrak q_{\widetilde D}(\widetilde Z')$ (this latter property follows from
Proposition \ref{Prop-overlapping-charts}(3), noting that 
$\widetilde z,\widetilde t\in \widetilde Z\cap \mathfrak q_{\widetilde D}(\widetilde Z')$),
hence we can replace $\widetilde t$
by any point in $W^u(\widetilde y,\widetilde Z)\cap \mathfrak q_{\widetilde D}(\widetilde Z')$.
Take $\widetilde t :=[\widetilde z,\widetilde y]_{\widetilde Z}$.

Let $0<r\leq 2\rho$ such that $\varphi^r(\widetilde t)\in W^s(\vf^{\widetilde s}(\widetilde z), \widetilde Z')$.
The symbolic Markov property in Proposition~\ref{Prop-Z}(4) then implies that
its forward orbit under the flow will meet the rectangles
$Z(w_{-b})$,\dots, $Z(w_0)$.

Note that $\widetilde z\in \widetilde Z=Z(v_{-1})$ and $z=g^+_{x_{-1}}(\widetilde z)\in Z=Z(v_0)$.
The same property holds for $\widetilde y$ and $H^{N+1}(y)=g^+_{x_{-1}}(\widetilde y)$ since
the points $H^i(x)$ and $H^i(y)$ belong to the same rectangles in $\mathfs Z$ for each $i=k,\dots,N+1$.
Using Proposition~\ref{Prop-overlapping-charts}(3), it follows that
the image of $\widetilde t=[\widetilde z,\widetilde y]_{\widetilde Z}$
by $g^+_{x_{-1}}$ belongs to $Z$ and coincides with the Smale product
$[z,H^{N+1}(y)]_Z$.

The properties found in the two previous paragraphs imply that
$W^u(H^{N+1}(y),Z)$ intersects $\mathfrak{q}_{D}(Z')$
at a point of the orbit of $\widetilde t$, contained in $W^s(z,Z)$.
In particular, the intersection $W^u(H^{N+1}(y),Z)\cap \mathfrak{q}_{D}(Z')$ is non-empty.
We have thus shown:
\begin{equation}\label{e.unstable}
\forall Z'\in \mathfs I_Z,\quad
W^u(H^{N+1}(x),Z)\cap \mathfrak{q}_{D}(Z')\neq \emptyset
\iff  W^u(H^{N+1}(y),Z)\cap \mathfrak{q}_{D}(Z')\neq \emptyset.
\end{equation}
Properties~\eqref{e.stable} and~\eqref{e.unstable} mean that
$H^{N+1}(x)$ and $H^{N+1}(y)$ belong to the same element of $\mathfs E_Z$
for any rectangle $Z\in \mathfs Z$ containing $H^{N+1}(x),H^{N+1}(y)$.
This concludes the proof that $H(x)\simN H(y)$, and of part (3) of the proposition.
\end{proof}

\section{A finite-to-one extension}

In this section, we construct a finite-to-one extension and deduce the Main Theorem.
We rely on the
family of disjoint sets $\mathfs R$ satisfying a geometrical Markov property.
This family was obtained in the previous section
as a refinement of the family $\mathfs Z$ constructed in Section \ref{Section-locally-finite-section}, which
was itself induced by the coding $\pi$ introduced in Section \ref{ss.first.coding}.
One important property of $\mathfs Z$ is that, due to the inverse theorem
(Theorem \ref{Thm-inverse}), it satisfies a local finiteness property, see Proposition \ref{Prop-Z}(2).
Having these facts in mind,
we construct a symbolic coding of the return map $H$.

\subsection{A detailed statement}\label{s.detailed}

The theorem below implies the Main Theorem and includes additional properties that will
be useful for some applications, including the one we will obtain in Section \ref{sec.homoclinic}.
We begin defining a Bowen relation for flows. This notion was formalized for diffeomorphisms in \cite{Boyle-Buzzi},
and the following is an adaptation for flows. We refer to \cite{Buzzi-JMD} for a discussion on the notion,
and in particular on the non-uniqueness of such a relation.

\newcommand\hS{\widehat S}

Let  $T_r: S_r\to S_r$ be a suspension flow over a symbolic system $S$ that is an extension
of some flow $U:X\to X$ by a semiconjugacy map $\pi:S_r\to X$,  i.e.
$U^t\circ\pi=\pi\circ T^t_r$ for all $t\in\R$.\\

\noindent
{\sc Bowen relation:} A \emph{Bowen relation} $\sim$ for $(T_r,\pi,U)$ is a symmetric binary
relation on the alphabet of $S$ satisfying the following two properties:
\begin{enumerate}[i,]
\item[{\rm (i)}] $\forall\omega,\omega'\in S_r,\;\; \pi(\omega)=\pi(\omega')\implies \operatorname{v}(\omega)\sim\operatorname{v}(\omega')$, where $\operatorname{v}(x,t):=x_0$ for $x\in S$;
\item[{\rm (ii)}] $\exists \gamma>0$ with the following property:
$$\forall\omega,\omega'\in S_r,\;\; \left[ \forall t\in\R,\; \operatorname{v}(T_r^t\omega)\sim\operatorname{v}(T_r^t\omega') \right] \implies \left[ \exists |t|<\gamma, \pi(\omega)= U^t(\pi(\omega')) \right].
$$
  \end{enumerate}

\begin{theorem}\label{t.main}
Let $X$ be a non-singular $C^{1+\beta}$ vector field ($\beta>0$) on a closed $3$-manifold $M$.
Given $\chi>0$, there exist a locally compact topological Markov flow
$(\widehat \Sigma_{\widehat r},\widehat\sigma_{\widehat r})$ with graph
$\widehat{\mathfs G}=(\widehat V,\widehat E)$ and roof function $\widehat{r}$
and a map $\widehat \pi_{\widehat r}:\widehat \Sigma_{\widehat r}\to M$ such that
$\widehat \pi_{\widehat r}\circ {\widehat\sigma}_{\widehat r}^t=\vf^t\circ\widehat \pi_{\widehat r}$, for all $t\in\R$, and satisfying:
\smallskip
\begin{enumerate}[{\rm (1)}]
\item $\widehat r$ and $\widehat \pi_{\widehat r}$ are H\"older continuous.
\smallskip

\item $\widehat \pi_{\widehat r}[\widehat \Sigma_{\widehat r}^\#]=\nuh^\#$ has full measure for every $\chi$--hyperbolic measure; for every ergodic $\chi$--hyperbolic measure $\mu$,
there is an ergodic $\widehat\sigma_{\widehat r}$--invariant measure $\overline \mu$ 
on $\widehat \Sigma_{\widehat r}$
such that $\overline \mu\circ\widehat \pi_{\widehat r}^{-1}=\mu$.
\smallskip

\item If
$(\un R,t)\in \widehat \Sigma_{\widehat r}^{\#}$ satisfies
$R_n=R$ and $R_m=S$ for infinitely many $n<0$ and $m>0$, then $\operatorname{Card}\{z\in \widehat \Sigma_{\widehat r}^\#:\widehat \pi_{\widehat r}(z)=\widehat \pi_{\widehat r}(\un R,t)\}$
is bounded by a number $C(R,S)$, depending only on $R,S$.
\smallskip

\item\label{i.splitting} There is $\lambda>0$ and for $x\in \widehat \pi_{\widehat r}(\widehat \Sigma_{\widehat r})$ there is a unique splitting
$N_{x}=N^s_{x}\oplus N^u_{x}$ such that:
\begin{align*}
\limsup_{t\to +\infty} \tfrac{1}{t}\log \|\Phi^t|_{N^s_{x}}\|\leq -\lambda
\quad \text{ and }\quad\liminf_{t\to +\infty} \tfrac{1}{t}\log \|\Phi^{-t}|_{N^s_{x}}\|\geq \lambda\\
\quad \limsup_{t\to +\infty} \tfrac{1}{t}\log \|\Phi^{-t}|_{N^u_{x}}\|\leq -\lambda
\quad \text{ and }\quad\liminf_{t\to +\infty} \tfrac{1}{t}\log \|\Phi^{t}|_{N^u_{x}}\|\geq \lambda.
\end{align*}
The splitting is $\Phi$--equivariant, and the maps $z\mapsto N^{s/u}_{\widehat \pi_{\widehat r}(z)}$ are H\"older continuous on $\widehat \Sigma_{\widehat r}$.
\smallskip

\item\label{i.manifold} For every $z\in \widehat \Sigma_{\widehat r}$,
there are $C^1$ submanifolds $V^{cs}(z),V^{cu}(z)$ passing through $x:=\widehat \pi_{\widehat r}(z)$
such that:
\begin{enumerate}[{\rm (a)}]
\item
$T_{x}V^{cs}(z)=N^s_x+\mathbb{R}\cdot X(x)$ and $T_{x}V^{cu}(z)=N^{u}_x+\mathbb{R}\cdot X(x)$.
\item For all $y\in V^{cs}(z)$, there is $\tau\in \mathbb{R}$ such that
$d(\varphi^t(x),\varphi^{t+\tau}(y))\leq e^{-\lambda t}$, $\forall t\geq 0$.
\item For all $y\in V^{cu}(z)$, there is $\tau\in \mathbb{R}$ such that
$d(\varphi^{-t}(x),\varphi^{-t+\tau}(y))\leq e^{-\lambda t}$, $\forall t\geq 0$.
\end{enumerate}
\smallskip

\item\label{i.Bowen} There is a symmetric binary relation $\sim$ on the alphabet $\widehat V$
satisfying:
\begin{enumerate}[{\rm (a)}]
\item For any $R\in \widehat V$, the set $\{S\in \widehat V:R\sim S\}$ is finite.
\item The relation $\sim$ is a Bowen relation for $(\widehat\sigma_{\widehat r},\widehat\pi_{\widehat r}|_{\widehat\Sigma^\#_{\widehat r}},\vf^t)$.
\end{enumerate}
\smallskip

\item\label{i.canonical} There exists a measurable set $\mathfs R$ with a measurable partition
indexed by $\widehat V$, which we denote by $\{R:R\in\widehat V\}$, such that:
\begin{enumerate}[{\rm (a)}]
\item The orbit of any point $x\in \nuh^\#$ intersects $\mathfs R$.
\item The first return map $H\colon \mathfs R\to \mathfs R$ induced by $\varphi$ is a well-defined bijection.
\item For any $x\in \mathfs R$, if $\un R=\{R_n\}_{n\in\Z}$ satisfies $H^n(x)\in R_n$ for all $n\in\Z$,
then $(\un R,0)\in \widehat \Sigma^\#_{\widehat r}$ and $\widehat \pi_{\widehat r}(\un R,0)=x$.
\end{enumerate}

\item\label{i.lift}
For any compact transitive invariant hyperbolic set $K\subset M$ whose ergodic $\vf$--invariant
measures are all $\chi$--hyperbolic, there is a {\em transitive} invariant compact set 
$X\subset \widehat \Sigma_{\widehat r}$ such that $\widehat\pi_{\widehat r}(X)=K$.
\end{enumerate}
\end{theorem}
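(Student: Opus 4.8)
The plan is to deduce Theorem~\ref{t.main} by assembling the machinery built in the preceding sections. First I would set up the symbolic dynamics: starting from the family $\mathfs R$ of Section~\ref{Section-refinement}, take $\widehat V:=\mathfs R$ and define edges $R\to R'$ whenever $R\cap H^{-1}(R')\neq\emptyset$, where $H$ is the first return map to $\mathfs Z$ (equivalently to $\bigsqcup\mathfs R$). By Lemma~\ref{Lemma-local-finite} and Proposition~\ref{Prop-Z}(2) the resulting graph $\widehat{\mathfs G}$ is locally finite, and the geometrical Markov property of Proposition~\ref{Prop-R}(3) gives a shift-commuting map $\widehat\pi:\widehat\Sigma\to\mathfs Z$ defined by $\widehat\pi(\un R)=\bigcap_{n}\overline{H^{-n}(R_n)}$; the product structure of Proposition~\ref{Prop-R}(1) together with the hyperbolicity in Proposition~\ref{Prop-R}(2) shows this intersection is a single point. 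One then defines the roof $\widehat r(\un R):=r_{\mathfs Z}(\widehat\pi(\un R))$ (the first-return time to $\mathfs Z$) and $\widehat\pi_{\widehat r}(\un R,t):=\vf^t(\widehat\pi(\un R))$. H\"older continuity of $\widehat r$ and $\widehat\pi_{\widehat r}$ — item~(1) — follows from Theorem~\ref{Thm-stable-manifolds}(5) and the H\"older regularity of the holonomies, exactly as for $(\Sigma_r,\sigma_r,\pi_r)$ in Proposition~\ref{Prop-pi_R}(2).

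Next I would address items (2), (3), (7). Item~(7) is essentially a restatement of what has already been proved: $\mathfs R$ is the partition, (7a) is Proposition~\ref{Prop-Z}(1) combined with the fact that every orbit in $\nuh^\#$ hits $\Lambda$, (7b) holds because $H$ is a bijection on each fibre structure, and (7c) is the definition of $\widehat\pi$ together with Proposition~\ref{Prop-R}(3). For (2), the inclusion $\widehat\pi_{\widehat r}[\widehat\Sigma^\#_{\widehat r}]\supset\nuh^\#$ comes from Propositions~\ref{Prop-pi}(3) and~\ref{Prop-pi_R}(3) (a point of $\Lambda\cap\nuh^\#$ is coded by a regular $\ve$--gpo, hence lands in a regular sequence of $\mathfs R$-rectangles), and the reverse inclusion $\subset\nuh^\#$ follows from Proposition~\ref{Prop-image-pi} (via $\Sigma^\#$) and Theorem~\ref{Thm-inverse}; the lift of an ergodic $\chi$--hyperbolic measure is obtained by the standard argument (push forward to the Poincar\'e section, lift through the finite-to-one coding using that $\widehat\pi_{\widehat r}$ is finite-to-one on $\widehat\Sigma^\#_{\widehat r}$, invoke ergodic decomposition). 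Item~(3), finiteness with bound $C(R,S)$ depending only on $R,S$, is the flow analogue of Sarig's Theorem~1.3 / the argument of \cite[\S12]{Sarig-JAMS}: using the inverse theorem (Theorem~\ref{Thm-inverse}), two sequences $\un R,\un R'\in\widehat\Sigma^\#_{\widehat r}$ with the same image have, at the recurrent symbols $R$ and $S$, comparable parameters, so the number of such sequences is controlled by the local finiteness of $\mathfs Z$ (Proposition~\ref{Prop-Z}(2)) and the discreteness of $\mathfs A$ (Theorem~\ref{Thm-coarse-graining}(1)).

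Then I would turn to the dynamical-geometric items (4), (5), (6). For (4), given $x=\widehat\pi_{\widehat r}(z)$ with $z=(\un R,t)$, the splitting $N_x=N^s_x\oplus N^u_x$ is defined by $N^s_x:=T_x\widetilde V^s[\un v]$-projection and $N^u_x$ the analogous unstable projection, where $\un v\in\Sigma^\#$ is any $\ve$--gpo with $\pi(\un v)$ on the orbit of $x$; the exponential estimates are exactly the two properties recorded after Proposition~\ref{Prop-center-stable} (namely $\limsup\tfrac1t\log\|d\vf^t\widetilde e^{s}_x\|<0$ and its unstable counterpart), made uniform ($\lambda=\chi\inf(r_\Lambda)/(2\sup(r_\Lambda))$) using Proposition~\ref{Prop-center-stable} and Theorem~\ref{Thm-stable-manifolds}(3); $\Phi$-equivariance is immediate and H\"older continuity of $z\mapsto N^{s/u}_{\widehat\pi_{\widehat r}(z)}$ follows from Theorem~\ref{Thm-stable-manifolds}(5). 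For (5), the center-stable and center-unstable manifolds are $V^{cs}(z):=\vf^{(-\rho,\rho)}\widetilde V^s[\un v]$ and $V^{cu}(z):=\vf^{(-\rho,\rho)}\widetilde V^u[\un v]$ with $\widetilde V^{s/u}$ as in Proposition~\ref{Prop-center-stable} and the remarks following it; property (a) is the tangency statement recorded there, and (b),(c) are precisely the contraction estimate of Proposition~\ref{Prop-center-stable} (the shift $\tau$ absorbs the cumulative shear $\Delta$). For (6), I would define $R\sim S$ iff there exist $\un R,\un R'\in\widehat\Sigma^\#_{\widehat r}$ with $R_0=R$, $R'_0=S$ and $\widehat\pi_{\widehat r}$ agreeing along the corresponding orbits — or, more concretely following \cite{Boyle-Buzzi,Buzzi-JMD}, $R\sim S$ iff $\overline{H^{-1}S}\cap \vf^{(-2\rho,2\rho)}\overline{H^{-1}R}\neq\emptyset$ in the sense of overlapping rectangles — so that (6a) is again local finiteness (Proposition~\ref{Prop-Z}(2), Lemma~\ref{Lemma-local-finite}) and (6b) splits into property (i), a direct consequence of Theorem~\ref{Thm-inverse}(6) and Proposition~\ref{Prop-overlapping-charts}, and property (ii), which says that two points whose whole orbits stay in $\sim$-related rectangles differ by a flow time $<\gamma$: this uses Theorem~\ref{Thm-inverse} to see that the two coding sequences have uniformly comparable Pesin data and then Proposition~\ref{Prop-shadowing} (uniqueness of the shadowed point) plus Remark~\ref{rmk-time} ($T(v,w)\le\rho$) to bound the displacement. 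Finally, (8): for a compact transitive hyperbolic set $K$ all of whose ergodic measures are $\chi$--hyperbolic, one has $\inf_{x\in K}q(x)>0$ on the relevant part, so $K\cap\Lambda$ is coded by $\ve$--gpo's taking values in a finite sub-alphabet of $\mathfs A$; transitivity of $K$ lets one, by a Baire/spectral-decomposition argument as in \cite{Sarig-JAMS}, find an irreducible finite subgraph whose suspension $X\subset\widehat\Sigma_{\widehat r}$ is transitive, compact (finite alphabet, bounded roof), and projects onto $K$.

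The main obstacle I expect is item~(6)(b)(ii), the ``$\gamma$-rigidity'' half of the Bowen relation for flows: for diffeomorphisms \cite{Boyle-Buzzi} this is clean because two points with the same symbolic itinerary literally coincide, but here two orbits can have $\sim$-related itineraries while being parsed into returns at genuinely different, accumulating times (the ``parsing''/``shear'' problem flagged in Section~\ref{ss-method-proof}). Controlling this requires combining the inverse theorem's ``up to bounded error'' parameter comparison (Theorem~\ref{Thm-inverse}(1)--(6)), the uniform bound $T(v,w)\le\rho$ from Remark~\ref{rmk-time}, the uniqueness of shadowed points (Proposition~\ref{Prop-shadowing}), and the bounded-shear estimates behind Proposition~\ref{Prop-center-stable}, in order to show that the flow-time ambiguity stays bounded by a constant $\gamma$ independent of the orbit. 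The remaining items are, by contrast, reorganizations of results already established in the excerpt.
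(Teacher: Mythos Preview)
Your overall architecture matches the paper's: build $(\widehat\Sigma,\widehat\sigma)$ from $\widehat V=\mathfs R$ with edges $R\to S$ iff $H(R)\cap S\neq\emptyset$, define $\widehat\pi$ via nested intersections, suspend by the return time, and then read off items (1), (2), (4), (5), (7), (8) from Sections~\ref{section-coarse-graining}--\ref{Section-refinement} essentially as you describe. Your identification of (6)(b)(ii) as the delicate point is exactly right.

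Two places where your sketch diverges from what the paper actually does deserve comment. First, for item~(3) you write that the number of preimages ``is controlled by the local finiteness of $\mathfs Z$ and the discreteness of $\mathfs A$''. That is not the mechanism. The paper introduces the \emph{affiliation} $R\sim S$ (there exist $Z\supset R$, $Z'\supset S$ in $\mathfs Z$ with $Z'\in\mathfs I_Z$) and sets $N(R)=\#\{(S,Z'):R\sim S,\ S\subset Z'\}$; the bound is $C(R,S)=N(R)N(S)$. The proof is a Bowen-style pigeonhole: assuming $N(R)N(S)+1$ distinct preimages, one finds two of them whose associated quadruples in $A(R)\times A(S)$ coincide at the recurrent times, and then a Smale-bracket/holonomy argument (using Propositions~\ref{Prop-overlapping-charts} and~\ref{Prop-overlapping-charts-2}) forces the two symbolic sequences to agree on arbitrarily long central blocks, hence to be equal. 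Local finiteness only enters to make $N(R)$ finite; it does not by itself bound the number of preimages.

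Second, for item~(6) the relation $\sim$ is precisely this affiliation, not either of the relations you propose. Property~(6)(b)(i) is then Lemma~\ref{Lemma-affiliation}. For (6)(b)(ii) with $\gamma=3\rho$, the paper does exactly what you outline in your last paragraph --- reduce to the uniqueness of shadowing in Proposition~\ref{Prop-shadowing} --- but the reduction is the work: one must show that the holonomies dictated by the $\ve$--gpo $\un v$ coding $x$ agree, when applied to the other orbit $y$, with the holonomies coming from $\un w$ (this is the content of Claims~1--5 in the paper's proof, comparing $\mathfrak q_{D_{i+1}}\circ\mathfrak q_{E_{i+1}}$ with $g^+_{X_i}\circ\mathfrak q_{D_i}$ via repeated use of the affiliation and Proposition~\ref{Prop-overlapping-charts}). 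Your ingredients (Theorem~\ref{Thm-inverse}, Proposition~\ref{Prop-shadowing}) are correct, but the ``bounded-shear'' control is achieved through this chain of holonomy compatibilities rather than through Proposition~\ref{Prop-center-stable} or Remark~\ref{rmk-time} directly.
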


Part~\eqref{i.Bowen}
provides a combinatorial characterization of the noninjectivity of  the coding.
It is an adaptation for flows of the \emph{Bowen property}, which was
introduced in \cite{Boyle-Buzzi} for diffeomorphisms and motivated by the work of
Bowen \cite{Bowen-Regional-Conference}. Note that, in contrast to \cite{Bowen-Regional-Conference},
we {\em do not} claim that the flow restricted to $\widehat\pi_{\widehat r}[\widehat\Sigma_{\widehat r}^\#]$
is topologically equivalent to the corresponding quotient dynamics.

The relation $\sim$ will be the \emph{affiliation}, which will be introduced in Section~\ref{s.finite-one},
following a similar notion introduced in \cite{Sarig-JAMS}. 
Note that the assumption 
$\bigl[\operatorname{v}(\widehat \sigma_{\widehat r}^t(z))\sim \operatorname{v}(\widehat\sigma_{\widehat r}^t(z'))$
for all $t\in \mathbb{R}\bigr]$ consists of countably many
affiliation conditions: if $z=(\un R,s)$ and $z'=(\un S,s')$, then varying $t$ in the interval $[\widehat r_n(\un R),\widehat r_{n+1}(\un R))$
provides $i\leq \tfrac{\sup(\widehat r)}{\inf(\widehat r)}$ affiliations of the form
$R_n\sim S_{m+1},\ldots,R_n\sim S_{m+i}$.

Part~\eqref{i.canonical} provides for any
$x\in  \nuh^\#$ a particular pair $(\un R,t)\in \widehat \Sigma_{\widehat r}^\#$
such that $\widehat \pi_{\widehat r}(\un R,t)=x$
(here $t$ is the smallest non-negative number such that $\varphi^{-t}(x)\in \mathfs R$). We call
the pair $(\un R,t)$ the \emph{canonical lift} of $x$.
This is a measurable embedding of $\nuh^\#$ into $\widehat\Sigma_{\widehat r}$.
\color{black}

Part~\eqref{i.lift} is a version of \cite[Proposition 3.9]{BCS-MME} in our context, and the proof
is very similar, see Section~\ref{ss.conclusion}.

\subsection{Second coding}\label{s.second}
Let $\widehat{\mathfs G}=(\widehat V,\widehat E)$ be the oriented graph with vertex set
$\widehat V=\mathfs R$ and edge set $\widehat E=\{R\to S:R,S\in\mathfs R\text{ s.t. }H(R)\cap S\neq\emptyset\}$,
and let $(\widehat\Sigma,\widehat\sigma)$ be the TMS induced by $\widehat{\mathfs G}$.
We note that the ingoing and outgoing degree of every vertex in $\widehat\Sigma$ is finite. We show this
for the outgoing edges, since the proof for the ingoing edges is symmetric. Fix $R\in\mathfs R$, and fix
$Z\in\mathfs Z$ such that $Z\supset R$. 
If $(R,S)\in\widehat E$ then $\vf^{[0,\rho]}(R)\cap S\ne\emptyset$, hence for any $Z'\in\mathfs Z$ with $S\subset Z'$, we have $Z'\in\mathfs I_Z$. In particular,
$$
\#\{(R,S)\in\widehat E\}\leq \sum_{Z'\in\mathfs I_Z}\#\{S\in\mathfs R:S\subset Z'\}<+\infty,
$$
since both $\mathfs I_Z$ and each $\{S\in\mathfs R:S\subset Z'\}$ are finite sets
(see Lemma \ref{Lemma-local-finite}(1)).

For $\ell\in\Z$ and a path $R_m\to\cdots\to R_n$ on $\widehat{\mathfs G}$ define
$$
_\ell[R_m,\ldots,R_n]:=H^{-\ell}(R_m)\cap\cdots\cap H^{-\ell-(n-m)}(R_n),
$$
the set of points whose itinerary
under $H$ from $\ell$ to $\ell+(n-m)$ visits the rectangles $R_m,\ldots,R_n$ respectively.
The crucial property that
gives the new coding is that $_\ell[R_m,\ldots,R_n]\neq\emptyset$. This follows by induction, using the
Markov property of $\mathfs R$ (Proposition \ref{Prop-R}(3)).

The map $\pi$ defines similar sets: for $\ell\in\Z$ and a path
$v_m\overset{\ve}{\to}\cdots\overset{\ve}{\to}v_n$ on $\Sigma$, let
$$
Z_\ell[v_m,\ldots,v_n]:=\{\pi(\un w):\un w\in\Sigma^\#\text{ and }w_\ell=v_m,\ldots,w_{\ell+(n-m)}=v_n\}.
$$
There is a relation between these sets we just defined. Before stating such a relation, we will define the coding of $H$,
and then collect some of its properties.

\medskip
\noindent
{\sc The map $\widehat\pi:\widehat\Sigma\to M$:} Given $\un R=\{R_n\}_{n\in\Z}\in\widehat\Sigma$,
$\widehat\pi(\un R)$ is defined by the identity
$$
\{\widehat\pi(\un R)\}:=\bigcap_{n\geq 0}\overline{_{-n}[R_{-n},\ldots,R_n]}.
$$

\medskip
Note that $\widehat\pi$ is well-defined, because the right hand side is an intersection of
nested compact sets with diameters going to zero.
The proposition below states relations between $\Sigma$ and $\widehat\Sigma$, and between $\pi$ and $\widehat\pi$.
For $\un v=\{\Psi_{x_n}^{p^s_n,p^u_n}\}_{n\in\Z}\in\Sigma$, define
$$
G_{\un v}^n=\left\{
\begin{array}{ll}
g_{x_{n-1}}^+\circ\cdots\circ g_{x_0}^+ &,n\geq 0\\
g_{x_{n+1}}^-\circ\cdots\circ g_{x_0}^- &,n<0.
\end{array}
\right.
$$
Recall the integer $N$ introduced in Lemma \ref{l.time}.

\begin{proposition}\label{Prop-relation-codings}
For each $\un R=(R_n)_{n\in\Z}\in\widehat\Sigma$ and $Z\in\mathfs Z$ with
$Z\supset R_0$, there are an $\ve$--gpo
$\un v=\{v_k\}_{k\in\Z}\in\Sigma$ with $Z(v_0)=Z$ and a sequence
$(n_k)_{k\in\Z}$ of integers with $n_0=0$ and $1\le n_k-n_{k-1}\le N$ for all $k\in\Z$ such that:
\begin{enumerate}[{\rm (1)}]
\item For each $k\ge1$, $$ _{n_{-k}}[R_{n_{-k}},\ldots,R_{n_k}]\subset Z_{-k}[v_{-k},\ldots,v_k].$$
In particular, $\widehat\pi(\un R)=\pi(\un v)$.
Moreover, $R_{n_k}\subset Z(v_k)$ for all $k\in\Z$.

\item The map $\widehat\pi$ is H\"older continuous over $\widehat\Sigma$. In fact,
$\{v_i\}_{|i|\le k}$ depends only on $\{R_j\}_{|j|\le kN}$ for each $k\ge1$.

\item If $\un R\in {\widehat \Sigma}^\#$, then $\un v\in {\Sigma}^\#$.

\item The two codings have the same regular image: $\pi[\Sigma^\#]=\widehat\pi[\widehat\Sigma^\#]$.
\end{enumerate}
\end{proposition}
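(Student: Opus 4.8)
\textbf{Proof strategy for Proposition~\ref{Prop-relation-codings}.}
The plan is to construct the $\ve$--gpo $\un v$ and the times $(n_k)$ by a ``diagonal'' argument, interleaving the forward and backward passes through the rectangles $R_n$. First I would fix $\un R=(R_n)_{n\in\Z}\in\widehat\Sigma$ and $Z\in\mathfs Z$ with $Z\supset R_0$. The key observation is that each rectangle $R_n$ is, by construction, an element of the partition $\mathfs R$ refining some $Z\in\mathfs Z$, and each $Z$ is of the form $Z(v)$ for a unique $v\in\mathfs A$; I would choose $v_0$ so that $Z(v_0)=Z$. For the forward direction, I would proceed inductively: suppose $v_0,\dots,v_{k-1}$ and $n_0=0,\dots,n_{k-1}$ have been chosen so that $R_{n_{k-1}}\subset Z(v_{k-1})$ and the nested-cylinder inclusion holds up to level $k-1$. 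Pick any point $x$ in the (nonempty, by the Markov property of $\mathfs R$, Proposition~\ref{Prop-R}(3)) set $_{n_{-(k-1)}}[R_{n_{-(k-1)}},\dots,R_{n_{k-1}}]$, so $x\in R_{n_{k-1}}$; writing $x=\pi(\un w)$ for a suitable $\un w\in\Sigma^\#$ with $Z(w_0)\supset R_{n_{k-1}}$, Lemma~\ref{l.time} gives an integer $0<n_k-n_{k-1}<N$ with $g^+_{(w)_0}(x)=H^{n_k-n_{k-1}}(x)$, and then $H^{n_k-n_{k-1}}(x)$ lies in $R_{n_k}$ and also in $Z(w_1)$; set $v_k:=w_1$. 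The key point is that by Proposition~\ref{Prop-Z}(4) (symbolic Markov property) and Proposition~\ref{Prop-overlapping-charts}(2), the choice of $v_k$ does not depend on the choice of $x$ up to what matters for the inclusion, so $v_{k-1}\overset{\ve}{\to}v_k$ is a genuine edge and $R_{n_k}\subset Z(v_k)$. The backward pass is symmetric, using $g^-$ and the backward part of Lemma~\ref{l.time}'s analogue. The inclusion $_{n_{-k}}[R_{n_{-k}},\dots,R_{n_k}]\subset Z_{-k}[v_{-k},\dots,v_k]$ then follows: a point in the left-hand set, written as $\pi(\un w)$, has $H^{n_j}(\cdot)\in R_{n_j}\subset Z(v_j)$ for $|j|\le k$, and translating the $H$-itinerary into the $g^\pm$-itinerary via Lemma~\ref{l.time} shows $w_j$ can be taken equal to $v_j$ for $|j|\le k$ (using the definition of the $\simN$-classes and Proposition~\ref{Prop-overlapping-charts}). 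Taking the intersection over $k$ and using that $\{\pi(\un v)\}=V^s[\un v]\cap V^u[\un v]$ is a single point gives $\widehat\pi(\un R)=\pi(\un v)$.

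For part (2), the $C^1$ contraction of the graph transforms (Lemma~\ref{Prop-graph-transform}) combined with Theorem~\ref{Thm-stable-manifolds}(5) already gives that $\pi$ is H\"older on $\Sigma$; since by construction $\{v_i\}_{|i|\le k}$ is determined by $\{R_j\}_{|j|\le kN}$ (because $n_k-n_{k-1}<N$), the composition $\widehat\pi=\pi\circ(\text{this coding change})$ is H\"older on $\widehat\Sigma$ with a modified exponent. I would state this as: if $\un R,\un R'$ agree on $[-kN,kN]$ then $\un v,\un v'$ agree on $[-k,k]$, whence $d(\widehat\pi(\un R),\widehat\pi(\un R'))\le K\theta^k$ with $\theta$ as in Theorem~\ref{Thm-stable-manifolds}(5). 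Part (3) is immediate from the construction: if $R_n=R$ for infinitely many $n>0$, then since there are only finitely many $v$ with $Z(v)\supset R$ (local finiteness, Proposition~\ref{Prop-Z}(2), and discreteness, Theorem~\ref{Thm-coarse-graining}(1)) and the $n_k$ grow linearly, some $v_k$ must repeat infinitely often in the future; similarly in the past, so $\un v\in\Sigma^\#$.

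Part (4) is where both inclusions must be argued. The inclusion $\widehat\pi[\widehat\Sigma^\#]\subset\pi[\Sigma^\#]$ is immediate from parts (1) and (3). For the reverse, let $x=\pi(\un v)$ with $\un v\in\Sigma^\#$; by Theorem~\ref{Thm-inverse}, $x\in\nuh^\#$, and by Proposition~\ref{Prop-Z}(1) and the definition of $\mathfs Z$, $x$ (or a flow-translate lying in $\Lambda\cap\nuh^\#$) belongs to some $Z\in\mathfs Z$, hence to some $R\in\mathfs R$. The orbit of $x$ under $H$ then visits a bi-infinite sequence $\un R=\{R_n\}_{n\in\Z}$ of elements of $\mathfs R$ with $H^n(x)\in R_n$; this is an element of $\widehat\Sigma$ because consecutive rectangles are joined by an edge (the return map $H$ sends $R_n$ into a set meeting $R_{n+1}$). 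One then checks $\widehat\pi(\un R)=x$ from the definition of $\widehat\pi$ as the nested intersection and from the fact that the cylinder sets $_{-n}[R_{-n},\dots,R_n]$ contain $x$ and shrink to a point. Finally $\un R\in\widehat\Sigma^\#$: since $\un v\in\Sigma^\#$ some vertex $v$ repeats infinitely often in the future, and along the subsequence of times where $v_k=v$ the rectangle $R_{n_k}\subset Z(v)$ ranges over the finitely many elements of $\mathfs R$ contained in $Z(v)$, so one of them repeats infinitely often; symmetrically in the past.

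\textbf{Main obstacle.} The delicate point is verifying that the induced sequence $\un v$ is genuinely an edge-path in $\Sigma$, i.e.\ that $v_{k-1}\overset{\ve}{\to}v_k$ in the strong sense of (GPO1)--(GPO2), rather than merely a sequence of charts whose rectangles happen to be Markov-related under $H$. The construction produces $v_k$ as ``$w_1$'' for some shadowing sequence $\un w$ of a point in the cylinder, but one must check this is consistent across different choices of the base point and that the transition-time and parameter inequalities (GPO2) are inherited; this rests on the inverse theorem (Theorem~\ref{Thm-inverse}), which controls the parameters of any $\un w\in\Sigma^\#$ with $\pi(\un w)$ in the given rectangle ``up to bounded error,'' so that all admissible choices of $v_k$ lie in a uniformly bounded (hence, by discreteness, finite) set and in fact the edge condition propagates. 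Making this bookkeeping precise — especially the claim that $\{v_i\}_{|i|\le k}$ is \emph{well-defined} and depends only on $\{R_j\}_{|j|\le kN}$ — is the technical heart of the proof, and I would organize it around Lemma~\ref{l.time}, Proposition~\ref{Prop-Z}(4), and Proposition~\ref{Prop-overlapping-charts}.
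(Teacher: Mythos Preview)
Your overall approach matches the paper's, but you have misidentified the main obstacle. The edge condition $v_{k-1}\overset{\ve}{\to}v_k$ is \emph{automatic}: since $x\in R_{n_{k-1}}\subset Z(v_{k-1})$ by the inductive hypothesis, you can (and must) choose your shadowing sequence $\un w$ with $w_0=v_{k-1}$, and then $v_k:=w_1$ is the next vertex of an honest path $\un w\in\Sigma^\#$, so (GPO1)--(GPO2) hold by definition. No appeal to the inverse theorem is needed here. The real work, which you under-emphasize, is the cylinder inclusion: you must show that \emph{every} $y\in{}_{0}[R_{n_{k-1}},\ldots,R_{n_k}]$ lies in $Z_0[v_{k-1},v_k]$, not just the chosen reference point $y^*$. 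The mechanism is that $y\simN y^*$ forces $g^+_{x_{k-1}}(y)=H^{n_k-n_{k-1}}(y)$ with the \emph{same} integer $n_k-n_{k-1}$, landing in the \emph{same} $Z(v_k)$ (this is exactly the content of the paragraph following the definition of $\simN$); one then concatenates a past sequence for $y$ in $Z(v_{k-1})$ with a future sequence for $g^+(y)$ in $Z(v_k)$ to produce $\un u\in\Sigma^\#$ with $(u_0,u_1)=(v_{k-1},v_k)$ and $\pi(\un u)=y$. The full inclusion ${}_{n_{-k}}[R_{n_{-k}},\ldots,R_{n_k}]\subset Z_{-k}[v_{-k},\ldots,v_k]$ requires the same concatenation idea with three pieces (a past tail, the middle $(v_{-k},\ldots,v_k)$, and a future tail).

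Your argument for part~(3) has a small gap. You write ``if $R_n=R$ for infinitely many $n>0$, then since there are only finitely many $v$ with $Z(v)\supset R$\ldots some $v_k$ must repeat'', but the repeat times $m_j$ with $R_{m_j}=R$ need not be among the $n_k$'s, so you cannot conclude $R_{n_k}=R$. The fix is the paper's: for each $m_j$ take the unique $k=k(j)$ with $n_{k-1}<m_j\le n_k$; then $R_{n_k}$ is reachable from $R$ by a $\widehat{\mathfs G}$--path of length at most $N$, hence lies in a finite set (local compactness of $\widehat\Sigma$), and $v_k\in\{Z\in\mathfs Z:\exists S\text{ in that finite set with }S\subset Z\}$, which is finite by Lemma~\ref{Lemma-local-finite}(2).
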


For diffeomorphisms, the above lemma is \cite[Lemma 12.2]{Sarig-JAMS}. The difference from the case
of diffeomorphisms relies on our definitions of $\mathfs G$ and $\widehat{\mathfs G}$. While the edges of
$\widehat{\mathfs G}$ correspond to possible time evolutions of $H$, the edges of $\mathfs G$ correspond
to $\ve$--overlaps. In particular, not every edge of $\widehat{\mathfs G}$ corresponds to an edge of $\mathfs G$,
and this is the reason we have to introduce the sequence $(n_k)_{k\in\Z}$. 
In fact, each edge $v_k\to v_{k+1}$ of $\mathfs G$ corresponds to a sequence of edges $R_{n_k}\to\dots\to R_{n_{k+1}}$ of $\widehat{\mathfs G}$.

\begin{proof}
We begin proving part (1).
Fix $\{R_n\}_{n\in\Z}\in\widehat\Sigma$. The proof consists of successive uses of the following fact.

\medskip
\noindent
{\sc Claim:}
\emph{For all $i\in\Z$ and $v\in\mathfs A$ such that $R_i\subset Z(v)$, there are $1\leq k\leq N$
and $w\in\mathfs A$ such that $_{0}[R_i,\ldots,R_{i+k}]\subset Z_0[v,w]$ and $R_{i+k}\subset Z(w)$.
Similarly, there are $1\leq \ell\leq N$ and $u\in\mathfs A$ such that 
$_0[R_{i-\ell},\ldots,R_i]\subset Z_0[u,v]$ and $R_{i-\ell}\subset Z(u)$.
}

\begin{proof}[Proof of the claim.]
We prove the first statement (the second is proved similarly).  Let $v=\Psi_x^{p^s,p^u}\in\mathfs A$
such that $R_i\subset Z(v)$.  Since $\un R\in\widehat\Sigma$,  there is $y^*\in\, _{0}[R_i,\ldots,R_{i+N}]$.
Moreover,  there is $\un{v}^*\in\Sigma^\#$ such that $\pi(\un v^*)=y^*$ and $v_0^*=v$.  We set $w:=v^*_1$ so that $v\to w$.
By construction, $g_{x}^+(\pi(\un v^*))=\pi(\sigma(\un v^*))$ so $Z(w)$ contains $g_{x}^+(y^*)$.
Also, there is $1\leq k\leq N$ such that $g_{x}^+(y^*)=H^k(y^*)$.  

We claim that 
$_{0}[R_i,\ldots,R_{i+k}]\subset Z_0[v,w]$. To see that, let $y\in\, _{0}[R_i,\ldots,R_{i+k}]$.
We have $y\simN y^*$, thus the following occur:
\begin{enumerate}[$\circ$]
\item $y\in\, _{0}[R_i,\ldots,R_{i+k}]\subset R_i\subset Z(v)$, hence $y=\pi(\un v)$ for some $\un v\in\Sigma^\#$ with $v_0=v$.
\item $g_x^+(y^*)=H^k(y^*)\in Z(w)\Rightarrow g_x^+(y)=H^k(y)\in Z(w)$,
hence $\pi(\sigma(\un v))=g_x^+(y)=\pi(\un w)$
for some $\un w\in\Sigma^\#$ with $w_0=w$.
\end{enumerate}
Define $\un u=\{u_n\}_{n\in\Z}$ by
$$
u_n=\left\{\begin{array}{ll}v_n&,n\leq 0\\ w_{n-1}&,n\geq 1.\end{array}\right.
$$
Note that $\un u$ belongs to $\widehat\Sigma^\#$ since $\un v,\un w\in\widehat\Sigma^\#$ and $v\to w$ on $\mathfs G$.
To prove that $z=\pi(\un u)$,
note that:
\begin{enumerate}[$\circ$]
\item If $n\leq 0$, then $G^n_{\un u}(z)=G^n_{\un v}(z)\in Z(v_n)$.
\item If $n\geq 1$, then $G^n_{\un u}(z)=G^{n-1}_{\un w}[g_x^+(z)]\in Z(w_{n-1})$.
\end{enumerate}
By Proposition \ref{Prop-shadowing}, it follows that $y=\pi(\un u)\in Z_0[v,w]$, proving the inclusion.

The rest of the claim follows by symmetry, replacing $g_x^+,H,\sigma$ by $g_x^-,H^{-1},\sigma^{-1}$
and noting that $\simN$ considers $H^k$ for all $|k|\le N$.
\end{proof}

Now we prove part (1). Fix $n_0=0$ and $v_0\in\mathfs A$
such that $R_0\subset Z(v_0)$. Applying the claim for $i=0$ and $v_0$, we get $0<n_1\leq N$ and
$v_1\in\mathfs A$ such that $_{0}[R_0,\ldots,R_{n_1}]\subset Z_0[v_0,v_1]$
and $R_{n_1}\subset Z(v_1)$.
By induction, we obtain
an increasing sequence $n_0=0<n_1<n_2<\cdots$ such that $n_k<n_{k+1}\leq n_k+N$,
$_{0}[R_{n_k},\ldots,R_{n_{k+1}}]\subset Z_0[v_k,v_{k+1}]$, and $R_{n_k}\subset Z(v_k)$ for all $k\geq 0$. 
Doing the same for negative
iterates, we get a decreasing sequence $n_0=0>n_{-1}>n_{-2}>\cdots$ such that
$n_k-N\leq n_{k-1}<n_k$, $_{0}[R_{n_k},\ldots,R_{n_{k+1}}]\subset Z_0[v_k,v_{k+1}]$,
and $R_{n_{k}}\subset Z(v_{k})$ for all $k<0$.
We claim that the sequence $\un v=\{v_k\}_{k\in\Z}$ satisfies the proposition.

Fix $k\geq 0$. We wish to show that $_{n_{-k}}[R_{n_{-k}},\ldots,R_{n_k}]\subset Z_{-k}[v_{-k},\ldots,v_k]$,
i.e. given $y\in\, _{n_{-k}}[R_{n_{-k}},\ldots,R_{n_k}]$ we want to find $\un u\in\Sigma^\#$ such that
$(u_{-k},\ldots,u_k)=(v_{-k},\ldots,v_k)$ and $\pi(\un u)=y$.
Since $H^{n_{-k}}(y)\in R_{n_{-k}}\subset Z(v_{-k})$,
there is $\un w^-\in\Sigma^\#$ with $w^-_0=v_{-k}$ and $H^{n_{-k}}(y)=\pi(\un w^-)$.
Similarly, since $H^{n_k}(y)\in R_{n_k}\subset Z(v_k)$,
there is $\un w^+\in\Sigma^\#$ with $w^+_0=v_k$ and
$H^{n_k}(y)=\pi(\un w^+)$. Define $\un u=\{u_i\}_{i\in\Z}$ by:
$$
u_i=\left\{\begin{array}{ll}w^-_{i+k}&,i\leq -k\\ v_i&,i=-k,\ldots,k\\ w^+_{i-k}&,i\geq k.\end{array}\right.
$$
Clearly $\un u\in\Sigma^\#$. We claim that $\pi(\un u)=y$. Indeed:
\begin{enumerate}[$\circ$]
\item $-k\leq i\leq k$: we have $G^i_{\un u}(y)=H^{n_i}(y)\in R_{n_i}\subset Z(v_i).$
\item $i\leq -k$: since $G^{-k}_{\un u}(y)=H^{n_{-k}}(y)$ and $G^{i+k}_{\sigma^{-k}(\un u)}=G^{i+k}_{\un w^-}$
(the sequences $\sigma^{-k}(\un u)$ and $\un w^-$ coincide in the past), we have
$G^i_{\un u}(y)=G^{i+k}_{\sigma^{-k}(\un u)}[G^{-k}_{\un u}(y)]=
G^{i+k}_{\un w^-}[H^{n_{-k}}(y)]\in Z(w^-_{i+k})=Z(u_i)$.
\item $i\geq k$: as in the previous case, $G^i_{\un u}(y)\in Z(u_i)$.
\end{enumerate}
Therefore $G^i_{\un u}(y)\in Z(u_i)$ for all $i\in\Z$, hence by Proposition \ref{Prop-shadowing}
it follows that $\pi(\un u)=y$.

Now we show that $\widehat\pi(\un R)=\pi(\un v)$. Indeed, since $n_k\pm\infty$ as $k\to\pm\infty$,
we have
$$
\{\widehat\pi(\un R)\}=\displaystyle\bigcap_{k\geq 0}\overline{_{n_{-k}}[R_{n_{-k}},\ldots,R_{n_k}]}
\subset\displaystyle\bigcap_{k\geq 0}\overline{Z_{-k}[v_{-k},\ldots,v_k]}.
$$
On one hand, this latter set is, by Theorem~\ref{Thm-stable-manifolds}(3), the intersection of a descending chain of closed sets with diameter going to zero, hence it is a singleton. On the other hand, it contains
$\displaystyle\bigcap_{k\geq 0}Z_{-k}[v_{-k},\ldots,v_k]=\{\pi(\un v)\}$. Thus
$\widehat\pi(\un R)=\pi(\un v)$, which concludes the proof of part (1).

To check part (2), note that its second statement is immediate from the above argument.
It implies the rest, since $\pi$ is H\"older-continuous.

We turn to part (3). Assume that $\un R\in\widehat\Sigma^\#$. Let $R\in\mathfs R$ and $m_j\to+\infty$ such that
$R_{m_j}=R$ for all $j$. Since $\widehat\Sigma$ is locally compact (the degrees of $\widehat{\mathfs G}$ are
all finite), the set
$$
\mathfs P=\{S\in\mathfs R:\exists \text{ path }S_0=R\to S_1\to\cdots\to S_i=S\text{ with }i\leq N\}
$$
is finite. Given $j$, let $k=k(j)$ be the unique integer such that $n_{k-1}<m_j\leq n_k$. Since $n_k-n_{k-1}\leq N$,
it follows that $R_{n_k}\in \mathfs P$. By Lemma \ref{Lemma-local-finite}(2), it follows that
$v_k$ belongs to the finite set
$\{Z\in\mathfs Z:\exists S\in \mathfs P\text{ such that }S\subset Z\}$, and so there is a sequence
$k_i\to+\infty$ such that $\{v_{k_i}\}_{i\geq 0}$ is a constant sequence. Proceeding similarly for the negative indices,
we conclude that $\un v\in\Sigma^\#$. This proves part~(3).

Now we prove part (4). By part (3), we have $\widehat\pi[\widehat\Sigma^\#]\subset \pi[\Sigma^\#]$. 
To prove the converse inclusion, 
let $\un v=\{v_n\}_{n\in\Z}\in\Sigma^\#$ and write $x=\pi(\un v)$.
Let $R_n\in\mathfs R$ such that $H^n(x)\in R_n$. Clearly, $\un R=\{R_n\}\in\widehat\Sigma$ 
and $x=\widehat\pi(\un R)$. It remains to prove that $\un R\in\widehat\Sigma^\#$. Let
$v\in \mathfs A$ and $k_i\to+\infty$ such that $v_{k_i}=v$ for all $i\geq 0$.
Letting $m_i:=n_{k_i}\to+\infty$ so that $H^{m_i}(x)=\pi[\sigma^{k_i}(\un v)]$,
we have $H^{m_i}(x)\in R_{m_i}\cap Z(v)$ and so 
$R_{m_i}\subset Z(v)$. By Lemma \ref{Lemma-local-finite}(1), there is a subsequence
${m_{\ell_j}}$ such that $(R_{m_{\ell_j}})$ is constant. Proceeding similarly for negative
indices, it follows that $\un R\in\widehat\Sigma^\#$ and so $\pi[\Sigma^\#]\subset\widehat\pi[\widehat\Sigma^\#]$. This concludes the proof of part (4), and of the proposition.
\end{proof}

We now define the topological Markov flow (TMF) and coding that satisfy the Main Theorem.
For that, recall the definition of TMF in Section \ref{Section-Preliminaries}.

\medskip
\noindent
{\sc The triple $({\widehat\Sigma}_{\widehat r},{\widehat\sigma}_{\widehat r},{\widehat\pi}_{\widehat r})$:} 
The topological Markov flow $({\widehat\Sigma}_{\widehat r},{\widehat\sigma}_{\widehat r})$ is the suspension of $(\widehat\Sigma,\widehat\sigma)$ by the roof function $\widehat r:\widehat\Sigma\to(0,\rho)$ defined by
$$
 \widehat r(\un R) := \min\{t>0:\vf^t(\widehat\pi(\un R)) = \widehat\pi(\widehat\sigma(\un R))\},
$$
and the factor map ${\widehat\pi}_{\widehat r}:\widehat\Sigma_{\widehat r}\to M$ is 
given by $\widehat\pi_{\widehat r}(\un R,s):=\vf^s(\widehat\pi(\un R))$.

\medskip
As claimed above, we have $\sup \widehat r<\rho$. Indeed, by 
Proposition \ref{Prop-relation-codings} there is $\un v=\{v_n\}_{n\in\Z}\in\Sigma$
such that $\widehat\pi(\un R)=\pi(\un v)$, and 
there are integers $n_{-1}<0<n_1$ such that  
$ _{n_{-1}}[R_{n_{-1}},\ldots,R_{n_1}]\subset Z_{-1}[v_{-1},v_0,v_1]$, hence
$\widehat r(\un R)\leq \widehat r_{n_1}(\un R)=r(\un v)<\rho$.
The rest of this section is devoted to proving that
$({\widehat\Sigma}_{\widehat r},{\widehat\sigma}_{\widehat r},{\widehat\pi}_{\widehat r})$
satisfies Theorem~\ref{t.main}. We start with some fundamental properties.

\begin{proposition}\label{Prop-pi_r}
The following holds for all $\ve>0$ small enough.
\begin{enumerate}[{\rm (1)}]
\item $\widehat r:\widehat\Sigma\to(0,\infty)$ is well-defined and H\"older continuous.
\item $\widehat\pi_{\widehat r}\circ\widehat\sigma_{\widehat r}^t=\vf^t\circ\widehat \pi_{\widehat r}$,
for all $t\in\R$.
\item $\widehat\pi_{\widehat r}$ is H\"older continuous with respect to the Bowen-Walters distance.
\item $\widehat\pi_{\widehat r}[\widehat\Sigma_{\widehat r}^\#]=\nuh^\#$.
\end{enumerate}
\end{proposition}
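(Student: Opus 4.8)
\textbf{Proof proposal for Proposition~\ref{Prop-pi_r}.}
The plan is to deduce all four statements from the already-established properties of the first coding $(\Sigma_r,\sigma_r,\pi_r)$ together with the relation between the two codings provided by Proposition~\ref{Prop-relation-codings}. First I would prove (1): well-definedness of $\widehat r$ is immediate since $\widehat\pi(\widehat\sigma(\un R))$ lies in the forward orbit of $\widehat\pi(\un R)$ (as $\widehat\pi(\un R)\in \mathfs Z$ and $H(\widehat\pi(\un R))=\widehat\pi(\widehat\sigma(\un R))$ up to the time-change of Lemma~\ref{l.time}), and the upper bound $\widehat r<\rho$ was already verified in the paragraph before the proposition. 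For H\"older continuity, I would use Proposition~\ref{Prop-relation-codings}(1)--(2): if $\un R,\un R'$ agree on a long central block, then the associated $\ve$--gpo's $\un v,\un v'$ agree on a central block of length $\gtrsim (\text{block length})/N$ (this is exactly the second assertion of Proposition~\ref{Prop-relation-codings}(2)), so $\widehat\pi(\un R)=\pi(\un v)$ and $\widehat\pi(\un R')=\pi(\un v')$ are close by H\"older continuity of $\pi$, and moreover the first return times $\widehat r(\un R),\widehat r(\un R')$ are comparable because they equal the corresponding return times of $x=\pi(\un v)$ and $x'=\pi(\un v')$ to the set $\mathfs Z$ (or to $\mathfs R$), which is a H\"older function of the point since the fibres vary H\"older-continuously; in fact $\widehat r(\un R)$ is a Birkhoff sum of the roof function $r$ of $\Sigma$ over a block of bounded length, so H\"older continuity of $\widehat r$ follows from that of $r$ (Proposition~\ref{Prop-pi_R}(2)) and of the map $\un R\mapsto\{v_i\}$.

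Part (2) is a formal computation from the definitions of the suspension flow and of $\widehat\pi_{\widehat r}$: for $(\un R,s)\in\widehat\Sigma_{\widehat r}$ and $t\in\R$, writing $\widehat\sigma_{\widehat r}^t(\un R,s)=(\widehat\sigma^n(\un R),s')$ with $\widehat r_n(\un R)\le s+t<\widehat r_{n+1}(\un R)$, one has $\widehat\pi_{\widehat r}(\widehat\sigma^n(\un R),s')=\vf^{s'}(\widehat\pi(\widehat\sigma^n(\un R)))$, and by the defining property of $\widehat r$ iterated $n$ times, $\widehat\pi(\widehat\sigma^n(\un R))=\vf^{\widehat r_n(\un R)}(\widehat\pi(\un R))$, whence $\vf^{s'}(\widehat\pi(\widehat\sigma^n(\un R)))=\vf^{s'+\widehat r_n(\un R)}(\widehat\pi(\un R))=\vf^{s+t}(\widehat\pi(\un R))=\vf^t(\widehat\pi_{\widehat r}(\un R,s))$; this is exactly the semiconjugacy. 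Part (3), H\"older continuity of $\widehat\pi_{\widehat r}$ in the Bowen--Walters metric, follows from (1) and (3) of Proposition~\ref{Prop-relation-codings} by the same argument used for $\pi_r$ in Proposition~\ref{Prop-pi_R}(2), i.e. one invokes \cite[Lemma 5.9]{Lima-Sarig}: a H\"older roof function plus a H\"older base-coding gives a H\"older coding of the suspension, and the regularity with respect to the Bowen--Walters metric is handled exactly as there.

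For part (4), the inclusion $\widehat\pi_{\widehat r}[\widehat\Sigma_{\widehat r}^\#]\subset\nuh^\#$ follows from Proposition~\ref{Prop-relation-codings}(3)--(4): an element $(\un R,s)\in\widehat\Sigma_{\widehat r}^\#$ has $\un R\in\widehat\Sigma^\#$, hence the associated $\un v\in\Sigma^\#$, hence $\widehat\pi(\un R)=\pi(\un v)\in\pi[\Sigma^\#]\subset\nuh^\#$ by the Inverse Theorem (Theorem~\ref{Thm-inverse}, used via Proposition~\ref{Prop-image-pi} and the $\nuh^\#$ conclusion), and $\nuh^\#$ is flow-invariant so $\vf^s(\widehat\pi(\un R))\in\nuh^\#$. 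For the reverse inclusion, I would take $x\in\nuh^\#$; by Proposition~\ref{Prop-pi_R}(3) (via $\pi_r[\Sigma_r^\#]\supset\nuh^\#$, equivalently $\pi[\Sigma^\#]\supset\Lambda\cap\nuh^\#$) there is a time $t_0\ge 0$ with $\vf^{-t_0}(x)\in\Lambda\cap\nuh^\#\subset\mathfs Z\subset\mathfs R$-saturated set; more directly, since the $\vf$-orbit of $x$ meets $\Lambda\cap\nuh^\#$ (by invariance), it meets $\mathfs Z$, say at $y=\vf^{-s}(x)$ with $0\le s<\rho$; then $y=\pi(\un v)$ for some $\un v\in\Sigma^\#$, and choosing $R_n\in\mathfs R$ with $H^n(y)\in R_n$ gives (by the argument in the proof of Proposition~\ref{Prop-relation-codings}(4)) $\un R=\{R_n\}\in\widehat\Sigma^\#$ with $\widehat\pi(\un R)=y$, so $(\un R,s)\in\widehat\Sigma_{\widehat r}^\#$ and $\widehat\pi_{\widehat r}(\un R,s)=\vf^s(y)=x$. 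The main obstacle I anticipate is the bookkeeping in (1)/(3): carefully tracking that $\widehat r$, being (over a bounded block) a Birkhoff sum of $r$ composed with the bounded-length recoding $\un R\mapsto\un v$, inherits H\"older continuity with a controlled exponent, and that the same block-length control makes $\widehat\pi_{\widehat r}$ H\"older in the Bowen--Walters metric; everything else is a direct transcription of the corresponding statements for $(\Sigma_r,\sigma_r,\pi_r)$ together with Proposition~\ref{Prop-relation-codings}.
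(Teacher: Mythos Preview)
Your treatment of parts (2), (3), and (4) matches the paper's proof essentially verbatim: the semiconjugacy is a formal cocycle computation, H\"older continuity of $\widehat\pi_{\widehat r}$ is reduced to \cite[Lemma 5.9]{Lima-Sarig}, and the equality $\widehat\pi_{\widehat r}[\widehat\Sigma_{\widehat r}^\#]=\nuh^\#$ is obtained from $\widehat\pi[\widehat\Sigma^\#]=\pi[\Sigma^\#]$ (Proposition~\ref{Prop-relation-codings}(4)) together with Theorem~\ref{Thm-inverse} and flow-saturation, exactly as you write.

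There is, however, a genuine gap in your argument for part (1). Two of the justifications you offer are incorrect. First, the claim that ``$\widehat r(\un R)$ is a Birkhoff sum of the roof function $r$ of $\Sigma$ over a block of bounded length'' is backwards: what Proposition~\ref{Prop-relation-codings} gives is $\widehat r_{n_1}(\un R)=r(\un v)$, i.e.\ a single value of $r$ equals a \emph{sum} of up to $N$ values of $\widehat r$, not the other way around, so you cannot read off H\"older continuity of $\widehat r$ from that of $r$ this way. Second, the claim that $\widehat r(\un R)$ is ``the return time of $x$ to the set $\mathfs Z$, which is a H\"older function of the point since the fibres vary H\"older-continuously'' is not a valid argument: $\mathfs Z$ is a fractal set, and first-hitting times to fractal sets are in general highly discontinuous; moreover for general $\un R\in\widehat\Sigma$ the base point $\widehat\pi(\un R)$ lies only in $\overline{R_0}$, so the first-return interpretation is not even well-posed.

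The paper's fix is simple but is the one idea you are missing: on the cylinder $U=\{\un S:(S_0,S_1)=(R_0,R_1)\}$ one has $\widehat\pi(U)\subset D_i$ and $\widehat\pi(\widehat\sigma(U))\subset D_j$ for the connected components $D_i,D_j$ of $\widehat\Lambda$ containing $R_0,R_1$, and therefore $\widehat r=\tau\circ\widehat\pi$ on $U$, where $\tau(x)=\inf\{t>0:\vf^t(x)\in D_j\}$ is the \emph{smooth} passage time between two transverse discs (Lemma~\ref{Lemma-regularity-q-t}). H\"older continuity of $\widehat r$ then follows immediately from the H\"older continuity of $\widehat\pi$ that you already have. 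Replacing your two flawed justifications by this observation makes your proof correct and identical to the paper's.
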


\begin{proof}
To prove part (1), note that, by construction of $\widehat\sigma$ and of the sections $\Lambda\subset\widehat\Lambda$, $\widehat r$ is well-defined over $\widehat\Sigma$. Now, let $\un R\in\widehat\Sigma$ and notice that $U:=\{\un S\in\widehat\Sigma:(S_0,S_1)=(R_0,R_1)\}$ is a neighborhood of $\un R$. Moreover, 
there are $v,w\in\mathfs A$ and discs $D_i,D_j$ from $\widehat\Lambda$ such that 
$\widehat\pi(U)\subset Z(v)\subset D_i$ and $\widehat\pi(\widehat\sigma(U))\subset Z(w)\subset D_j$. 
Setting $\tau(x)=\inf\{t>0:\vf^t(x)\in D_j\}$ for $x$ on a neighborhood of $Z(v)$,
we have $\widehat r=\tau\circ\widehat\pi$ on $U$. 
Since $\tau$ is a continuous passage time between the two smooth disks, transverse to the flow, 
it is well-defined and smooth, see Lemma \ref{Lemma-regularity-q-t}(3).
To finish the proof of part (1), recall that $\widehat\pi$ is 
H\"older continuous by Proposition~\ref{Prop-relation-codings}(2).

Part (2) follows from the definition of $\widehat r$ by a routine argument, which we quickly recall. 
For $n\in\Z$, let $\widehat r_n$ be the $n$--th Birkhoff sum of $\widehat r$ (see Section~\ref{Section-Preliminaries}). 
Let $(\un R,s)\in\widehat\Sigma_{\widehat r}$. Given $t\in\R$, let $n\in\Z$ be defined by 
$\widehat r_n(\un R)\le t+s<\widehat r_{n+1}(\un R)$ so that 
$\widehat\sigma_{\widehat r}^{t}(\un R,s)=(\widehat\sigma^n(\un R),t+s-\widehat r_n(\un R))$. We have
\begin{align*}
&\, (\widehat\pi_{\widehat r}\circ \widehat\sigma_{\widehat r}^t)(\un R,s)
=\widehat\pi_{\widehat r}\left(\widehat\sigma^n(\un R),t+s-\widehat r_n(\un R)\right) 
=\vf^{t+s-\widehat r_n(\un R)}(\widehat\pi(\widehat\sigma^n(\un R)))\\
&= \vf^{t+s-\widehat r_n(\un R)}(\vf^{\widehat r_n(\un R)}(\widehat\pi(\un R)))=
\vf^{t+s}(\widehat\pi(\un R))
= (\vf^t\circ\widehat\pi_{\widehat r})(\un R,s),
\end{align*}
and so part (2) is established.

Now we prove part (3). By Proposition~\ref{Prop-relation-codings}(2), $\widehat\pi$ is H\"older continuous.
Applying the same arguments of \cite[Lemma 5.9]{Lima-Sarig}, we conclude that
$\widehat\pi_{\widehat r}$ is H\"older continuous with respect to the Bowen-Walters distance. 

\medbreak
We finally arrive at part (4). Recall from Proposition \ref{Prop-relation-codings}(4) that
$\widehat\pi[\widehat\Sigma^\#]=\pi[\Sigma^\#]$, hence Proposition~\ref{Prop-pi}(3) rewrites as
$\widehat\pi[\widehat\Sigma^\#]\supset\Lambda\cap\nuh^\#$. The flow saturation of 
$\widehat\pi[\widehat\Sigma^\#]$ is $\widehat\pi_{\widehat r}[\widehat\Sigma_{\widehat r}^\#]$
by definition, and the flow saturation of $\Lambda\cap\nuh^\#$ is $\nuh^\#$ since $\Lambda$ is a global section
and $\nuh^\#$ is $\vf$--invariant. Therefore 
$\widehat\pi_{\widehat r}[\widehat\Sigma_{\widehat r}^\#]\supset\nuh^\#$. Reversely,
$\widehat\pi[\widehat\Sigma^\#]=\pi[\Sigma^\#]$ is contained in $\nuh^\#$ by Theorem 
\ref{Thm-inverse}. Saturating this inclusion under the flow, we obtain that $\widehat\pi_{\widehat r}[\widehat\Sigma_{\widehat r}^\#]\subset\nuh^\#$. This concludes the proof of part (4).
\end{proof}

By Proposition \ref{Prop-adaptedness}, the above proposition establishes Parts
(1) and (2) of the Main Theorem. In the next sections, we focus on proving part (3)
and the other properties stated in Theorem~\ref{t.main}.

\subsection{The map $\widehat\pi_r$ is finite-to-one}\label{s.finite-one}

Given $Z\in\mathfs Z$, remember that $\mathfs I_Z=\{Z'\in\mathfs Z:\vf^{[-\rho,\rho]}Z\cap Z'\neq\emptyset\}$.
The loss of injectivity of $\widehat\pi_{\widehat r}$ is related to the following notion.

\medskip
\noindent
{\sc Affiliation:} We say that two rectangles $R,S\in\mathfs R$ are {\em affiliated}, and write
$R\sim S$, if there are
$Z,Z'\in\mathfs Z$ such that $R\subset Z$, $S\subset Z'$ and $Z'\in\mathfs I_Z$. 
This is a symmetric relation.

\begin{lemma}\label{Lemma-affiliation}
If $\widehat\pi(\un R)=\vf^t[\widehat\pi(\un S)]$ with $\un R,\un S\in\widehat\Sigma^\#$ and $|t|\leq\rho$, then $R_0\sim S_0$.
More precisely, if $\un v,\un w\in\Sigma^\#$ are such that $\pi(\un v)=\widehat\pi(\un R)$ and $\pi(\un w)=\widehat\pi(\un S)$, then $R_0\subset Z(v_0)$ and $S_0\subset Z(w_0)$ with $Z(w_0)\in\mathfs I_{Z(v_0)}$.
\end{lemma}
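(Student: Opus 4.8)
The statement asserts two things: first, that the symbols $R_0$ and $S_0$ are affiliated; second, a precise witness, namely that if $\un v,\un w\in\Sigma^\#$ code $\widehat\pi(\un R)$ and $\widehat\pi(\un S)$ respectively, then $R_0\subset Z(v_0)$, $S_0\subset Z(w_0)$ and $Z(w_0)\in\mathfs I_{Z(v_0)}$. Note that the first assertion follows immediately from the second by the very definition of affiliation, so the whole content is in producing the $\ve$--gpo's and checking the inclusion relations. The plan is to invoke Proposition~\ref{Prop-relation-codings}(1): for each of $\un R,\un S\in\widehat\Sigma^\#$ and for a choice of $Z\in\mathfs Z$ with $Z\supset R_0$ (resp.\ $Z'\supset S_0$), that proposition produces an $\ve$--gpo $\un v\in\Sigma^\#$ with $Z(v_0)=Z$ and $\pi(\un v)=\widehat\pi(\un R)$, and likewise $\un w\in\Sigma^\#$ with $Z(w_0)=Z'$ and $\pi(\un w)=\widehat\pi(\un S)$. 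Since any two $\ve$--gpo's coding the same point produce sections $Z(\cdot)$ that are, up to the local finiteness of $\mathfs Z$, essentially pinned down, the inclusion statement for a \emph{given} pair $\un v,\un w$ will reduce to the same argument applied to those specific codings; so it suffices to establish the inclusion and affiliation for the codings built in Proposition~\ref{Prop-relation-codings}(1), and then observe the argument did not use anything about $\un v,\un w$ beyond $Z(v_0)\supset R_0$, $\pi(\un v)=\widehat\pi(\un R)$, etc.

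First I would record what we know about the points involved. Put $x=\widehat\pi(\un R)=\pi(\un v)$ and $y=\widehat\pi(\un S)=\pi(\un w)$, so that $x=\vf^t(y)$ with $|t|\leq\rho$. By Proposition~\ref{Prop-relation-codings}(1), $R_0\subset Z(v_0)$ and $S_0\subset Z(w_0)$; moreover $x\in Z(v_0)$ since $x\in R_0\subset Z(v_0)$, and similarly $y\in Z(w_0)$. Now $y=\vf^{-t}(x)$ with $|{-t}|\leq\rho$, hence $y\in\vf^{[-\rho,\rho]}Z(v_0)$; combined with $y\in Z(w_0)$ this gives $\vf^{[-\rho,\rho]}Z(v_0)\cap Z(w_0)\neq\emptyset$, which is exactly the definition of $Z(w_0)\in\mathfs I_{Z(v_0)}$. (Equivalently $x\in\vf^{[-\rho,\rho]}Z(w_0)\cap Z(v_0)$, giving $Z(v_0)\in\mathfs I_{Z(w_0)}$, and symmetry of the affiliation relation is automatic.) Thus $R_0\subset Z(v_0)$, $S_0\subset Z(w_0)$, $Z(w_0)\in\mathfs I_{Z(v_0)}$, which says $R_0\sim S_0$.

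The only subtlety — and the step I would state carefully rather than grind through — is the passage from ``some codings $\un v,\un w$'' (the ones Proposition~\ref{Prop-relation-codings}(1) hands us) to ``any codings $\un v,\un w$ of these points''. Since the displayed argument used only the three facts $\pi(\un v)=x$, $\pi(\un w)=y$, $R_0\subset Z(v_0)$, $S_0\subset Z(w_0)$ (the last two being consequences of $H^0(x)=x\in R_0$ and the defining property $R_0=Z(v)\cap(\text{element of }\mathfs E_{Z(v)})$ for whatever $v$ with $\pi(\un v)=x$, $v_0=v$), the same reasoning applies verbatim to an arbitrary pair of codings: any $\un v\in\Sigma^\#$ with $\pi(\un v)=x$ has $x\in Z(v_0)$, and since $R_0$ is the $\simN$--class of $x$ inside $Z(v_0)$ one gets $R_0\subset Z(v_0)$. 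Hence the ``more precisely'' clause holds for every such $\un v,\un w$, and the short geometric computation of the previous paragraph finishes the proof. I would present the argument in this order: (i) get the codings and the basic inclusions from Proposition~\ref{Prop-relation-codings}(1); (ii) note $x=\vf^t(y)$ forces $\vf^{[-\rho,\rho]}Z(v_0)\cap Z(w_0)\neq\emptyset$; (iii) conclude $Z(w_0)\in\mathfs I_{Z(v_0)}$ and hence $R_0\sim S_0$; (iv) remark that steps (ii)–(iii) used nothing specific to the chosen codings, giving the sharper statement. No genuine obstacle is expected; this is a bookkeeping lemma whose weight sits entirely in Proposition~\ref{Prop-relation-codings}.
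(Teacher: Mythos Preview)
Your proof is correct and follows essentially the same approach as the paper: apply Proposition~\ref{Prop-relation-codings} to obtain $\un v,\un w\in\Sigma^\#$ with $R_0\subset Z(v_0)$, $S_0\subset Z(w_0)$, and then observe that $x=\vf^t(y)$ with $|t|\le\rho$ forces $\vf^{[-\rho,\rho]}Z(v_0)\cap Z(w_0)\neq\emptyset$, i.e.\ $Z(w_0)\in\mathfs I_{Z(v_0)}$. Your additional discussion of the universal quantifier (``any codings $\un v,\un w$'') is handled in the paper as a separate remark immediately following the lemma, using precisely the freedom in Proposition~\ref{Prop-relation-codings}(1) to start from an arbitrary $Z\supset R_0$.
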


\begin{proof}
Let $y=\widehat\pi(\un R)$ and $z=\widehat\pi(\un S)$, so that $y=\vf^{t}(z)$.
Applying Proposition \ref{Prop-relation-codings} to $\un R$ and $\un S$, we find
two $\ve$--gpo's $\un v,\un w\in \Sigma^\#$ such that:
\begin{enumerate}[$\circ$]
\item $\pi(\un v)=y$ and $R_0\subset Z(v_0)$,
\item $\pi(\un w)=z$ and $S_0\subset Z(w_0)$.
\end{enumerate}
The lemma thus follows with $Z=Z(v_0)$ and $Z'=Z(w_0)$, since $\vf^t(z)\in Z(v_0)$.
\end{proof}

\begin{remark} We observe that the condition $\widehat\pi(\un R)=\vf^t[\widehat\pi(\un S)]$ in the above lemma 
actually implies more than just $R_0\sim S_0$. It implies a {\em strong} affiliation: for {\em any}
$Z,Z'\in\mathfs Z$ such that $Z\supset R_0$ and $Z' \supset S_0$, we have
$Z'\in\mathfs I_Z$. Indeed, if $\un R,\un S\in\widehat\Sigma^\#$ and $|t|\leq\rho$
satisfy $\widehat\pi(\un R)=\vf^t[\widehat\pi(\un S)]$ and $Z,Z'\in\mathfs Z$ satisfy $Z\supset R_0$
and $Z' \supset S_0$, Proposition \ref{Prop-relation-codings} gives the existence of $\un v,\un w\in\Sigma^\#$
such that $\pi(\un v)=\widehat\pi(\un R)$ and $\pi(\un w)=\widehat\pi(\un S)$ with $Z(v_0)=Z$ and $Z(w_0)=Z'$,
and so $Z'\in\mathfs I_Z$.
\end{remark}

For each $R\in\mathfs R$, define
$$
A(R):=\{(S,Z')\in\mathfs R\times\mathfs Z:R\sim S\text{ and }S\subset Z'\}\text{ and }N(R):=\#A(R).
$$
We can use Lemma \ref{Lemma-local-finite} and proceed as in the proof of \cite[Lemma 12.7]{Sarig-JAMS}
to show that $N(R)<\infty$, $\forall R\in\mathfs R$. Having this in mind, we now prove the finiteness-to-one
property of $\widehat\pi_{\widehat r}$, i.e. part (3) of the Main Theorem and of Theorem~\ref{t.main}.

\begin{theorem}\label{Thm-finite-extension}
Every $x\in \widehat\pi_{\widehat r}[\widehat\Sigma_{\widehat r}^\#]$ has finitely many
$\widehat\pi_{\widehat r}$--preimages inside $\widehat\Sigma_{\widehat r}^\#$.
More precisely, if $x=\widehat\pi_{\widehat r}(\un R,t)$ with $R_n=R$ for infinitely many $n>0$
and $R_n=S$ for infinitely many $n<0$, then
$\#\{(\un S,t')\in\widehat\Sigma^\#_{\widehat r}:\widehat\pi_{\widehat r}(\un S,t')=x\}\leq N(R)N(S)$.
\end{theorem}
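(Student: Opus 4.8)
The plan is to reduce the counting of $\widehat\pi_{\widehat r}$--preimages in $\widehat\Sigma_{\widehat r}^\#$ to counting $\widehat\pi$--preimages in $\widehat\Sigma^\#$, and then to bound the latter using the affiliation relation together with the canonical structure coming from the partition $\mathfs R$ and the return map $H$. First I would observe that if $x=\widehat\pi_{\widehat r}(\un S,t')$ with $(\un S,t')\in\widehat\Sigma_{\widehat r}^\#$, then $\vf^{-t'}(x)=\widehat\pi(\un S)\in\mathfs R$ and $\un S$ is an itinerary of $\vf^{-t'}(x)$ under the first return map $H$; since $\widehat r<\rho$, there are only boundedly many such base points $\vf^{-t'}(x)$ on the orbit of $x$ that can arise (they are exactly the hits of the $\vf$--orbit of $x$ to $\mathfs R$ inside a window of length $\rho$), so up to a uniformly bounded multiplicative factor it suffices to bound $\#\{\un S\in\widehat\Sigma^\#:\widehat\pi(\un S)=y\}$ for a fixed $y\in\mathfs R$. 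Actually, using Proposition~\ref{Prop-R}(1) (the product structure of $\mathfs R$) and the fact that $H$ is genuinely a bijection of $\mathfs R$ onto itself at the relevant points, the itinerary $\{R_n\}_{n\in\Z}$ with $H^n(y)\in R_n$ is forced to be essentially unique once we know which rectangle of $\mathfs R$ each $H^n(y)$ lies in; the only ambiguity is that $H^n(y)$ may lie in the boundary between several rectangles, but the regular set constraint $R_n=R$ i.o. and $R_n=S$ i.o. will pin this down up to the affiliation count.

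The core of the argument is then the diffeomorphism-case scheme of \cite[Lemma~12.7 and Thm.~12.8]{Sarig-JAMS}, adapted via Lemma~\ref{Lemma-affiliation}. Concretely, suppose $\widehat\pi(\un R)=\widehat\pi(\un S)=x$ with $\un R,\un S\in\widehat\Sigma^\#$. I would apply Proposition~\ref{Prop-relation-codings} to both $\un R$ and $\un S$ to get $\ve$--gpo's $\un v,\un w\in\Sigma^\#$ with $\pi(\un v)=\pi(\un w)=x$ and $R_{n_k}\subset Z(v_k)$, $S_{m_j}\subset Z(w_j)$ along the associated index sequences. Then Lemma~\ref{Lemma-affiliation} (applied with $t=0$) gives $R_0\sim S_0$, and more generally $R_n\sim S_n$ for all $n$ by applying the lemma to the shifted sequences $\widehat\sigma^n(\un R),\widehat\sigma^n(\un S)$ together with the relation $\widehat\pi(\widehat\sigma^n(\un R))=H^n(x)=\widehat\pi(\widehat\sigma^n(\un S))$ (using Proposition~\ref{Prop-pi_r}(2) to identify the shifted base points). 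So the two itineraries $\un R,\un S$ are "pointwise affiliated". Now fix a symbol $R$ with $R_n=R$ for infinitely many $n>0$: at each such $n$ we have $S_n\in\{S':R\sim S'\}$, which by definition of $N(R)$ and finiteness of $A(R)$ is a finite set; combining with the Markov property and the fact that between two consecutive $R$--occurrences the forward itinerary of $\un R$ is determined by $\un R$ itself (since $\widehat\Sigma$ is a TMS), a pigeonhole argument bounds the number of possible forward halves of $\un S$ by $N(R)$. Symmetrically, using a symbol $S$ with $R_n=S$ for infinitely many $n<0$, the number of possible backward halves of $\un S$ is bounded by $N(S)$. Hence $\#\{\un S\in\widehat\Sigma^\#:\widehat\pi(\un S)=x\}\leq N(R)N(S)$, and undoing the suspension reduction (each such $\un S$ contributing a single $t'$, since $t'$ is determined by the condition $\vf^{-t'}(x)\in\mathfs R$ with $0\le t'<\widehat r(\un S)$) yields the stated bound.

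The main obstacle I anticipate is the pigeonhole step: making precise the claim that "an element of $\widehat\Sigma^\#$ that is pointwise affiliated to a fixed $\un R\in\widehat\Sigma^\#$ is determined, up to $N(R)$ choices for the forward part and $N(S)$ choices for the backward part, by the finitely many affiliation constraints." In the diffeomorphism setting this is handled in \cite{Sarig-JAMS} by showing that once one knows $S_n$ for one index $n$ in a recurrent block $R_n=R$, the affiliation relation together with the local product structure (Proposition~\ref{Prop-R}(1)) and the geometrical Markov property (Proposition~\ref{Prop-R}(3)) force $S_m$ for all $m\ge n$; the subtlety here is the extra bookkeeping introduced by the index sequences $(n_k)$ of Proposition~\ref{Prop-relation-codings}, i.e.\ the fact that an edge of $\widehat{\mathfs G}$ need not correspond to an edge of $\mathfs G$. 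I would handle this by working entirely at the level of the return map $H$ and the partition $\mathfs R$: the point $H^n(x)$ is well-defined and lies in a definite, finite collection of rectangles $R_n'$ (those affiliated to $R_n$), and the product structure of $\mathfs R$ says that $H^n(x)=[H^n(x),H^n(x)]$ expresses $H^n(x)$ via its stable and unstable fibres in any rectangle containing it; combined with Proposition~\ref{Prop-R}(3), knowing which rectangle contains $H^n(x)$ for all $n$ large determines the rectangle for all $n$, and the number of consistent choices of "which rectangle" is exactly controlled by $N(R)$ on the positive side and $N(S)$ on the negative side. The remaining routine points — that $A(R)$ is finite (from Lemma~\ref{Lemma-local-finite}, exactly as in \cite[Lemma~12.7]{Sarig-JAMS}), and that the suspension reduction costs no extra factor — I would dispatch briefly, citing the analogous statements.
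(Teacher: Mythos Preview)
There is a genuine gap in your reduction to the diffeomorphism scheme. You assert that for two preimages $\un R,\un S\in\widehat\Sigma^\#$ of the same base point $y$ one has $\widehat\pi(\widehat\sigma^n\un R)=H^n(y)=\widehat\pi(\widehat\sigma^n\un S)$, and hence $R_n\sim S_n$ at the \emph{same} index $n$. This is precisely what fails for flows: unlike Sarig's setting, the base shift $\widehat\sigma$ is not conjugate to a globally defined continuous map. By the proof of Proposition~\ref{Prop-pi_r}(1), $\widehat r(\un S)$ is the hitting time to the disc of $\widehat\Lambda$ containing $S_1$, \emph{not} the first return time $r_{\mathfs Z}(y)$; if $S_1$ and $R_1$ lie in different discs, then $\widehat r(\un R)\neq\widehat r(\un S)$ and the two codings run on different clocks. (Concretely, take $y\in\overline{S_0}\setminus S_0$ such that nearby points of $S_0$ first return to $\mathfs Z$ in a disc different from the one containing $H(y)$.) The paper's proof confronts this head--on: it does \emph{not} fix a base point, but for each preimage $(\un R^{(i)},t_i)$ introduces alignment indices $\kappa_i,\tau_i$ locating where the recurrent positions $R_k=S$, $R_\ell=R$ of the reference coding fall in the $i$--th clock, and then applies Lemma~\ref{Lemma-affiliation} with $|t|\le\rho$ (not $t=0$) to produce a quadruple in $A(S)\times A(R)$.

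Even the pigeonhole step you sketch --- ``knowing $S_n$ at one recurrent index forces $S_m$ for all $m\ge n$'' --- is the crux and cannot be dispatched by citing Proposition~\ref{Prop-R}(1),(3) alone. In the paper this is the Claim inside the proof: given two preimages whose quadruples coincide, one forms Smale brackets $w=[y,z]$ and $w'=[y',z']$ in the common rectangles $B,A$ and shows, via Proposition~\ref{Prop-overlapping-charts-2} applied along an \emph{interleaved} sequence of times drawn from the two clocks, that $w'=H^{\tau_1-\kappa_1}(w)=H^{\tau_2-\kappa_2}(w)$; only then do the two finite words agree and the contradiction follows. Your product--structure/Markov heuristic does not supply this interleaving argument, and your ``bound forward halves by $N(R)$, backward by $N(S)$'' is not how the Sarig argument actually runs (it is contradiction via matching quadruples, not a direct count). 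Finally, your suspension reduction would at best yield $C\cdot N(R)N(S)$ for some $C>1$ coming from the number of base hits in a $\rho$--window, not the stated bound.
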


\begin{proof}
The proof is by contradiction. Assuming that
$\#\{(\un S,t')\in\widehat\Sigma^\#_{\widehat r}:\widehat\pi_{\widehat r}(\un S,t')=x\}$
contains $N(R)N(S)+1$ distinct elements $(\un R^{(i)},t_i)$,  we are going to show that,
up to permutation of these preimages,  there are arbitrarily large integers $k<0<\ell$ such that 
\begin{equation}\label{eq-coincides}
(R^{(1)}_{k},\ldots,R^{(1)}_{\ell})=(R^{(2)}_{k},\ldots,R^{(2)}_{\ell}),
\end{equation} 
i.e. $\un R^{(1)}$ and $\un R^{(2)}$ agree between positions $k\to-\infty$ and $\ell\to+\infty$. 
This implies that $\un R^{(1)}=\un R^{(2)}$ and so $t_1\ne t_2$. But then $x$ is periodic with period 
$|t_2-t_1|<\widehat r(\un R^{(1)})<\rho$,  a contradiction to the choice of  $\rho$ (see Section~\ref{section-discs}).

The proof of equality~\eqref{eq-coincides} uses, as in \cite[Theorem 12.8]{Sarig-JAMS},
an idea of Bowen \cite[pp. 13--14]{Bowen-Regional-Conference}: it exploits the (non-uniform) expansiveness of $\vf$,
expressed in terms of the uniqueness of shadowing (Proposition \ref{Prop-shadowing}).
For simplicity of notation, we assume without loss of generality that $t=0$.
Recall that $r_n$ and $\widehat r_n$ denote Birkhoff sums for $n\in\Z$, see Section~\ref{Section-Preliminaries}.

\medskip
Let $x_n:=\vf^{\widehat r_n(\un R)}(x)=\widehat\pi[\widehat\sigma^n(\un R)]$, a point in the trajectory of $x$.
Fix two integers $k<0<\ell$ such that $R_k=S$ and $R_\ell=R$.

\begin{figure}
\includegraphics[width=0.9\linewidth]{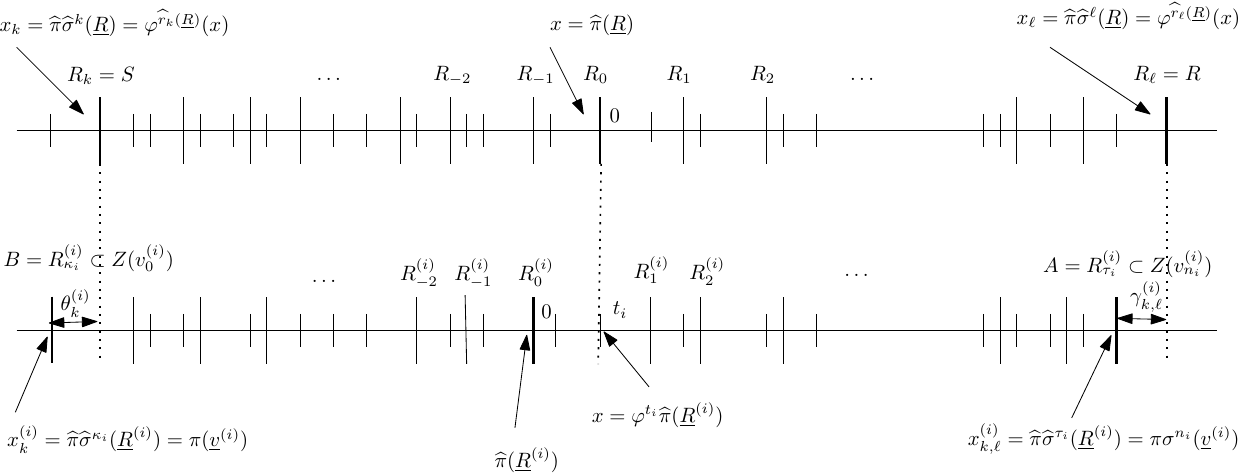}
\caption{The objects in the proof of Theorem~\ref{Thm-finite-extension}.  
The line above depicts the points associated to $x=\widehat\pi(\underline R)$ and the line below 
to $x=\varphi^{t_i}(\underline R^{(i)})$.  
Vertical segments represent visits to the section (long segments correspond to the symbols from $\un R$ or $\un R^{(i)}$).  
The origins $\widehat\pi(\un R)$ and $\widehat\pi(\un R^{(i)})$ are marked by a zero.}
\end{figure}

For each $i=1,\ldots,N(R)N(S)+1$, consider the following objects:
\begin{enumerate}[$\circ$]
\item Let $\kappa_i\in\Z$ be the unique integer such that
$\widehat r_{\kappa_i}(\un R^{(i)})\leq \widehat r_k(\un R)+t_i<\widehat r_{\kappa_i+1}(\un R^{(i)})$,
so that $(\vf^{t_i}\circ\widehat\pi\circ\widehat\sigma^{k})(\un R)$ belongs to the orbit segment between 
$(\widehat\pi\circ\widehat\sigma^{\kappa_i})(\un R^{(i)})$ and $(\widehat\pi\circ\widehat\sigma^{\kappa_i+1})(\un R^{(i)})$.
\item Let $\theta^{(i)}_k:=\widehat r_k(\un R)+t_i-\widehat r_{\kappa_i}(\un R^{(i)})$,
then $0\le\theta^{(i)}_k<\rho$.
\item Let $x^{(i)}_k:=\vf^{\widehat r_{\kappa_i}(\un R^{(i)})-t_i}(x)=\widehat\pi[\widehat\sigma^{\kappa_i}(\un R^{(i)})]$,
a point in the trajectory of $x$. Note that $\vf^{\theta^{(i)}_k}(x^{(i)}_k)=\vf^{\widehat r_k(\un R)}(x)=x_k$
and $\vf^{\theta^{(i)}_k+\widehat r_\ell(\un R)-\widehat r_k(\un R)}(x^{(i)}_k)=x_\ell$.
\item By Proposition \ref{Prop-relation-codings}, there is an $\ve$--gpo $\un v^{(i)}\in\Sigma^\#$
such that $\pi[\un v^{(i)}]=x^{(i)}_k$ and $R_{\kappa_i}^{(i)}\subset Z(v_0^{(i)})$.
\item Let $n_i$ be the unique integer such that
$r_{n_i}(\un v^{(i)})\leq \theta^{(i)}_k+\widehat r_\ell(\un R)-\widehat r_k(\un R)<r_{n_i+1}(\un v^{(i)})$.
Hence $x_\ell$ belongs to the orbit segment between $(\pi\circ\sigma^{n_i})(\un v^{(i)})$
and $(\pi\circ\sigma^{n_i+1})(\un v^{(i)})$.
\item Let $\tau_i>\kappa_i$ be the unique integer such that
$\widehat r_{\tau_i-\kappa_i}[\widehat\sigma^{\kappa_i}(\un R^{(i)})]=r_{n_i}(\un v^{(i)})$.
The existence of such an integer is ensured by Proposition~\ref{Prop-relation-codings} which also gives
$R^{(i)}_{\tau_i}\subset Z(v^{(i)}_{n_i})$.
\item Let $x^{(i)}_{k,\ell}:=\vf^{r_{n_i}(\un v^{(i)})}(x^{(i)}_k)=\widehat\pi[\widehat\sigma^{\tau_i}(\un R^{(i)})]$,
a point in the trajectory of $x$.
\item Let $\gamma^{(i)}_{k,\ell}:=\theta^{(i)}_k+\widehat r_\ell(\un R)-\widehat r_k(\un R)-r_{n_i}(\un v^{(i)})$,
then $|\gamma^{(i)}_{k,\ell}|<\rho$.
This is the time displacement between $x^{(i)}_{k,\ell}$ and $x_\ell$,
i.e. $x_\ell=\vf^{\gamma^{(i)}_{k,\ell}}(x^{(i)}_{k,\ell})$.
\end{enumerate}
Therefore, for each $i$, we have:
\begin{enumerate}[$\circ$]
\item $(R_{\kappa_i}^{(i)},Z(v_0^{(i)}))\in A(S)$: this follows from Lemma \ref{Lemma-affiliation},
since $x^{(i)}_k=\widehat\pi[\widehat\sigma^{\kappa_i}(\un R^{(i)})]$, $x_k=\widehat\pi[\widehat\sigma^k(\un R)]$
and $x_k=\vf^{\theta^{(i)}_k}(x^{(i)}_k)$.
\item $(R_{\tau_i}^{(i)},Z(v_{n_i}^{(i)}))\in A(R)$: this also follows from Lemma \ref{Lemma-affiliation},
since $x^{(i)}_{k,\ell}=\widehat\pi[\widehat\sigma^{\tau_i}(\un R^{(i)})]$, $x_\ell=\widehat\pi[\widehat\sigma^\ell(\un R)]$
and $x_\ell=\vf^{\gamma^{(i)}_{k,\ell}}(x^{(i)}_{k,\ell})$.
\end{enumerate}

The previous paragraph implies that every quadruple
$(R_{\kappa_i}^{(i)},Z(v_0^{(i)});R_{\tau_i}^{(i)},Z(v_{n_i}^{(i)}))$ we constructed
belongs to the cartesian product $A(R)\times A(S)$. This latter set has cardinality $N(R)N(S)$,
hence by the pigeonhole principle there are distinct $i,j$ such that
$$
(R_{\kappa_i}^{(i)},Z(v_0^{(i)});R_{\tau_i}^{(i)},Z(v_{n_i}^{(i)}))=
(R_{\kappa_j}^{(j)},Z(v_0^{(j)});R_{\tau_j}^{(j)},Z(v_{n_j}^{(j)})).
$$
For simplicity of notation, we assume $i=1$ and $j=2$ and write $R^{(1)}_{\kappa_1}=R^{(2)}_{\kappa_2}=:B$ and
$R^{(1)}_{\tau_1}=R^{(2)}_{\tau_2}=:A$.

Set $\alpha_i:=\widehat r_{\kappa_i}(\un R)-t_i$ and $\beta_i:=\widehat r_{\tau_i}(\un R)-t_i$ for $i=1,2$. 
By definition, we have $\alpha_i\in [\widehat r_k(\un R)-\rho,\widehat r_k(\un R)]$, and so $|\alpha_1-\alpha_2|\le\rho$. Since $\vf^{\alpha_1}(x)=\widehat\pi[\widehat\sigma^{\kappa_1}(\un R^{(1)})]$ and 
$\vf^{\alpha_2}(x)=\widehat\pi[\widehat\sigma^{\kappa_2}(\un R^{(2)})]$ both belong to $\overline{B}$, 
we must have $\alpha_1=\alpha_2$.
An analogous argument shows that $\beta_1=\beta_2$. We denote these common values by $\alpha,\beta$.

Since $R^{(1)}_{\kappa_1}\to\cdots\to R^{(1)}_{\tau_1}$
and $R^{(2)}_{\kappa_2}\to\cdots\to R^{(2)}_{\tau_2}$ are admissible paths on $\widehat\Sigma$,
we can find non-periodic points
$$
y\in {_{0}[}R^{(1)}_{\kappa_1},\ldots,R^{(1)}_{\tau_1}]\textrm{ and }
z\in{_{0}[}R^{(2)}_{\kappa_2},\ldots,R^{(2)}_{\tau_2}].
$$
Let $y'=H^{\tau_1-\kappa_1}(y)$ and $z'=H^{\tau_2-\kappa_2}(z)$.
We have $y,z\in B$ and $y',z'\in A$. By Proposition~\ref{Prop-R}(1), we can define two
points $w,w'$ by the equalities
\begin{align*}\{w\}&:=\{[y,z]\}= W^s(y,B)\cap W^u(z,B)\\
\{w'\}&:=\{[y',z']\}= W^s(y',A)\cap W^u(z',A).
\end{align*}
Note that neither $w$ nor $w'$ can be periodic.

\medskip
\noindent
{\sc Claim:} $w,w'$ belong to the same trajectory of $\vf$. More precisely, $w'=H^{\tau_1-\kappa_1}(w)$.

\begin{proof}[Proof of the claim.]
This is a consequence of Proposition \ref{Prop-overlapping-charts-2}: we can obtain $w'$
from $w$ by applying small flow displacements of Smale products of points at nearby rectangles.

To implement this idea, we first divide the interval $[\alpha,\beta]$ by visits to the rectangles 
$\left\{R^{(1)}_k\right\}_{\kappa_1\le k\le\tau_1}$ and $\left\{R^{(2)}_k\right\}_{\kappa_2\le k\le\tau_2}$.
Since these visits are $\rho$--dense in this interval, we can select times:
 \begin{equation}\label{eq-interleave}
   \delta_0=\alpha<\ve_0<\delta_1<\ve_1<\dots<\delta_T\le\ve_T=\beta
   \text{ such that } 0<\ve_s-\delta_s,\delta_{s+1}-\ve_s\le\rho
 \end{equation}
where each $\delta_t=\widehat r_m(\un R^{(1)})$ for some $m=m(t)\in[\kappa_1,\tau_1]$ and each
$\ve_t=\widehat r_n(\un R^{(2)})$ for some $n=n(t)\in[\kappa_2,\tau_2]$.

By Lemma \ref{Lemma-affiliation}, this implies that the successive rectangles implied by 
eq.~\eqref{eq-interleave} are affiliated:
$R^{(1)}_{m(t)}\sim R^{(2)}_{n(t)}$ and $R^{(2)}_{n(t)}\sim R^{(1)}_{m(t+1)}$.
Applying Proposition \ref{Prop-relation-codings},
find rectangles $Z,Z',Z''$ of $\mathfs Z$ that contain $R^{(1)}_{m(t)},R^{(2)}_{n(t)},R^{(1)}_{m(t+1)}$
and satisfy the conditions of Proposition~\ref{Prop-overlapping-charts-2}. The same applies to
the three rectangles $R^{(2)}_{n(t)},R^{(1)}_{m(t+1)},R^{(2)}_{n(t+1)}$.

Now let $y_k=H^{k-\kappa_1}(y)$ for $\kappa_1\leq k\leq \tau_1$ and $z_\ell=H^{\ell-\kappa_2}(z)$ for 
$\kappa_2\leq \ell\leq \tau_2$. 
For each $t=0,1,\ldots,T$,  note that $y_{m(t)}\in R^{(1)}_{m(t)}$ and $z_{n(t)}\in R^{(2)}_{n(t)}$. 
We let $D_t^{(1)}$ and $D^{(2)}_t$ be the connected components
of $\widehat\Lambda$ containing
$R^{(1)}_{m(t)}$ and $R^{(2)}_{n(t)}$ respectively.

On the one hand, since $y_{m(t+1)}=\vf^u(y_{m(t)})$ with $0\le u\le\rho$, Proposition \ref{Prop-overlapping-charts-2} implies that
 $$
   [y_{m(t)},z_{n(t)}]_{D^{(2)}_t}=[y_{m(t+1)},z_{n(t)}]_{D^{2}_t}.
 $$
On the other hand, Proposition \ref{Prop-overlapping-charts}(3) yields:
\begin{align*}
&\ [y_{m(t+1)},z_{n(t)}]_{D_{t+1}^{(1)}}= \mathfrak q_{D^{(1)}_{t+1}}([y_{m(t+1)},z_{n(t)}]_{D^{(2)}_t})\\
&=\mathfrak q_{D^{(1)}_{t+1}}([y_{m(t)},z_{n(t)}]_{D^{(2)}_t})=
(\mathfrak q_{D^{(1)}_{t+1}}\circ \mathfrak q_{D^{(2)}_t})([y_{m(t)},z_{n(t)}]_{D^{(1)}_t}).
\end{align*}
Finally, applying Proposition \ref{Prop-overlapping-charts-2} again, we conclude that
$$
[y_{m(t+1)},z_{n(t+1)}]_{D^{(1)}_{t+1}}= [y_{m(t+1)},z_{n(t)}]_{D^{(1)}_{t+1}}=
(\mathfrak q_{D^{(1)}_{t+1}}\circ \mathfrak q_{D^{(2)}_t})([y_{m(t)},z_{n(t)}]_{D^{(1)}_t}).
$$

Proceeding inductively,
\begin{align*}
w'&=[y',z']=\mathfrak q_{D^{(2)}_T}([y_{m(T)},z_{n(T)}]_{D^{(1)}_T})\\
&=(\mathfrak q_{D^{(2)}_T}\circ \mathfrak q_{D^{(1)}_T}\circ\mathfrak q_{D^{(2)}_{T-1}})([y_{m(T-1)},z_{n(T-1)}]_{D^{(1)}_{T-1}})\\
&=
\cdots \\
&= (\mathfrak q_{D^{(2)}_T}\circ \mathfrak q_{D^{(1)}_T}\circ\cdots \circ \mathfrak q_{D^{(1)}_1}\circ  \mathfrak q_{D^{(2)}_0})([y_{m(0)},z_{n(0)}]_{D^{(1)}_0})\\
&= (\mathfrak q_{D^{(2)}_T}\circ \mathfrak q_{D^{(1)}_T}\circ\cdots \circ \mathfrak q_{D^{(1)}_1}\circ  \mathfrak q_{D^{(2)}_0})([y,z]_{D^{(1)}_0})\\
&= (\mathfrak q_{D^{(2)}_T}\circ \mathfrak q_{D^{(1)}_T}\circ\cdots \circ \mathfrak q_{D^{(1)}_1}\circ  \mathfrak q_{D^{(2)}_0})(w),
\end{align*}
which proves that $w$ and $w'$ belong to the same trajectory. Repeating the argument using the holonomy maps corresponding to the sequence $(R^{(1)}_{\kappa_1},\dots,R^{(1)}_{\tau_1})$, we get that their composition sends $w$ to $w'$. By the Markov property in the stable direction, these holonomy maps correspond to first returns. This proves that $w'=H^{\tau_1-\kappa_1}(w)$.
\end{proof}

Now it is easy to conclude the proof of the theorem. 
A symmetric version of the claim implies that $w=H^{-(\tau_2-\kappa_2)}(w')$.
Since $w$ is not periodic, we obtain $\tau_1-\kappa_1=\tau_2-\kappa_2$. 
It follows that $(R^{(1)}_{\kappa_1},\dots,R^{(1)}_{\tau_1})=(R^{(2)}_{\kappa_2},\dots,R^{(2)}_{\tau_2})$,
since both correspond to the rectangles in $\mathfs R$ that contain $H^{\kappa_1}(w),\ldots,H^{\tau_1}(w)$.
This concludes the proof.
\end{proof}

\subsection{Conclusion of the proof of Theorem~\ref{t.main}}\label{ss.conclusion}

We already proved parts (1) and the first half of part (2). Also, Theorem \ref{Thm-finite-extension}
establishes part (3). For the second half of part (2), we note that
every point of $\nuh^\#$ has a finite and nonzero number of lifts to $\widehat \Sigma^\#_{\widehat r}$,
hence every ergodic $\chi$--hyperbolic measure on $M$, which is supported in $\nuh^\#$,
can be lifted to an ergodic $\widehat\sigma_{\widehat r}$--invariant measure $\overline\mu$,
exactly as in the argument performed in~\cite[Section 13]{Sarig-JAMS}. This concludes the proof of 
part (2) of Theorem~\ref{t.main}.

We now prove the remaining parts (4)--(8) stated in Theorem~\ref{t.main}.

\paragraph{\bf Part~(\ref{i.splitting}).} Using Theorem~\ref{Thm-stable-manifolds}, 
we define $N^{s/u}_{z}$ as follows:
\begin{enumerate}[$\circ$]
\item For $z=(\un R,0)\in\widehat\Sigma_{\widehat r}$, define first $V^{s/u}(z)=W^{s/u}(\widehat\pi(\un R),R_0)$
and $N^{s/u}_{z}=T_{\widehat\pi(\un R)}V^{s/u}(z)$. By definition,
$V^s(z)$ and $V^u(z)$ are transverse.
\item For $z=(\un R,t)\in\widehat\Sigma_{\widehat r}$, define
$N^{s/u}_{z}=\Phi^t\left(N^{s/u}_{(\un R,0)}\right)$.
Since $\Phi$ is an isomorphism, $N_{\widehat\pi_{\widehat r}(\un R,t)}=N^{s}_{z}\oplus N^{u}_{z}$.
\end{enumerate}
The geometrical Markov property of Proposition \ref{Prop-R}(3) 
implies that the families $\{N^{s/u}_z\}$ are invariant under $\Phi$.
The convergence rates along $N^{s/u}_z$ follow from Theorem \ref{Thm-stable-manifolds}(3),
taking $\lambda:=\tfrac{1}{\sup(r_\Lambda)}\left(\tfrac{\chi \inf(r_\Lambda)}{2}-\tfrac{\beta\ve}{6}\right)$.
These estimates show, in particular, that these spaces only depend on $x:=\widehat\pi_{\widehat r}(z)$,
hence one can set $N^{s/u}_x:=N^{s/u}_z$.
Finally, the H\"older continuity follows from 
Theorem \ref{Thm-stable-manifolds}(5). This concludes the proof of part~(\ref{i.splitting}).
\medskip

\paragraph{\bf Part~\eqref{i.manifold}.}
For any $z=(\un R,0)\in \widehat \Sigma$, Theorem~\ref{Thm-stable-manifolds}
associates curves $V^{s/u}(z)$ tangent to $\widehat \Lambda$, hence transverse to the flow direction.
For general $z=(\un R,t)\in\widehat\Sigma_{\widehat r}$, 
one then defines the manifolds $V^{cs/cu}(z):=\varphi^{[t-1,t+1]}(V^{s/u}(\un R,0))$.
By construction, $V^{cs/cu}(z)$ is tangent to $N^{s/u}_z+\mathbb{R}\cdot X(\widehat \pi_{\widehat r}(z))$.
Moreover, by Proposition~\ref{Prop-center-stable}, for any $y\in V^{cs}(z)$ there exists $\tau\in \mathbb{R}$
such that $d(\varphi^t(\widehat\pi_{\widehat r}(z)),\varphi^{t+\tau}(y))\leq \exp{}(-\lambda t)$
for all $t\geq 0$. The same holds for $V^{cu}(z)$, thus concluding the proof of Part~\eqref{i.manifold}.

\medskip

\paragraph{\bf Part~\eqref{i.canonical}.} The proof of this part is almost automatic.
The measurable set $\mathfs Z=\mathfs R$ contains $\Lambda\cap \nuh^\#$, hence
the orbit of any point $x\in \nuh^\#$ intersects $\mathfs R$, which proves item (a).
Item (b) was proved in the beginning of Section~\ref{subsec-fundpropZ}.
Finally, any $x\in \mathfs R$ defines $\{R_n\}_{n\in\Z}$
such that $H^n(x)\in R_n$ for all $n\in\Z$.
In particular, $H(R_n)\cap R_{n+1}\neq \emptyset$ for all $n\in\Z$ and so
$\un R=\{R_n\}\in\widehat\Sigma$. Since $\mathfs R=\pi[\Sigma^\#]$, we also
have $x=\pi(\un v)$ for some $\un v=\{v_n\}_{n\in\Z}\in\Sigma^\#$. For each $k\in\Z$,
the point $\pi[\sigma^k(\un v)]$ is a return of $x$ to $\mathfs R$, hence
there is an increasing sequence such that $\pi[\sigma^k(\un v)]=H^{n_k}(x)$.
Therefore $R_{n_k}\subset Z(v_k)$.
Using that $\un v\in\Sigma^\#$ and Lemma \ref{Lemma-local-finite}(1), it follows that
$\un R\in\widehat\Sigma^\#$.
\medskip

\paragraph{\bf Part~\eqref{i.lift}.}
Assume $K\subset M$ is a compact, transitive, invariant, hyperbolic set
such that all $\vf$--invariant measures supported by it are $\chi$--hyperbolic. Let
$TK=E^s\oplus X\oplus E^u$ be the continuous hyperbolic splitting.
Proceeding as in \cite[Proposition 2.8]{BCS-MME}, there are constants $C>0$ and $\kappa>\chi$
such that
$$
\|d\vf^t v^s\|\leq Ce^{-\kappa t}\|v^s\| \text{ and } \|d\vf^{-t} v^u\|\leq Ce^{-\kappa t}\|v^u\|\text{,\ \ for all }v^s\in E^s,
v^u\in E^u\text{ and }t\geq 0.
$$ 
Now we proceed as in the proof of Proposition \ref{Prop-NUH}. Using the notation 
of equation (\ref{definition-normal-vectors}), the functions $x\in K\mapsto \gamma^{s/u}(x)$ are continuous.
Therefore there is a constant $C_1=C_1(K)$ such that $s(x),u(x)<C_1$
and $\alpha(x)=\angle(n^s_x,n^u_x)>C_1^{-1}$ for all $x\in K$. This implies 
that $\inf_{x\in K}Q(x)>0$, which in turn implies that $\inf_{x\in K}q(x)>0$. In particular,
$K\subset\nuh^\#$. This is enough to reproduce the method of proof of \cite[Prop. 3.9]{BCS-MME}, as follows.
We recall that $X\subset\widehat\Sigma_{\widehat r}$ is $\widehat\sigma_{\widehat r}$--invariant if 
$\widehat\sigma_{\widehat r}^t(X)=X$ for all $t\in\R$.

\medskip
\noindent
{\sc Step 1:} There is a $\widehat\sigma_{\widehat r}$--invariant compact set $X_0\subset \widehat\Sigma_{\widehat r}$ such that $\widehat\pi_{\widehat r}(X_0)\supset K$.

\begin{proof}[Proof of Step $1$]
For each $x\in K\cap\mathfs R$, consider its canonical coding $\un R(x)=\{R_n(x)\}_{n\in\Z}$.
Since $\inf_{x\in K}q(x)>0$, $K$ intersects finitely many rectangles of $\mathfs R$. Hence there is a finite
set $V_0\subset\mathfs R$ such that $R_0(x)\in V_0$ for all $x\in K\cap\mathfs R$.
By invariance, the same happens for all $n\in\Z$, i.e.
$R_n(x)\in V_0$ for all $x\in K\cap\mathfs R$. Therefore 
the subshift $\Sigma_0$ induced by $V_0$, which is compact since $V_0$ is finite,
satisfies $\widehat\pi(\Sigma_0)\supset K\cap\mathfs R$.
Let $X_0$ be the TMF defined by $(\Sigma_0,\sigma)$ with roof function $\widehat r\restriction_{\Sigma_0}$.
Saturating the latter inclusion under $\vf$ and using part~(7)(a), we conclude that
$\widehat\pi_{\widehat r}(X_0)\supset K$.
\end{proof}

\medskip
\noindent
{\sc Step 2:} There is a transitive $\widehat\sigma_{\widehat r}$--invariant compact subset
$X\subset X_0$ such that $\widehat\pi_{\widehat r}(X)=K$.

\begin{proof}[Proof of Step $2$] Among all compact $\widehat\sigma_{\widehat r}$--invariant sets
$X\subset X_0$ with $\widehat\pi_{\widehat r}(X)\supset K$,
consider one which is minimal for the inclusion (it exists by Zorn's lemma). We claim that such an $X$ satisfies
Step 2. 
To see that, let $z\in K$ whose forward orbit is dense in $K$, let
$x\in X$ be a lift of $z$,
and let $Y$ be the $\omega$--limit set of the forward orbit of $x$,
$$
Y=\{y\in \widehat\Sigma_{\widehat r}:\exists t_n\to+\infty \text{ s.t. }\widehat\sigma_{\widehat r}^{t_n}(x)\to y\}.
$$
For any $n\geq 1$, the set
$Y_n:=\{\sigma^t_{\widehat r}(x), t\geq n\}\cup Y\subset X$ is compact and forward invariant.
Hence the projection $\widehat\pi_{\widehat r}(Y_n)$ is compact and contains
$\{\varphi^t(z), t\geq n\}$. Since the forward orbit of $z$ is dense in $K$,
we have $\widehat\pi_{\widehat r}(Y_n)\supset K$.
Taking the intersection over $n$, one deduces that the projection of the
$\sigma^t_{\widehat r}$--invariant compact set $Y$ contains $K$.
By the minimality of $X$, it follows that $X=Y$.
\end{proof}

This concludes the proof of Part~\eqref{i.lift}.
\bigskip

\paragraph{\bf Part~\eqref{i.Bowen}, items (a) and (b)-(i).} Item (a) of Part~\eqref{i.Bowen}, the local finiteness of the affiliation, was proved at the beginning of Section~\ref{s.finite-one}.
Item (b) claims that the affiliation $\sim$ is a Bowen relation. This splits into two properties (i)~and~(ii).

To prove item (i) of the Bowen relation,  let $(\un R,t),(\un S,s)\in \widehat \Sigma^\#_{\widehat r}$ with 
$\widehat \pi_{\widehat r}(\un R,t)=\widehat \pi_{\widehat r}(\un S,s)$, i.e.
$\widehat \pi(\un R)=\vf^{s-t}\widehat \pi(\un S)$. 
Since $|s-t|\leq\sup(\widehat r)\le \rho$, Lemma~\ref{Lemma-affiliation} implies that $R_0\sim S_0$.
\bigskip

\paragraph{\bf Part~\eqref{i.Bowen}, item (b)-(ii).}
We turn to property (ii) of a Bowen relation. We take $\gamma=3\rho$.
Let $z,z'\in\widehat\Sigma_{\widehat r}^\#$ 
such that $\operatorname{v}(\widehat\sigma_{\widehat r}^t z)\sim\operatorname{v}(\widehat\sigma_{\widehat r}^tz')$
for all $t\in\R$. By flowing the two orbits, we can assume that $z=(\un R,0)$ and $z'=(\un S,s)$.
Let $x=\widehat\pi(\un R)$ and $y=\widehat\pi(\un S)$. We wish to show that $x=\vf^{t+s}(y)$ for some 
$|t|< \gamma$. We will deduce from the affiliation condition that the orbit of $y$ must be shadowed by an
$\varepsilon$--gpo that shadows $x$.  By Proposition~\ref{Prop-shadowing}, the two orbits are equal
and the time shift between $x$ and $\vf^s(y)$ will be easily bounded.

To do this, we first apply Proposition \ref{Prop-relation-codings}(1) and get $\varepsilon$--gpo's
$\un v,\un w\in \Sigma^\#$ such that $x=\widehat\pi(\un R)=\pi(\un v)$ and $y=\widehat\pi(\un S)=\pi(\un w)$ 
with $R_0\subset Z(v_0)$ and $S_0\subset Z(w_0)$. Moreover, there are increasing integer sequences $(n_i)_{i\in\Z}$, $(\widetilde m_i)_{i\in\Z}$ such that $R_{n_i}\subset Z(v_i)$ and $S_{\widetilde m_i}\subset Z(w_i)$. For each $i\in\Z$, we locate affiliated symbols in the codings of $x$ and $y$ as follows.

We start with $\vf^t(x)\in Z(v_i)$ for $t=r_i(\un v)=\widehat r_{n_i}(\un R)$.
We have $\widehat\sigma^t_{\widehat r}(\un R,0)=(\widehat\sigma^{n_i}(\un R),0)$,
hence $\operatorname{v}(\widehat\sigma_{\widehat r}^t(z))=R_{n_i}$.
We also have $\widehat\sigma^t_{\widehat r}(\un S,s)=(\widehat\sigma^{\ell_i}(\un S),t+s-\widehat r_{\ell_i}(\un S))$,
where $\ell_i$ is the unique integer such that $\widehat r_{\ell_i}(\un S)\le t+s<\widehat r_{\ell_i+1}(\un S)$.
Thus $\operatorname{v}(\widehat\sigma_{\widehat r}^t(z'))=S_{\ell_i}$ and, by assumption, $R_{n_i}\sim S_{\ell_i}$.

Let $a_i\in\Z$ be the largest integer such that $m_i:=\widetilde m_{a_i}\le\ell_i$. Hence, $S_{m_i}\subset Z(w_{a_i})$.
We have $R_{n_i}\subset Z(v_i)\subset D_i$ and likewise $S_{m_i}\subset Z(w_{a_i})\subset E_i$ for some unique connected components $D_i,E_i$  of the section $\widehat \Lambda$.

We write $\Psi_{X_i}^{P^s_i,P^u_i}$ for $v_i$ and $\Psi_{Y_i}^{Q^s_i,Q^u_i}$ for $w_{a_i}$ for all $i\in\Z$.
Finally, we set $\widetilde y_i:=\pi(\sigma^{a_i}\un w)\in Z(w_{a_i})$  and $y_i:=\mathfrak q_{D_i}(\widetilde y_i)$.
We are going to show that, for all $i\in\Z$:
 \begin{enumerate}[(1)]\label{id.for.bowen}
  \item[(1)] $y_i$ is well-defined, and for $i=0$ we have $y_0=\vf^{u}(\widetilde y_0)$
  with $|u|\leq 2\rho$;
  \item[(2)] $y_{i+1}=g_{X_i}^+(y_i)$. 
 \end{enumerate}
Proposition~\ref{Prop-shadowing} will then imply that 
$x=y_0=\vf^u(\widetilde y_0)=\vf^u(y)=\vf^{u-s}(\widehat\pi_{\widehat r}(\un S,s))$, where
$|u-s|\le 2\rho+\sup\widehat r<3\rho$. 
Property (ii) and therefore the Bowen relation claimed by Part (6)(b) will be established.

\begin{figure}\label{fig-Bowen}
\includegraphics[width=0.9\linewidth]{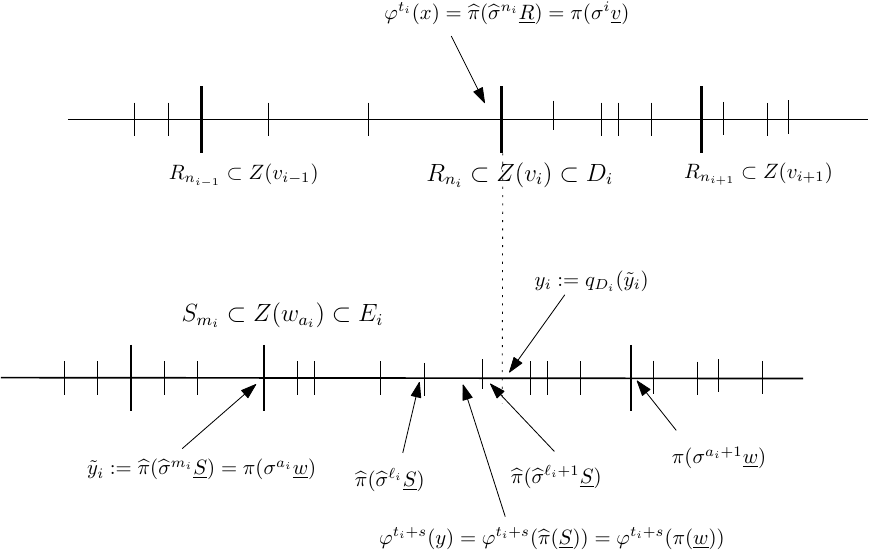}
\caption{The objects in the proof of Theorem~\ref{t.main}, part (6)(b).  
The time $t_i$ is $r_i(\underline v)=\widehat r_{n_i}(\underline R)$ for an arbitrary $i\in\Z$.
The line above depicts the coding of $x=\widehat\pi(\underline R)=\pi(\underline v)$:
large vertical lines correspond to $R_{n_j}\subset Z(v_j)$,  shorter ones to other $R_n$'s.
The line below is related to the coding of $y=\widehat\pi(\underline S)=\pi(\underline w)$ with $S_{m_i}\subset Z(w_{a_i})$, the symbol that our proof relates to $R_{n_i}\subset Z(v_i)$.
By construction $R_{n_i}\sim S_{\ell_i}$ and $S_{\ell_i}\sim S_{m_i}$.
The point $y_i$ is the trace of the orbit of $y$ on $D_i$,  the connected component of the section containing $Z(v_i)$, figured by a dotted line.}
\end{figure}

It remains to prove the above identities. They require checking that some holonomies along the flow are compatible. 
We will prove this using that affiliation implies
that charts have comparable parameters and their images fall inside $\widehat\Lambda$
far from its boundary. The claims below are not sharp but enough for our purposes.
We begin by proving some variants of Proposition \ref{Prop-overlapping-charts}(1).

\medskip
\noindent
{\sc Claim 1:} Let $Z_1,Z_2\in\mathfs Z$ such that $Z_1\cap \vf^{[-\rho,\rho]}Z_2\neq\emptyset$.
Write $Z_i=Z(\Psi_{x_i}^{p^s_i,p^u_i})$ and let $D_i$ be the connected component
of $\widehat\Lambda$ containing $Z_i$. 
Then $\tfrac{p^s_1\wedge p^u_1}{p^s_2\wedge p^u_2}=e^{\pm(O(\sqrt[3]{\ve})+O(\rho))}$ and  
$$
\mathfrak q_{D_1}(\Psi_{x_2}(R[c(p^s_2\wedge p^u_2)]))\subset \Psi_{x_1}(R[2c(p^s_1\wedge p^u_1)])
$$
for all $1\leq c\leq 64$.\\

\begin{proof}[Proof of Claim $1$.] Same of Proposition \ref{Prop-overlapping-charts}(1).
\end{proof}

\medskip
\noindent
{\sc Claim 2:} Let $R_1,R_2\in\mathfs R$ such that $R_1\sim R_2$. For $i=1,2$,
let $D_i$ be the connected component of
$\widehat\Lambda$ containing $R_i$, and let $Z_i=Z(\Psi_{x_i}^{p^s_i,p^u_i})\in\mathfs Z$ 
such that $Z_i\supset R_i$. 
Then $\tfrac{p^s_1\wedge p^u_1}{p^s_2\wedge p^u_2}=e^{\pm(O(\sqrt[3]{\ve})+O(\rho))}$ and
$$
\mathfrak q_{D_1}(\Psi_{x_2}(R[c(p^s_2\wedge p^u_2)]))\subset \Psi_{x_1}(R[8c(p^s_1\wedge p^u_1)]).
$$
for all $1\leq c\leq 16$.

\begin{proof}[Proof of Claim $2$.]
Since $R_1\sim R_2$, there are $W_1,W_2\in\mathfs Z$ such that $W_i\supset R_i$
and $W_1\cap\vf^{[-\rho,\rho]}W_2\neq\emptyset$. Write $W_i=Z(\Psi_{y_i}^{q^s_i,q^u_i})$.
We apply Claim 1 three times:
\begin{enumerate}[$\circ$]
\item Since $W_2,Z_2\supset R_2$, we have $W_2\cap Z_2\neq\emptyset$, hence 
$\tfrac{p^s_2\wedge p^u_2}{q^s_2\wedge q^u_2}=e^{\pm(O(\sqrt[3]{\ve})+O(\rho))}$ and
$$
\Psi_{x_2}(R[c(p^s_2\wedge p^u_2)])\subset \Psi_{y_2}(R[2c(q^s_2\wedge q^u_2)]).
$$
\item Since $W_1\cap\vf^{[-\rho,\rho]}W_2\neq\emptyset$, we have
$\tfrac{q^s_2\wedge q^u_2}{q^s_1\wedge q^u_1}=e^{\pm(O(\sqrt[3]{\ve})+O(\rho))}$ and
$$
\mathfrak q_{D_1}(\Psi_{y_2}(R[2c(q^s_2\wedge q^u_2)]))\subset \Psi_{y_1}(R[4c(q^s_1\wedge q^u_1)]).
$$
\item Since $W_1,Z_1\supset R_1$, we have $W_1\cap Z_1\neq\emptyset$, hence 
$\tfrac{q^s_1\wedge q^u_1}{p^s_1\wedge p^u_1}=e^{\pm(O(\sqrt[3]{\ve})+O(\rho))}$ and
$$
\Psi_{y_1}(R[4c(q^s_1\wedge q^u_1)])\subset \Psi_{x_1}(R[8c(p^s_1\wedge p^u_1)].
$$
\end{enumerate}
Plugging these inclusions together, Claim 2 is proved.
\end{proof}

\medskip
\noindent
{\sc Claim 3:} Let $R_1,R_2,R_3\in\mathfs R$ such that $R_1\sim R_2$ and $R_2\sim R_3$.
For $i=1,2,3$, let $D_i$ be the connected component of
$\widehat\Lambda$ containing $R_i$, and let $Z_i=Z(\Psi_{x_i}^{p^s_i,p^u_i})\in\mathfs Z$ 
such that $Z_i\supset R_i$. 
Then $\tfrac{p^s_3\wedge p^u_3}{p^s_1\wedge p^u_1}=e^{\pm(O(\sqrt[3]{\ve})+O(\rho))}$ and
$$
(\mathfrak q_{D_1}\circ \mathfrak q_{D_2})(\Psi_{x_3}(R[c(p^s_3\wedge p^u_3)]))=
\mathfrak q_{D_1}(\Psi_{x_3}(R[c(p^s_3\wedge p^u_3)]))\subset 
\Psi_{x_1}(R[64c(p^s_1\wedge p^u_1)])
$$
for all $1\leq c\leq 2$.

\begin{proof}[Proof of Claim $3$.]
The estimate $\tfrac{p^s_3\wedge p^u_3}{p^s_1\wedge p^u_1}=e^{\pm(O(\sqrt[3]{\ve})+O(\rho))}$
follows directly from Claim 2. Also by Claim 2, we have the inclusions
$\mathfrak q_{D_2}(\Psi_{x_3}(R[c(p^s_3\wedge p^u_3)]))\subset \Psi_{x_2}(R[8c(p^s_2\wedge p^u_2)])$
and $\mathfrak q_{D_1}(\Psi_{x_2}(R[8c(p^s_2\wedge p^u_2)]))\subset \Psi_{x_1}(R[64c(p^s_1\wedge p^u_1)])$.
This implies that $(\mathfrak q_{D_1}\circ \mathfrak q_{D_2})(\Psi_{x_3}(R[c(p^s_3\wedge p^u_3)]))\subset 
\Psi_{x_1}(R[64c(p^s_1\wedge p^u_1)])$. In particular, it proves that 
we can project $\Psi_{x_3}(R[c(p^s_3\wedge p^u_3)])$ to $D_1$, and so the equality follows. 
\end{proof}

Now we apply the above claims to our particular situation. Write $v_i=\Psi_{x_i}^{p^s_i,p^u_i}$
and $w_i=\Psi_{z_i}^{q^s_i,q^u_i}$, so that $Q^{s/u}_i=q^{s/u}_{a_i}$.

\medskip
\noindent
{\sc Claim 4:} Let $i\in\Z$. We have
$$
\mathfrak q_{E_{i+1}}(\Psi_{Y_{i}}(R[Q^s_{i}\wedge Q^u_{i}]))\subset
\Psi_{Y_{{i+1}}}(R[2(Q^s_{{i+1}}\wedge Q^u_{{i+1}})]).
$$

\begin{proof}[Proof of Claim $4$.] 
By Lemma \ref{Lemma-minimum} and Claim 3, we have 
$$
\tfrac{q^s_{a_{i+1}}\wedge q^u_{a_{i+1}}}{q^s_{a_i}\wedge q^u_{a_i}}=
\tfrac{q^s_{a_{i+1}}\wedge q^u_{a_{i+1}}}{p^s_{i+1}\wedge p^u_{i+1}}\cdot 
\tfrac{p^s_{i+1}\wedge p^u_{i+1}}{p^s_{i}\wedge p^u_{i}}\cdot
\tfrac{p^s_{i}\wedge p^u_{i}}{q^s_{a_i}\wedge q^u_{a_i}}=e^{\pm(O(\sqrt[3]{\ve})+O(\rho))}.
$$
This estimate allows to apply the same proof of Proposition \ref{Prop-overlapping-charts}(1), 
and so we can obtain the claimed inclusion in the same manner.
\end{proof}

\medskip
\noindent
{\sc Claim 5:} Let $i\in\Z$. Restricted to the set $\Psi_{Y_{i}}(R[Q^s_{i}\wedge Q^u_{i}])$, we have the equality
$\mathfrak q_{D_{i+1}}\circ \mathfrak q_{E_{i+1}}=\mathfrak q_{D_{i+1}}=g_{X_i}^+\circ \mathfrak q_{D_i}$.
A similar statement holds for $i\leq 0$.

\begin{proof}[Proof of Claim $5$.] It is enough to prove the equality for $i=0$, i.e. that
$\mathfrak q_{D_1}\circ\mathfrak q_{E_1} =\mathfrak q_{D_1}=g_{X_0}^+\circ \mathfrak q_{D_0}$ 
when restricted to $\Psi_{Y_0}(R[Q^s_0\wedge Q^u_0])$. 
By Claim 4, $\mathfrak q_{E_1}[\Psi_{Y_0}(R[Q^s_0\wedge Q^u_0])]\subset \Psi_{Y_1}(R[2(Q^s_1\wedge Q^u_1)])$.
Applying Claim 3 with $c=2$ to the triple $(R_{n_1},S_{\ell_1},S_{m_1})$, we get that
$\mathfrak q_{D_1}[\Psi_{Y_1}(R[2(Q^s_1\wedge Q^u_1)])]$
is well-defined, hence $\mathfrak q_{D_1}\circ \mathfrak q_{E_1}=\mathfrak q_{D_1}$ when
restricted to $\Psi_{Y_0}(R[Q^s_0\wedge Q^u_0])$. On the other hand, applying Claim 3
with $c=1$ to the triple $(R_{n_0},S_{\ell_0},S_{m_0})$, we have that
$\mathfrak q_{D_0}[\Psi_{Y_0}(R[Q^s_0\wedge Q^u_0])]\subset \Psi_{X_0}(R[64(P^s_0\wedge P^u_0)])$.
By definition, $g_{X_0}^+=\mathfrak q_{D_1}$ when restricted to $R[64(P^s_0\wedge P^u_0)]$.
Therefore, $g_{X_0}^+\circ \mathfrak q_{D_0}=\mathfrak q_{D_1}$ when
restricted to $\Psi_{Y_0}(R[Q^s_0\wedge Q^u_0])$. This proves Claim 5.
\end{proof}

We now complete the proof of identities (1) and (2) of page \pageref{id.for.bowen}, 
which in turn will complete the proof of part (6) of Theorem \ref{t.main}.
For that, we use the claims we just proved.

Firstly we check that $y_i:=q_{D_i}(\widetilde y_i)$ is well-defined. By assumption $R_{n_i}\sim S_{\ell_i}$,
and by construction the orbit of $y$ between $S_{m_i}$ and $S_{\ell_i}$ flows for a time at most $\sup(r)<\rho$,
hence $S_{\ell_i}\sim S_{m_i}$. This allows us to apply Claim 3 for $c=1$ and get 
that $y_i:=\mathfrak q_{D_i}(\widetilde y_i)$ is well-defined. To calculate the time displacement for
$i=0$, recall that $m_0=\ell_0=0$. Since $R_0\sim S_0$, inclusion (\ref{e.2rho}) 
implies that $y_0=\vf^u(\widehat y_0)$ with $|u|\leq 2\rho$.

Finally, Claim 5 implies that 
 $$
   g_{X_i}^+(y_i)= g_{X_i}^+\circ \mathfrak q_{D_i}(\widetilde y_i)= q_{D_{i+1}}\circ q_{E_{i+1}}(\widetilde y_i)
    =  q_{D_{i+1}}(\widetilde y_{i+1})=y_{i+1},
  $$
finishing  the proof of Theorem~\ref{t.main}.

\newcommand\ignore[1]{}

\ignore{
Now we prove the implication $(\Longrightarrow)$ of Part~\eqref{i.Bowen}(c). 
We are assuming that 
$\operatorname{v}(\widehat\sigma_{\widehat r}^t(z))\sim \operatorname{v}(\widehat\sigma_{\widehat r}^t(z'))$
for all $t\in\R$.
We can assume that $z=(\un R,0)$ and $z'=(\un S,s)$ with $\un R=\{R_n\}_{n\in\Z}$ and $\un S=\{S_n\}_{n\in\Z}$. Let $x=\widehat\pi(\un R)$ and $y=\widehat\pi(\un S)$.
We are going to show $x,y$ belong to the same orbit by showing that they are shadowed by some $\varepsilon$-gpo defining $x$.

\medbreak

We need to show the following for all $i\in\Z$:
 \begin{itemize}
  \item $y_i:=q_{D_i}(\widehat\pi\widehat\sigma^{m_i}\underline S)$ is well-defined and belongs to $\Psi_{x_{i}}(R[10Q(x_{i})]\subset D_i$;
  \item $y_{i+1}=g_{x_i}^+(y_i)$.
 \end{itemize}
It will then follow by uniqueness of the orbit shadowed by the $\varepsilon$-gpo $\underline v$ that $y_0=x$.

*****

Firstly, note that if $D_0$ is the connected component of $\widehat\Lambda$
containing $R_0$, then $t-s$ is uniquely defined by the intersection of $D_0$ and the 
orbit segment $\vf^{[-\rho,\rho]}(y)$. This intersection is equal to $\mathfrak q_{D_0}(y)$, call it
$x'$, so we want to prove that $x=x'$. By Proposition \ref{Prop-relation-codings}(1), 
there are $\un v,\un w\in \Sigma^\#$ such that $x=\widehat\pi(\un R)=\pi(\un v)$ and $y=\widehat\pi(\un S)=\pi(\un w)$ 
with $R_0\subset Z(v_0)$ and $S_0\subset Z(w_0)$. Let $(n_i)$ such that $R_{n_i}\subset Z(v_i)$
as in Proposition \ref{Prop-relation-codings}(1). 
Applying the assumption $\operatorname{v}(\widehat\sigma_{\widehat r}^t(z))\sim \operatorname{v}(\widehat\sigma_{\widehat r}^t(z'))$ for $t=\widehat r_{n_i}(\un R)$, there is $\ell_i$ such that
$R_{n_i}\sim S_{\ell_i}$. The integer $\ell_i$ is uniquely defined by the inequalities
$\widehat r_{\ell_i}(\un S)\leq \widehat r_{n_i}(\un R)+s<\widehat r_{\ell_i+1}(\un S)$.
We locate the rectangle of $\un w$ of largest index appearing in $\un S$ before index $\ell_i$. In other words,
let $(k_j)$ be a sequence such that $S_{k_j}\subset Z(w_j)$ as in Proposition \ref{Prop-relation-codings}(1),
and make the following definitions: $m_i$ is the largest $k_j$ that is $\leq \ell_i$, and 
$a_i=j$. Notice that $S_{\ell_i}\sim S_{m_i}$. We want to prove a compatibility between the holonomy maps
defined by $\un v$ and $\un w$. Write $\un v=\{v_n\}_{n\in\Z}=\{\Psi_{x_n}^{p^s_n,p^u_n}\}_{n\in\Z}$
and $\un w=\{w_n\}_{n\in\Z}=\{\Psi_{y_n}^{q^s_n,q^u_n}\}_{n\in\Z}$. Let $D_i,E_i$
be the connected components of $\widehat\Lambda$ containing $R_{n_i},S_{m_i}$ respectively.

\medskip
Let us show how to conclude the proof assuming Claim 5. Since the equality can be applied to
$y$ and its time displacements, we have that 
\begin{align*}
[(g_{x_{i-1}^+}\circ\cdots\circ g_{x_0}^+)\circ \mathfrak q_{D_0}](y)&=
[q_{D_i}\circ(\mathfrak q_{E_{i}}\circ\cdots\circ  \mathfrak q_{E_1})](y)\\
(g_{x_{i-1}}^+\circ\cdots\circ g_{x_0}^+)(x')&=q_{D_i}[(\mathfrak q_{E_{i}}\circ\cdots\circ  \mathfrak q_{E_1})(y)].
\end{align*}
Since $(\mathfrak q_{E_{i}}\circ\cdots\circ \mathfrak q_{E_1})(y)=(g_{y_{a_i-1}}^+\circ\cdots g_{y_0}^+)(y)$,
we obtain that 
$(g_{x_{i-1}}^+\circ\cdots\circ g_{x_0}^+)(x')\in q_{D_i}(\Psi_{y_{a_i}}(R[q^s_{a_i}\wedge q^u_{a_i}]))\subset 
\Psi_{x_n}(R[64(p^s_i\wedge p^u_i)])$, where the last inclusion follows from Claim 3 with $c=1$
applied to the triple $(R_{n_i},S_{\ell_i},S_{m_i})$.
Applying a similar reasoning for $i\leq 0$,
it follows from Theorem \ref{Thm-stable-manifolds}(1) and Proposition \ref{Prop-shadowing} that
$x=x'$.

}

\section{Homoclinic classes of measures}\label{sec.homoclinic}
In this final section, we prove Theorem \ref{thm.homoclinic} stated in the introduction,
as well as Corollary~\ref{cor.local-uniq}.

\subsection{The homoclinic relation}
For any hyperbolic measure $\mu$ and $\mu$--a.e. $x$,
the stable set $W^s(x)$ of the orbit of $x$ is
the set of points $y$ such that there exists an increasing homeomorphism
$h\colon \mathbb{R}\to \mathbb{R}$ satisfying $d(\varphi^t(x),\varphi^{h(t)}(y))\to 0$
as $t\to +\infty$. This is an injectively immersed submanifold which is tangent to $E^s_x\oplus X(x)$ 
and invariant under the flow. 
We define similarly the unstable manifold $W^u(x)$ by considering past orbits.

\medskip
\noindent
{\sc Homoclinic relation of measures:} 
We say that two ergodic hyperbolic measures $\mu,\nu$ are
\emph{homoclinically related} if for $\mu$--a.e. $x$ and $\nu$--a.e. $y$
there exist transverse intersections $W^s(x)\pitchfork W^u(y)\ne\emptyset$
and  $W^u(x)\pitchfork W^s(y)\ne\emptyset$, i.e., points $z_1\in W^s(x)\cap W^u(y)$
and $z_2\in W^u(x)\cap W^s(y)$ satisfying
$T_{z_1}M=T_{z_1}W^s(x)+T_{z_1}W^u(y)$
and $T_{z_2}M=T_{z_2}W^u(x)+T_{z_2}W^s(y)$.

\medskip
Note that the invariance of the stable and unstable manifolds makes this notion slightly
\emph{simpler} than it is for diffeomorphisms.
Since any hyperbolic periodic orbit supports a (unique) ergodic measure,
the above homoclinic relation is also defined between
hyperbolic periodic orbits, in which case it coincides with the classical notion,
see, e.g., \cite{Newhouse-Lectures-dynamical-systems}.

\begin{proposition}
The homoclinic relation is an equivalence relation among ergodic hyperbolic measures.
\end{proposition}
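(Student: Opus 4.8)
The plan is to verify the three properties of an equivalence relation: reflexivity, symmetry, and transitivity. Reflexivity and symmetry are essentially immediate from the definition, so the content is in transitivity. Suppose $\mu,\nu,\lambda$ are ergodic hyperbolic measures with $\mu$ homoclinically related to $\nu$ and $\nu$ homoclinically related to $\lambda$; I must show $\mu$ is homoclinically related to $\lambda$. The difficulty is that the definition quantifies over $\mu$-a.e.\ $x$ and $\nu$-a.e.\ $y$, and the intermediate transverse intersections involving $\nu$ occur at $\nu$-a.e.\ point, which is \emph{not} the same $\nu$-full set one gets from the second relation. I would resolve this exactly as in the diffeomorphism setting (see, e.g., \cite{Newhouse-Lectures-dynamical-systems}): use a ``folklore lemma'' that if a transverse heteroclinic intersection $W^s(x)\pitchfork W^u(y)\ne\emptyset$ exists for \emph{one} pair $(x,y)$, then by the inclination lemma ($\lambda$-lemma) and ergodicity of $\mu$ and $\nu$, the analogous transverse intersection exists for \emph{all} $x$ in a $\mu$-full set and all $y$ in a $\nu$-full set. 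Concretely, the set of $x$ for which $W^u(x)$ has a transverse intersection with a fixed $W^s(y)$ is flow-invariant (stable/unstable manifolds are flow-invariant here, which the paper already observes makes the relation simpler than for diffeomorphisms) and, once nonempty, has positive hence full $\mu$-measure by ergodicity.

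First I would record the easy directions. Reflexivity: for $\mu$-a.e.\ $x$, $x\in W^s(x)\cap W^u(x)$, and at $x$ the stable and unstable manifolds are transverse because $T_xM = (E^s_x\oplus X(x)) + (E^u_x\oplus X(x))$, as $\mu$ is hyperbolic so $E^s_x\oplus X(x)\oplus E^u_x = T_xM$. Taking $z_1=z_2=x$ gives the required transverse intersections with $y=x$. Symmetry is built into the definition, since the two conditions defining ``homoclinically related'' are interchanged under swapping $\mu$ and $\nu$.

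For transitivity, here is the order of steps. (1) Fix $\mu$-generic $x$, $\nu$-generic $y$, $\lambda$-generic $w$ so that all the transverse intersections guaranteed by the two relations hold; in particular pick a point $p\in W^s(y)\pitchfork W^u(w)$ and a point $q\in W^u(y)\pitchfork W^s(w)$, and also $a\in W^s(x)\pitchfork W^u(y)$, $b\in W^u(x)\pitchfork W^s(y)$. (2) Apply the inclination lemma along the orbit of $y$: since $b\in W^u(x)$ and $b$ lies on $W^s(y)$ transversally, forward iterates of a disc in $W^u(x)$ near $b$ accumulate (in $C^1$) on $W^u(y)$; similarly backward iterates of a disc in $W^s(x)$ near $a$ accumulate on $W^s(y)$. (3) Combining with the transverse intersections $W^s(y)\pitchfork W^u(w)\ni p$ and $W^u(y)\pitchfork W^s(w)\ni q$, deduce that $W^u(x)$ has a transverse intersection with $W^s(w)$ and $W^s(x)$ has a transverse intersection with $W^u(w)$, taking limits of transverse intersections (transversality is open, so a $C^1$-close enough disc to $W^u(y)$ still meets $W^s(w)$ transversally near $p$, etc.). (4) Finally upgrade ``for these particular generic $x,w$'' to ``for $\mu$-a.e.\ $x$ and $\lambda$-a.e.\ $w$'' using ergodicity: the set of $x$ admitting such intersections is $\varphi$-invariant and nonempty, hence of full $\mu$-measure, and symmetrically for $w$.

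The main obstacle is Step (2)--(3): making the inclination-lemma argument rigorous in the flow setting, where one works with the time-$T$ map or a Poincaré section and must keep track of the extra flow direction. In our three-dimensional non-singular setting the stable/unstable manifolds are two-dimensional (a one-dimensional strong direction plus the flow line), and transversality in $M$ means the two two-planes span $T_zM$; the $\lambda$-lemma then applies to the strong directions in a section transverse to the flow, and flow-invariance of $W^{s/u}$ transports the conclusion. I would cite the standard reference for this (\cite{Newhouse-Lectures-dynamical-systems}) rather than reprove the $\lambda$-lemma, noting only the two simplifications available here: stable and unstable manifolds are genuinely flow-invariant, and ergodicity of $\mu,\nu,\lambda$ promotes a single transverse intersection to a full-measure family. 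With these two facts in hand the proof is short.
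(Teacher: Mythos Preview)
Your approach is essentially the same as the paper's: reflexivity and symmetry are immediate, and transitivity follows from an inclination lemma combined with the openness of transversality. The one substantive difference is that you propose to \emph{cite} the $\lambda$-lemma from \cite{Newhouse-Lectures-dynamical-systems}, whereas the paper \emph{proves} its own inclination lemma in the non-uniformly hyperbolic flow setting using the machinery built earlier (Pesin charts on the section $\Lambda$, regular sequences $\un v\in\Sigma^\#$, and the graph transform of Section~\ref{ss.graph.transform}). The reason is that Newhouse's statement is for uniformly hyperbolic sets; here the invariant manifolds are Pesin manifolds of a merely hyperbolic ergodic measure, so one needs either a Pesin-theoretic $\lambda$-lemma (which exists but is not what Newhouse proves) or the coding-based argument the paper gives. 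Your Step~(4) ergodicity upgrade is fine but not strictly needed in the paper's presentation, since $x_1$ and $x_3$ are taken from full-measure sets to begin with and the stable/unstable sets are constant along orbits.
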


\begin{proof}
The only property that is not obvious is the transitivity of the relation.
Its proof uses the following standard lemma. 
\medskip

\noindent
{\bf Inclination lemma.}
\emph{For any hyperbolic measure $\mu$,
there is a set $Y\subset M$ of full $\mu$--measure satisfying the following:
if $x\in Y$, $D\subset W^u(x)$ is a two-dimensional disc 
and $\Delta$ is a two-dimensional disc tangent to $X$ having a transverse intersection point with $W^s(x)$,
then there are discs $\Delta_k\subset \varphi_{(k,+\infty)}(\Delta)$ which converge to $D$ in the $C^1$ topology.}

\begin{figure}
\includegraphics[width=0.9\linewidth]{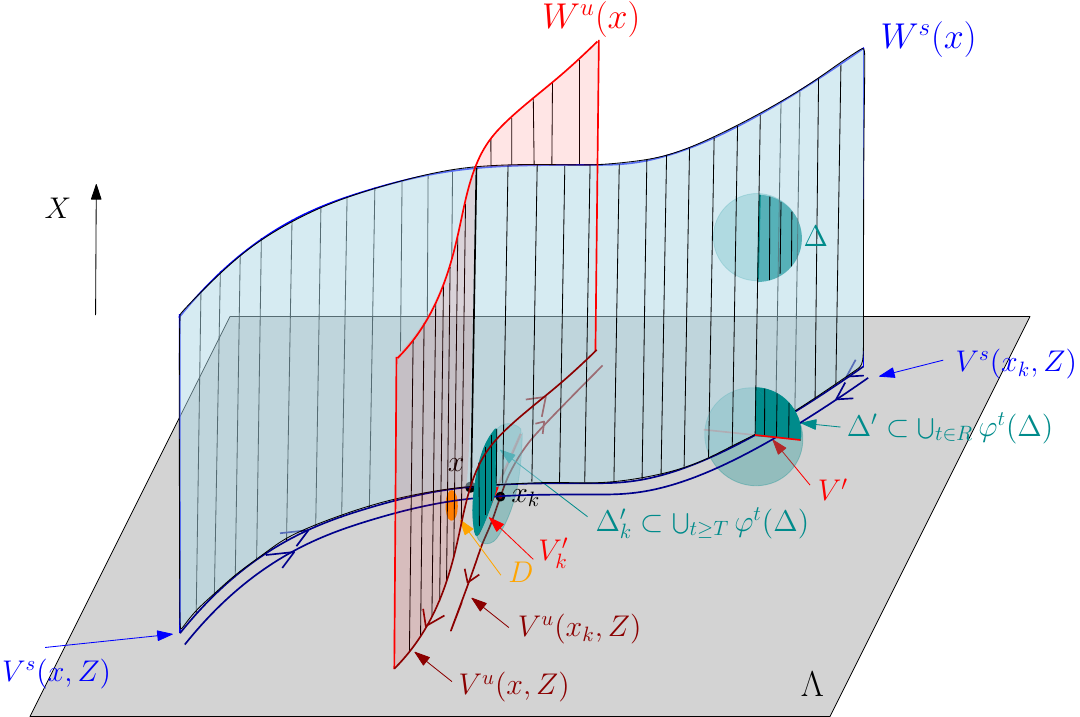}
\caption{The objects in the proof of the inclination lemma. }\label{fig-inclination}
\end{figure}
\begin{proof}[Sketch of the proof]
Taking $\chi>0$ small, the measure $\mu$ is $\chi$--hyperbolic and the constructions
done in the other sections apply.
Consequently one may replace $x$ by an iterate in the section~$\Lambda$
and assume that it is the projection under $\pi$ of a regular sequence $\un v\in \Sigma^\#$.
We denote $Z=Z(v_0)$ and let $n_k\to +\infty$ such that
$\sigma^{n_k}(\un v)\to \un v$. 

We let $x_k:=\pi(\sigma^{n_k}(\un v))$. We consider the curves $V^s(x_k,Z)\to V^s(x,Z)$, $V^u(x_k,Z)\to V^u(x,Z)$ as in Section~\ref{ss.graph.transform} and especially Theorem~\ref{Thm-stable-manifolds}(5).
The intersections $W^s(x)\cap\Lambda$ and $W^u(x)\cap\Lambda$ contain the stable and unstable 
curves $V^s(x,Z)$ and $V^u(x,Z)$.
See Figure~\ref{fig-inclination} for the various objects.

Now, the orbit of $\Delta$ contains a disk $\Delta'$ transversally  intersecting $V^s(x,Z)\subset\Lambda$. Thus $\Delta'$ transversally intersects $\Lambda$ along some curve $V'$. This curve $V'$ intersects the stable curve $V^s(x,Z)$ transversally inside the section $\Lambda$.

Hence, if $k$ is large enough, then the curve $V^s(x_k,Z)$ also intersects $\Delta'$ transversally 
inside $\Lambda$ along the curve $V'$. 
Now, the graph transform argument in Section~\ref{ss.graph.transform} shows that the images of $V'$ 
(by suitable holonomies of the flow mapping $x_k$ to $x_{k'}$ for $k'\gg k$) 
contain curves $V'_k$ that $C^1$--approximate $V^u(x_{k'},Z)$.

It follows that the orbit of $\Delta$ contains a curve which is arbitrarily $C^1$--close to $V^u(x,Z)$.
By invariance, this orbit contains discs which are arbitrarily $C^1$--close to the arbitrary subset $D\subset W^u(x)$.
\end{proof}

In order to prove the proposition, let us consider three measures $\mu_1,\mu_2,\mu_3$
such that $\mu_1,\mu_2$ are homoclinically related and $\mu_2,\mu_3$ are homoclinically related.
For each measure $\mu_i$, let $x_i$ be a point in the full measure set implied by the homoclinic relation.
In particular, there exist a disc $\Delta\subset W^u(x_1)$ which intersects transversally $W^s(x_2)$
and a disc $D\subset W^u(y_2)$ which intersects transversally $W^s(x_3)$.
By the inclination lemma, the orbit of $\Delta$ contains discs that converge to $D$ for the $C^1$--topology.
This proves that $W^u(x_1)$ has a transverse intersection point with $W^s(x_3)$.
The same argument shows that $W^u(x_3)$ has a transverse intersection with $W^s(x_1)$.
Hence $\mu_1$ and $\mu_3$ are homoclinically related.
\end{proof}

\medskip
\noindent
{\sc Homoclinic classes of measures:}
The equivalence classes for the homoclinic relation on the set of hyperbolic measures
are called \emph{homoclinic classes of measures.}

\subsection{Proof of Theorem \ref{thm.homoclinic}}
The proof follows closely the argument in~\cite[Section 3]{BCS-MME}.
We consider the setting of the Main Theorem and especially a topological Markov flow
$(\widehat \Sigma_{\widehat r},\widehat\sigma_{\widehat r})$ satisfying the properties
stated in Theorem~\ref{t.main}.

We begin by some preliminary lemmas.
The first two correspond to properties (C6), (C7) in~\cite{BCS-MME}.

\begin{lemma}\label{C6}
For any two ergodic measures supported on a common irreducible component of $\widehat \Sigma_{\widehat r}$,
their projections under $\widehat\pi_{\widehat r}$ are hyperbolic ergodic measures that are homoclinically related.
\end{lemma}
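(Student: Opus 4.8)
\textbf{Proof plan for Lemma \ref{C6}.}
The plan is to reduce the homoclinic relation between two projected measures to the transitivity of the symbolic coding on an irreducible component, combined with the invariant manifold structure provided by Theorem~\ref{t.main}(\ref{i.manifold}) and (\ref{i.splitting}). First I would observe that any ergodic measure $\overline\mu$ supported on an irreducible component $\widehat\Sigma'_{\widehat r}$ projects to an ergodic measure $\mu=\widehat\pi_{\widehat r}(\overline\mu)$ which is carried by $\nuh^\#$; by Theorem~\ref{t.main}(\ref{i.splitting}) the induced linear Poincar\'e flow $\Phi$ has a dominated splitting $N=N^s\oplus N^u$ with uniform exponential rates $\pm\lambda$ along $\widehat\pi_{\widehat r}$-generic orbits, so $\mu$ is hyperbolic (the exponents transverse to the flow are bounded away from $0$). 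This handles the first assertion. Moreover, by Theorem~\ref{t.main}(\ref{i.manifold}), for $\overline\mu$-a.e. $z$ the point $x=\widehat\pi_{\widehat r}(z)$ has genuine stable/unstable manifolds: the curves $V^{cs}(z),V^{cu}(z)$ are tangent to $N^{s/u}_x\oplus\mathbb{R}X(x)$ and contract/expand at rate $\lambda$ under the flow (up to a time reparametrization), hence they are contained in $W^s(x)$ and $W^u(x)$ respectively.

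Next I would take two ergodic measures $\overline\mu_1,\overline\mu_2$ supported on the same irreducible component, with projections $\mu_1,\mu_2$. Pick $\overline\mu_i$-generic points $z_i=(\un R^{(i)},0)$ (we may flow so that the base coordinate is $0$), and set $x_i=\widehat\pi_{\widehat r}(z_i)\in\Lambda\cap\nuh^\#$. By genericity and the Poincar\'e recurrence theorem, there is a symbol $R$ appearing infinitely often in the future and past of $\un R^{(1)}$, and a symbol $S$ with the same property for $\un R^{(2)}$; by irreducibility of the component there are admissible paths $R\rightsquigarrow S$ and $S\rightsquigarrow R$ in $\widehat{\mathfs G}$. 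Splicing, one forms a sequence $\un T\in\widehat\Sigma^\#_{\widehat r}$ whose negative tail eventually agrees with that of $\un R^{(1)}$ (through the symbol $R$) and whose positive tail eventually agrees with that of $\un R^{(2)}$ (through the symbol $S$); conversely a second sequence $\un T'$ with the roles of the tails reversed. The point $w=\widehat\pi_{\widehat r}(\un T,0)$ then lies, via Theorem~\ref{t.main}(\ref{i.canonical}) and the geometric Markov property of $\mathfs R$ (Proposition~\ref{Prop-R}(3)), in $W^u(x_1)\cap W^s(x_2)$: its forward $H$-orbit (hence forward flow orbit up to bounded reparametrization) shadows that of $x_2$ and so converges to it along the stable fiber structure, while its backward orbit shadows that of $x_1$. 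Symmetrically $w'=\widehat\pi_{\widehat r}(\un T',0)\in W^s(x_1)\cap W^u(x_2)$.

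It remains to upgrade these heteroclinic intersections to \emph{transverse} ones, i.e.\ $T_wM=T_wW^u(x_1)+T_wW^s(x_2)$. Here I would use that the tangent spaces to the invariant manifolds at $w$ are governed by the splitting $N^s_w\oplus N^u_w\oplus\mathbb{R}X(w)$: since $w\in W^s(x_2)$ its stable manifold is tangent to $N^s_w\oplus\mathbb{R}X(w)$, and since $w\in W^u(x_1)$ its unstable manifold is tangent to $N^u_w\oplus\mathbb{R}X(w)$; as $N^s_w\pitchfork N^u_w$ inside $N_w$ (they are the two transverse lines of the Poincar\'e-flow splitting, with nonzero angle by Theorem~\ref{t.main}(\ref{i.splitting}) and Lemma~\ref{Lemma-admissible-manifolds}(2)), their sum together with $X(w)$ spans all of $T_wM$. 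This is the step I expect to require the most care, because one must make sure that the invariant manifolds of the flow through $w$ are exactly the ones tangent to these subbundles — this is where the identification of $V^{cs/cu}(z)$ with pieces of $W^{s/u}$ and the estimates of Proposition~\ref{Prop-center-stable} are invoked — and that the intersection at $w$ is a single orbit so the transversality is genuine and not merely a dimension count. Once transversality of both intersections is established for $\mu_i$-generic $x_i$, the definition of the homoclinic relation is satisfied, so $\mu_1$ and $\mu_2$ are homoclinically related, completing the proof.
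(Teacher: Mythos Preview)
Your proposal is correct and follows essentially the same approach as the paper: splice tails of generic symbolic points using irreducibility to produce heteroclinic points, then deduce transversality from the fact that the tangent spaces to $W^{s/u}$ at the heteroclinic point are $N^{s/u}\oplus\mathbb{R}X$ via Theorem~\ref{t.main}(\ref{i.splitting})--(\ref{i.manifold}). The paper's version is slightly leaner---it splices directly at the zeroth symbols $R_0,S_0$ using a single connecting word from irreducibility, without first passing through recurrent symbols $R,S$---and for transversality it identifies $N^s_z$ with $T_z(W^s(x)\cap\Lambda)$ by the uniqueness of the contracting direction in Theorem~\ref{t.main}(\ref{i.splitting}), which is exactly the mechanism you anticipated as ``the step requiring the most care.''
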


\begin{proof}
Let us consider two ergodic measures $\overline\mu$ and $\overline \nu$ on a same irreducible component of $\widehat \Sigma_{\widehat r}$ and their projections $\mu=\overline\mu\circ{\widehat\pi_{\widehat r}}^{-1}$ and
$\nu=\overline\nu\circ{\widehat\pi_{\widehat r}}^{-1}$.
These two measures are obviously ergodic. They are hyperbolic by Theorem~\ref{t.main}\eqref{i.splitting}.

Let $x,y$ be points in full measure sets for $\mu$ and $\nu$ respectively:
they are the projections of points $\overline x,\overline y$ which are in the irreducible component
supporting the measures $\overline \mu,\overline \nu$.
Note that one can replace $x,y,\overline x,\overline y$ by iterates and assume that
$\overline x=(\un R,0)$, $\overline y=(\un S,0)$. Since $\overline x,\overline y$ belong to the same irreducible component,
there exists a finite word $w=w_0w_1\cdots w_\ell$ such that $w_0=R_0$ and $w_\ell=S_0$.
One can thus consider the point $\overline z=(\un T,0)$ such that
$T_{-n}=R_{-n}$ and $T_{\ell+n}=S_{n}$ for any $n\geq 0$ and
$T_n=w_n$ for $1\leq n\leq \ell$.
One deduces from the H\"older-continuity of $\widehat\pi_{\widehat r}$ that the
projection $z=\widehat \pi_{\widehat r}(\overline z)=\widehat \pi(\un T)$ belongs to the intersection between $W^s(x)$ and $W^u(y)$.
In particular $W^s(z)=W^s(x)$, hence using Theorem~\ref{t.main}\eqref{i.splitting}\eqref{i.manifold} we have
$$\limsup_{t\to +\infty} \tfrac{1}{t}\log \|\Phi^t|_{T_z(W^s(x)\cap \Lambda)} \|\leq -\lambda <0.$$
Therefore $N^s_z=T_z(W^s(x)\cap \Lambda)$, and 
similarly $N^u_z=T_z(W^u(x)\cap \Lambda)$. Since $N^s_z\oplus N^u_z=N_z$,
one deduces that the intersection between $W^s(x)$ and $W^u(y)$ at $z$ is transverse.
By the same argument, one finds a transverse intersection between
$W^u(x)$ and $W^s(y)$.
Since the points $x$ and $y$ can be taken in full measure sets for $\mu$ and $\nu$ respectively,
this proves that $\mu$ and $\nu$ are homoclinically related.
\end{proof}

\begin{lemma}\label{C7}
For any $\chi’>0$, the set of ergodic measures on $\widehat \Sigma_{\widehat r}$
whose projection is $\chi’$--hyperbolic is open for the weak--* topology.
\end{lemma}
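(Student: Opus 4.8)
The plan is to show that for a fixed $\chi'>0$, the complement --- the set of ergodic measures on $\widehat\Sigma_{\widehat r}$ whose projection is \emph{not} $\chi'$--hyperbolic --- is closed, by exhibiting it as the set of measures where suitable continuous observables have a limiting Lyapunov-type integral inside $[-\chi',\chi']$. The key point is that, thanks to Theorem~\ref{t.main}\eqref{i.splitting}, the bundles $N^{s/u}$ are defined and H\"older continuous on all of $\widehat\Sigma_{\widehat r}$, so we can cook up a genuine continuous cocycle out of $\Phi$ and use that Lyapunov exponents of continuous cocycles vary upper/lower semicontinuously with the measure.

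\textbf{Key steps.} First I would reduce to the base: an ergodic $\widehat\sigma_{\widehat r}$--invariant measure $\overline\mu$ on the suspension corresponds to an ergodic $\widehat\sigma$--invariant measure $m$ on $\widehat\Sigma$ (its normalized ``floor'' measure), and the relevant Lyapunov exponents of $\mu=\overline\mu\circ\widehat\pi_{\widehat r}^{-1}$ along $N^s$ and $N^u$ are, up to the factor $1/\int\widehat r\,dm$, the Birkhoff averages of the continuous functions
$$
\phi^{s/u}(\un R):=\log\big\|\Phi^{\widehat r(\un R)}\big|_{N^{s/u}_{\widehat\pi(\un R)}}\big\|
$$
on $\widehat\Sigma$, where I use that $\widehat r$ and $\un R\mapsto N^{s/u}_{\widehat\pi(\un R)}$ are (H\"older, hence) continuous by Proposition~\ref{Prop-pi_r}(1) and Theorem~\ref{t.main}\eqref{i.splitting}, and that $\Phi$ is continuous. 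Second, I would invoke Kingman/Furstenberg--Kesten together with the standard fact that for a \emph{continuous} matrix cocycle over a compact (here: locally compact, but the argument is carried on a single compact invariant piece at a time, or one uses the one-point compactification) base the top Lyapunov exponent is an upper-semicontinuous function of the invariant measure; symmetrically the bottom exponent is lower-semicontinuous. Concretely, the projection $\mu$ is $\chi'$--hyperbolic iff $\chi^s(\overline\mu):=\lim\frac1t\log\|\Phi^t|_{N^s}\|<-\chi'$ and $\chi^u(\overline\mu):=\lim\frac1t\log\|\Phi^{-t}|_{N^u}\|<-\chi'$, i.e. iff $\chi^s<-\chi'$ and $-\chi^u_+>\chi'$ where $\chi^u_+$ is the top exponent of $\Phi^{-1}$ on $N^u$; since $\overline\mu\mapsto\chi^s(\overline\mu)$ and $\overline\mu\mapsto \chi^u_+(\overline\mu)$ are upper-semicontinuous, each of the two strict inequalities defines an open condition, and their intersection is open. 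Third, I would note the bookkeeping point that passing between the suspension measure $\overline\mu$ and the base measure $m$ is weak--$*$ continuous on the relevant sets (the roof $\widehat r$ is continuous and bounded away from $0$ and $\infty$), so upper-semicontinuity on the base transfers to upper-semicontinuity on $\widehat\Sigma_{\widehat r}$; alternatively one works directly with the flow cocycle $\Phi^t$ and Kingman's subadditive ergodic theorem for flows.

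\textbf{Main obstacle.} The genuine subtlety is the non-compactness: $\widehat\Sigma_{\widehat r}$ is only \emph{locally} compact, so one cannot blindly cite ``top Lyapunov exponent is u.s.c. on a compact base''. I expect this to be the step that needs care. The remedy is that semicontinuity of Lyapunov exponents for continuous cocycles holds along any weak--$*$ convergent sequence of invariant probability measures provided the cocycle and $\log\|\Phi^{\pm\widehat r}\|$ are \emph{bounded} (uniformly continuous) --- and here $\|\Phi^t\|\le e^{\rho+|t|}$ with $\widehat r<\rho$, so $\phi^{s/u}$ and their inverses are bounded continuous functions on $\widehat\Sigma$. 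With that uniform bound in hand, the classical argument (subadditivity plus $\log\|\Phi^{t}|_{N^{s/u}}\|=\sum$ of bounded continuous pieces, and $\limsup$ of averages being u.s.c.) goes through verbatim. I would also double-check one small compatibility issue: that the bundle $N^{s}$ used to define $\phi^s$ really carries the \emph{stable} exponent of $\mu$ and not merely an a.e.-defined Oseledets bundle --- but this is exactly the content of Theorem~\ref{t.main}\eqref{i.splitting}, which gives the splitting everywhere on $\widehat\pi_{\widehat r}(\widehat\Sigma_{\widehat r})$ with the stated uniform domination rates, so there is no measure-dependence in the bundles themselves. Modulo these checks, the statement follows.
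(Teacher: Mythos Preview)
Your argument is correct, but it is more elaborate than necessary and misses the key simplification the paper exploits. Since we are on a $3$--manifold, the normal bundle $N$ is two-dimensional and the invariant splitting $N=N^s\oplus N^u$ of Theorem~\ref{t.main}\eqref{i.splitting} is into \emph{one-dimensional} subbundles. Hence the cocycles $\Phi^t|_{N^{s/u}}$ are scalar: $\log\|\Phi^{t+s}|_{N^{s/u}_x}\|=\log\|\Phi^{t}|_{N^{s/u}_{\vf^s(x)}}\|+\log\|\Phi^{s}|_{N^{s/u}_x}\|$ is genuinely additive, not merely subadditive. The Lyapunov exponents of the projected measure are therefore exactly the $\overline\mu$--integrals of the bounded continuous functions $\overline x\mapsto \log\|\Phi^1|_{N^{s/u}_{\widehat\pi_{\widehat r}(\overline x)}}\|$ (boundedness from $\|\Phi^t\|\le e^{\rho+|t|}$, continuity from the H\"older continuity of $z\mapsto N^{s/u}_{\widehat\pi_{\widehat r}(z)}$). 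This gives outright \emph{continuity} of the two exponents as functions of $\overline\mu$ in the weak--$*$ topology, from which openness is immediate.

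By contrast, you route the argument through Kingman/Furstenberg--Kesten and semicontinuity of top exponents, and then worry about non-compactness of the base. None of this is needed: once you observe the cocycle is one-dimensional, Birkhoff replaces Kingman, the exponent is a linear functional in $\overline\mu$ evaluated on a bounded continuous integrand, and the non-compactness issue evaporates (weak--$*$ convergence tests against bounded continuous functions by definition). Your reduction to the base via the roof function and the check that $\phi^{s/u}$ are bounded continuous are fine, but the subsequent semicontinuity machinery is superfluous; what you actually need (and in fact already have in your setup) is the much stronger continuity statement.
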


\begin{proof}
The two Lyapunov exponents of the projection of any measure $\overline\mu$ on $\widehat \Sigma_{\widehat r}$  are obtained by integration of the bounded continuous functions
$\overline x\mapsto \log \|\Phi^t|_{N^s_x}\|$ and $\overline x\mapsto \log \|\Phi^t|_{N^u_x}\|$
($x=\widehat \pi_{\widehat r}(\overline x)$).
Hence they vary continuously with the measure $\overline\mu$ in the weak--* topology.
\end{proof}

The next lemma finds an irreducible component that lifts periodic orbits.

\begin{lemma}
There exists an irreducible component $\widehat\Sigma'_{\widehat r}\subset \widehat \Sigma_{\widehat r}$
to which one can lift all $\chi$--hyperbolic periodic orbits that are homoclinically related to $\mu$.
\end{lemma}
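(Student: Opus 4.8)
The goal is to produce a single irreducible component of $\widehat\Sigma_{\widehat r}$ carrying lifts of \emph{all} $\chi$--hyperbolic periodic orbits homoclinically related to $\mu$. This is the flow analogue of \cite[Lemma 3.6]{BCS-MME} and I would follow that argument. First, by Theorem~\ref{t.main}\eqref{i.canonical} every $\chi$--hyperbolic periodic orbit $\gamma$ meets the section $\mathfs R$ and thus admits a canonical lift, which (being periodic) lives in a single irreducible component $\widehat\Sigma_\gamma$ of $\widehat\Sigma_{\widehat r}$; the unique invariant measure $\overline{\mu_\gamma}$ on $\widehat\Sigma_\gamma$ projects to the measure on $\gamma$. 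By Lemma~\ref{C6}, any two periodic orbits whose lifts lie in a common irreducible component are homoclinically related; the content of the lemma is the converse-type statement: I want one component receiving \emph{all} members of the homoclinic class of $\mu$ that are $\chi$--hyperbolic periodic.

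The key step is a \emph{merging} argument. Fix one $\chi$--hyperbolic periodic orbit $\gamma_0$ homoclinically related to $\mu$ (which exists, since $\mu$ itself is approximated by such orbits via Katok's closing lemma in the non-uniformly hyperbolic setting, or one takes $\mu$ periodic if it already is — in any case a periodic orbit in the class exists because $\mu\in \nuh^\#$ has a lift in some $\widehat\Sigma^\#$ and periodic points of the shift are dense in the relevant irreducible component). Let $\widehat\Sigma'_{\widehat r}$ be its irreducible component. Given any other $\chi$--hyperbolic periodic orbit $\gamma$ homoclinically related to $\mu$, hence to $\gamma_0$, I would use the transverse heteroclinic intersections $W^s(\gamma_0)\pitchfork W^u(\gamma)$ and $W^u(\gamma_0)\pitchfork W^s(\gamma)$ together with the coding to produce admissible paths in the graph $\widehat{\mathfs G}$ connecting a symbol of (the lift of) $\gamma_0$ to a symbol of $\gamma$ and back. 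Concretely: a transverse heteroclinic point $z\in W^s(\gamma_0)\cap W^u(\gamma)$ lies in $\nuh^\#$ (its forward orbit has the hyperbolicity of $\gamma_0$, its backward orbit that of $\gamma$, and the transversality of the intersection guarantees the recurrence condition (NUH4) holds), so by Theorem~\ref{t.main}\eqref{i.canonical} it has a canonical lift $(\un R,0)\in\widehat\Sigma^\#_{\widehat r}$ whose negative part repeats a symbol of $\gamma$'s cycle infinitely often and whose positive part repeats a symbol of $\gamma_0$'s cycle infinitely often. Such a bi-infinite admissible sequence forces the existence of an admissible word from a vertex of $\gamma$ to a vertex of $\gamma_0$; symmetrically, the other heteroclinic point gives an admissible word from a vertex of $\gamma_0$ to a vertex of $\gamma$. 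Hence the vertices of $\gamma$'s canonical cycle lie in the same irreducible component as those of $\gamma_0$, i.e.\ the canonical lift of $\gamma$ lies in $\widehat\Sigma'_{\widehat r}$.

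I expect the main obstacle to be the bookkeeping needed to guarantee that the relevant heteroclinic points genuinely land in $\nuh^\#$ and that their \emph{canonical} codings are the ones realizing the desired symbol recurrences — one must check that the stable/unstable splitting $N^s\oplus N^u$ furnished by Theorem~\ref{t.main}\eqref{i.splitting} at such a point matches the tangent spaces to $W^{s}(\gamma_0)$ and $W^u(\gamma)$, which uses the transversality of the heteroclinic intersection exactly as in the proof of Lemma~\ref{C6}. A secondary technical point: the canonical cycle associated to a periodic orbit may depend on which point of $\mathfs R$ on the orbit one starts from, but all such choices lie in the same irreducible component (they are shift-iterates of one another up to the bounded time change of Lemma~\ref{l.time}), so ``the irreducible component of $\gamma$'' is well defined. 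Once these points are settled, the conclusion is immediate: $\widehat\Sigma'_{\widehat r}$ receives the canonical lift of every $\chi$--hyperbolic periodic orbit homoclinically related to $\mu$, and the associated periodic measures project correctly by Theorem~\ref{t.main}\eqref{i.canonical}(c).
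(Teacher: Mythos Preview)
Your approach is natural but contains a genuine gap, and it differs from the paper's route.

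\textbf{The gap.} You assert that the canonical lift $(\un R,0)$ of a heteroclinic point $z\in W^s(\gamma_0)\cap W^u(\gamma)$ has ``positive part repeating a symbol of $\gamma_0$'s cycle infinitely often''. This is not justified and need not be true. The canonical coding is $R_n\ni H^n(z)$, and although $H^n(z)$ converges to $\gamma_0$, the rectangles of $\mathfs R$ are fractal sets with empty interior, so proximity to $\gamma_0$ does not force $H^n(z)$ to land in the \emph{same} rectangle as the nearby point of $\gamma_0$. Equivalently: even if eventually $H^n(z)$ lies on the smooth local stable curve $V^s(p_0,Z)$ through a point $p_0\in\gamma_0$, it may fail to lie in $W^s(p_0,R_0)=V^s(p_0,Z)\cap R_0$, because the intersection with $R_0$ is a fractal subset of that curve. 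Hence you cannot conclude that $z$'s canonical coding eventually enters the irreducible component of $\gamma_0$'s canonical cycle, and the ``admissible path from a vertex of $\gamma$ to a vertex of $\gamma_0$'' is not produced.

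\textbf{What the paper does instead.} The paper bypasses the problem of tracking individual heteroclinic orbits. Given any \emph{finite} collection of $\chi$--hyperbolic periodic orbits homoclinically related to $\mu$, they are pairwise homoclinically related and hence lie in a common transitive $\chi$--hyperbolic compact set $K$. Part~\eqref{i.lift} of Theorem~\ref{t.main} then furnishes a transitive compact $X\subset\widehat\Sigma_{\widehat r}$ with $\widehat\pi_{\widehat r}(X)=K$; transitivity forces $X$ into a single irreducible component, and compactness of $X$ (hence $X\subset\widehat\Sigma_{\widehat r}^\#$) together with the finite-to-one property gives periodic lifts in $X$ of each periodic orbit in $K$. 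Finally, since each periodic orbit has only finitely many periodic lifts, the set of irreducible components containing lifts of \emph{all} of the first $n$ periodic orbits is finite, non-empty, and decreasing in $n$; the intersection over all $n$ is therefore non-empty, yielding the desired component. The key tool your argument lacks is precisely this use of Part~\eqref{i.lift}, which packages the heteroclinic connections into a uniformly hyperbolic horseshoe and lifts it \emph{as a whole}, rather than symbol-by-symbol.
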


\begin{proof}
Periodic orbits that are homoclinically related to $\mu$ are homoclinically related together.
Hence, given any finite set of such periodic orbits, there exists a transitive $\chi$--hyperbolic set $K$
which contains all of them. By Theorem~\ref{t.main}(8), there is a transitive invariant compact
set $X\subset  \widehat \Sigma_{\widehat r}$ such that $\widehat\pi_{\widehat r}(X)=K$.
In particular, $X$ is contained in an irreducible component of $\widehat \Sigma_{\widehat r}$.

Note that $X\subset\widehat \Sigma_{\widehat r}^\#$, the regular set of
$\widehat\Sigma_{\widehat r}$, since $X$ sees only finitely many vertices.
In particular,  $\widehat \pi_{\widehat r}:X\to K$ is not only onto but finite-to-one and all 
periodic orbits of $K$ lift to periodic orbits of $X$ (though with perhaps larger periods).

Let us enumerate all the $\chi$--hyperbolic periodic orbits $\mathcal{O}_i\sim \mu$, $i=1,2,\dots$.
For each $n$, the set of irreducible components which contains periodic lifts of all
the periodic orbits $\mathcal{O}_i$ with $1\leq i\leq n$ is non-empty (by the previous paragraph),
finite (by the finiteness-to-one property of the coding) and is non-increasing with $n$.
Hence their intersection is nonempty, and any irreducible component which belongs to it satisfies the claim.
\end{proof}

Let $\nu$ be a $\chi$--hyperbolic ergodic measure that is homoclinically related to $\mu$.
By Theorem~\ref{t.main}(2), there exists an ergodic lift $\overline\nu$ of $\nu$ to
$\widehat \Sigma_{\widehat r}$.
Consider a point $ q\in \widehat \Sigma_{\widehat r}$ that is recurrent (such that there exists a sequence of forward iterates
$\widehat \sigma^{k_i}(q)$ which converges to $q$) and generic for $\overline\nu$,
and let $x=\widehat\pi_{\widehat r}(q)$.

The recurrence of $q$ gives rise to a sequence of periodic points $ q^i$ in $\widehat \Sigma_{\widehat r}$
which converge to $q$ (hence are in a same irreducible component)
and whose orbits weak--* converge to $\overline\nu$.
By Lemma~\ref{C7} the projections of these periodic orbits are $\chi$--hyperbolic and
by Lemma~\ref{C6} they are homoclinically related to $\mu$. Therefore there are periodic
orbits $p^i$ in the irreducible component $\widehat\Sigma'_{\widehat r}$
which have the same projections as the periodic orbits $ q^i$.

Let us write $q^i=(\underline R^i,t^i)$ and $p^i=(\underline S^i,s^i)$.
Since $(q^i)$ is converging
and $\widehat \Sigma$ is locally compact,
the sequence $(\underline R^i)$ is relatively compact.
The Bowen property of Theorem~\ref{t.main}(6) implies that
$\operatorname{v}(\widehat \sigma_{\widehat r}^t(q^i))\sim \operatorname{v}(\widehat\sigma_{\widehat r}^t(p^i))
\text{ for all }t\in \mathbb{R}$ so, by the local finiteness of the affiliation,
the sequence $(\underline S^i)$ is relatively compact.
This implies that $(p^i)$ is relatively compact and (up to taking a subsequence)
converges to some $p\in \widehat\Sigma'_{\widehat r}$.
By continuity of the projection, $\widehat\pi_{\widehat r}(p)=\widehat\pi_{\widehat r}(q)=x$.

We claim that $p\in  \widehat{\Sigma'}_{\widehat r}^\#$.
This follows from the fact that $q$ is recurrent and that the Bowen relation is locally finite.
More precisely, there are some vertex $A\in\widehat V$ and integers $m_k,n_k\to\infty$ such that $q_{m_k}=q_{-n_k}=A$. 
In particular, for each $k\ge1$ we have $q^i_{m_k}=q^i_{-n_k}=A$ for all large $i$.
Hence $p^i_{m_k},p^i_{-n_k}$ are related to $A$, and so they belong to the set $\{B\in\widehat V:B\sim A\}$.
Since this latter set is finite, some symbol must repeat
as required and this passes to the limit $p$, proving the claim.

We have proved that $\nu$--almost every point has a lift in $\widehat{\Sigma'}_{\widehat r}^\#$.
The finiteness-to-one property of Theorem~\ref{t.main}(3) and
the same averaging argument used in the proof of Theorem~\ref{t.main}(2) imply
that $\nu$ has a lift in $\widehat\Sigma'_{\widehat r}$. Considering the ergodic decomposition,
we can choose an ergodic lift, as claimed.
Theorem \ref{thm.homoclinic} is now proved.\qed

\subsection{Proof of Corollary~\ref{cor.local-uniq}}
Let $\mathcal H$ be some homoclinic class of hyperbolic ergodic measures.
Let us deduce from Theorem~\ref{thm.homoclinic} that there is at most one
$\nu\in\mathcal H$ such that  $h(\vf,\nu)=\sup\{h(\vf,\mu):\mu\in\mathcal H\}$. 
Let  $\nu,\nu'\in\mathcal H$ be two measures with this property.  They are both hyperbolic, 
hence $\chi$--hyperbolic for some $\chi>0$. For one such fixed parameter $\chi$,
let $\pi_r:\Sigma_r\to M$ be the coding given by the Main Theorem.

By Theorem~\ref{thm.homoclinic}, there is an irreducible component $\Sigma'_r$ of $\Sigma_r$
to which both $\nu$ and $\nu'$ lift. Since the factor map $\pi_r$ preserves the entropy
and since the projection of any ergodic measure on $\Sigma'_r$ is homoclinically related 
to $\nu$ and $\nu'$ by Lemma~\ref{C6},
the two lifts are measures of maximal entropy for  $\Sigma'_r$.  But the measure of maximal entropy of an irreducible component of a topological Markov flow with a H\"older continuous roof function $r$ is unique (see
e.g. \cite[Proof of Theorem 6.2]{Lima-Sarig}). Hence $\nu=\nu'$, which proves Corollary~\ref{cor.local-uniq}.

\appendix

\section{Standard proofs}\label{Appendix-proofs}

\renewcommand\thetheorem{\Alph{section}.\arabic{theorem}}

Remind we are assuming that $\|\nabla X\|\leq 1$, and that this implies two facts:
\begin{enumerate}[$\circ$]
\item Every Lyapunov exponent of $\vf$ has absolute values $\leq 1$, hence we consider $\chi\in (0,1)$.
\item $\|\Phi^t\|\leq e^{2\rho+|t|}$, $\forall t\in\R$, see Section \ref{section-induced}.
\end{enumerate}

\begin{proof}[Proof of Lemma \ref{Lemma-linear-reduction}.]
We begin with some preliminary calculations. Fix $t\in\R$. 
We prove that $e_1$ is an eigenvector of $C(\vf^t(x))^{-1}\circ \Phi^t\circ C(x)$,
and calculate its eigenvalue. By the proof of Proposition \ref{Prop-NUH},
$\Phi^t n^s_x=\pm\|\Phi^t n^s_x\|n^s_{\vf^t(x)}$, therefore
$[\Phi^t\circ C(x)](e_1)=\pm\tfrac{\|\Phi^t n^s_x\|}{s(x)}n^s_{\vf^t(x)}$.
This implies that $[C(\vf^t(x))^{-1}\circ\Phi^t\circ C(x)](e_1)=\pm\|\Phi^t n^s_x\|\tfrac{s(\vf^t(x))}{s(x)}e_1$,
hence $e_1$ is an eigenvector with eigenvalue $A_t(x)=\pm\|\Phi^t n^s_x\|\tfrac{s(\vf^t(x))}{s(x)}$.
Similarly, $e_2$ is an engenvector with eigenvalue $B_t(x)=\pm\|\Phi^t n^u_x\|\tfrac{u(\vf^t(x))}{u(x)}$.
Note that
\begin{align*}
&\,s(x)^2=4e^{4\rho}\int_0^t e^{2\chi t'}\|\Phi^{t'}n^s_x\|^2dt'+4e^{4\rho}\int_t^\infty e^{2\chi t'}\|\Phi^{t'}n^s_x\|^2dt'\\
&=4e^{4\rho}\int_0^t e^{2\chi t'}\|\Phi^{t'}n^s_x\|^2dt'+
e^{2\chi t}\|\Phi^tn^s_x\|^2 s(\vf^t(x))^2\\
\end{align*}
and so
$$
e^{2\chi t}\|\Phi^tn^s_x\|^2\tfrac{s(\vf^t(x))^2}{s(x)^2}=1-\tfrac{4e^{4\rho}}{s(x)^2}\int_0^t e^{2\chi t'}\|\Phi^{t'}n^s_x\|^2dt'.
$$
When $0<t\leq 2\rho$, we have 
$\tfrac{4e^{4\rho}}{s(x)^2}\int_0^t e^{2\chi t'}\|\Phi^{t'}n^s_x\|^2dt'\leq 4\rho e^{16\rho}<5\rho$ 
for $\rho>0$ small enough, therefore
\begin{equation}\label{equation-s}
e^{-4\rho}<e^{\chi t}\|\Phi^tn^s_x\|\tfrac{s(\vf^t(x))}{s(x)}<1.
\end{equation}
Similarly,
\begin{align*}
&\,u(\vf^t(x))^2=4e^{4\rho}\int_0^{t}e^{2\chi t'}\|\Phi^{-t'}n^u_{\vf^t(x)}\|^2dt'+
4e^{4\rho}\int_{t}^\infty e^{2\chi t'}\|\Phi^{-t'}n^u_{\vf^t(x)}\|^2dt'\\
&=4e^{4\rho}\int_0^{t}e^{2\chi t'}\|\Phi^{-t'}n^u_{\vf^t(x)}\|^2dt'+
e^{2\chi t}\|\Phi^tn^u_x\|^{-2}u(x)^2
\end{align*}
since $1=\|\Phi^{-t}\Phi^t n^u_x\|=\|\Phi^t n^u_x\|\cdot\|\Phi^{-t}n^u_{\vf^t(x)}\|$, and so 
\begin{equation}\label{equation-u}
e^{-4\rho}<e^{\chi t}\|\Phi^tn^u_x\|^{-1}\tfrac{u(x)}{u(\vf^t(x))}<1.
\end{equation}
We will use (\ref{equation-s}) and (\ref{equation-u}) to prove (2)--(3).

\medskip
\noindent
(1) In the basis $\{e_1,e_2\}$ of $\R^2$ and the basis $\{n^s_x,(n^s_x)^\perp\}$ of $N_x$,
$C(x)$ takes the form
$\left[\begin{array}{cc}\tfrac{1}{s(x)}& \tfrac{\cos\alpha(x)}{u(x)}\\ 0&\tfrac{\sin\alpha(x)}{u(x)}\end{array}\right]$,
hence $\|C(x)\|_{\rm Frob}^2=\tfrac{1}{s(x)^2}+\tfrac{1}{u(x)^2}\leq 1$. Now observe that the inverse of
$C(x)$ is
$\left[\begin{array}{cc}s(x)& -\tfrac{s(x)\cos\alpha(x)}{\sin\alpha(x)}\\ 0&\tfrac{u(x)}{\sin\alpha(x)}\end{array}\right]$,
hence $\|C(x)^{-1}\|_{\rm Frob}=\tfrac{\sqrt{s(x)^2+u(x)^2}}{|\sin\alpha(x)|}$.

\medskip
\noindent
(2) The first part was already proved, so we concentrate on the second part. Fix $0<t\leq 2\rho$.
By (\ref{equation-s}), $e^{-4\rho}<e^{\chi t}|A_t(x)|<1$ and so
$e^{-8\rho}<|A_t(x)|<e^{-\chi t}$.
Similarly, (\ref{equation-u}) implies that
$e^{-4\rho}<e^{\chi t}|B_t(x)|^{-1}<1$, and so $e^{\chi t}<|B_t(x)|<e^{8\rho}$.

\medskip
\noindent
(3) For $|t|\leq 2\rho$, we have $e^{\chi t}\|\Phi^t n_s^x\|=e^{\pm 6\rho}$,
therefore by (\ref{equation-s}) it follows that $e^{-10\rho}<\tfrac{s(\vf^t(x))}{s(x)}<e^{6\rho}$,
so that $\tfrac{s(\vf^t(x))}{s(x)}=e^{\pm 10\rho}$. Similarly, 
$\tfrac{u(\vf^t(x))}{u(x)}=e^{\pm 10\rho}$.
To estimate $\tfrac{|\sin\alpha(\vf^t(x))|}{|\sin\alpha(x)|}$,
we use the general inequality for an invertible linear transformation $L$:
\begin{equation}\label{gen-ineq-angles}
\frac{1}{\|L\|\|L^{-1}\|}\leq \frac{|\sin\angle(Lv,Lw)|}{|\sin\angle(v,w)|}\leq \|L\|\|L^{-1}\|.
\end{equation}
Apply this to $L=\Phi^t$, $v=n^s_x$, $w=n^u_x$ to get that
$\tfrac{|\sin\alpha(\vf^t(x))|}{|\sin\alpha(x)|}=e^{\pm 8\rho}$. Finally, the above estimates
and part (1) imply that $\tfrac{\|C(\vf^t(x))\|_{\rm Frob}}{\|C(x)\|_{\rm Frob}}=e^{\pm 18\rho}$.
\end{proof}

For the proof of the next theorem we will need some estimates on $Q(x)$.
By Lemma \ref{Lemma-linear-reduction}(3) proved above,
$\tfrac{Q(\vf^t(x))}{Q(x)}=e^{\pm\frac{200\rho}{\beta}}$ for all $x\in\nuh$ and
$|t|\leq 2\rho$. Therefore, if $x\in\Lambda\cap\nuh$ then
$\tfrac{Q(f(x))}{Q(x)}=e^{\pm\frac{200\rho}{\beta}}$.
Hence the following bounds hold for $Q(x)$:
\begin{align*}
&Q(x)\leq \ve^{3/\beta}\text{ and } \|C(x)^{-1}\|Q(x)^{\beta/12}\leq \ve^{1/4}\text{ for all }x\in\nuh,\\
&Q(x)^{\beta/2}\leq e^{100\rho}Q(f(x))^{\beta/2}\text{ for all }x\in\Lambda\cap\nuh.
\end{align*}

\begin{proof}[Proof of Theorem \ref{Thm-non-linear-Pesin}.]
Recall that $B_x=B(x,2\mathfrak r)$.
If $\ve>0$ is small enough then Lemma \ref{Lemma-Pesin-chart}(1)
implies 
$$\Psi_x(R[10Q(x)])\subset B(x,40Q(x))\subset B_x,
$$ and in this ball (Exp1)--(Exp4) are valid.
We first show that $f_x^+:R[10Q(x)]\to\R^2$ is well-defined.
Since $C(x)$ is a contraction, we have $C(x)R[10Q(x)]\subset B_x[20Q(x)]$.
Since $C(f(x))^{-1}$ is globally defined, it is enough to show that
$$
(g_x^+\circ\exp{x})(B_x[20Q(x)])\subset \exp{f(x)}(B_{f(x)}[2\mathfrak r]).
$$
For small $\ve>0$ we have:
\begin{enumerate}[$\circ$]
\item $20Q(x)<2\mathfrak r$, hence $\exp{x}$ is well-defined on $B_x[20Q(x)]$. By (Exp2),
$\exp{x}$ maps $B_x[20Q(x)]$ diffeomorphically into $B(x,40Q(x))$.
\item $40Q(x)<2\mathfrak r\Rightarrow B(x,40Q(x))\subset B_x$,
hence Lemma \ref{Lemma-map-g} implies that $g_x^+$ maps the ball $B(x,40Q(x))$
diffeomorphically into $B(f(x),80Q(x))$.
\item $80Q(x)<2\mathfrak r\Rightarrow B(f(x),80Q(x))\subset
B_{f(x)}$. By condition (Exp2), $\exp{f(x)}^{-1}$ maps $B(f(x),80Q(x))$
diffeomorphically onto its image.
\end{enumerate}
The conclusion is that $f_x^+:R[10Q(x)]\to\R^2$ is a diffeomorphism onto its image.

\medskip
Now we check (1)--(2). Using the equalities $d(\Psi_x)_0=C(x)$, $d(\Psi_{f(x)})_0=C(f(x))$
and Lemma \ref{Lemma-map-g}, we get that
$d(f_x^+)_0=C(f(x))^{-1}\circ \Phi^{r_\Lambda(x)}\circ C(x)$.
By Lemma \ref{Lemma-linear-reduction}(2),
$d(f_x^+)_0=\left[\begin{array}{cc}A & 0 \\ 0 & B\end{array}\right]$ with $e^{-4\rho}<|A|<e^{-\chi r_\Lambda(x)}$ and
$e^{\chi r_\Lambda(x)}<|B|<e^{4\rho}$. This proves part (1). Items (a)--(b) of part (2) are automatic,
hence we focus on (c).

\medskip
\noindent
{\sc Claim:} $\|d(f_x^+)_{v_1}-d(f_x^+)_{v_2}\|\leq \tfrac{\ve}{3}\|v_1-v_2\|^{\beta/2}$
for all $v_1,v_2\in R[10Q(x)]$.

\medskip
Before proving the claim, we show how to conclude (c).
If $\ve>0$ is small enough then $R[10Q(x)]\subset B_x[1]$. Applying the claim with $v_2=0$,
we get $\|dH_v\|\leq \frac{\ve}{3}\|v\|^{\beta/2}<\tfrac{\ve}{3}$. By the mean value inequality,
$\|H(v)\|\leq \tfrac{\ve}{3}\|v\|<\tfrac{\ve}{3}$, hence $\|H\|_{C^{1+\frac{\beta}{2}}}<\ve$.

\begin{proof}[Proof of the claim.]
Let us choose $L>\Hol{\beta}(dg_x^+)$.
For $i=1,2$, write $w_i=C(x)v_i$ and let
$$
A_i= \widetilde{d(\exp{f(x)}^{-1})_{(g_x^+\circ \exp{x})(w_i)}}\,,\
B_i=\widetilde{d(g_x^+)_{\exp{x}(w_i)}}\,,\ C_i=\widetilde{d(\exp{x})_{w_i}}.
$$
We first estimate $\|A_1 B_1 C_1-A_2 B_2 C_2\|$.
\begin{enumerate}[$\circ$]
\item By (Exp2), $\|A_i\|\leq 2$. By (Exp2), (Exp3) and Lemma \ref{Lemma-map-g}:
\begin{align*}
\|A_1-A_2\|\leq \mathfrak Kd((g_x^+\circ \exp{x})(w_1),(g_x^+\circ \exp{x})(w_2))
\leq 4\mathfrak K\|w_1-w_2\|.
\end{align*}
\item By Lemma \ref{Lemma-map-g}, $\|B_i\|\leq 2$.
By (Exp2) and Lemma \ref{Lemma-map-g}:
$$
\|B_1-B_2\|\leq Ld(\exp{x}(w_1),\exp{x}(w_2))^{\beta}\leq 2L\|w_1-w_2\|^\beta.
$$
\item By (Exp2), $\|C_i\|\leq 2$. By (Exp3), $\|C_1-C_2\|\leq \mathfrak K\|w_1-w_2\|$.
\end{enumerate}
Applying some triangle inequalities, we get that
$$
\|A_1 B_1 C_1-A_2 B_2 C_2\|\leq 24\mathfrak K L\|w_1-w_2\|^\beta
\leq 24\mathfrak K L\|v_1-v_2\|^\beta.
$$
Now we estimate $\|d(f_x^+)_{v_1}-d(f_x^+)_{v_2}\|$:
\begin{align*}
&\,\|d(f_x^+)_{v_1}-d(f_x^+)_{v_2}\|\leq \|C(f(x))^{-1}\| \|A_1 B_1 C_1-A_2 B_2 C_2\| \|C(x)\|\\
&\leq 24\mathfrak K L\|C(f(x))^{-1}\|\|v_1-v_2\|^\beta.
\end{align*}
Using estimate (\ref{ratio-Q}) and that $\|v_1-v_2\|<40Q(x)$, 
we conclude that for $\ve>0$ small:
\begin{align*}
&\,24\mathfrak K L\|C(f(x))^{-1}\|\|v_1-v_2\|^{\beta/2}\leq 
200\mathfrak K L\|C(f(x))^{-1}\| Q(x)^{\beta/2}\\
&\leq 200\mathfrak K L e^{125\rho}\|C(f(x))^{-1}\| Q(f(x))^{\beta/2}
\leq 200\mathfrak K L e^{125\rho}\ve^{3/2} \|C(f(x))^{-1}\|^{-5}\\
&\leq 200\mathfrak K L e^{125\rho}\ve^{3/2}<\ve.
\end{align*}
Hence the claim is proved.
\end{proof}
This completes the proof of the theorem.
\end{proof}

\begin{remark}\label{rmk-holonomy}
The sole property of $g_x^+$ used in the above proof is Lemma \ref{Lemma-map-g}.
Since any holonomy map $\mathfrak q_{D_j}$ also satisfies this lemma, we conclude that $\mathfrak q_{D_j}$
satisfies a statement analogous to Theorem \ref{Thm-non-linear-Pesin}. We will use this
observation in the proof of Proposition \ref{Prop-overlapping-charts}.
\end{remark}

\begin{proof}[Proof of Proposition \ref{Lemma-overlap}.]
Write $C_i=\widetilde{C(x_i)}:\R^2\to T_{x_1}\Lambda$.
By assumption, $d(x_1,x_2)+\|C_1-C_2\|<(\eta_1\eta_2)^4$.
Note that $\Psi_{x_i}=\exp{x_i}\circ P_{x_1,x_i}\circ C_i$.

\medskip
\noindent
(1) We prove the estimate for $s$ (the calculation for $u$ is similar).
Since $\ve>0$ is small, it is enough to prove that $\left|\tfrac{s(x_1)}{s(x_2)}-1\right|<\ve^{3/\beta}(\eta_1\eta_2)^3$.
We have $s(x_i)^{-1}=\|C(x_i)e_1\|=\|C_ie_1\|$, hence
$|s(x_1)^{-1}-s(x_2)^{-1}|=|\|C_1e_1\|-\|C_2e_1\||\leq \|C_1-C_2\|<(\eta_1\eta_2)^4$.
Also
$s(x_1)=\|C(x_1)e_1\|^{-1}\leq \|C(x_1)^{-1}\|\leq\tfrac{\ve^{3/\beta}}{Q(x_1)}
<\tfrac{\ve^{3/\beta}}{\eta_1\eta_2}$,
therefore
$$
\left|\tfrac{s(x_1)}{s(x_2)}-1\right|=s(x_1)|s(x_1)^{-1}-s(x_2)^{-1}|<\ve^{3/\beta}(\eta_1\eta_2)^3.
$$

\medskip
\noindent
(2) Apply (\ref{gen-ineq-angles}) to $L=C_1C_2^{-1}$, $v=C_2e_1$, $w=C_2e_2$ to get that
$$
\frac{1}{\|C_1C_2^{-1}\|\|C_2C_1^{-1}\|}\leq \frac{\sin\alpha(x_1)}{\sin\alpha(x_2)}\leq \|C_1C_2^{-1}\|\|C_2C_1^{-1}\|.
$$
We have $\|C_1C_2^{-1}-{\rm Id}\|\leq\|C_1-C_2\|\|C_2^{-1}\|<\ve^{3/\beta}(\eta_1\eta_2)^3$,
and by symmetry $\|C_2C_1^{-1}-{\rm Id}\|<\ve^{3/\beta}(\eta_1\eta_2)^3$, therefore
$\|C_1C_2^{-1}\|\|C_2C_1^{-1}\|<[1+\ve^{3/\beta}(\eta_1\eta_2)^3]^2<e^{2\ve^{3/\beta}(\eta_1\eta_2)^3}<e^{(\eta_1\eta_2)^3}$.
The left hand side estimate is proved similarly.

\medskip
\noindent
(3) We prove that $\Psi_{x_1}(R[e^{-2\ve}\eta_1])\subset \Psi_{x_2}(R[\eta_2])$.
If $v\in R[e^{-2\ve}\eta_1]$ then
$\|C(x_1)v\|\leq \sqrt{2}e^{-2\ve}\eta_1<2\mathfrak r$,
hence by (Exp1):
$$\Sas(C(x_1)v,C(x_2)v)\leq 2(d(x_1,x_2)+\|C_1v-C_2v\|)\leq 2(\eta_1\eta_2)^4.$$
By (Exp2), 
$d(\Psi_{x_1}(v),\Psi_{x_2}(v))\leq 4(\eta_1\eta_2)^4\Rightarrow\Psi_{x_1}(v)\in B(\Psi_{x_2}(v),4(\eta_1\eta_2)^4)$.
By Lemma \ref{Lemma-Pesin-chart}(1),
$B(\Psi_{x_2}(v),4(\eta_1\eta_2)^4)\subset \Psi_{x_2}(B)$ where
$B\subset \R^2$ is the ball with center $v$ and radius $8\|C_2^{-1}\|(\eta_1\eta_2)^4$,
hence it is enough to show that $B\subset R[\eta_2]$. If $w\in B$ then
$\|w\|_\infty\leq \|v\|_\infty+8\|C_2^{-1}\|(\eta_1\eta_2)^4\leq (e^{-\ve}+8\ve^{3/\beta})\eta_2<\eta_2$
for $\ve>0$ small enough.

\medskip
\noindent
(4) The proof that $\Psi_{x_2}^{-1}\circ \Psi_{x_1}$ is well-defined in
$R[\mathfrak r]$ is similar to the proof of (3). The only difference is in the last calculation:
if $\ve>0$ is small enough then for $w\in B$ it holds
\begin{align*}
\|w\|\leq \|v\|+8\|C_2^{-1}\|(\eta_1\eta_2)^4\leq \sqrt{2}\mathfrak r+8(\eta_1\eta_2)^3
\leq [\sqrt{2}+8\ve^{3/\beta}]\mathfrak r<2\mathfrak r,
\end{align*}
therefore $B$ is contained in the ball of $\R^2$ with center 0 and radius $2\mathfrak r$,
and in this latter ball $\Psi_{x_2}$
is a diffeomorphism onto its image.
Now:
\begin{align*}
&\,\Psi_{x_2}^{-1}\circ \Psi_{x_1}-{\rm Id}=C(x_2)^{-1}\circ\exp{x_2}^{-1}\circ\exp{x_1}\circ C(x_1)-{\rm Id}\\
&=[C_2^{-1}\circ P_{x_2,x_1}]\circ[\exp{x_2}^{-1}\circ\exp{x_1}-
P_{x_1,x_2}]\circ [P_{x_1,x_1}\circ C_1]+C_2^{-1}(C_1-C_2)\\
&=[C_2^{-1}\circ P_{x_2,x_1}]\circ[\exp{x_2}^{-1}-P_{x_1,x_2}\circ\exp{x_1}^{-1}]\circ\Psi_{x_1}+C_2^{-1}(C_1-C_2).
\end{align*}
We calculate the $C^2$ norm of $[\exp{x_2}^{-1}-P_{x_1,x_2}\circ\exp{x_1}^{-1}]\circ\Psi_{x_1}$
in the domain $R[\mathfrak r]$.
By Lemma \ref{Lemma-Pesin-chart}, $\|d\Psi_{x_1}\|_{C^0}\leq 2$ and $\Lip(d\Psi_{x_1})\leq\mathfrak K$.
Call $\Theta:=\exp{x_2}^{-1}-P_{x_1,x_2}\circ\exp{x_1}^{-1}$. For $\ve>0$ small enough, inside $B_{x_1}$ we have:
\begin{enumerate}[$\circ$]
\item By (Exp2),
$\|\Theta(y)\|\leq \Sas(\exp{x_2}^{-1}(y),\exp{x_1}^{-1}(y))\leq 2d(x_1,x_2)\leq 2\ve^{6/\beta}(\eta_1\eta_2)^3$
thus $\|\Theta\circ \Psi_{x_1}\|_{C^0}<\ve^{2/\beta}(\eta_1\eta_2)^3$.
\item By (Exp3), $\|d\Theta_y\|=\|\tau(x_2,y)-\tau(x_1,y)\|\leq \mathfrak Kd(x_1,x_2)
<\ve^{3/\beta}(\eta_1\eta_2)^3$.
Hence $\|d\Theta\|_{C^0}<\ve^{3/\beta}(\eta_1\eta_2)^3$ and
$\|d(\Theta\circ\Psi_{x_1})\|_{C^0}\leq 2\ve^{3/\beta}(\eta_1\eta_2)^3<\ve^{2/\beta}(\eta_1\eta_2)^3$.
\item By (Exp4),
\begin{align*}
&\,\|\widetilde{d\Theta_y}-\widetilde{d\Theta_z}\|=\|[\tau(x_2,y)-\tau(x_1,y)]-[\tau(x_2,z)-\tau(x_1,z)]\|\\
&\leq \mathfrak Kd(x_1,x_2)d(y,z)
\end{align*}
hence ${\rm Lip}(d\Theta)\leq \mathfrak Kd(x_1,x_2)$.
\item Using that $\Lip(d(\Theta_1\circ\Theta_2))\leq \|d\Theta_1\|_{C^0}\Lip(d\Theta_2)+\Lip(d\Theta_1)\|d\Theta_2\|_{C^0}^2$,
we get that
\begin{align*}
&\ \Lip[d(\Theta\circ\Psi_{x_1})]\leq \|d\Theta\|_{C^0}\Lip(d\Psi_{x_1})+
{\rm Lip}(d\Theta)\|d\Psi_{x_1}\|_{C^0}^2\\
&<\mathfrak K\ve^{3/\beta}(\eta_1\eta_2)^3+4\mathfrak K(\eta_1\eta_2)^4<
5\mathfrak K\ve^{3/\beta}(\eta_1\eta_2)^3<
\ve^{2/\beta}(\eta_1\eta_2)^3.
\end{align*}
\end{enumerate}
This implies that $\|\Theta\circ\Psi_{x_1}\|_{C^2}<3\ve^{2/\beta}(\eta_1\eta_2)^3$, hence
$$
\|C_2^{-1}\circ P_{x_2,x}\circ\Theta\circ\Psi_{x_1}\|_{C^2}\leq \|C_2^{-1}\|3\ve^{2/\beta}(\eta_1\eta_2)^3
\leq 3\ve^{2/\beta}(\eta_1\eta_2)^2.
$$
Thus
$\|\Psi_{x_2}^{-1}\circ \Psi_{x_1}-{\rm Id}\|_2\leq
3\ve^{2/\beta}(\eta_1\eta_2)^2+\|C_2^{-1}\|(\eta_1\eta_2)^4<
3\ve^{2/\beta}(\eta_1\eta_2)^2+\ve^{3/\beta}(\eta_1\eta_2)^3$ $<4\ve^{2/\beta}(\eta_1\eta_2)^2<\ve(\eta_1\eta_2)^2$.
\end{proof}

\begin{proof}[Proof of Proposition \ref{Prop-overlapping-charts}]
Let $z\in Z$, $z'=\vf^t(z)\in Z'$ with $|t|\leq2\rho$, and assume that $Z'\subset D'$.
Define $\Upsilon:=\Psi_y^{-1}\circ \mathfrak q_{D'}\circ\Psi_x$. We will write $\Upsilon$
as a small perturbation of $\pm{\rm Id}$. For ease of notation, write $p:=p^s\wedge p^u$
and $q:=q^s\wedge q^u$.
Start noting that, by Lemma \ref{Lemma-q}, Proposition \ref{Prop-Z-par}(1), and Theorem \ref{Thm-inverse}(5),
$$
\tfrac{p}{q}=\tfrac{p}{p^s(z)\wedge p^u(z)}\cdot\tfrac{p^s(z)\wedge p^u(z)}{q(z)}\cdot\tfrac{q(z)}{q(z')}\cdot
\tfrac{q(z')}{p^s(z')\wedge p^u(z')}\cdot \tfrac{p^s(z')\wedge p^u(z')}{q}=e^{\pm[O(\sqrt[3]{\ve})+O(\rho)]}.$$
We have
$\Upsilon=
(\Psi_y^{-1}\circ \Psi_{z'})\circ(\Psi_{z'}^{-1}\circ\mathfrak q_{D'}\circ\Psi_z)\circ(\Psi_z^{-1}\circ \Psi_x)$.
By Theorem \ref{Thm-inverse}(6), we have:
\begin{enumerate}[$\circ$]
\item $(\Psi_z^{-1}\circ \Psi_x)=(-1)^{\sigma_1}{\rm Id}+\Delta_1(v)$ where $\sigma_1\in\{0,1\}$,
$\|\Delta_1(0)\|<50^{-1}p$, and $\|d\Delta_1\|_{C^0}<\sqrt[3]{\ve}$ on $R[10Q(x)]$.
\item $(\Psi_y^{-1}\circ\Psi_{z'})=(-1)^{\sigma_2}{\rm Id}+\Delta_2(v)$ where $\sigma_2\in\{0,1\}$,
$\|\Delta_2(0)\|<50^{-1}q$, and $\|d\Delta_2\|_{C^0}<\sqrt[3]{\ve}$ on $R[10Q(z')]$.
\end{enumerate}
Assume, for simplicity, that $\sigma_1=\sigma_2=0$.
Applying the same method of proof of Theorem \ref{Thm-non-linear-Pesin} to
$\mathfrak q_{D'}$ (see Remark \ref{rmk-holonomy}), we conclude that 
$\Psi_{z'}^{-1}\circ\mathfrak q_{D'}\circ\Psi_z$ can be written in the form
$(v_1,v_2)\mapsto \begin{bmatrix} A & 0 \\ 0 & B \end{bmatrix} + H$,
where $A,B,H$ satisfy Theorem \ref{Thm-non-linear-Pesin}(2) with $\rho$ changed to $2\rho$.
Assuming for simplicity that $\vf$ preserves orientation\footnote{If not, we can apply an argument similar
to \cite{Ben-Ovadia-high-dimension}.}, we have $AB>0$, hence we can rewrite
$\Psi_{z'}^{-1}\circ\mathfrak q_{D'}\circ\Psi_z=\pm[{\rm Id}+\Delta_3(v)]$ on $R[10Q(z)]$,
where $\pm$ is the sign of $A,B$. Clearly $\Delta_3(0)=0$. If $A,B>0$ then
$d(\Delta_3)_0=\left[
\begin{array}{cc}
A-1 & 0\\
0 & B-1
\end{array}\right]$, thus
$\|d(\Delta_3)_0\|=|B-1|<e^{8\rho}-1$. The same estimate holds if $A,B<0$.
Using Theorem \ref{Thm-non-linear-Pesin}(2)(c), we get that
$\|d\Delta_3\|_{C^0}<e^{8\rho}-1+O(\ve)$.
Therefore $\Upsilon=\pm({\rm Id}+\Delta_1)({\rm Id}+\Delta_3)({\rm Id}+\Delta_2)$ where:
\begin{enumerate}[$\circ$]
\item $\|\Delta_1(0)\|<50^{-1}p$ and $\|d\Delta_1\|_{C^0}=O(\ve^{1/3})$.
\item $\Delta_3(0)=0$ and $\|d\Delta_3\|_{C^0}<e^{8\rho}-1+O(\ve)$.
\item $\|\Delta_2(0)\|<50^{-1}q$ and $\|d\Delta_2\|_{C^0}=O(\ve^{1/3})$.
\end{enumerate}
So $\Upsilon=\pm({\rm Id}+\Delta)$,
$\Delta=\Delta_1+\Delta_2+\Delta_3+\Delta_1\Delta_2+\Delta_1\Delta_3+\Delta_3\Delta_2+
\Delta_1\Delta_3\Delta_2$. We have:
\begin{enumerate}[$\circ$]
\item $\|d\Delta\|_{C^0}\leq [1+O(\ve^{1/3})]^2[1+e^{8\rho}-1+O(\ve)]-1=
[1+O(\ve^{1/3})][e^{8\rho}+O(\ve)]-1=e^{8\rho}-1+O(\ve^{1/3})$.
Hence $\|d\Upsilon\|_{C^0}\leq e^{8\rho}+O(\ve^{1/3})$.
\item Let $a_i:=\Delta_i(0)$, $b_i:=\Lip(\Delta_i)$.
Using that $\|\Delta_i\Delta_j(0)\|\leq \|\Delta_i(0)\|+\Lip(\Delta_i)\|\Delta_j(0)\|$, direct
calculations give
$\|\Delta(0)\|\leq 4a_1+(1+b_1)(2a_2+a_3+a_3b_2)=4a_1+2a_2(1+b_1)$.
Since $p\leq e^{\pm[O(\sqrt[3]{\ve})+O(\rho)]}q=[1+O(\ve^{1/3})+O(\rho)]q$, it follows that
\begin{align*}
&\,\|\Delta(0)\|\leq \tfrac{2}{25}p+\tfrac{1}{25}q[1+O(\ve^{1/3})]\leq\tfrac{3}{25}[1+O(\ve^{1/3})+O(\rho)]q.
\end{align*}
Hence $\|\Upsilon(0)\|\leq\tfrac{3}{25}[1+O(\ve^{1/3})+O(\rho)]q$.
\end{enumerate}
We now proceed to prove the proposition.

\medskip
\noindent
(1) We have $\Upsilon(R[\tfrac{1}{2}p])\subset \Upsilon(B_0[\tfrac{1}{\sqrt{2}}p])
\subset B_{\Upsilon(0)}[\tfrac{1}{\sqrt{2}}\Lip(\Upsilon)p]\subset B$,
where $B\subset\R^2$ is the ball with center $0$ and radius
$\|\Upsilon(0)\|+\tfrac{1}{\sqrt{2}}\Lip(\Upsilon)p$. By the estimates obtained above,
\begin{align*}
&\,\|\Upsilon(0)\|+\tfrac{1}{\sqrt{2}}\Lip(\Upsilon)p\leq\tfrac{3}{25}[1+O(\ve^{1/3})+O(\rho)]q+
\tfrac{1}{\sqrt{2}}\left[e^{8\rho}+O(\ve^{1/3})\right]p\\
&\leq\tfrac{3}{25}[1+O(\ve^{1/3})+O(\rho)]q+\tfrac{1}{\sqrt{2}}\left[1+O(\ve^{1/3})+
O(\rho)\right]\left[e^{8\rho}+O(\ve^{1/3})\right]q\\
&<\left[\tfrac{3}{25}+\tfrac{1}{\sqrt{2}}\right]\left[e^{8\rho}+O(\ve^{1/3})\right]q.
\end{align*}
Since $\tfrac{3}{25}+\tfrac{1}{\sqrt{2}}<1$, for $0<\ve\ll\rho\ll1 $ we get that $B\subset B_0[q]\subset R[q]$.

\medskip
\noindent
(2) Fix $z\in Z$ such that $z'=\mathfrak q_{D'}(z)\in Z'$. We will show that
$\mathfrak q_{D'}[W^{s}(z,Z)]\subset V^{s}(z',Z')$ (the other case is identical).
Write $W=\mathfrak q_{D'}[W^s(z,Z)]$ and $V=V^s(z',Z')$. Our goal is to show that $W\subset V$.
Let $\un v=\{\Psi_{x_n}^{p^s_n,p^u_n}\}_{n\in\Z},\un w=\{\Psi_{y_n}^{q^s_n,q^u_n}\}_{n\in\Z}$
such that $z=\pi(\un v)$ and $z'=\pi(\un w)$. For $n\geq 0$, let
$G^n_{\un v}=g_{x_{n-1}}^+\circ\cdots\circ g_{x_0}^+ \text{ and } G^n_{\un w}=g_{y_{n-1}}^+\circ\cdots\circ g_{y_0}^+$.
By Theorem \ref{Thm-stable-manifolds}(1), we need to show that
$G^n_{\un w}[W]\subset\Psi_{y_n}(R[10Q(y_n)])$ for all $n\geq 0$.

Fix $n\geq 0$. If $z'=\vf^t(z)$, $|t|\leq2\rho$, then there is a unique $m\geq 0$ such that
$r_m(\un v)<r_n(\un w)+t\leq r_{m+1}(\un v)$. Let $D_k$ be the disc containing
$\vf^{r_n(\un w)}(z')$. We claim that $G^n_{\un w}\circ{\mathfrak q}_{D'}=\mathfrak q_{D_k}\circ G^m_{\un v}$
wherever these maps are well-defined. To see this, firstly note that these maps are both
of the form $\vf^{\tau}$ for some continuous function $\tau$. Secondly, we claim that they coincide
at $z$. Indeed, $(G^n_{\un w}\circ{\mathfrak q}_{D'})(z)=G^n_{\un w}(z')=\vf^{r_n(\un w)}(z')$ and
$(\mathfrak q_{D_k}\circ G^m_{\un v})(z)=\mathfrak q_{D_k}[\vf^{r_m(\un v)}(z)]$.
Writing $\vf^{r_n(\un w)}(z')=z_n'$ and $\vf^{r_m(\un v)}(z)=z_m$, we have $z_n'=\vf^{t'}(z_m)$
for $t'=r_n(\un w)+t-r_m(\un v) \in (0,\rho]$, therefore $\mathfrak q_{D_k}(z_m)=z_n'$. Hence
$G^n_{\un w}[W]=(G^n_{\un w}\circ{\mathfrak q}_{D'})[W^s(z,Z)]=(\mathfrak q_{D_k}\circ G^m_{\un v})[W^s(z,Z)]
\subset \mathfrak q_{D_k}[W^s(\vf^{r_m(\un v)}(z),Z(v_m))]$, where we used Proposition \ref{Prop-Z}(4)
in the last inclusion. Since
$W^s(\vf^{r_m(\un v)}(z),Z(v_m))\subset \Psi_{x_m}(R[10^{-2}(p^s_m\wedge p^u_m)])$,
part (1) gives that $\mathfrak q_{D_k}[W^s(\vf^{r_m(\un v)}(z),Z(v_m))]\subset \Psi_{y_n}(R[q^s_n\wedge q^u_n])$,
and this last set is contained in $\Psi_{y_n}(R[10Q(y_n)])$.

\medskip
\noindent
(3) When $M$ is compact and $f$ is a $C^{1+\beta}$ diffeomorphism, the proof 
that $[z,z']_{Z'}$ is well-defined is \cite[Lemma 10.8]{Sarig-JAMS},
and the proof uses that the change of coordinates from one Pesin chart to the other is so close to the identity
that the representing function of an $s$--admissible manifold satisfies properties similar to
(AM1)--(AM3), with the constants $10^{-3},\tfrac{1}{2}$ slightly increased. We can apply the same method,
since we showed above that our change of coordinates $\Upsilon$ is a small perturbation of the identity.
The details can be easily carried out with the estimates we already obtained above. Similarly,
$[z,z']_Z$ is well-defined. It remains to prove that $[z,z']_Z=\mathfrak q_D([z,z']_{Z'})$.
To see this, observe that the composition $\mathfrak q_D\circ\mathfrak q_{D'}$ is the identity
where it is defined, hence
$$
\mathfrak q_D([z,z']_{Z'})=\mathfrak q_D(\mathfrak q_{D'}[V^s(z,Z)]\cap V^u(z',Z'))
= V^s(z,Z)\cap \mathfrak q_D[V^u(z',Z')]=[z,z']_Z.
$$
This completes the proof of the proposition.
\end{proof}

\begin{proof}[Proof of Proposition \ref{Prop-overlapping-charts-2}]
Let $Z, Z',Z''$ such that $Z\cap \vf^{[-2\rho,2\rho]}Z'\neq\emptyset$, $Z\cap \vf^{[-2\rho,2\rho]}Z''\neq\emptyset$,
and assume that $z'\in Z'$ such that $\vf^t(z')\in Z''$ for some $|t|\leq 2\rho$. 
We are asked to show that for every $z\in Z$ it holds
$$
[z,z']_Z=[z,\vf^t(z')]_Z.
$$
The idea is the following:
\begin{enumerate}[$\circ$] 
\item $V^u(z',Z')$ and $V^u(\vf^t(z'),Z'')$ coincide in a small window.
\item If $Z=Z(\Psi_x^{p^s,p^u})$ and $G$ is the representing function of $V^s(z,Z)$,
then $[z,z']_Z=\Psi_x(s,G(s))$ for some $|s|\leq \tfrac{1}{3}(p^s\wedge p^u)$.
\end{enumerate}
The precise statements are in the next claims. Write $Z'=Z(\Psi_y^{q^s,q^u})$,
$p=p^s\wedge p^u$ and $q=q^s\wedge q^u$, and
let $D$ be the connected components of $\widehat\Lambda$ with $Z\subset D$.

\medskip
\noindent
{\sc Claim 1:} $\mathfrak q_D[V^u(z',Z')\cap \Psi_y(R[\tfrac{1}{2}q])]$ contains
$\Psi_x\{(H(t),t):|t|\leq \tfrac{1}{3}p\}$ for some function $H:[-\tfrac{1}{3}p,\tfrac{1}{3}p]\to\R$
such that $H(0)<\tfrac{4}{25}p$ and $\|H'\|_{C^0}<\tfrac{1}{2}$.
Furthermore, $[z,z']_Z=\Psi_x(s,G(s))$ for some $|s|\leq \tfrac{1}{3}p$.

\medskip
\noindent
{\sc Claim 2:} If $D''$ is the connected components of $\widehat\Lambda$ such that $Z''\subset D''$,
then
$$
\mathfrak q_{D''}[V^{s/u}(z',Z')\cap \Psi_y(R[\tfrac{1}{2}q])]\subset V^{s/u}(z'',Z'').
$$

Once we prove these claims, the proposition follows:
Claim 2 implies that $\mathfrak q_D[V^u(z',Z')\cap \Psi_y(R[\tfrac{1}{2}q])]
\subset \mathfrak q_D[V^u(z'',Z'')]$ and so by Claim 1
\begin{align*}
&\ \{[z,z']_Z\}=V^s(z,Z)\cap \mathfrak q_D[V^u(z',Z')\cap \Psi_y(R[\tfrac{1}{2}q])]\\
&\subset V^s(z,Z)\cap \mathfrak q_D[V^u(z'',Z'')]=\{[z,z'']_Z\}.
\end{align*}

\begin{proof}[Proof of Claim $1$]
With the estimates obtained in the beginning of the proof of Proposition \ref{Prop-overlapping-charts},
we just need to proceed as in the proof of \cite[Lemma 10.8]{Sarig-JAMS}.
We will include the calculations for completeness. By the proof of Proposition \ref{Prop-overlapping-charts},
$\Upsilon:=\Psi_x^{-1}\circ \mathfrak q_D\circ \Psi_y={\rm Id}+\Delta$ where:
\begin{enumerate}[$\circ$]
\item $\|d\Delta\|_{C^0}\leq e^{8\rho}-1+O(\ve^{1/3})=O(\ve^{1/3})+O(\rho)$.
\item $\|\Delta(0)\|\leq \tfrac{3}{25}\left[1+O(\ve^{1/3})+O(\rho)\right]p$.
\end{enumerate} 
In particular, $\|\Delta\|_{C^0}\leq \tfrac{3}{25}\left[1+O(\ve^{1/3})+O(\rho)\right]p$.
Write $\Delta=(\Delta_1,\Delta_2)$, and let $F$ be the representing function of $V^u(z',Z')$, i.e.
$V^u(z',Z')=\Psi_y\{(F(t),t):|t|\leq q^u\}$. Hence
$V^u(z',Z')\cap \Psi_y(R[\tfrac{1}{2}q])=\Psi_y\{(F(t),t):|t|\leq \tfrac{1}{2}q\}$, and since
$\mathfrak q_D\circ \Psi_y=\Psi_x\circ\Upsilon$ we have
\begin{align*}
&\ \mathfrak q_D[V^u(z',Z')\cap \Psi_y(R[\tfrac{1}{2}q])]=
(\Psi_x\circ\Upsilon)\{(F(t),t):|t|\leq \tfrac{1}{2}q\}\\
&=\Psi_x\{(F(t)+\Delta_1(F(t),t),t+\Delta_2(F(t),t)):|t|\leq \tfrac{1}{2}q\}\}.
\end{align*}
We represent the pair inside $\Psi_x$ above as a graph on the second coordinate.
Call $\tau(t):=t+\Delta_2(F(t),t))$. We have:
\begin{enumerate}[$\circ$]
\item $|\tau(0)|=|\Delta_2(F(0),0)|\leq \|\Delta(F(0),0)\|\leq \|\Delta(0)\|+\|d\Delta\|_{C^0}|F(0)|\leq
\tfrac{3}{25}[1+O(\ve^{1/3})+O(\rho)]p+[O(\ve^{1/3})+O(\rho)]10^{-3}q\leq
\tfrac{3}{25}[1+O(\ve^{1/3})+O(\rho)]p$.
\item $|\tau'(t)|=1\pm\|d\Delta\|_{C^0}(1+\|F'\|_{C^0})=1+[O(\ve^{1/3})+O(\rho)](1+\ve)=1+O(\ve^{1/3})+O(\rho)$
for every $|t|\leq\tfrac{1}{2}q$.
\end{enumerate}
In particular,
\begin{align*}
&\ \tau(\tfrac{1}{2}q)\geq \tfrac{1}{2}q-|\Delta_2(F(0),0)|\geq \tfrac{1}{2}q-\tfrac{3}{25}[1+O(\ve^{1/3})+O(\rho)]p\\
&\geq \left( \tfrac{1}{2}e^{-[O(\ve^{1/3})+O(\rho)]}-\tfrac{3}{25}[1+O(\ve^{1/3})+O(\rho)]\right)p>\tfrac{1}{3}p,
\end{align*}
for $\rho,\ve>0$ small, since $\tfrac{1}{2}-\tfrac{3}{25}>\tfrac{1}{3}$.
Therefore, the image of $\tau:[-\tfrac{1}{2}q,\tfrac{1}{2}q]\to \R$ contains $[-\tfrac{1}{3}p,\tfrac{1}{3}p]$.

Now, we write the first coordinate $F(t)+\Delta_1(F(t),t)$ as a function of $\tau$.
Start noting that, since the derivative of $\tau$ is positive, it has an inverse $\theta:\tau[-\tfrac{1}{2}q,\tfrac{1}{2}q]\to 
[-\tfrac{1}{2}q,\tfrac{1}{2}q]$ such that $|\theta'(\tau(t))|=|\tau'(t)|^{-1}=1+O(\ve^{1/3})+O(\rho)$
for every $\tau(t)\in \tau[-\tfrac{1}{2}q,\tfrac{1}{2}q]$. In particular,
$$
|\theta(0)|=|\theta(0)-\theta(\tau(0))|\leq \|\theta'\|_{C^0}|\tau(0)|\leq \tfrac{3}{25}[1+O(\ve^{1/3})+O(\rho)]p<\tfrac{1}{5}p.
$$
Defining $H:[-\tfrac{1}{3}p,\tfrac{1}{3}p]\to\R$ by
$$
H(\tau)=F(t)+\Delta_1(F(t),t)=F(\theta(\tau))+\Delta_1(F(\theta(\tau)),\theta(\tau)),
$$
we have:
\begin{enumerate}[$\circ$]
\item $|H(0)|\leq |F(\theta(0))|+|\Delta_1(F(\theta(0),\theta(0))|\leq |F(0)|+\|F'\|_{C^0}|\theta(0)|+\|\Delta\|_{C^0}\leq
10^{-3}q+\ve \tfrac{1}{5}p+\tfrac{3}{25}\left[1+O(\ve^{1/3})+O(\rho)\right]p<\tfrac{4}{25}p$.
\item $\|H'\|_{C^0}\leq \|F'\|_{C^0}\|\theta'\|_{C^0}+\|d\Delta\|_{C^0}(1+\|F'\|_{C^0})\|\theta'\|_{C^0}\leq 
2\ve+2[O(\ve^{1/3})+O(\rho)][1+\ve]=O(\ve^{1/3})+O(\rho)$
which is smaller than $\tfrac{1}{2}$ for $\rho,\ve>0$ small.
\end{enumerate}
This proves the first part of Claim 1. For the second part, note that
$|H(\tau)|\leq |H(0)|+\|H'\|_{C^0}|\tau|\leq \tfrac{4}{25}p+\tfrac{1}{2}\cdot\tfrac{1}{3}p<\tfrac{1}{3}p$, thus
$H:[-\tfrac{1}{3}p,\tfrac{1}{3}p]\to[-\tfrac{1}{3}p,\tfrac{1}{3}p]$ is a contraction.
We have $[z,z']_Z=\Psi_x(t,G(t))$,
where $t$ is the unique $t\in[-p^s,p^s]$ such that $(t,G(t))=(H(\tau),\tau)$. Necessarily 
$H(G(t))=t$, i.e. $t$ is a fixed point of $H\circ G$. Using the admissibility of $G$ and the above estimates,
the restriction of $H\circ G$ to $[-\tfrac{1}{3}p,\tfrac{1}{3}p]$ is a contraction into 
$[-\tfrac{1}{3}p,\tfrac{1}{3}p]$, and so it has a unique fixed point in this interval, proving that
$|t|\leq \tfrac{1}{3}p$.
\end{proof}

\begin{proof}[Proof of Claim $2$]
The proof is very similar to the proof of Proposition \ref{Prop-disjointness}.
Let us prove the inclusion for $V^s$.
Let $V^s=V^s(z'',Z'')=V^s[\un v^+]$ with $\un v^+=\{\Psi_{y_n}^{q^s_n,q^u_n}\}$,
and let $G_n=g_{y_{n-1}}^+\circ\cdots\circ g_{y_0}^+$.
Let  $U^s=\mathfrak q_{D''}[V^s(z',Z')\cap \Psi_y(R[\tfrac{1}{2}q])]$.
By Proposition \ref{Prop-overlapping-charts}(1), $U^s\subset \Psi_{y_0}(R[q^s_0\wedge q^u_0])$.
Now we proceed as in the proof of Proposition \ref{Prop-disjointness} to get that:
\begin{enumerate}[$\circ$]
\item If $n$ is large enough then $G_n(U^s)\subset \Psi_{y_n}(R[Q(y_n)])$: this is exactly 
Claim 2 in the proof of Proposition \ref{Prop-disjointness}.
\item $U^s\subset V^s$: this is exactly Claim 3 in the proof of Proposition \ref{Prop-disjointness}.
\end{enumerate}
Hence Claim 2 is proved.
\end{proof}
The proof of the proposition is complete.
\end{proof}

\bigskip
\small
\bibliographystyle{plain-like-initial}
\bibliography{bibliography}{}

\bigskip
\bigskip

\hspace{-2.8cm}
\begin{tabular}{l l l l l}
\emph{J\'er\^ome Buzzi}
& &\emph{Sylvain Crovisier}
& &\emph{Yuri Lima}\\
Laboratoire de Math\'ematiques
&& Laboratoire de Math\'ematiques
&& Departamento de Matem\'atica\\
 d'Orsay, CNRS - UMR 8628
&&  d'Orsay, CNRS - UMR 8628
&&  Centro de Ci\^encias, Campus do Pici\\
Universit\'e Paris-Saclay
&&  Universit\'e Paris-Saclay
&& Universidade
Federal do Cear\'a (UFC)\\
Orsay 91405, France
&& Orsay 91405, France
&& Fortaleza -- CE, CEP 60455-760, Brasil\\
\tt{jerome.buzzi}
&& \tt{sylvain.crovisier}
&&\tt{yurilima@gmail.com}\\
\; \tt{@universite-paris-saclay.fr}
&& \; \tt{@universite-paris-saclay.fr}
&&
\end{tabular}

\end{document}